\theoremstyle{plain}
\newtheorem{theorem}{\protect\theoremname}[section]
\theoremstyle{definition}
\newtheorem{definition}[theorem]{\protect\definitionname}
\theoremstyle{remark}
\newtheorem{remark}[theorem]{\protect\remarkname}
\theoremstyle{plain}
\newtheorem{prop}[theorem]{\protect\propositionname}
\theoremstyle{remark}
\newtheorem{claim}[theorem]{\protect\claimname}
\theoremstyle{plain}
\newtheorem{lemma}[theorem]{\protect\lemmaname}
\theoremstyle{plain}
\newtheorem{corollary}[theorem]{\protect\corollaryname}
\theoremstyle{remark}
\newtheorem*{claim*}{\protect\claimname}
\providecommand{\claimname}{Claim}
\providecommand{\corollaryname}{Corollary}
\providecommand{\definitionname}{Definition}
\providecommand{\lemmaname}{Lemma}
\providecommand{\propositionname}{Proposition}
\providecommand{\remarkname}{Remark}
\providecommand{\theoremname}{Theorem}
\begin{document}

\title{First order distinguishability of sparse random graphs}

\author{Tal Hershko\thanks{California Institute of Technology, Pasadena, US; thershko@caltech.edu}, \quad Maksim Zhukovskii\thanks{Department of Computer Science, University of Sheffield, Sheffield S1 4DP, UK; m.zhukovskii@sheffield.ac.uk}}

\date{}

\maketitle

\begin{abstract}
We study the problem of distinguishing between two independent samples $\mathbf{G}_n^1,\mathbf{G}_n^2$ of a binomial random graph $G(n,p)$ by first order (FO) sentences. Shelah and Spencer proved that, for a constant $\alpha\in(0,1)$, $G(n,n^{-\alpha})$ obeys FO zero-one law if and only if $\alpha$ is irrational. Therefore, for irrational $\alpha\in(0,1)$, any fixed FO sentence does not distinguish between $\mathbf{G}_n^1,\mathbf{G}_n^2$ with asymptotical probability 1 (w.h.p.) as $n\to\infty$. We show that the minimum quantifier depth $\mathbf{k}_{\alpha}$ of a FO sentence $\varphi=\varphi(\mathbf{G}_n^1,\mathbf{G}_n^2)$ distinguishing between $\mathbf{G}_n^1,\mathbf{G}_n^2$ depends on how closely $\alpha$ can be approximated by rationals:
\begin{itemize}
\item for all non-Liouville $\alpha\in(0,1)$, $\mathbf{k}_{\alpha}=\Omega(\ln\ln\ln n)$ w.h.p.;
\item there are irrational $\alpha\in(0,1)$ with $\mathbf{k}_{\alpha}$ that grow arbitrarily slowly w.h.p.;
\item $\mathbf{k}_{\alpha}=O_p(\frac{\ln n}{\ln\ln n})$ for all $\alpha\in(0,1)$. 
\end{itemize}
The main ingredients in our proofs are a novel randomized algorithm that generates asymmetric strictly balanced graphs as well as a new method to study symmetry groups of randomly perturbed graphs.

\end{abstract}

\section{Introduction}
    In this paper we focus on the problem of distinguishing between two independent random (simple) graphs by first order (FO) sentences. The vocabulary of FO language of graphs contains the adjacency and the equality relations.  

    Given two graphs $G_1,G_2$ and a FO sentence $\varphi$, we say that $\varphi$ \emph{distinguishes between} $G_1$ and $G_2$ if $G_{1}\models\varphi$ and $G_{2}\not\models\varphi$, or vice versa. We say that  $G_1,G_2$ are \emph{$k$-distinguishable} if there exists a FO sentence $\varphi$ with quantifier depth at most $k$ that distinguishes between them. Recall that the \emph{quantifier depth} of a sentence $\varphi$ is, roughly speaking, the maximum number of nested quantifiers in $\varphi$ (for a formal definition, see~\cite{libkin}). 
    We call the minimum $k$ such that $G_1,G_2$ are $k$-distinguishable {\it the FO distinguishability}, and denote it by $k(G_1,G_2)$. The FO distinguishability was first studied by Spencer and St.\ John for random sequences~\cite{SpencerJohn}. Tight worst-case upper bounds on $k(G_1,G_2)$ for deterministic graphs $G_1,G_2$ were obtained in~\cite{PVV} by Pikhurko, Veith, and Verbitsky. In this paper we study the FO distinguishability of random graphs. Before stating our results, we discuss some motivation of this problem as well as give an overview of its history. 

Distinguishability of random graphs is closely related to zero-one laws,  an important phenomenon in finite model theory. We say that a sequence of random graphs $\left\{ \mathbf{G}_n\right\}_{n=1}^\infty$ satisfies the {\it FO zero-one law} if, for every FO sentence $\varphi$, the limiting probability $\lim_{n\rightarrow\infty}\mathbb{P}(\mathbf{G}_n\models \varphi)$ is either $0$ or $1$. In other words, either $\varphi$ holds with high probability (w.h.p., in what follows) or $\neg\varphi$  holds w.h.p. The celebrated theorem of Glebskii, Kogan, Liogon'kii, Talanov~\cite{glebskii} and Fagin~\cite{fagin} says that the FO zero-one law holds when $\mathbf{G}_{n}\sim G(n,1/2)$, that is, when  $\mathbf{G}_{n}$ is  distributed uniformly on the set of all labelled graphs on $[n]:=\{1,\ldots,n\}$. 

The Bridge Theorem (see ~\cite[Theorem 2.5.1 and Theorem 2.3.1]{spencer}) implies the following relation between zero-one laws and distinguishability: a sequence of random graphs $\left\{ \mathbf{G}_n\right\}_{n=1}^\infty$ satisfies the FO zero-one law if and only if the respective FO distinguishability is unbounded, i.e. for every $k\in\mathbb{N}$,
\begin{equation*}
\lim_{n,m\rightarrow\infty}\mathbb{P}(k(\mathbf{G}_n,\mathbf{G}_m) \geq  k)=1.
\end{equation*}
 In~\cite{KPSV} Kim, Pikhurko, Spencer, and Verbitsky proved a more precise estimation: w.h.p.\ $k\left(\mathbf{G}_{n}^1,\mathbf{G}_{n}^2\right)=\log_2 n+O(\log\log n)$. 
    In~\cite{BZ} Benjamini and the second author of the paper obtained a tight bound: w.h.p. $k\left(\mathbf{G}_{n}^1,\mathbf{G}_{n}^2\right)=\log_2 n-2\log_2\ln n+O(1)$ and is concentrated in 3 consecutive points. They also observed that the same result holds true for the number of variables in infinitary logic $\mathcal{L}^{\omega}_{\infty,\omega}$. 

FO distniguishability is also related to the graph isomorphism problem. Note that two finite graphs are isomorphic if and only if there is no FO sentence that distinguishes between them. Since the truth value of a FO sentence of quantifier depth $k$ on an $n$-vertex graph can be tested in time $O(n^k)$~\cite{libkin}, upper bounds on the FO distinguishability of graphs imply upper bounds on the time complexity of deciding whether or not the graphs are isomorphic. From this perspective, the problem of distinguishing between two independent copies  $\mathbf{G}_{n}^1,\mathbf{G}_{n}^2\sim G(n,1/2)$ arises naturally from the average case analysis of the graph isomorphism problem. However, this approach gives a bound that is far from being optimal: it only shows that  $\mathbf{G}_{n}^1,\mathbf{G}_{n}^2$ can be distinguished in quasi-polynomial time w.h.p. 
 A much more efficient algorithm can be obtained by considering FO logic with counting. Indeed, the results of Babai and Ku\v{c}era~\cite{BK} and Babai, Erd\H{o}s, and Selkow~\cite{BES} (combined with Immerman and Lander's~\cite{IL} logical characterization of color refinement) imply that w.h.p. $\mathbf{G}_n\sim G(n,1/2)$ can be {\it defined} by a FO sentence with counting quantifiers of quantifier depth 4. This implies a polynomial (actually, even linear) time algorithm that distinguishes between $\mathbf{G}_{n}$ and {\it any} non-isomorphic graph w.h.p.

In sparse random graphs, however, the situation is strikingly different. The FO distinguishability can be significantly smaller, and therefore provide much more efficient algorithms for random graphs distinguishing. To state the relevant results, recall the general definition of the binomial random graph: 
$\mathbf{G}_{n}\sim G(n,p)$ is a random graph on $[n]$, in which edges between pairs of vertices appear independently with probability $p$. In~\cite{spencershelah}, Shelah and Spencer studied the validity of the FO zero-one law for $\mathbf{G}_n\sim G(n,n^{-\alpha})$ where $\alpha$ is a positive constant.
    \begin{theorem}[S. Shelah, J. Spencer~\cite{spencershelah}]
    Let $\alpha\in(0,1)$. Then $\mathbf{G}_n\sim G(n,n^{-\alpha})$ obeys a FO zero-one law if and only if $\alpha$ is irrational.
    \label{thm:SS_0-1}
    \end{theorem}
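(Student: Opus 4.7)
The plan is to establish both directions of the equivalence. For $(\Leftarrow)$, I will exhibit a first-order sentence whose probability of being true fails to converge to $\{0,1\}$ when $\alpha$ is rational. For $(\Rightarrow)$, I will use the Ehrenfeucht-Fra\"iss\'e (EF) game together with the Bridge Theorem cited in the introduction.

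For the rational direction, write $\alpha = a/b$ in lowest terms. A standard construction yields a \emph{strictly balanced} graph $H$ with $v(H)/e(H) = a/b$, meaning every proper subgraph $H' \subsetneq H$ with at least one edge satisfies $e(H')/v(H') < e(H)/v(H)$. The expected number of labelled copies of $H$ in $\mathbf{G}_n \sim G(n, n^{-\alpha})$ is $\Theta(n^{v(H) - \alpha e(H)}) = \Theta(1)$, and strict balance allows the factorial-moment method to show that this count converges in distribution to $\mathrm{Poisson}(\mu)$ for some fixed $\mu > 0$. Consequently, the FO sentence $\varphi_H$ asserting \emph{``there exists a copy of $H$''} satisfies $\lim_n \mathbb{P}(\mathbf{G}_n \models \varphi_H) = 1 - e^{-\mu} \in (0,1)$, violating the FO zero-one law.

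For the irrational direction, by the Bridge Theorem it suffices to prove that for every fixed $k \in \mathbb{N}$, Duplicator wins the $k$-round EF game between two independent samples $\mathbf{G}_n^1, \mathbf{G}_n^2$ w.h.p. The central combinatorial notion is the $\alpha$-\emph{value} of a rooted extension $(R, H)$ with $R \subseteq V(H)$,
\begin{equation*}
f_{\alpha}(R, H) \;:=\; \bigl(v(H) - v(R)\bigr) \;-\; \alpha \bigl(e(H) - e(R)\bigr).
\end{equation*}
Call $(R, H)$ \emph{safe} if $f_{\alpha}(R, S) > 0$ for every $R \subsetneq S \subseteq V(H)$, and \emph{dense} if $f_{\alpha}(R, H) < 0$. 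Because $\alpha$ is irrational, $f_{\alpha}$ never vanishes on finite nontrivial extensions, so every rooted extension is either safe or contains a dense subextension; this dichotomy is the only place in the argument where irrationality enters qualitatively, but it is decisive.

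The proof then reduces to two probabilistic estimates, uniform over root tuples and extension types of bounded size: (a) by a first-moment argument, w.h.p.\ no root tuple admits any dense extension, since the expected count is $\Theta(n^{f_{\alpha}(R,H)}) \to 0$; (b) by a second-moment argument, w.h.p.\ every root tuple admits many realizations of every safe extension type, the count being concentrated around its expectation $\Theta(n^{f_{\alpha}(R,H)}) \to \infty$. Given (a) and (b), Duplicator's strategy is the classical one: respond to each Spoiler move by matching the isomorphism type of the extension it induces over the previously chosen vertices, using (b) to produce such a response and (a) to guarantee that no dense subconfiguration ever forces Duplicator into a corner. I expect the principal technical obstacle to be the second-moment estimate in (b): controlling $\mathrm{Cov}$ between overlapping extension realizations requires a careful decomposition along the intersection subgraph, and the irrationality of $\alpha$ must be invoked precisely to rule out boundary cases in which the relevant exponent is exactly zero --- the very same cases that produce the Poisson counts breaking the zero-one law in the rational direction.
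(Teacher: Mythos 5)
The paper does not prove this theorem --- it cites it from Shelah--Spencer --- but Section~\ref{sec:Lower-bound} and Appendix~\ref{AppendixG} reproduce (and generalize to growing quantifier depth) exactly the machinery your second direction needs, so the comparison is meaningful. Your rational direction is correct and standard: a strictly balanced $H$ of density $1/\alpha$ exists (Ruci\'nski--Vince, which the paper's own random balanced graph construction is modelled on), the copy count is asymptotically Poisson, and ``$\exists$ a copy of $H$'' has limiting probability $1-e^{-\mu}\in(0,1)$.

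The irrational direction has a genuine gap in your step (a). It is false that w.h.p.\ \emph{no} root tuple admits a dense extension. The first-moment bound $\Theta(n^{f_\alpha(R,H)})\to 0$ applies to a \emph{fixed} root tuple; summed over all $n^{r}$ tuples it gives nothing, and indeed dense (in fact rigid) extensions do occur over some tuples --- e.g.\ for $\alpha$ close to $1$, a short cycle through a vertex $x$ is a dense extension of $\{x\}$, and such cycles exist w.h.p.\ in $G(n,n^{-\alpha})$. This is precisely why the actual argument needs the Finite Closure Theorem (the paper's Bounded Closure Lemma): the set of vertices reachable from a tuple by rigid extensions with at most $t$ non-roots is w.h.p.\ \emph{bounded}, not empty. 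Consequently your description of Duplicator's strategy --- ``match the isomorphism type of the extension the new vertex induces over the previously chosen vertices'' --- does not work: Spoiler can play a vertex lying inside the closure of the already-marked vertices, where no fresh safe extension is available, and Duplicator must already have committed to an isomorphism between entire closures. The correct strategy is the look-ahead strategy (Lemma~\ref{lem:look-ahead}): Duplicator maintains, with $i$ rounds remaining, an isomorphism of the $t_i$-closures of the marked tuples, for a decreasing sequence $t_0<t_1<\dots<t_{k-1}$ chosen via the closure bound; new vertices are supplied by \emph{generic} safe extensions (Definition~\ref{def:generic}), i.e.\ safe extensions that create no unexpected edges and attach to no rigid configuration, so that the closure isomorphism can be extended. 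Your step (b) is the right second ingredient (it is Lemma~\ref{lem:generic_extensions} in the fixed-size setting), but without the closure/genericity layer the two estimates do not assemble into a winning strategy.
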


It follows that, for rational $\alpha\in(0,1)$ and two independent copies  $\mathbf{G}_n^1,\mathbf{G}_n^2\sim G(n,n^{-\alpha})$ we have $k(\mathbf{G}_n^1,\mathbf{G}_n^2)=O(1)$ with probability bounded away from 0. Furthermore, it can be even derived that, for every $\varepsilon>0$, $\mathbb{P}(k(\mathbf{G}_n^1,\mathbf{G}_n^2)<C)>1-\varepsilon$ for a certain constant $C=C(\alpha,\varepsilon)$. 

For irrational $\alpha$, the asymptotic behavior of $k(\mathbf{G}_n^1,\mathbf{G}_n^2)$ is more complicated. Authors of~\cite{BZ} suspected that similar methods to those that were applied for dense random graphs might imply $k(\mathbf{G}_n^1,\mathbf{G}_n^2)=O(\ln n)$ w.h.p. for irrational $\alpha$ as well and it was, however, conjectured that actually $k(\mathbf{G}_n^1,\mathbf{G}_n^2)=o(\ln n)$ w.h.p. Our first result shows that this conjecture is true.

    For an irrational $\alpha\in(0,1)$, we let $\mathbf{k}_{\alpha}=k(\mathbf{G}_n^1,\mathbf{G}_n^2)$ for two independent $\mathbf{G}_n^1,\mathbf{G}_n^2\sim G(n,n^{-\alpha})$. For a sequence of random variables $\xi_n$ and a sequence of non-zero constants $a_n$, we write $\xi_n=O_p(a_n)$ when $\xi_n/a_n$ is {\it stochastically bounded}, that is, for every $\varepsilon>0$, there exists $C>0$ such that $\mathbb{P}(|\xi_n/a_n|<C)>1-\varepsilon$ for all $n$.

    \begin{theorem}
    \label{thm:general_upper_bound}
    Let $\alpha\in(0,1)$ be irrational. Then $\mathbf{k}_{\alpha}=O_{p}\left(\frac{\ln n}{\ln\ln n}\right)$.
    \end{theorem}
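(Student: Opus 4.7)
The target is a first-order sentence $\varphi_n$ of quantifier depth $k_n = O(\log n / \log\log n)$ that takes different truth values on $\mathbf{G}_n^1$ and $\mathbf{G}_n^2$ with probability tending to $1$. The strategy is to use Diophantine approximation of $\alpha$ to select a small strictly balanced ``witness'' graph $H$ whose expected copy-count in $G(n,n^{-\alpha})$ lies in a useful intermediate regime, and then to encode a distinguishing property based (essentially) on the number of copies of $H$ in the sample.

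\textbf{Diophantine approximation and witness.} Since $\alpha$ is irrational, Dirichlet's theorem with parameter $Q = \lceil \log\log n \rceil$ (combined with the three-distance theorem to fix the sign) produces positive integers $v, e \le Q$ with $v - \alpha e \in (0, 1/Q)$. Hence $v, e = O(\log\log n)$ and $\mu := n^{v - \alpha e}$ lies in $(1,\exp(\log n / \log\log n))$. Using the paper's randomized construction of asymmetric strictly balanced graphs, build such an $H$ with $v(H) = v$, $e(H) = e$. Let $X(\mathbf{G}_n)$ be the number of labelled copies of $H$. Then $\mathbb{E}[X] \asymp \mu$ and, by Janson's inequality together with strict balance, $\mathrm{Var}(X) = O(\mu)$. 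A standard central limit theorem for subgraph counts then yields $(X_1 - X_2)/\sqrt{\mu} \Rightarrow \mathcal{N}(0,2)$, so with high probability the two samples' counts differ by $\Theta(\sqrt{\mu})$, and in particular we can fix a (data-dependent) threshold $t$ separating them.

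\textbf{FO encoding via iterated extensions.} To convert this count discrepancy into a sentence of depth $k_n$, we use an iterated-extension certificate. Decompose $H$ as a chain $R_0 \subset R_1 \subset \cdots \subset R_\ell = H$ where each $R_{i-1} \to R_i$ is a safe extension whose per-copy extension count in $\mathbf{G}_n$ is well concentrated. A ``certificate'' for a count threshold $t \le \mu$ is then a branching tree of extensions, and by balancing the branching widths $B_i$ against the incremental vertex additions $\Delta v_i$ we can encode such a certificate with total depth $O(v + \log_{B}\mu)$ for an appropriate effective branching $B$. Tuning the chain so that $\log_B \mu \cdot (\text{per-level cost}) = O(\log n / \log\log n)$ gives the required sentence; amplification across several thresholds (taken as a disjunction, hence not increasing depth) boosts the distinguishing probability to $1 - o(1)$.

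\textbf{Main obstacle.} The principal challenge is the FO encoding. Without counting quantifiers or a built-in order, the naive expression of ``$X \ge t$'' costs depth $\Omega(vt)$, far above the target. Reducing to logarithmic-type depth requires genuinely exploiting random-graph structure: one must show that in $G(n,n^{-\alpha})$, with high probability, every safe level of the extension chain admits the prescribed branching uniformly over all instances encountered, and simultaneously that such certificates actually separate the two independent samples at a common threshold (rather than both exceeding or both falling short due to correlation across layers). Controlling concentration and independence along the whole chain $\{R_i\}$ is the technical heart; it is also what motivates the construction of asymmetric strictly balanced $H$ with prescribed $(v,e)$ advertised in the abstract, since asymmetry and strict balance ensure clean second-moment bounds at every level.
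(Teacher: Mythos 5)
Your proposal has a genuine gap at its central step: the first-order encoding of the counting threshold. You reduce the problem to expressing ``$X\ge t$'' for a threshold $t$ that can be as large as $\mu=n^{v-\alpha e}\approx \exp(\ln n/\ln\ln n)$, and you acknowledge that naively this costs depth $\Omega(vt)$. The proposed fix --- an ``iterated-extension certificate'' with branching widths $B_i$ giving depth $O(v+\log_B\mu)$ --- is not a construction but a hope: FO without counting quantifiers and without order is extremely weak at cardinality comparisons (already distinguishing a $t$-set from a $(t+1)$-set in the pure equality vocabulary needs depth about $t$), and nothing in your sketch explains how the random-graph structure circumvents this. Whether branching existential certificates can express approximate copy-counts at low depth is precisely the open direction the authors flag in their concluding remarks as a \emph{possible} route to improving the bound to $O(\ln\ln n)$; it is not something one can take for granted, and the CLT separation $|X_1-X_2|=\Theta(\sqrt\mu)$ is useless without it.

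The paper's proof avoids counting entirely. It takes the witness graphs \emph{large} rather than small: $v=\lfloor\omega\ln n/\ln\ln n\rfloor$ vertices and $e=\lceil v/\alpha\rceil+1$ edges, so that $v-\alpha e\le-\alpha$ and each \emph{individual} isomorphism type $H$ has expected induced-copy count $O(n^{-\alpha})=o(1)$ in $G(n,n^{-\alpha})$. The point of the randomized construction of asymmetric strictly balanced graphs (Theorem~\ref{thm:family_of_graphs}) is then not to produce one witness but to produce $\exp(-O(v))v^{e}$ of them, i.e.\ so many non-isomorphic candidates that the aggregate count $\mathbf{X}_{\mathcal F}$ over the whole family has expectation tending to infinity; a second-moment argument (using asymmetry, strict balancedness, enhanced balancedness, and Proposition~\ref{prop:E_0_existence_bound}) shows $\mathbf{X}_{\mathcal F}>0$ w.h.p., so some type $H$ appears in $\mathbf{G}_n^1$, while by independence and Markov that same $H$ w.h.p.\ does not appear in $\mathbf{G}_n^2$. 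The distinguishing sentence is then plain induced-subgraph containment --- purely existential, depth $v=O(\ln n/\ln\ln n)$ --- and no threshold comparison is ever needed. If you want to salvage your route, the burden is to exhibit a concrete FO sentence of the claimed depth that is sensitive to the \emph{number} of copies of a fixed small $H$; as written, that step is missing.
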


    On the other hand, our second result shows that it is impossible to get a uniform lower bound approaching infinity: $\mathbf{k}_{\alpha}$ may grow arbitrarily slow. 

    \begin{theorem}
    \label{thm:arbitrary_low_upper_bounds}For every function $f(n)\stackrel{n\rightarrow\infty}{\rightarrow}\infty$
    there exists an irrational $\alpha\in(0,1)$ and an increasing sequence of positive integers $\{n_{t}\}_{t=1}^{\infty}$ such that w.h.p. (as $t\to\infty$) $\mathbf{k}_{\alpha}(n_t)\leq f(n_{t})$.
    \end{theorem}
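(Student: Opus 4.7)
The strategy is to pick $\alpha$ to be Liouville-like, so exceptionally well approximated by rationals $\beta_t=v_t/e_t$ that, at a suitable $n_t$, the number of copies of an asymmetric strictly balanced graph $H_t$ of density $\beta_t$ in $G(n_t,n_t^{-\alpha})$ is approximately Poisson with a slowly growing mean $\lambda_t\to\infty$. Two independent samples will then w.h.p.\ have differing $H_t$-counts, detectable by the FO sentence ``there exist $k$ vertex-disjoint copies of $H_t$'', whose quantifier depth is at most $kv_t$.

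\textbf{Inductive construction of $\alpha$, $\{n_t\}$, $\{H_t\}$.} At step $t$, suppose $\alpha$ has been confined to an open interval $I_{t-1}\subset(0,1)$. Choose a rational $\beta_t>\sup I_{t-1}$ for which an asymmetric strictly balanced graph $H_t$ on $v_t$ vertices with $e_t$ edges and $v_t/e_t=\beta_t$ exists (guaranteed by the randomized construction highlighted as a main ingredient of the paper). Pick a slowly growing $L_t\to\infty$ and an integer $n_t\gg n_{t-1}$ large enough that $L_t v_t\leq f(n_t)/10$. The constraint
\begin{equation*}
\lambda_t := n_t^{v_t-\alpha e_t}\in[L_t,2L_t]
\end{equation*}
is equivalent to $\alpha$ lying in a sub-interval $I_t\subset I_{t-1}$ of length $\tfrac{\log 2}{e_t\log n_t}$, which is achievable once $n_t$ is large. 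By diagonalizing against an enumeration of $\mathbb{Q}$ when selecting $I_t$, we ensure that the unique $\alpha\in\bigcap_t I_t$ is irrational.

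\textbf{Poisson approximation and distinguishing sentence.} Strict balance of $H_t$ together with $\beta_t>\alpha$ yields $v_J/e_J>\beta_t>\alpha$ for every proper subgraph $J\subsetneq H_t$ with $e_J\geq 1$, so the expected number of pairs of $H_t$-copies sharing any common subgraph is $o(\lambda_t)$. A standard Stein--Chen computation then gives $d_{TV}\bigl(X_t^i,\mathrm{Poisson}(\lambda_t)\bigr)=o(1)$ for the count $X_t^i$ of $H_t$-copies in $\mathbf{G}_{n_t}^i$ (asymmetry of $H_t$ removes the $|\mathrm{Aut}(H_t)|$ factor in the mean). Since $\lambda_t\to\infty$, w.h.p.\ $X_t^1\neq X_t^2$ and $\max(X_t^1,X_t^2)\leq 3\lambda_t$. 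Taking $k=\min(X_t^1,X_t^2)+1$ produces a distinguishing FO sentence whose quantifier depth is at most $(2\lambda_t+1)v_t\leq f(n_t)$ w.h.p., and hence $\mathbf{k}_\alpha(n_t)\leq f(n_t)$ w.h.p.

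\textbf{Main obstacle.} The delicate part is the joint inductive construction: each $\beta_t$ must be realizable as the density of an asymmetric strictly balanced graph, the nested intervals $I_t$ must shrink without forcing $\alpha$ to be rational, and the parameters must be coupled so that simultaneously $\lambda_t\to\infty$, $\lambda_t v_t=o(f(n_t))$, and $|\beta_t-\alpha|\sim\log\lambda_t/(e_t\log n_t)$---effectively forcing $\alpha$ to be Liouville-like with respect to the achievable strictly-balanced densities. A secondary technical step is to verify that the Poisson approximation carries through uniformly as $v_t\to\infty$ with $t$; this is handled by standard second-moment/Chen--Stein estimates relying only on strict balance of $H_t$ and $\beta_t>\alpha$.
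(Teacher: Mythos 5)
Your proposal is sound in outline but takes a genuinely different route from the paper. The paper constructs $\alpha$ explicitly as a Liouville number whose convergents $p_t/q_t$ directly supply the parameters ($v_t=p_t$, $e_t=q_t$, $n_t=g(v_t)$, so that $v_t=f(n_t)$ and $n_t^{v_t-\alpha e_t}\to 1$); it then uses the \emph{entire} family $\mathcal{H}(v_t,e_t)$, i.e.\ $\ell\to\infty$ isomorphism classes each with asymptotically $\mathrm{Pois}(1)$ counts, and distinguishes by which classes occur (method of moments for joint convergence, then $\mathbb{P}(\vec{\mathbf{I}}=\vec{\mathbf{J}})\le\lambda^k\to 0$). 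The distinguishing sentence is a single subgraph-containment statement of depth $v_t=f(n_t)$. You instead use \emph{one} class with mean $\lambda_t\to\infty$ engineered via a nested-interval construction of $\alpha$, and distinguish by the counts themselves, paying depth $\Theta(\lambda_t v_t)$ in the sentence ``there are at least $k$ copies.'' Both mechanisms convert ``distinguishes with constant probability'' into ``w.h.p.'': the paper by taking many independent-ish Poisson(1) coordinates, you by letting the single Poisson mean diverge so two independent samples collide with probability $O(\lambda_t^{-1/2})$. Your second-moment/Chen--Stein step does go through: since $\alpha<\beta_t$ and $H_t$ is strictly balanced, every proper subgraph satisfies $v_0-\alpha e_0\ge v_0-\beta_t e_0\ge 1/e_t$, which beats the $\exp(O(v_t\ln v_t))$ count of overlap patterns once $v_t^2\ln v_t=o(\ln n_t)$ --- automatic because $n_t$ is chosen after $v_t$. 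This is essentially the paper's Claim~\ref{claim:union_density} in disguise.

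Two slips you should repair, neither fatal. First, ``choose a rational $\beta_t>\sup I_{t-1}$'' cannot be right: the admissible interval for $\alpha$ determined by $\lambda_t\in[L_t,2L_t]$ sits at distance $\Theta(\log L_t/(e_t\log n_t))$ \emph{below} $\beta_t$, and shrinks toward $\{\beta_t\}$ as $n_t\to\infty$, so if $\beta_t$ lies outside $I_{t-1}$ the new interval escapes $I_{t-1}$. You should take $\beta_t=v_t/e_t$ in the interior of $I_{t-1}$ (which also gives the needed $\alpha<\beta_t$); Theorem~\ref{thm:family_of_graphs} guarantees such realizable densities are dense once $v_t$ is large. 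Second, the sentence should assert ``at least $k$ pairwise \emph{distinct} copies of $H_t$,'' not vertex-disjoint ones: disjointness of all copies is not guaranteed and is not needed, and since $H_t$ is asymmetric, $k$ pairwise distinct $v_t$-tuples each spanning a copy certify $X\ge k$ with quantifier depth $kv_t$.
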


    We observe that FO distinguishability $\mathbf{k}_{\alpha}$ depends on how well an irrational $\alpha$ can be approximated by rational numbers: the better $\alpha$ is approximable, the closer the behavior of $\mathbf{k}_{\alpha}$ to the case of rational $\alpha$ is.
    This allows us to prove our third result: in contrast to Theorem~\ref{thm:arbitrary_low_upper_bounds}, for almost all $\alpha\in(0,1)$, $\mathbf{k}_{\alpha}$ is at least $\frac{1-o(1)}{\ln 2}\ln\ln\ln n$ w.h.p. Let us recall that the {\it Liouville--Roth irrationality measure} of an irrational number $\alpha$ is the infimum (which can be infinite) of the set of all $d>0$ such that at most finitely many integers $p$ and $q$ satisfy $|\alpha-p/q|\leq q^{-d}$. Due to Roth's theorem~\cite{Roth}, $d=2$ if $\alpha$ is algebraic (while for transcendental numbers $d\geq 2$ or $d=\infty$ --- in the latter case $\alpha$ is called {\it Liouville number}).

    \begin{theorem}
    \label{thm:almost_uniform_lower_bound} Let $d_0>2$ and $\alpha\in(0,1)$ be an irrational number with the Liouville--Roth irrationality measure strictly smaller than $d_0$. Then w.h.p. $\mathbf{k}_{\alpha}\geq\frac{1}{\ln d_0}\ln\ln\ln n$. In particular, for almost all irrational $\alpha\in(0,1)$ (in the Lebesgue measure), w.h.p. $\mathbf{k}_{\alpha}\geq\frac{1-o(1)}{\ln 2}\ln\ln\ln n$.
    \end{theorem}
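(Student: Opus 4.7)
The plan is to work via the Ehrenfeucht--Fraïssé (EF) game characterization of $k$-distinguishability: $k(G_1,G_2)\geq k$ if and only if Duplicator wins the $k$-round EF game on $(G_1,G_2)$. Thus it suffices to exhibit, w.h.p., a Duplicator winning strategy for the $k$-round EF game on $(\mathbf{G}_n^1,\mathbf{G}_n^2)$ with $k=\left\lfloor\frac{1}{\ln d_0}\ln\ln\ln n\right\rfloor$. The construction follows the Shelah--Spencer template, but executed quantitatively rather than for fixed~$k$.

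The first step is to translate the Liouville--Roth hypothesis into an arithmetic gap estimate: if $\alpha$ has irrationality measure strictly less than $d_0$, then there is a constant $c=c(\alpha,d_0)>0$ with $|v-e\alpha|\geq c\cdot e^{1-d_0}$ for all positive integers $v,e$, and in particular $|v-e\alpha|\geq c\cdot M^{1-d_0}$ whenever $\max(v,e)\leq M$. Setting $M=M(n):=(\ln n)^{1/(d_0-1)-o(1)}$, for every graph $H$ of size at most $M$ the expected count $\Theta(n^{v_H-e_H\alpha})$ of copies in $\mathbf{G}_n^i$ tends either to $0$ (if $v_H-e_H\alpha<0$) or to $\infty$ faster than any power of $\ln n$. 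Standard first- and second-moment estimates, applied in the rooted-extension form of~\cite{spencershelah}, then yield w.h.p.\ that every $\alpha$-safe rooted extension of size $\leq M$ is realized with the expected count (up to $1+o(1)$) in both $\mathbf{G}_n^1$ and $\mathbf{G}_n^2$, while no $\alpha$-dangerous rooted extension of size $\leq M$ is realized in either graph.

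With this uniform extension control, Duplicator's strategy is that of Shelah--Spencer: at each round she closes the chosen vertices under rigid sub-extensions. Let $T(k)$ denote the maximum closure size needed after $k$ rounds. The gap bound forces the recursion $T(k)\leq T(k-1)^{d_0+o(1)}$, because a rigid extension attaching to a closure of size $T(k-1)$ can have at most $T(k-1)^{d_0}$ vertices before violating the inequality $|v-e\alpha|\geq cM^{1-d_0}$. Iterating yields $T(k)\leq\exp(O(d_0^k))$, so substituting $k=\left\lfloor\frac{1}{\ln d_0}\ln\ln\ln n\right\rfloor$ gives $T(k)\leq\exp(O(\ln\ln n))\leq M(n)$ for $n$ large enough. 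Hence Duplicator's strategy is realizable simultaneously on $\mathbf{G}_n^1$ and $\mathbf{G}_n^2$, giving $\mathbf{k}_\alpha\geq k$ w.h.p.

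The main technical obstacle is establishing the sharp recursion $T(k)\leq T(k-1)^{d_0+o(1)}$: a direct run of the classical Shelah--Spencer argument composes exponentiations and produces only a tower of height $k$, giving an iterated-logarithm lower bound. To obtain the doubly-exponential-in-$k$ bound on $T(k)$ one must exploit the explicit gap rate $M^{1-d_0}$ to see that the closure growth at each round is a polynomial of fixed degree $d_0$ rather than exponentiation. Finally, the Lebesgue-almost-all part follows from Khintchine's theorem, which says that almost every $\alpha\in(0,1)$ has irrationality measure equal to $2$; applying the first part of the theorem with $d_0=2+\varepsilon$ and sending $\varepsilon\to 0^+$ yields the factor $\frac{1-o(1)}{\ln 2}$.
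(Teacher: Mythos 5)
Your proposal follows essentially the same route as the paper: the EF-game characterization, a quantitative version of the Shelah--Spencer look-ahead strategy, a gap estimate $|v-\alpha e|\gtrsim e^{1-d}$ derived from the irrationality-measure hypothesis, a closure-growth recursion of degree $d$ per round, and Khinchin's theorem for the almost-everywhere statement. You also correctly identify the key point that the closure must grow only polynomially (degree $d$) per round rather than exponentially, which is exactly what the paper's Bounded Closure Lemma delivers.

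Two quantitative points in your sketch need repair, though neither changes the architecture. First, ``standard first- and second-moment estimates'' do not suffice for the uniform extension counts: you need the conclusion simultaneously for all $n^{O(M)}$ root tuples and all $\exp(O(M^2))$ rooted graphs, so Chebyshev's polynomial failure probability is far too weak; the paper invokes the Kim--Vu polynomial concentration inequality to get failure probability $\exp(-\exp(\Omega(\ln n/M^{d})))$, which survives the union bound. Second, you run the gap bound and the recursion with exponent $d_0$ itself, but the recursion actually has the form $t_{i+1}\leq \tilde Ck\,t_i^{d}$, so unwinding gives $t_{k-1}\leq(\tilde Ck)^{O(d^{k})}=\exp\left(O(d^{k}\ln k)\right)$; with $d=d_0$ and $k=\frac{1}{\ln d_0}\ln\ln\ln n$ this is $\exp\left(\Theta(\ln\ln n\cdot\ln\ln\ln\ln n)\right)$, which \emph{exceeds} $M=(\ln n)^{O(1)}$. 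The fix --- which the hypothesis ``measure strictly smaller than $d_0$'' is there for --- is to fix $d\in(2,d_0)$ with $|\alpha-p/q|\geq q^{-d}$ for all but finitely many $p/q$ and run everything with $d$; then $d^{k}=(\ln\ln n)^{\ln d/\ln d_0}$ is a strictly smaller power and $t_{k-1}=\exp(o(\ln\ln n))=o(M)$, exactly as in the paper.
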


    The second assertion follows from the fact that the set of $\alpha$ with irrationality measure strictly bigger than 2 has Lebesgue measure 0 due to Khinchin's theorem~\cite{rational_approximations}. In particular, the lower bound $\frac{1-o(1)}{\ln 2}\ln\ln\ln n$ holds true
    for all algebraic irrational $\alpha\in(0,1)$ due to Roth's theorem. 
    In contrast, in our proof of Theorem~\ref{thm:arbitrary_low_upper_bounds}, for a given growing $f(n)$, the corresponding irrational $\alpha$ is defined as a suitable Liouville number.

To prove Theorem \ref{thm:general_upper_bound} we distinguish ${\bf G}_{n}^1$ from ${\bf G}_{n}^2$ via an existential sentence. The strategy is to find a large family of graphs  $\mathcal{F}$ with the property that w.h.p.\ there exists $H\in\mathcal{F}$ which appears as an induced subgraph in ${\bf G}_{n}^1$ but not in ${\bf G}_{n}^2$. Commonly, in order to verify that a representative of a given isomorphism class $H$ appears as an induced subgraph of a random graph, the second moment of the random variable $\mathbf{X}_H$ that counts the number of such appearances is computed. When $H$ has subgraphs that are at least as dense as $H$ itself (i.e., $H$ is not {\it strictly balanced}), the variance of this random variable becomes large and does not allow to apply a concentration inequality. Moreover, symmetries of $H$ negatively affect the expectation of $\mathbb{E}\mathbf{X}_H$. Therefore, we require $\mathcal{F}$ to be a large family of asymmetric strictly balanced graphs with a given density. 

Our last contribution is the existence of a large family of asymmetric strictly balanced graphs as well as a randomized algorithm for sampling such graphs, that we have used to prove Theorem \ref{thm:general_upper_bound}. We prove that for every density $\rho\geq 1+2/n$ (this restriction is tight), bounded from above by a constant, there exists a family of strictly balanced asymmetric graphs on $[n]$ of size $n^{(\rho-o(1))n}$ (see Theorem~\ref{thm:family_of_graphs} in Section~\ref{sec:random_balanced_graph}).
The novel algorithm of sampling {\it random balanced graphs} that are w.h.p. asymmetric that we present in Section~\ref{subsec:Definition} is inspired by the proof of Ruci\'{n}ski and Vince \cite{RV-balanced} of the existence of a strictly balanced graph for every fixed density $\rho\geq 1$. In order to prove that the random balanced graph is asymmetric w.h.p., we develop a new approach for proving asymmetry of randomly perturbed graphs, based on the concept of \emph{alternating cycles}, i.e. cycles whose edges alternate between the edges of a fixed deterministic graph and the random edges. We believe that these results are interesting in their own right --- strictly balanced graphs and their generalizations (as well as their automorphisms) naturally arise in different contexts in random graphs theory, see, e.g.,~\cite{HS,Rucinski,RV-balanced}~and~\cite[Chapter 3]{rucinski_luczak_janson}. 

The paper is organized as follows. In Section~\ref{sec:random_balanced_graph} we define the random balanced graph, study its properties, and then show that it can be used to prove the existence of a large family of strictly balanced asymmetric graphs. The crucial property of the random balanced graph --- asymmetry --- is proved in Section~\ref{sec:Asymmetry_sparse}. The existence of a large family of strictly balanced asymmetric graphs is used in Section~\ref{sec:general_upper_bound} to prove Theorem~\ref{thm:general_upper_bound}. Sections~\ref{sec:low_upper_bound_well_approximable}~and~\ref{sec:Lower-bound} are devoted to proofs of Theorems~\ref{thm:arbitrary_low_upper_bounds}~and~\ref{thm:almost_uniform_lower_bound} respectively. Note that the proof of Theorem~\ref{thm:almost_uniform_lower_bound} requires the random balanced graph and its asymmetry as well. 

\subsubsection*{Notations and conventions}

Throughout the text, we often maintain a convention of denoting random
variables in boldface letters.

For a graph $G$, we denote by $v(G)$ and $e(G)$ the number of vertices and
edges in $G$ respectively. In addition, $\delta(G)$ is the
minimum degree of $G$ and $\Delta(G)$ is the maximum degree of $G$.
For a set of vertices $U$, let $G[U]$ denote the subgraph of $G$ induced
by $U$. 

We will sometimes need to work with multigraphs. We often use asterisks to distinguish multigraphs from simple graphs, unless the distinction is clear from the  context. Recall that a \emph{multigraph} is a pair $G^{*}=(V,E^{*})$ where
$V$ is a set of vertices and $E^{*}$ is a multiset of edges (loops
are allowed). $e(G^{*})$ is the number of edges counted with multiplicities.
Given two multisets $E_{1}^{*},E_{2}^{*}$, we define their sum $E_{1}^{*}+E_{2}^{*}$
as the multiset $E^{*}$ with the property $m_{E^{*}}(e)=m_{E_{1}^{*}}(e)+m_{E_{2}^{*}}(e)$
for every $e$, where $m_{E^{*}}(e)$ denotes the multiplicity of
$e$ in $E^{*}$. For two multigraphs on the same vertex set $G_{1}^{*}=(V,E_{1}^{*})$,
$G_{2}^{*}=(V,E_{2}^{*})$ we define their sum as $G_{1}^{*}+G_{2}^{*}=(V,E_{1}^{*}+E_{2}^{*})$.
Note that $e(G_{1}^{*}+G_{2}^{*})=e(G_{1}^{*})+e(G_{2}^{*})$.  When we talk about (multi)graphs on $n$ vertices, we always refer to labelled graphs with vertex set $[n]$. 

We use the asymptotic notations $f(n)\sim g(n)$ for $f(n)=(1+o(1))g(n)$
and $f(n)\lesssim g(n)$ for $f(n)\leq(1+o(1))g(n)$.

Note that the symbol $\sim$ is also used to denote sampling from a distribution, e.g.\ $\mathbf{G}\sim G(n,p)$. The two meanings of the symbol are distinguished by the context.

\section{The random balanced graph}
\label{sec:random_balanced_graph}

In this section we introduce a random graph model, denoted ${\bf H}(n,m)$,
which we call the \emph{random balanced graph} (with $n$ vertices
and $m$ edges). We then use it to generate a large family of asymmetric,
strictly balanced graphs on $[n]$ with a given density. This will be the key
ingredient in our proof of Theorem \ref{thm:general_upper_bound}
and Theorem \ref{thm:arbitrary_low_upper_bounds}

Recall that a graph $H$ is \emph{balanced} if $d(H)\geq d(H_{0})$
for every subgraph $H_{0}\subseteq H$, where $d(H)=\frac{e(H)}{v(H)}$
is the {\it density} of $H$. Furthermore, $H$ is \emph{strictly balanced}
if $d(H)>d(H_{0})$ for every proper subgraph $H_{0}\subset H$. Also
recall that a graph $H$ is \emph{asymmetric} if its group of automorphisms
is trivial. The same definitions apply for multigraphs.

In Sections \ref{sec:general_upper_bound} and \ref{sec:low_upper_bound_well_approximable}
we will be interested in the case where the density $\frac{m}{n}$
approaches $\frac{1}{\alpha}$ as $n\rightarrow\infty$. Here we consider
a more general setting of a bounded density. The main goal of this
section is to prove the following theorem. 
\begin{theorem}
\label{thm:family_of_graphs}
Fix a constant $c>1$ and let $m=m(n)$ satisfy $n+2\leq m\leq cn$. Then, for every even positive integer $n$, there exists a family $\mathcal{H}(n,m)$
of strictly balanced and asymmetric graphs on $[n]$ and $m$ edges such that $\left|\mathcal{H}(n,m)\right|=\exp\left(-O(n)\right)n^{m}$.
\end{theorem}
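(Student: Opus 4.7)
The plan is to realise $\mathcal{H}(n,m)$ as the set of labelled graphs on $[n]$ with $m$ edges that (i) are strictly balanced, (ii) are asymmetric, and (iii) lie in the support of the random balanced graph $\mathbf{H}(n,m)$ introduced in this section. The standard max-probability lower bound
\begin{equation*}
|\mathcal{H}(n,m)| \;\geq\; \frac{\mathbb{P}\bigl[\mathbf{H}(n,m) \text{ is strictly balanced and asymmetric}\bigr]}{\displaystyle\max_{H}\mathbb{P}\bigl[\mathbf{H}(n,m)=H\bigr]}
\end{equation*}
reduces the theorem to two independent estimates: a lower bound on the numerator and an upper bound on the denominator.

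For the numerator, I would show that $\mathbf{H}(n,m)$ is strictly balanced and asymmetric with probability bounded away from zero (ideally $1-o(1)$). Strict balance should come from a first moment computation over the possible ``violating'' vertex subsets $U \subsetneq [n]$ with $e(\mathbf{H}(n,m)[U])/|U| \geq m/n$: since the Ruciński--Vince--style construction is built on a Hamilton-cycle skeleton of density $1 < m/n$ plus roughly $m-n$ random ``ear'' or ``chord'' additions, the probability that many chord endpoints fall inside a fixed small set $U$ decays fast enough to beat the $\binom{n}{|U|}$ union bound. Asymmetry is the genuine obstacle and should follow from the alternating cycles method promised in the introduction: any non-trivial automorphism $\sigma$ of $\mathbf{H}(n,m)$ would force a compatibility between the deterministic skeleton edges and the random added edges that is witnessed by a cycle whose edges alternate between the two types, and a carefully organised union bound over all such potential alternating cycles — classifying them by length and by the cycle structure of $\sigma$ — should show that w.h.p.\ none exist. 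The constraint $m \geq n+2$ is used here to guarantee enough genuinely random edges to break the residual reflective symmetry of a cycle-plus-single-chord.

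For the denominator, I would prove the entropy-type bound $\max_{H}\mathbb{P}[\mathbf{H}(n,m)=H] \leq e^{O(n)}/n^{m}$. If $\mathbf{H}(n,m)$ is sampled by a sequence of roughly $m$ (conditionally) independent choices each from a menu of size $\Theta(n)$, then the probability of any specific outcome $H$ equals the number of sampling histories producing $H$ divided by the product of menu sizes. The former is at most $|\mathrm{Aut}(H)|$ times an ordering/labelling factor, both of which are $e^{O(n)}$ when $m=O(n)$, while the denominator is $n^{m-O(n)}$. Combining the two estimates yields $|\mathcal{H}(n,m)| \geq e^{-O(n)}\, n^{m}$, as required.

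The main technical hurdle is the asymmetry step. The strict balance and entropy estimates are expected to be reasonably routine once the sampling procedure is pinned down, but bounding the expected number of alternating cycles requires a delicate case analysis: short alternating cycles are dangerous (their expected count is not automatically small) and the hypothesis $m \geq n+2$, together with the freedom in choosing the chord placements, must be leveraged precisely to rule them out. The parity hypothesis (even $n$) is presumably needed in the construction of the Hamilton-cycle skeleton or in an ear-decomposition parity argument, and should enter naturally in the proof.
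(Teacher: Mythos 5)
Your overall architecture coincides with the paper's: $\mathcal{H}(n,m)$ is taken to be the asymmetric graphs in the support of $\mathbf{H}(n,m)$, and your max-probability inequality is exactly the paper's count in probabilistic clothing --- the paper bounds from below the number of triples $(H_{r},H_{h},H_{b})$ whose sum is a simple asymmetric graph by $\exp(-O(n))n^{m}$ (using the positive simplicity probability and w.h.p.\ asymmetry), and bounds the multiplicity of each graph by $B(n)=\exp(O(n))$, which is your denominator estimate. Two of your sub-steps, however, deviate from what actually makes the construction work. First, strict balancedness is not a probabilistic statement in the paper: \emph{every} graph in the support of $\mathbf{H}^{*}(n,m)$ is strictly balanced, deterministically. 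The reason is that the sources of the $r$ balancing edges are placed at an \emph{almost equidistributed} set $R$ on the Hamilton cycle, so every run of $k$ consecutive vertices meets $R$ in fewer than $kr/n+1$ points; the proof then reduces an arbitrary $V_{0}$ to its Hamilton-cycle segments after contracting balancing edges to loops. A first-moment bound over violating subsets, as you propose, is not what the construction is designed for: if the chord sources were random their clustering would be the enemy, and even with the actual construction your union bound would have to rediscover the reduction to segments. The equidistribution of $R$ is a design choice made precisely so that no union bound is needed here.

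Second, and more seriously, your asymmetry plan has a gap. The alternating-cycle method works in the paper only in the sparse regime $m<\tfrac{3}{2}n$, where $\mathbf{H}$ is a Hamilton cycle plus $r<n/2$ random balancing edges and the key ratio $2r/n<1$ makes long alternating cycles exponentially unlikely. For $m\geq\tfrac{3}{2}n$ the regular component is a nonempty random regular graph; the analogue of $2r/n$ for "skeleton versus random chords" is at least $1$, the expected number of long alternating cycles no longer vanishes, and the paper instead invokes a Bollob\'{a}s-type orbit-counting argument that uses only the degree bounds $3\leq\delta\leq\Delta=O(1)$ together with the edge-containment estimate $\mathbb{P}(E_{0}\subseteq E(\mathbf{H}))\leq(c_{0}/n)^{|E_{0}|}$. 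Running alternating cycles over the entire range $n+2\leq m\leq cn$, as you propose, therefore fails in the dense regime. With these two repairs --- deterministic strict balance via the equidistribution of $R$, and a case split ($m\gtrless\tfrac{3}{2}n$) for asymmetry --- your outline matches the paper's proof; your remaining ingredients (the role of $m\geq n+2$, the $e^{O(n)}$ bound on sampling histories per graph) are correct.
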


\begin{remark}
It makes sense to consider only connected graphs since a disconnected graph is not strictly balanced, and so $m$ should  be at least $n-1$. On the other hand, any tree is strictly balanced. Up to isomorphism, there is only one connected graph with $m=n$ edges which is strictly balanced --- a cycle $C_n$, though it is not asymmetric. Finally, every strictly balanced graph with $m=n+1$ edges is either a cycle with a path between its two vertices, or two cycles joined by a path. Clearly, both graphs are not asymmetric. Thus, the condition $m\geq n+2$ is necessary unless the graph is a tree.
\end{remark}

\begin{remark}
Although we believe that the assertion of Theorem~\ref{thm:family_of_graphs} is true for odd $n$ as well, we state it in this weaker form since an adaptation of our approach to odd $n$ requires a more complicated case-analysis to prove strict balancedness. As for our purposes it is enough to consider even $n$, we omit these technical complications.
\end{remark}

In Subsection~\ref{subsec:Definition} we define ${\bf H}(n,m)$, in Subsection~\ref{subsec:Properties}
we study its properties, 
  and in Subsection~\ref{subsec:The-family} we complete the proof of Theorem~\ref{thm:family_of_graphs}.


\subsection{Definition}
\label{subsec:Definition}

From now on fix $m=m(n)$ and assume that $n$ is even and $n+2\leq m\leq cn$, where $c>1$
is a constant. We begin with a definition of the random balanced \emph{multigraph}
${\bf H}^{*}(n,m)$. It is inspired by Ruci\'{n}ski and Vince's construction
of a strictly balanced graph \cite{RV-balanced}. Write $m=q\cdot\frac{n}{2}+r$ where $q\geq2$ and $0\leq r<\frac{n}{2}$
are integers. ${\bf H}^{*}(n,m)$ is defined as the sum of three different
(multi)graphs, which we call components: the \emph{regular component}
${\bf H}_{r}$ with $(q-2)\frac{n}{2}$ edges, the \emph{Hamiltonian
component} ${\bf H}_{h}$ with $n$ edges and the \emph{balancing
component} ${\bf H}_{b}$ with $r$ edges. 

Let $C$ be a cycle of order $n$; write it as $(v_{1},v_{2},\dots,v_{n},v_{1})$.
For a given $0\leq r\leq n$, define the following set of $r$ \emph{almost
equidistributed} vertices around $C$:
\begin{equation}
R=R(C,r)=\left\{ v_{i}\biggm|1\leq i\leq n,\;\left\lfloor (i-1)\cdot\frac{r}{n}\right\rfloor <\left\lfloor i\cdot\frac{r}{n}\right\rfloor \right\} .\label{eq:R}
\end{equation}
Note that indeed $\left|R\right|=r$. In addition, for any sequence
of $k$ consecutive vertices in $C$, less than $k\cdot\frac{r}{n}+1$
of them belong to $R$. Indeed, a sequence of vertices $v_{i+1},\dots,v_{i+k}$
(with indices taken modulo $n$) has $\left\lfloor (i+k)\cdot\frac{r}{n}\right\rfloor -\left\lfloor i\cdot\frac{r}{n}\right\rfloor $
vertices from $R$, and this is less than $k\cdot\frac{r}{n}+1$.
This property shows that the vertices of $R$ are distributed
almost equally, which is useful for proving balancedness. 
\begin{definition}
\label{def:H*}The random balanced multigraph with $n$ vertices and
$m$ edges ${\bf H}^{*}={\bf H}^{*}(n,m)$ is defined as follows. 
\begin{itemize}
\item Let ${\bf H}_{r}=\left([n],{\bf E}_{r}\right)$ be a uniformly distributed
$(q-2)$-regular (simple) graph on $[n]$.\footnote{We refer to \cite[Chapter 9]{rucinski_luczak_janson} as an introduction
to random regular graphs.}
\item Let ${\bf H}_{h}=\left([n],{\bf E}_{h}\right)$ be a uniformly random Hamilton
cycle, independent of ${\bf H}_{r}$.
\item Let ${\bf H}_{b}=\left([n],{\bf E}_{b}\right)$ be defined as follows.
For every $x\in{\bf R}:=R\left({\bf H}_{h},r\right)$ choose a random vertex ${\bf y}[x]\in[n]$,
such that these $r$ vertices are chosen independently of ${\bf H}_{r}$
and ${\bf H}_{h}$, and are uniformly distributed over all possible
sequences of $r$ \emph{distinct} vertices. Now define ${\bf E}_{b}=\left\{ \left\{ x,{\bf y}[x]\right\} :x\in{\bf R}\right\} $.
We regard ${\bf E}_{b}$ as a multiset: edges may have multiplicity
$2$ and loops are allowed. Thus ${\bf H}_{b}$ may be a multigraph.
\end{itemize}
Finally, define ${\bf H}^{*}={\bf H}_{r}+{\bf H}_{h}+{\bf H}_{b}$. Edges of ${\bf H}_{r}$ are called \emph{regular}, edges of ${\bf H}_{h}$ are called \emph{Hamiltonian}, and edges of ${\bf H}_{b}$ are called \emph{balancing}.
\end{definition}

\begin{remark}
The requirement that the vertices $\{{\bf y}[x]:x\in\mathbf{R}\}$
are all distinct is introduced in order to restrict the maximum degree
of ${\bf H}^{*}$. Indeed, note that ${\bf H}_{r}\cup{\bf H}_{h}$
is a $q$-regular multigraph, and that degrees in ${\bf H}_{b}$ are
$0$, $1$ or $2$. Therefore 
\begin{equation}
q\leq\delta\left({\bf H}^{*}\right)\leq\Delta\left({\bf H}^{*}\right)\leq q+2.\label{eq:min_max_degrees}
\end{equation}
\end{remark}

\begin{remark}
\label{rem:N*}Let $N^{*}=N^{*}(n,m)$ denote the number of possible
values $\left(H_{r},H_{h},H_{b}\right)$ of $\left({\bf H}_{r},{\bf H}_{h},{\bf H}_{b}\right)$.
By definition, $N^{*}=N_{r}N_{h}N_{b}$, where $N_{r}=\exp\left(-O(n)\right)n^{\left(\frac{q}{2}-1\right)n}$ is the number of $(q-2)$-regular graphs on $n$ vertices (see \cite{rucinski_luczak_janson}, Corollary 9.8); $N_{h}=(n-1)!/2$ is the number of cycles on $[n]$; $N_{b}=(n)_{r}$ is the number of sequences of $r$ distinct vertices. Note that $\left({\bf H}_{r},{\bf H}_{h},{\bf H}_{b}\right)$ is uniformly distributed over all $N^{*}$ possible values. However, ${\bf H}^{*}$ itself is \emph{not} uniformly distributed. This is because different multigraphs $H^{*}$ may be represented as a sum $H_{r}+H_{h}+H_{b}$ in a different number of ways.
\end{remark}

\begin{definition}
\label{def:H_and_N}The \emph{random balanced graph} ${\bf H}={\bf H}(n,m)$
is defined as a random (simple) graph whose distribution is the distribution
of ${\bf H}^{*}(n,m)$ conditioned on being simple.
\end{definition}

\begin{remark}
Let $N=N(n,m)$ be the number of possible values $\left(H_{r},H_{h},H_{b}\right)$
of $\left({\bf H}_{r},{\bf H}_{h},{\bf H}_{b}\right)$ such that $H_{r}+H_{h}+H_{b}$
is a simple graph. Conditioning by the event that ${\bf H}^{*}$ is
simple, $\left({\bf H}_{r},{\bf H}_{h},{\bf H}_{b}\right)$ is uniformly
distributed over all $N$ possible values.
\end{remark}

\subsection{Properties\label{subsec:Properties}}

In this subsection we list several important properties of ${\bf H}$.
The main three properties are strict balancedness (Proposition \ref{thm:balanceness}),
a bound on the probability that ${\bf H}$ contains a given set of
edges (Proposition \ref{prop:E_0_existence_bound}), and asymmetry
(Theorem \ref{thm:asymmetry}). 
\begin{prop}
\label{thm:balanceness}
Suppose $H$ is a graph such that $\mathbb{P}\left({\bf H}=H\right)>0$.
Then $H$ is strictly balanced.
\end{prop}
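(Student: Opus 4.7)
The plan is to bound, for an arbitrary proper subgraph $H_0 \subset H$ on a vertex set $U$ of size $k$, the number of edges of $H_0$ by splitting them among the three components $H_r$, $H_h$, $H_b$ of the decomposition $H = H_r + H_h + H_b$ witnessing $\mathbb{P}(\mathbf{H}=H)>0$. Since $H$ is simple, the three edge sets $E_r, E_h, E_b$ are pairwise disjoint, so $e(H_0) = e_r + e_h + e_b$ where $e_\ast$ denotes the number of edges of $H_0$ coming from the corresponding component. Throughout, the density of $H$ is $d(H) = m/n = q/2 + r/n$.

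First I would handle the easy case $k = n$: if $H_0$ is a proper subgraph then $e(H_0) < m$, so $d(H_0) < d(H)$ immediately. Assume therefore $k < n$. Now I would estimate each component in turn. For the regular component, $(q-2)$-regularity gives $e_r \leq (q-2)k/2$. For the Hamiltonian component, since $k < n$ the set $U$ decomposes into $s \geq 1$ maximal arcs on the cycle $H_h$; if these arcs have lengths $k_1,\dots,k_s$ (so $\sum_i k_i = k$), then $e_h = \sum_i (k_i - 1) = k - s$. For the balancing component, each balancing edge has an endpoint in $R = R(H_h, r)$, so $e_b \leq |R \cap U|$. Applying the almost-equidistribution property from~\eqref{eq:R} to each arc gives $|R \cap \text{arc}_i| < k_i r/n + 1$, and summing over arcs yields $|R \cap U| < kr/n + s$.

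Adding the three bounds, the $-s$ from the cycle deficit cancels the $+s$ surplus from the balancing part:
\[
e(H_0) \;<\; \tfrac{(q-2)k}{2} + (k-s) + \tfrac{kr}{n} + s \;=\; \tfrac{kq}{2} + \tfrac{kr}{n} \;=\; k \cdot \tfrac{m}{n},
\]
so $d(H_0) < d(H)$, establishing strict balancedness. The only subtle ingredient is this cancellation between the number of arcs $s$ appearing in the cycle bound and in the bound on $|R \cap U|$; this is exactly the feature that the almost-equidistribution construction of $R$ is designed to deliver, and I expect it to be the main (and really only) point requiring care in the write-up. Everything else is elementary counting.
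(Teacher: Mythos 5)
Your proof is correct and is essentially the paper's argument: the paper organizes it as a chain of reductions (drop the regular component, replace balancing edges by loops at their $R$-endpoint, reduce to segments of the Hamilton cycle), but the substance — the $(q-2)k/2$ bound from regularity, the $k-s$ count of cycle edges over $s$ maximal arcs, the bound $e_b\leq|R\cap U|<kr/n+s$ from almost-equidistribution, and the cancellation of the $\pm s$ terms — is identical. Your streamlined single computation (plus the separate $k=n$ case) is a perfectly valid write-up of the same proof.
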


Although the proof of Proposition~\ref{thm:balanceness} resembles the proof of Ruci\'{n}ski and Vince that the graphs that they construct in~\cite{RV-balanced} are strictly balanced, below we present the full proof for completeness. 

\begin{proof}
We actually prove a stronger statement: if $\mathbb{P}\left({\bf H}^{*}=H^*\right)>0$ then $H^*$ is strictly balanced. 

Consider a possible value of $\left(H_{r},H_{h},H_{b}\right)$ of
$\left({\bf H}_{r},{\bf H}_{h},{\bf H}_{b}\right)$ and let $H^*=H_{r}+H_{h}+H_{b}$.
We shall prove that $H^*$ is strictly balanced. That is, we show that
every proper sub-multigraph $H_{0}\subset H^*$ satisfies
\[
\frac{e(H_{0})}{v(H_{0})}<\frac{m}{n}=\frac{q}{2}+\frac{r}{n}.
\]
We now follow a sequence of reductions. 

First, we claim that it suffices to prove strict balancedness of $H'=H_{h}+H_{b}$.
Indeed, suppose that $H'$ is strictly balanced. Consider a set $V_{0}\subset[n]$
and denote $v_{0}=\left|V_{0}\right|$, $H_{0}=H[V_{0}]$ and $H_{0}'=H'[V_{0}]$.
Since $H'$ is strictly balanced, 
\[
\frac{e\left(H_{0}'\right)}{v_{0}}<1+\frac{r}{n}.
\]
Now, $H_{0}$ is obtained from $H_{0}'$ by adding some edges of the
$(q-2)$-regular graph $H_{r}$, which increases the degree of each
vertex by at most $q-2$. Therefore at most $\left(\frac{q}{2}-1\right)v_{0}$
edges are added. Overall
\[
\frac{e(H_{0})}{v_{0}}\leq\frac{\left(\frac{q}{2}-1\right)v_{0}+e\left(H_{0}'\right)}{v_{0}}<\frac{q}{2}+\frac{r}{n}
\]
which proves strict balancedness of $H^*$.

Second, we claim that it suffices to prove strict balancedness of $H''$,
which is obtained from $H'$ by replacing its balancing edges with
loops: each edge $\{x,y[x]\}$ is replaced with a loop at $x$. Indeed,
suppose that $H''$ is strictly balanced. Consider a set $V_{0}\subset[n]$
and denote $v_{0}=\left|V_{0}\right|$, $H_{0}'=H'[V_{0}]$ and $H_{0}''=H''[V_{0}]$.
It is easy to see that $e\left(H_{0}'\right)\leq e\left(H_{0}''\right)$.
From strict balancedness of $H''$, 
\[
\frac{e\left(H_{0}'\right)}{v_{0}}\leq\frac{e\left(H_{0}''\right)}{v_{0}}<1+\frac{r}{n}
\]
which proves strict balancedness of $H'$.

Third, we claim that to prove strict balancedness of $H''$, it suffices
to check only $V_{0}$ which are \emph{segments} of the Hamiltonian
component $H_{h}$; that is, sequences of consecutive vertices on
the cycle. Indeed, assume that we have verified strict balancedness
for segments. Consider a set $V_{0}\subset[n]$ and denote $v_{0}=\left|V_{0}\right|$
and $e_{0}=e\left(H_{0}''\right)$. Notice that $V_{0}$ can be written
as the union of mutually disjoint segments $\tau_{1}\sqcup\tau_{2}\sqcup\dots\sqcup\tau_{t}$
with no Hamiltonian edge between them. Let $v_{i},e_{i}$ denote the
number of vertices and edges in $H''[\tau_{i}]$. From the assumption,
$e_{i}<\left(1+\frac{r}{n}\right)v_{i}$ for every $i$. Then
\[
e_{0}=\sum_{i=1}^{t}e_{i}<\left(1+\frac{r}{n}\right)\sum_{i=1}^{t}v_{i}=\left(1+\frac{r}{n}\right)v_{0}
\]
which proves strict balancedness. 

Finally, it remains to check strict balancedness of $H''$ for segments.
Take any segment $\tau$ with $v_{0}<n$ vertices. The subgraph $H''[\tau]$
contains $v_{0}-1$ Hamiltonian edges and less than $v_{0}\cdot\frac{r}{n}+1$
loops (since the loops are almost equidistributed). Therefore
\[
e_{0}<v_{0}-1+v_{0}\cdot\frac{r}{n}+1=\left(1+\frac{r}{n}\right)v_{0}.
\]
That finishes the proof.
\end{proof}

\begin{prop}
\label{prop:simplicity_probability}There exists a positive constant
$p_{0}$ such that $\mathbb{P}({\bf H}^{*}\;{\rm is}\;{\rm simple})\geq p_{0}$.
\end{prop}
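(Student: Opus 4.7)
My plan is to identify the possible sources of non-simplicity in $\mathbf{H}^*$, bound the expected number of violations by a constant, and then apply a Poisson-limit argument to turn this into a uniform positive lower bound on the probability of having no violations. Since $\mathbf{H}_r$ and $\mathbf{H}_h$ are individually simple, $\mathbf{H}^* = \mathbf{H}_r + \mathbf{H}_h + \mathbf{H}_b$ can fail to be simple only in one of five ways: (i) $\mathbf{H}_b$ has a loop, i.e.\ $\mathbf{y}[x] = x$ for some $x \in \mathbf{R}$; (ii) two balancing edges coincide, which (since the $\mathbf{y}[\cdot]$ are distinct) means $\mathbf{y}[x_1] = x_2$ and $\mathbf{y}[x_2] = x_1$ for some $x_1 \neq x_2 \in \mathbf{R}$; (iii) a balancing edge coincides with a Hamiltonian edge; (iv) a balancing edge coincides with a regular edge; (v) a regular edge coincides with a Hamiltonian edge.

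Let $\mathbf{X}$ denote the total number of such violations. First I would bound $\mathbb{E}[\mathbf{X}] = O(1)$. Conditional on $(\mathbf{H}_r, \mathbf{H}_h)$, the map $\mathbf{y}\colon \mathbf{R} \to [n]$ is a uniform random injection, so the contributions of (i)--(iv) reduce to counting forbidden targets per $x \in \mathbf{R}$ (namely $x$ itself, its two Hamiltonian neighbors, and its $q-2$ regular neighbors), and each contribution is $O(rq/n) = O(1)$ since $r < n/2$ and $q \leq 2c$. For (v), independence of $\mathbf{H}_r$ and $\mathbf{H}_h$ together with vertex-transitivity of the uniform $(q-2)$-regular model yield $\mathbb{P}(\{u,v\} \in E(\mathbf{H}_r)) = (q-2)/(n-1)$ for every fixed pair $\{u,v\}$, so summing over the $n$ Hamiltonian edges produces an expected overlap of at most $q-2 = O(1)$.

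Next I would convert $\mathbb{E}[\mathbf{X}] = O(1)$ into a uniform positive lower bound on $\mathbb{P}(\mathbf{X} = 0)$ via a standard factorial-moment computation: I would verify that $\mathbb{E}[(\mathbf{X})_k] \to \lambda^k$, where $\lambda = \lim_{n\to\infty} \mathbb{E}[\mathbf{X}]$, so that $\mathbf{X}$ converges in distribution to a Poisson random variable with parameter $\lambda$, and hence $\mathbb{P}(\mathbf{X} = 0) \to e^{-\lambda} > 0$. This provides the desired lower bound for all sufficiently large $n$; for the finitely many remaining values of $n$, simplicity of $\mathbf{H}^*$ has a trivially positive probability, and taking the minimum gives a uniform constant $p_0$.

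The main obstacle is the factorial-moment bookkeeping, especially for case (v), because $\mathbf{H}_r$ is distributed as a uniform random regular graph rather than a configuration-model graph, which complicates decoupling of joint moments. The standard remedy, which I would adopt, is to pass to the configuration model on $n$ vertices each of degree $q-2$ (where half-edges are paired uniformly and joint moments factor cleanly) and exploit that this model produces a simple graph with probability bounded away from $0$ for bounded $q-2$; the asymptotics of short-cycle-type statistics in the two models then differ only by this bounded multiplicative factor, as detailed in \cite[Chapter 9]{rucinski_luczak_janson}.
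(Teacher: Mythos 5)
Your decomposition of the failure events matches the paper's, and your argument would go through, but the route is genuinely different in both halves. For the balancing edges (your cases (i)--(iv)), the paper does not compute any moments at all: it conditions on $(\mathbf{H}_r,\mathbf{H}_h)$ and chooses the targets $y[x_1],\dots,y[x_r]$ sequentially, noting that at step $j$ at least $n-q-j$ targets remain admissible, which gives the explicit product lower bound $(n-q-1)_r/(n)_r \gtrsim e^{-(2c+1)}$. This is shorter than your Poisson computation and sidesteps the factorial-moment bookkeeping for indicators driven by a random injection. For the regular--Hamiltonian overlap (your case (v)), the paper invokes McKay's enumeration theorem for regular graphs avoiding a prescribed subgraph, getting $\mathbb{P}(A_1)\sim e^{-(q-2)}$ directly; your configuration-model transfer (joint Poisson limit for loops, multiple edges, and overlaps with $H_h$, then divide by $\mathbb{P}(\text{simple})$) is the standard alternative and is equally valid. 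What your approach buys is the asymptotically exact value of $\mathbb{P}(\mathbf{H}^*\text{ simple})$ rather than just a lower bound; what it costs is considerably more verification, in particular the joint factorial moments mixing the three independent components.

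One point to tighten: you write $\lambda=\lim_{n\to\infty}\mathbb{E}[\mathbf{X}]$, but this limit need not exist, since $m=m(n)$ is only constrained by $n+2\le m\le cn$, so $q=q(n)$ and $r/n$ can oscillate. The conclusion survives because $\mathbb{E}[\mathbf{X}]$ is bounded: either run the Poisson approximation with a bounded, possibly non-convergent parameter $\lambda_n=\mathbb{E}[\mathbf{X}]$ (giving $\mathbb{P}(\mathbf{X}=0)=e^{-\lambda_n}+o(1)\ge e^{-C}+o(1)$), or argue along subsequences. As written, the step "$\mathbf{X}$ converges in distribution to $\mathrm{Pois}(\lambda)$" presumes a limit that may not exist.
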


\begin{remark}
Following our notation from Remark \ref{rem:N*} and Definition \ref{def:H_and_N}, let us
note that $\mathbb{P}({\bf H}^{*}\;{\rm is}\;{\rm simple})=N/N^{*}$,
so we can rewrite the inequality as $N\geq p_{0}N^{*}$.
\end{remark}

\begin{proof}[Proof of Proposition~\ref{prop:simplicity_probability}]
Let $A$ be the event that ${\bf H}^{*}$ is simple. Then $A=A_{1}\cap A_{2}$,
where
\begin{enumerate}
\item $A_{1}$ is the event that ${\bf H}_{r}$ and ${\bf H}_{h}$ do not
share common edges.
\item $A_{2}$ is the event that ${\bf H}_{b}$ does not contain loops or
edges with multiplicity $2$ and does not share edges with ${\bf H}_{r}$
or with ${\bf H}_{h}$.
\end{enumerate}
We write $\mathbb{P}(A)=\mathbb{P}(A_{1})\mathbb{P}\left(A_{2}\bigm|A_{1}\right)$
and prove that there are positive constants $p_{1},p_{2}$ such that $\mathbb{P}\left(A_{1}\right)\geq p_{1}$, $\mathbb{P}\left(A_{2}\big|A_{1}\right)\geq p_{2}$.

$\mathbb{P}(A_{1})$ is equal to the probability that the random $(q-2)$-regular
graph ${\bf H}_{r}$ does not contain any edges of a given Hamilton cycle. When $q=2$ it is clearly $1$, so assume $q\geq 3$. We apply \cite[Theorem 1.1]{mckay_2011}, which bounds the probability that a random graph with specified degrees does not contain any edges of a given subgraph. For the special case of regular graphs it yields the following statement. 

\begin{theorem}[B. McKay~\cite{mckay_2011}]
    Let $1\leq d=d(n)\ll n$ be an integer and let $X=X(n)$ be a graph on vertex set $[n]$ and maximum degree $\Delta:=\Delta(X)=o(n)$.  For every even $n$ let $G_{d,X}(n)$ denote the number of labelled $d$-regular graphs on $[n]$ that do not contain any edge of $X$.  Then
    \begin{eqnarray*}
        G_{d,X}(n) & = & \frac{(2E)!}{E!2^{E}(d!)^{n}}\exp\left(-\frac{nd(d-1)}{4E}-\left(\frac{nd(d-1)}{4E}\right)^{2}\right)\\
         &  & \times\exp\left(-\frac{d^2 e(X)}{2E}+O\left(\frac{d^2(d+\Delta)^2}{E}\right)\right).
\end{eqnarray*}
    Here $E=\frac{nd}{2}$ is the number of edges in a $d$-regular graph on $n$ vertices. 
\end{theorem}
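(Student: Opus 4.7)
The plan is to apply Bollob\'as's configuration model together with McKay's switching method. To each vertex $v \in [n]$ associate a ``cell'' of $d$ points, giving $nd = 2E$ points in total, and let $\mathbf{M}$ be a uniformly random perfect matching on these points. Contracting each cell, $\mathbf{M}$ encodes a random $d$-regular multigraph on $[n]$, possibly with loops and parallel edges. The total number of matchings equals $(2E)!/(E!\,2^{E})$, and every simple $d$-regular graph on $[n]$ corresponds to exactly $(d!)^{n}$ matchings (one per labelling of the points within each cell). Hence
\[
G_{d,X}(n) = \frac{(2E)!}{E!\,2^{E}\,(d!)^{n}} \cdot \mathbb{P}\bigl(\mathbf{M}\text{ is simple and contains no edge of }X\bigr),
\]
and the task reduces to estimating the probability on the right-hand side.

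Next I would estimate this probability via Poisson approximation of rare events. Let $\mathbf{L}, \mathbf{D}, \mathbf{F}$ count, respectively, the loops of $\mathbf{M}$, the unordered parallel edge pairs of $\mathbf{M}$, and the matched pairs of points whose cells are joined by an edge of $X$. Direct first-moment computations give
\[
\mathbb{E}[\mathbf{L}] \sim \frac{nd(d-1)}{4E}, \qquad \mathbb{E}[\mathbf{D}] \sim \left(\frac{nd(d-1)}{4E}\right)^{2}, \qquad \mathbb{E}[\mathbf{F}] \sim \frac{d^{2}e(X)}{2E}.
\]
Under the hypotheses $d \ll n$ and $\Delta(X) = o(n)$, the triple $(\mathbf{L}, \mathbf{D}, \mathbf{F})$ is asymptotically jointly Poisson with these means, so the probability that it vanishes recovers the two exponential factors appearing in the claimed formula.

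The main technical obstacle is proving the joint Poisson approximation with a \emph{quantitative} error $O(d^{2}(d+\Delta)^{2}/E)$ that is uniform in $X$. For this I would invoke McKay's switching method: given a configuration carrying a forbidden feature (a loop, a parallel pair, or an $X$-edge), define an elementary switch that eliminates the feature by rewiring a bounded number of edges, and compare the number of configurations with $k$ forbidden features to those with $k-1$ by counting forward switches and their inverses. Iterating the ratio estimate transforms the factorial moments of $(\mathbf{L}, \mathbf{D}, \mathbf{F})$ into the predicted Poisson form with controlled error. The hard part will be managing cross-interactions: a switch erasing an $X$-edge may inadvertently create or destroy loops or parallel edges in a neighbourhood of size $O(d+\Delta)$, and the careful double-counting of these overlaps is precisely what produces the $(d+\Delta)^{2}$ factor rather than a plain $d^{2}$ in the error exponent.

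Combining the configuration count with the three survival probabilities yields the claimed asymptotic formula. The sharpest point of the argument is ensuring the error bound is uniform in $X$, rather than depending on a naive union bound over $e(X)$ potentially bad events; this uniformity is exactly what McKay's local switching argument delivers under the hypothesis $\Delta = o(n)$.
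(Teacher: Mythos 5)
This statement is not proved in the paper at all: it is quoted verbatim from McKay's paper \cite{mckay_2011} (Theorem 1.1 there, specialized to regular degree sequences), and the authors simply apply it inside the proof of Proposition \ref{prop:simplicity_probability}. So there is no in-paper argument to compare against; the relevant comparison is with McKay's original proof, and your outline does follow the same route (pairing/configuration model plus switchings), which is the standard and essentially only known way to get this kind of formula. Your reduction $G_{d,X}(n)=\frac{(2E)!}{E!2^{E}(d!)^{n}}\,\mathbb{P}(\mathbf{M}\text{ simple, no }X\text{-edge})$ and the three first-moment computations are correct; note $\frac{nd(d-1)}{4E}=\frac{d-1}{2}$, so the exponents are exactly the means of the loop, double-edge, and forbidden-edge counts.

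The gap is that the entire quantitative content of the theorem is deferred to ``invoke McKay's switching method.'' Two points make this more than a routine omission. First, the framing ``Poisson approximation of rare events'' is not the right mechanism here: the hypothesis allows $d=d(n)\to\infty$, in which regime $\mathbb{E}[\mathbf{L}]\sim(d-1)/2$ and $\mathbb{E}[\mathbf{D}]\sim(d-1)^{2}/4$ tend to infinity, the events are not rare, the simplicity probability tends to $0$, and convergence in distribution to a Poisson limit (method of moments) does not hold and in any case would only yield a $(1+o(1))$ multiplicative factor rather than the explicit error $O\left(d^{2}(d+\Delta)^{2}/E\right)$. The switching argument is not a refinement of the Poisson heuristic but a replacement for it: one bounds the ratios $N_{k}/N_{k-1}$ of the numbers of pairings with $k$ versus $k-1$ defects and telescopes, and the stated error term falls out of the two-sided counting of switchings, including the cross-interactions you correctly identify. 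Second, the uniformity in $X$ under only $\Delta(X)=o(n)$ is itself a delicate counting fact, not something that follows from asymptotic independence. So as written the proposal is a correct plan whose hard part is entirely unexecuted; for the purposes of this paper that is acceptable, since the result is used as a black box anyway, but it should be presented as a citation rather than as a proof.
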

In our case, let $d=q-2$ (which is a constant) and let $X$ be a Hamiltonian cycle, so $e(X)=n$ and $\Delta=2$. Also let $\emptyset$ denote the empty graph (on $n$ vertices), so $G_{d,\emptyset}(n)$ is the number of labelled $d$-regular graphs. Then
\begin{equation*}
    \mathbb{P}\left(A_{1}\right)=\frac{G_{d,X}(n)}{G_{d,\emptyset}(n)}=\exp\left(-\frac{nd^2}{nd}+O\left(\frac{1}{nd}\right)\right)\sim e^{-d}=\mathrm{e}^{-(q-2)}.
\end{equation*}
Hence $\mathbb{P}\left(A_{1}\right)$ is bounded from below by a positive constant. 

Now let us bound $\mathbb{P}\left(A_{2}\bigm|A_{1}\right)$. We fix values
${\bf H}_{r}=H_{r}$ and ${\bf H}_{h}=H_{h}$ such that $A_{1}$ holds,
and prove the existence of a constant $p_{2}$ (independent of $H_{r},H_{h}$)
such that
\[
\mathbb{P}\left(A_{2}\mid{\bf H}_{r}=H_{r},\;{\bf H}_{h}=H_{h}\right)\geq p_{2}.
\]

Let $R=\{x_{1},x_{2},\dots,x_{r}\}$ (the labeling may be chosen arbitrarily) be the set of $r$ equidistributed vertices around $H_{h}$. Since $A_{1}$ holds, $H_{r}+H_{h}$ is a simple $q$-regular graph.  The sequence of values ${\bf y}[x_{1}],{\bf y}[x_{2}]\dots,{\bf y}[x_{r}]$
is drawn uniformly from all the $(n)_{r}$ sequences of $r$ distinct
vertices. We provide a lower bound on the number of choices which
satisfy $A_{2}$.

Let us sequentially choose the values $y[x_{1}],y[x_{2}]\dots,y[x_{r}]$,
making sure that $A_{2}$ is satisfied at every step. When we get
to $y[x_{j}]$, there are at least $n-q-j$ possible choices which
assure that $A_{2}$ is still satisfied. We deduce
\begin{multline*}
\mathbb{P}\left(A_{2}\mid{\bf H}_{r}=H_{r},\;{\bf H}_{h}=H_{h}\right)  \geq  \frac{(n-q-1)_{r}}{(n)_{r}}
  \overset{(*)}{\geq} \frac{(n-2c-1)_{r}}{(n)_{r}}\\
  \overset{(**)}{\geq}  \frac{(n-2c-1)_{\frac{n}{2}}}{(n)_{\frac{n}{2}}}
  \overset{(***)}{\geq}  \left(\frac{\frac{n}{2}-2c-1}{\frac{n}{2}}\right)^{\frac{n}{2}} \sim  {\rm e}^{-(2c+1)}.
\end{multline*}
Here $(*)$ follows from $q\leq\frac{2m}{n}\leq2c$, $(**)$ follows
from $r<\frac{n}{2}$, and $(***)$ holds since $\frac{n-2c-i-1}{n-i}$
decreases as a function of $i$. 
\end{proof}
\begin{prop}
\label{prop:E_0_existence_bound}There exists a positive constant $c_{0}$ such that, for every set $E_{0}\subseteq\binom{[n]}{2}$, we have
$\mathbb{P}\left(E_{0}\subseteq E\left({\bf H}\right)\right)\leq\left(c_0/n\right)^{\left|E_{0}\right|}$.
\end{prop}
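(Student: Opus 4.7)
The plan is to first pass from the conditioned graph $\mathbf{H}$ back to the unconditioned multigraph $\mathbf{H}^*$. By Proposition~\ref{prop:simplicity_probability},
\[
\mathbb{P}(E_0\subseteq E(\mathbf{H}))\leq p_0^{-1}\mathbb{P}(E_0\subseteq E(\mathbf{H}^*)),
\]
and the factor $1/p_0$ can be absorbed into the final constant $c_0$ whenever $|E_0|\geq 1$ (the case $|E_0|=0$ is trivial). So it suffices to bound $\mathbb{P}(E_0\subseteq E(\mathbf{H}^*))$ by $(C/n)^{|E_0|}$ for an absolute constant $C$.

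Next I would exploit the additive decomposition $\mathbf{H}^*=\mathbf{H}_r+\mathbf{H}_h+\mathbf{H}_b$ by a union bound. An edge lies in $E(\mathbf{H}^*)$ iff it lies in at least one component, so, using the mutual independence of the three components,
\[
\mathbb{P}(E_0\subseteq E(\mathbf{H}^*))\leq\sum_{E_0=E_r\sqcup E_h\sqcup E_b}\mathbb{P}(E_r\subseteq\mathbf{E}_r)\,\mathbb{P}(E_h\subseteq\mathbf{E}_h)\,\mathbb{P}(E_b\subseteq\mathbf{E}_b),
\]
where the sum runs over the $3^{|E_0|}$ ordered partitions of $E_0$ into three (possibly empty) parts.

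The main task is now to establish, for each component separately, an estimate of the form $\mathbb{P}(E_x\subseteq\mathbf{E}_x)\leq(C_x/n)^{|E_x|}$. For $\mathbf{H}_h$, the probability vanishes unless $E_h$ is a disjoint union of paths; when it is a union of $p$ paths of total size $|E_h|=k$, contracting each path to a super-vertex shows the number of Hamilton cycles containing $E_h$ equals $2^p(n-k-1)!/2$, giving a ratio at most $(4/n)^k$. For $\mathbf{H}_r$, I would invoke McKay's asymptotic formula (already cited in the proof of Proposition~\ref{prop:simplicity_probability}) or a standard switching argument to bound the number of $(q-2)$-regular graphs containing a specified edge set, yielding $(C_r/n)^{|E_r|}$ whenever $E_r$ has maximum degree at most $q-2$ and $0$ otherwise. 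For $\mathbf{H}_b$, I would condition on $\mathbf{H}_h$ and sum over the $2^{|E_b|}$ ways to designate, for each edge of $E_b$, which endpoint is supposed to lie in $\mathbf{R}$ (with $\mathbf{y}$ mapping it to the other endpoint); using that $\mathbf{R}$ is uniformly distributed over $r$-subsets of $[n]$ (by a symmetrization over the cyclic labeling of the uniform random Hamilton cycle) and that $\mathbf{y}$ restricted to $\mathbf{R}$ is a uniform random injection into $[n]$, each term factors as $\tfrac{(r)_k}{(n)_k}\cdot\tfrac{1}{(n)_k}$ and, since $r<n/2$, summing gives at most $(4/n)^{|E_b|}$.

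Assembling the three estimates and summing over all partitions then yields $\mathbb{P}(E_0\subseteq E(\mathbf{H}^*))\leq(3C/n)^{|E_0|}$ for $C=\max(C_r,4)$, which combined with the reduction above produces the desired bound with $c_0=3C/p_0$. I expect the most delicate step to be the analysis of the balancing component: the set $\mathbf{R}=R(\mathbf{H}_h,r)$ depends on the labeling of the underlying cycle, and one must carefully verify via symmetrization that its marginal distribution over $k$-tuples is exactly that of a uniform random $r$-subset, which in turn relies on the fact that the starting vertex and direction of the cyclic labeling are uniformly random given the unlabeled cycle.
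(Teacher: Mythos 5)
Your proposal is correct and follows essentially the same route as the paper: reduce to $\mathbf{H}^*$ via Proposition~\ref{prop:simplicity_probability}, union-bound over the $3^{|E_0|}$ ordered partitions of $E_0$, count Hamilton cycles containing $E_h$, quote a known edge-containment bound for random regular graphs (the paper cites Bollob\'as rather than McKay, but either works), and bound the balancing component through the uniform random injection $\mathbf{y}$. One remark: the step you single out as most delicate is actually unnecessary --- once you condition on $\mathbf{H}_h$ the set $\mathbf{R}$ is deterministic (so your factor $(r)_k/(n)_k$ is really just an indicator, and the symmetrization claim is in tension with the conditioning), but the required bound $(C/n)^{|E_b|}$ already follows from the $2^{|E_b|}\cdot 1/(n)_{|E_b|}$ contribution of the injection alone, which is exactly what the paper does.
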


\begin{proof}
The idea is to partition $E_{0}$ into three disjoint sets
and separately bound the probability that they are subsets of ${\bf E}_{r},{\bf E}_{h},{\bf E}_{b}$. It will be simpler to work with ${\bf H}^{*}$, and
for that we will rely on Proposition \ref{prop:simplicity_probability}.

Formally, fix $E_{0}\subseteq\binom{[n]}{2}$ and let $\mathcal{P}$
be the set of triplets $(E_{r},E_{h},E_{b})$ which form a partition
of $E_{0}$. Note that $\left|\mathcal{P}\right|=3^{\left|E_{0}\right|}$.
For every $P=\left(E_{r},E_{h},E_{b}\right)\in\mathcal{P}$ let $A_{P}$
denote the event that in\emph{ ${\bf H}^{*}$} we have $E_{r}\subseteq{\bf E}_{r}$, $E_{h}\subseteq{\bf E}_{h}$, $E_{b}\subseteq{\bf E}_{b}$. Then, from Proposition \ref{prop:simplicity_probability},
\begin{eqnarray*}
\mathbb{P}\left(E_{0}\subseteq E\left({\bf H}\right)\right)  \leq  \frac{1}{p_{0}}\mathbb{P}\left(E_{0}\subseteq E\left({\bf H}^{*}\right)\right)
  \leq \frac{1}{p_{0}}\sum_{P\in\mathcal{P}}\mathbb{P}(A_{P}).
\end{eqnarray*}

We will now prove that there exists a constant $c_{1}$ such that
$\mathbb{P}\left(A_{P}\right)\leq\left(c_1/n\right)^{\left|E_{0}\right|}$
for every $P\in\mathcal{P}$. Then, taking $c_{0}=\frac{1}{p_{0}}\cdot3c_{1}$
finishes the proof. 

So, let us fix a partition $P=\left(E_{r},E_{h},E_{b}\right)$ of
$E_{0}$. Let $\ell_{0}=\left|E_{0}\right|$ and similarly define
$\ell_{r},\ell_{h},\ell_{b}$. 

\textbf{Step 1.} We apply the following proposition from~\cite{Bollobas-asymmetry}.
\begin{prop}[\cite{Bollobas-asymmetry}, Equation (2)]
    Fix a constant $d\geq 1$ and let $\mathbf{G}_{n,d}$ be a random $d$-regular graph on $[n]$ vertices. Then there exists a constant $c>0$  (depending on $d$ but not on $n$) such that, for every set $E\subseteq \binom{[n]}{2}$, $\mathbb{P}\left(E\subseteq E({\bf G}_{n,d})\right)\leq\left(\frac{c}{n}\right)^{|E|}$.
\end{prop}
In our case, it follows that there exists a positive constant $c_r$ such that  $\mathbb{P}\left(E_{r}\subseteq{\bf E}_{r}\right)\leq\left(\frac{c_{r}}{n}\right)^{\ell_{r}}$.

\textbf{Step 2. }Let us bound $\mathbb{P}\left(E_{h}\subseteq{\bf E}_{h}\right)$.
Recall that ${\bf E}_{h}$ is the set of edges of a random Hamilton
cycle. If $E_{h}$ is not contained in any Hamilton cycle, this
probability is trivially $0$, so assume that it is not the case.

If $\ell_{h}=n$, then $E_{h}$ is already the set of edges of a Hamilton
cycle, and then $\mathbb{P}\left(E_{h}\subseteq{\bf E}_{h}\right)=\frac{2}{(n-1)!}$.

If $\ell_{h}<n$, then the graph $([n],E_{h})$ has exactly $n-\ell_{h}$
connected components, all of them are either paths or isolated vertices
(which are not considered as paths for now). Let $s$ be the number
of paths. Then the number of Hamilton cycles containing $E_{h}$
equals $2^{s-1}(n-\ell_{h}-1)!$. Therefore
\begin{align*}
\mathbb{P}\left(E_{h}\subseteq{\bf E}_{h}\right)  &=  2^{s}\cdot\frac{(n-\ell_{h}-1)!\cdot n}{n!}
  \leq  \frac{2^{\ell_{h}}}{(n-1)_{\ell_{h}}}\\
  &\leq\left(\frac{2{\rm e}}{n-1}\right)^{\ell_{h}}
  =  \left(\frac{n}{n-1}\right)^{\ell_{h}}\left(\frac{2{\rm e}}{n}\right)^{\ell_{h}}
  \lesssim  {\rm e}\left(\frac{2{\rm e}}{n}\right)^{\ell_{h}}.
\end{align*}
Note that the last bound also holds when $\ell_{h}=n$. 

\textbf{Step 3. }Finally, let us bound $\mathbb{P}\left(E_{b}\subseteq{\bf E}_{b}\right)$.
We shall prove that for every possible value $H_{h}$ of \textbf{${\bf H}_{h}$,}
\[
\mathbb{P}\left(E_{b}\subseteq{\bf E}_{b}\bigm|{\bf H}_{h}=H_{h}\right)\leq\left(\frac{2{\rm e}}{n}\right)^{\ell_{b}}.
\]
That will prove $\mathbb{P}\left(E_{b}\subseteq{\bf E}_{b}\right)\leq\left(\frac{2{\rm e}}{n}\right)^{\ell_{b}}.$

Fix a value ${\bf H}_{h}=H_{h}$. It determines the set of equidistributed
vertices $R=R(H_{h},r)$. Again, we arbitrarily enumerate it: $R=\{x_{1},x_{2},\dots,x_{r}\}$.
Recall that ${\bf E}_{b}$ is defined as the (multi)set of edges of
the form $\{x_{j},{\bf y}[x_{j}]\}$, where ${\bf y}[x_{1}],\dots,{\bf y}[x_{r}]$
are drawn uniformly from all possible sequences of $r$ \emph{distinct}
vertices. The number of ways to choose the vertices $y[x]$ such that
$E_{b}\subseteq{\bf E}_{b}$ holds is at most $2^{\ell_{b}}(n-\ell_{b})_{r-\ell_{b}}$.
Indeed, there are at most $2^{\ell_{b}}$ ways to choose a direction
$x_{j}\rightarrow y[x_{j}]$ for every edge of $E_{b}$ and
at most $(n-\ell_{b})_{r-\ell_{b}}$ ways to choose the remaining
$y[x_{j}]$. Overall,
\begin{eqnarray*}
\mathbb{P}\left(E_{b}\subseteq{\bf E}_{b}\bigm|{\bf H}_{h}=H_{h}\right)  \leq 2^{\ell_{b}}\frac{(n-\ell_{b})_{r-\ell_{b}}}{(n)_{r}}
  =  2^{\ell_{b}}\frac{1}{(n)_{\ell_{b}}}{\leq}\left(\frac{2{\rm e}}{n}\right)^{\ell_{b}}.
\end{eqnarray*}
In conclusion, the three steps and the independence between the different
components of ${\bf H}^{*}$ show that indeed there exists a constant
$c_{1}$ such that $\mathbb{P}\left(A_{P}\right)\leq\left(\frac{c_{1}}{n}\right)^{\ell_{0}}$.
That finishes the proof. 
\end{proof}
Finally, we address the asymmetry of ${\bf H}$.
\begin{theorem}
\label{thm:asymmetry}${\bf H}$ is asymmetric w.h.p.
\end{theorem}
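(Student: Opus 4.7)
\emph{Proof plan.} The strategy is a union bound: we aim to show
\begin{equation*}
\sum_{\sigma\in S_n,\ \sigma\neq \mathrm{id}}\mathbb{P}\bigl(\sigma\in\mathrm{Aut}(\mathbf{H})\bigr)=o(1),
\end{equation*}
which implies via Markov's inequality that $|\mathrm{Aut}(\mathbf{H})|=1$ w.h.p. I classify permutations by the size $k=|M_\sigma|$ of the moving set $M_\sigma=\{v:\sigma(v)\neq v\}\geq 2$; the number of permutations with a given value of $k$ is at most $\binom{n}{k}k!\leq n^k$.

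The starting observation is that if $\sigma\in\mathrm{Aut}(\mathbf{H})$ then $\mathbf{E}(\mathbf{H})$ is a union of $\sigma$-orbits on $\binom{[n]}{2}$. Any edge of $\mathbf{H}$ containing a vertex of $M_\sigma$ therefore lies in a $\sigma$-orbit of pairs of size $\geq 2$, all of whose elements must simultaneously be present in $\mathbf{H}$. For any set $E_0$ of pairs that $\sigma$ being an automorphism forces into $\mathbf{E}(\mathbf{H})$, Proposition~\ref{prop:E_0_existence_bound} gives $\mathbb{P}(E_0\subseteq\mathbf{E}(\mathbf{H}))\leq (c_0/n)^{|E_0|}$. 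The task thus reduces to lower-bounding, uniformly in $\sigma$, the size of a canonical ``witness'' edge set $E_0=E_0(\sigma)$ by enough to dominate the entropy $n^k$.

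The central tool, announced earlier in the section, is the analysis of \emph{alternating cycles}. I condition on the deterministic-looking pair $(\mathbf{H}_r,\mathbf{H}_h)$ and treat the partner assignment $(\mathbf{y}[x])_{x\in\mathbf{R}}$ defining the balancing edges $\mathbf{E}_b$ as the source of randomness. If $\sigma$ is an automorphism, then applying $\sigma$ to any balancing edge produces an edge of $\mathbf{H}$ of one of three types (regular, Hamiltonian, or again balancing), and iterating this process yields a closed walk whose edges alternate between balancing and non-balancing types: an alternating cycle. Closing up such a cycle of length $\ell$ imposes $\Theta(\ell)$ independent coincidence constraints on the uniformly distributed sequence of partners, contributing a factor of $n^{-\Theta(\ell)}$ to the probability. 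A structural argument, leveraging the $(q-2)$-regularity of $\mathbf{H}_r$, the almost equidistribution of $\mathbf{R}$ around $\mathbf{H}_h$, and bounded-degree counting of orbits, is used to show that every vertex of $M_\sigma$ participates in an alternating cycle, yielding $|E_0|=\Omega(k)$ forced edges in total — sufficient, after taking Proposition~\ref{prop:E_0_existence_bound} into account, to beat $n^k$.

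The main obstacle is the low-density regime $q=2$, where $\mathbf{H}_r=\emptyset$ and $\mathbf{H}$ is a Hamilton cycle with only $r\geq 2$ random balancing edges attached to equidistributed vertices. Here the dihedral symmetries of $\mathbf{H}_h$ already supply $2n$ candidate permutations $\sigma$ that respect the skeleton, any of which could move every vertex; the alternating-cycle machinery must be strong enough to show that the random attachment $\mathbf{y}[\cdot]$ breaks each of these symmetries. The plan is to treat this regime separately: for every $\sigma$ stabilising $\mathbf{H}_h$, the compatibility condition $\sigma(\mathbf{E}_b)\subseteq \mathbf{E}(\mathbf{H})$ imposes $\Omega(r)$ explicit equations on the partner map, which a direct computation over the uniform distribution on sequences of distinct vertices shows fails w.h.p. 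Combined with the union bound over the remaining $\sigma$'s handled by the generic alternating-cycle argument, this yields the stated asymmetry.
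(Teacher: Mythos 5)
Your high-level dichotomy (a Bollob\'as-style union bound for the dense regime, a separate treatment of the sparse regime $q=2$ split into permutations that do or do not stabilise the Hamilton skeleton) matches the paper, and the dense case and the Hamilton-stabilising subcase are essentially the paper's arguments. However, the core of the sparse case --- permutations that do \emph{not} preserve the red/blue colouring --- has a genuine gap, in two places. First, your quantitative claim that closing an alternating cycle of length $\ell$ costs $n^{-\Theta(\ell)}$ is wrong: each blue edge costs $\Theta(1/n)$ but contributes a factor of $\Theta(r)$ to the count of candidate cycles, so the first moment of alternating $\ell$-cycles is $\Theta\bigl(\ell\,(2r/n)^{\ell/2}\bigr)$. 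Since $r$ can be $(\tfrac12-o(1))n$ in the sparse regime, alternating cycles genuinely exist w.h.p.\ and give no decay against the $n^{k}$ entropy of permutations; the paper must instead rule out \emph{pairs} of equipotent alternating cycles and several auxiliary configurations (extra internal edges, nearby second cycles, short red segments, long disjoint red paths with identical degree sequences).

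Second, and more fundamentally, the scheme ``$|E_0|=\Omega(k)$ forced edges beats the $n^{k}$ choices of $\sigma$'' cannot work at minimum degree $2$. A permutation that acts as a rotation along a $k$-segment of a candidate Hamilton cycle forces only about one edge per moved vertex (the orbit of a Hamiltonian edge under such a $\sigma$ is the segment itself), so Proposition~\ref{prop:E_0_existence_bound} yields $(c_0/n)^{k}$ against $\approx n^{k}$ permutations, i.e.\ a bound of $c_0^{k}$ that does not vanish. This is precisely why $C_n$ is symmetric and why the Bollob\'as computation in the appendix needs $r_2,r_3,r_4\geq 3$. The paper escapes this by changing what the union bound ranges over: it is taken over rare \emph{configurations} rather than over permutations. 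Concretely, from a non-colour-preserving automorphism one extracts an alternating cycle $C$ with $\sigma(C)$ alternating with colours switched; if both are long they must be equipotent (same degree sequence), which is shown to be exponentially unlikely via a hypergeometric concentration argument, and if $C$ is short one propagates along red edges to produce two disjoint red paths of length $\sqrt{n}$ with identical degree sequences, again ruled out by a first-moment bound. Your proposal as written does not contain these ideas, and without them the sparse case does not close.
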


\begin{proof} We consider two separate cases $m\geq\frac{3}{2}n$ and $m<\frac{3}{2}n$.

\emph{Dense case:} $m\geq\frac{3}{2}n$. In this case we follow an
argument due to Bollob\'{a}s \cite{Bollobas-asymmetry}, which proves
asymmetry of the random regular graph $G(n,r)$ with $r\geq3$ fixed.
This argument directly generalizes to the following result.
\begin{theorem}[Bollob\'{a}s \cite{Bollobas-asymmetry}]
\label{thm:asymmetry_bollobas} Fix an integer constant $\Delta\geq3$. Let $\{{\bf G}_{n}=([n],\mathbf{E}_n)\}_{n=1}^{\infty}$
be a sequence of random graphs with  $\delta({\bf G}_{n})\geq 3$ and $\Delta({\bf G}_{n})\leq\Delta$.
Assume that there exists a constant $c_{0}>0$ such that $\mathbb{P}\left(E_{0}\subseteq E\left({\bf G}_{n}\right)\right)\leq\left(\frac{c_{0}}{n}\right)^{\left|E_{0}\right|}$
for every $E_{0}\subseteq\binom{[n]}{2}$. 
Then ${\bf G}_{n}$ is asymmetric w.h.p.
\end{theorem}

For the sake of completeness, we give the full proof of Theorem~\ref{thm:asymmetry_bollobas} in Appendix~\ref{appendix:dense_asym}.

Equation (\ref{eq:min_max_degrees}) and Proposition \ref{prop:E_0_existence_bound}
show that the random balanced graph ${\bf H}$ satisfies properties
2 and 3. When $m\geq\frac{3}{2}n$ it also satisfies property 1, and
therefore it is asymmetric w.h.p. 

\emph{Sparse case:} $m<\frac{3}{2}n$. In this case $\delta(\mathbf{G}_n)=2$ and Bollob\'{a}s's argument does not apply. Instead, we develop an entirely different approach for proving asymmetry, generally applicable to randomly perturbed cycles. Indeed, in the sparse case, the regular
component vanishes and ${\bf H}={\bf H}_{h}+{\bf H}_{b}$. That is, ${\bf H}$ is a Hamilton cycle with additional $2\leq r<\frac{n}{2}$ randomly scattered edges.  Our approach is based on the observation that non-trivial automorphisms give rise to certain configurations which are very rare in the sparse case. A key concept in the proof is that of an \emph{alternating cycle}: a cycle which alternates between Hamiltonian edges and balancing edges. Since this proof may be of its own interest and since it is long enough to interrupt the flow of the paper, we present it in the separate Section~\ref{sec:Asymmetry_sparse}.
\end{proof}

\subsection{The family $\mathcal{H}(n,m)$\label{subsec:The-family}}

We are now ready to prove Theorem \ref{thm:family_of_graphs}. Given
$m=m(n)$ which satisfies $n+2\leq m\leq cn$, define $\mathcal{H}(n,m)$
as the set of all asymmetric graphs which are possible values of ${\bf H}={\bf H}(n,m)$.
That is,
\[
\mathcal{H}_{n,m}=\left\{ H\;{\rm is}\;{\rm asymmetric}\bigm|\mathbb{P}\left({\bf H}(n,m)=H\right)>0\right\} .
\]
$\mathcal{H}(n,m)$ is indeed a family of strictly balanced and asymmetric
graphs with $n$ vertices and $m$ edges (by the definition and from Proposition
\ref{thm:balanceness}). Note that it is closed under isomorphism
(since so is the family of all possible values of ${\bf H}$). 

It remains to prove that $\left|\mathcal{H}(n,m)\right|=\exp\left(-O(n)\right)n^{m}$.  We start with an estimation of $N^{*}=N^{*}(n,m)$. Recall that 
\begin{eqnarray*}
N^{*} & = & N_{r}N_{h}N_{b}=\exp\left(-O(n)\right)n^{\left(\frac{q}{2}-1\right)n}\cdot\frac{(n-1)!}{2}\cdot(n)_{r}\\
 & = & \exp\left(-O(n)\right)n^{\left(\frac{q}{2}-1\right)n}n^{n}n^{r}=\exp\left(-O(n)\right)n^{m}.
\end{eqnarray*}
From Proposition \ref{prop:simplicity_probability} we have $N=\Theta(N^{*})$.
Combining that with Theorem \ref{thm:asymmetry}, we deduce that the
number of possible $(H_{r},H_{h},H_{b})$ such that $H_{r}+H_{h}+H_{b}$
is an asymmetric (simple) graph is at most $\exp\left(-O(n)\right)n^{m}$. 
The only remaining issue is that we may overcount graphs:
different values $(H_{r},H_{h},H_{b})$ may have the same sum. To
handle this, the following simple lemma bounds the number of ways
to express a given (simple) graph as $H_{r}+H_{h}+H_{b}$. We allow
very coarse estimations since we only care that the bound is exponential. 
\begin{claim}
\label{lem:B(n)_bound}There is a function $B(n)=\exp\left(O(n)\right)$
such that, for every graph $H\in\mathcal{H}(n,m)$, the number of possible
values $(H_{r},H_{h},H_{b})$ of $({\bf H}_{r},{\bf H}_{h},{\bf H}_{b})$
such that $H=H_{r}+H_{h}+H_{b}$ is at most $B(n)$.
\end{claim}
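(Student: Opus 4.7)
The plan is to give a very coarse bound on $B(n)$ that ignores the structural constraints on $H_r,H_h,H_b$ (regularity, Hamiltonicity, balancing structure) and relies only on the fact that $H=H_r+H_h+H_b$ is a \emph{simple} graph. The main observation is that, by Definition~\ref{def:H*}, each of $\mathbf{H}_r,\mathbf{H}_h,\mathbf{H}_b$ is a (multi)graph on the common vertex set $[n]$, so a value of the triple $(H_r,H_h,H_b)$ is determined by the three edge (multi)sets $E(H_r),E(H_h),E(H_b)$.

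Next I would invoke the definition of multigraph sum: the multiplicity of any edge $e$ in $H_r+H_h+H_b$ equals the sum of its multiplicities in the three summands. Since $H\in\mathcal{H}(n,m)$ is a simple graph, each edge of $H$ has multiplicity exactly $1$, forcing it to appear with multiplicity $1$ in exactly one of $H_r,H_h,H_b$ and multiplicity $0$ in the other two; and no edge outside $E(H)$ may appear in any component. Consequently, the edge sets $E(H_r), E(H_h), E(H_b)$ are pairwise disjoint and their union is $E(H)$ — so each triple $(H_r,H_h,H_b)$ corresponds to an ordered $3$-coloring of the $m$ edges of $H$.

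The number of such ordered $3$-colorings is $3^{e(H)}=3^m$, and since $m\le cn$ by assumption, we obtain
\[
B(n)\;\le\;3^m\;\le\;3^{cn}\;=\;\exp(O(n)),
\]
as required. There is essentially no genuine obstacle: the only subtle point is to confirm that the random triple $(\mathbf{H}_r,\mathbf{H}_h,\mathbf{H}_b)$ is determined by graphs rather than by extra combinatorial data (for instance, the sequence $(\mathbf{y}[x_1],\dots,\mathbf{y}[x_r])$ that \emph{produces} $\mathbf{H}_b$), but Definition~\ref{def:H*} explicitly presents $\mathbf{H}_b$ as the (multi)graph $([n],\mathbf{E}_b)$, so the counting above is valid.
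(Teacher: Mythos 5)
Your argument is correct, and it takes a genuinely different (and shorter) route than the paper's. The paper enumerates the decomposition sequentially: it bounds the number of Hamilton cycles of $H$ by $(q+2)^{n}$ using the degree bound $\Delta(H)\leq q+2$ from (\ref{eq:min_max_degrees}), then bounds the number of choices of each $y[x]$ by $q+2$ (it must be a neighbour of $x$ in $H$), and finally notes that $H_{r}$ is whatever remains; this gives $(q+2)^{n+r}\leq(2c+2)^{3n/2}$. Your observation that, since $H$ is simple and the multigraph sum adds multiplicities, every decomposition is just an ordered $3$-colouring of $E(H)$ yields the cleaner bound $3^{m}\leq 3^{cn}$ and needs neither the degree bound nor any structural information about the components.

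One caveat on the "subtle point" you flagged. The claim is applied by dividing the count $N$ of triples by $B(n)$, and $N$ is computed (via Remark~\ref{rem:N*}) with $N_{b}=(n)_{r}$, i.e.\ the balancing component is effectively counted by the \emph{sequence} $(\mathbf{y}[x])_{x\in\mathbf{R}}$ rather than by the resulting multigraph; the paper's own proof of the claim is consistent with this, as it chooses $y[x]$ separately for each $x\in R$. Your $3^{m}$ counts triples of (multi)graphs, and the map from sequences to multigraphs need not be injective: for instance, if three vertices of $R$ span a triangle of balancing edges, two distinct orientations of the triangle give the same $E_{b}$. So your resolution ("the counting above is valid") does not quite match the way the claim is used. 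The fix is cheap: given $E_{b}$, each of its $r$ edges must be assigned to one of its at most two endpoints lying in $R$, so a fixed multigraph $H_{b}$ arises from at most $2^{r}\leq 2^{n/2}$ sequences, and $B(n)=3^{cn}\cdot 2^{n/2}=\exp(O(n))$ still suffices.
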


\begin{proof}
From (\ref{eq:min_max_degrees}) we know that $\Delta(H)\leq q+2$.
Therefore $(q+2)^{n}$ is a trivial (and very coarse) bound on the
number of Hamilton cycles in $H$. The Hamiltonian component $H_{h}$
must be one of them. 

Now fix $H_{h}$, which also fixes the set $R$. For every $x\in R$,
$y[x]$ must be its neighbor in $H$ and can therefore be chosen in
at most $q+2$ ways. So the number of choices for $H_{b}$ is at most
$(q+2)^{r}$. 

The values $H_{h},H_{b}$ uniquely determine $H_{r}$ (as the graph
with all the remaining edges). Overall, the number of choices is at
most
\[
(q+2)^{n+r}\leq\left(2c+2\right)^{\frac{3}{2}n}=:B(n)
\]
where we used the inequalities $q\leq2c$ and $r<\frac{n}{2}$. That
finishes the proof.
\end{proof}
From the claim we immediately get
\[
\left|\mathcal{H}(n,m)\right|\geq\frac{1}{B}\cdot\exp\left(-O(n)\right)n^{m}=\exp\left(-O(n)\right)n^{m}
\]
and so Theorem \ref{thm:family_of_graphs} follows.

\section{The general upper bound}
\label{sec:general_upper_bound}

In this section we prove Theorem \ref{thm:general_upper_bound}. Let
us fix an irrational $\alpha\in (0,1)$ and a function $\omega=\omega(n)\rightarrow\infty$,
and let ${\bf G}_{n}^{1},{\bf G}_{n}^{2}\sim G(n,n^{-\alpha})$ be independent.
We prove that there exists a purely existential FO-sentence $\varphi=\varphi({\bf G}_{n}^{1},{\bf G}_{n}^{2})$,
of quantifier depth at most $\omega\frac{\ln n}{\ln\ln n}$,
that distinguishes between ${\bf G}_{n}^{1},{\bf G}_{n}^{2}$ w.h.p.
To do that, it is sufficient to find a family of graphs $\mathcal{F}$ on $[v]$ where
 $v=\left\lfloor \omega\frac{\ln n}{\ln\ln n}\right\rfloor $ is even and satisfies the following two properties:
\begin{description}
\item [{(A)}] W.h.p., ${\bf G}_{n}^{1}$ contains an induced subgraph which
is isomorphic to some $H\in\mathcal{F}$. 
\item [{(B)}] For any specific graph $H\in\mathcal{F}$, w.h.p.\ ${\bf G}_{n}^{2}$
does not contain an induced subgraph isomorphic to $H$.
\end{description}
Indeed, suppose we have found $\mathcal{F}$ with these properties. Let
$H=H({\bf G}_{n}^{1})$ be the minimum graph from $\mathcal{F}$ (with respect
to some arbitrary ordering) such that ${\bf G}_{n}^{1}$ contains an induced
subgraph isomorphic to $H$. From (A), such a graph $H$ exists w.h.p.
Now let $\varphi=\varphi({\bf G}_{n}^{1})$ be a FO-sentence expressing
the property of containing an induced subgraph isomorphic to $H$,
with quantifier depth at most $v$. From (B),
it distinguishes between ${\bf G}_{n}^{1},{\bf G}_{n}^{2}$ w.h.p.

Our approach is to define $\mathcal{F}$ as a set of typical graphs from the family $\mathcal{H}(v,e)$
from Theorem \ref{thm:family_of_graphs}, with $e$ around $\frac{1}{\alpha}v$.
 More precisely, we set $e=\left\lceil \frac{1}{\alpha}v\right\rceil +1$: the additional $1$ makes the subgraphs dense enough to assure Property (B). 
We get Property (A) in the usual way using Chebyshev's inequality.  To make it work, we need asymmetry and strict balancedness that are provided by Theorem~\ref{thm:family_of_graphs}. For technical reasons, we need to further refine the latter property and make sure that our family comprises graphs that are {\it enhancely balanced}, i.e. 
 small subgraphs
of $H\in\mathcal{F}$ have density slightly below $\frac{1}{\alpha}$.
We will make use of the following proposition.
\begin{prop}[Enhanced Balancedness]
\label{prop:enhanced_balanceness}Consider ${\bf H}={\bf H}(v,e)$
with even $v=\left\lfloor \omega\frac{\ln n}{\ln\ln n}\right\rfloor $
and $e=\left\lceil \frac{1}{\alpha}v\right\rceil +1$. Then there
exist positive constants $\delta_{0},\beta_{0}>0$ (depending only
on $\alpha$) such that the following holds. W.h.p., for every subgraph
$H_{0}$ of ${\bf H}$ with $v(H_{0})\leq\delta_{0}v$, we have $e(H_{0})<\frac{1}{\alpha}v(H_{0})-\beta_{0}$.
\end{prop}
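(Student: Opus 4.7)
The plan is a first-moment argument based on Proposition~\ref{prop:E_0_existence_bound}, combined with a trivial deterministic bound for the smallest values of $k := v(H_0)$. Set $\rho = 1/\alpha$, $\gamma = \rho - 1 > 0$, and $k_0 = \lfloor 2\rho \rfloor$. Any $k$-vertex graph with $k \leq k_0$ has at most $\binom{k}{2}$ edges, and $(k-1)/2 < \rho$ for all such $k$, so by choosing $\beta_0 > 0$ small enough one ensures $\binom{k}{2} < \rho k - \beta_0$ deterministically for every $2 \leq k \leq k_0$. This handles subgraphs of size at most $k_0$ without using any randomness.

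For $k_0 < k \leq \delta_0 v$, set $\ell_k = \lceil \rho k - \beta_0 \rceil$. A union bound over subsets $V_0 \subseteq [v]$ of size $k$ and candidate edge sets $E_0 \subseteq \binom{V_0}{2}$ of size $\ell_k$, together with Proposition~\ref{prop:E_0_existence_bound}, gives
\[
\mathbb{P}\!\left(\exists\,V_0,\;|V_0|=k,\;e(\mathbf{H}[V_0])\geq\ell_k\right) \leq \binom{v}{k}\binom{\binom{k}{2}}{\ell_k}\!\left(\frac{c_0}{v}\right)^{\ell_k}.
\]
Routine estimates ($\binom{v}{k}\leq v^k/k!$, $\binom{\binom{k}{2}}{\ell_k}\leq (k^2/2)^{\ell_k}/\ell_k!$, Stirling's bound on $k!$ and $\ell_k!$) together with $\ell_k \in [\rho k - \beta_0, \rho k + 1]$ reduce this to $O_\alpha\bigl(v^{\beta_0} (D_\alpha k/v)^{\gamma k}\bigr)$ for a constant $D_\alpha$ depending only on $\alpha$ and $c_0$. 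Picking $\delta_0 = 1/(2eD_\alpha)$ makes the ratio of consecutive terms at most $(1/2)^\gamma < 1$, so the sum over $k \in (k_0,\delta_0 v]$ is a convergent geometric series dominated by its first term at $k = k_0 + 1$, yielding total expected number of ``bad'' subgraphs bounded by $O_\alpha\bigl(v^{\beta_0 - \gamma(k_0+1)}\bigr)$.

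Finally, I choose $\beta_0 > 0$ small enough to satisfy both the trivial-bound constraint ($\beta_0 < \rho k - \binom{k}{2}$ for $2 \leq k \leq k_0$) and the probabilistic-bound constraint $\beta_0 < \gamma(k_0+1) = (1/\alpha - 1)(\lfloor 2/\alpha\rfloor + 1)$; the latter quantity is strictly positive because $\alpha < 1$. Markov's inequality then yields the w.h.p. conclusion. The main subtle point is coordinating these two constraints on $\beta_0$: a larger $\beta_0$ would weaken both the trivial bound for $k \leq k_0$ and the probabilistic bound at the threshold $k = k_0 + 1$ (by inflating the $v^{\beta_0}$ factor), but both constraints are simultaneously solvable precisely because $\rho > 1$.
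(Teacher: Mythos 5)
Your proof is correct, but it follows a genuinely different route from the paper's. You treat $\mathbf{H}$ essentially as a black box: the only probabilistic input is the edge-containment bound $\mathbb{P}(E_0\subseteq E(\mathbf{H}))\leq(c_0/v)^{|E_0|}$ of Proposition~\ref{prop:E_0_existence_bound}, fed into a first-moment union bound over vertex subsets and candidate edge sets, with the finitely many sizes $k\leq k_0=\lfloor 2/\alpha\rfloor$ disposed of deterministically via $\binom{k}{2}<k/\alpha$. The computation checks out: each term is $O(v^{\beta_0})\,(D_\alpha k/v)^{\gamma k}$, the ratio of consecutive terms is at most $\bigl(eD_\alpha(k+1)/v\bigr)^{\gamma}\leq(1/2)^{\gamma}$ for $k+1\leq\delta_0 v$ with your choice of $\delta_0$, and the dominant first term $O(v^{\beta_0-\gamma(k_0+1)})$ vanishes once $\beta_0<\gamma(k_0+1)$; the two constraints on $\beta_0$ are indeed simultaneously satisfiable since each gap $\rho k-\binom{k}{2}$ with $k\leq k_0$ is strictly positive and there are finitely many of them. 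The paper instead exploits the internal structure of $\mathbf{H}$: it decomposes $V_0$ into segments of the Hamiltonian cycle, uses the near-equidistribution of the balancing attachment points $R$, and reduces the claim to showing that any small $V_0$ consisting only of long segments and containing many $R$-vertices almost surely ``loses'' a balancing edge (an endpoint ${\bf y}[x]$ falling outside $V_0$), which it controls by a hypergeometric-type estimate and a union bound over unions of few segments. Your argument is shorter and more portable --- it applies verbatim to any bounded-density random graph on $[v]$ satisfying the conclusion of Proposition~\ref{prop:E_0_existence_bound} --- whereas the paper's argument is tailored to the three-component construction; for this proposition both approaches yield constants $\delta_0,\beta_0$ depending only on $\alpha$, as required.
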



The proof of Proposition \ref{prop:enhanced_balanceness} is postponed
to Subsection \ref{subsec:enhanced_balanceness_proof}. In Subsection
\ref{subsec:The-family-F} we use it to construct a family $\mathcal{F}$
with Properties (A) and (B) and thus complete the proof of Theorem
\ref{thm:general_upper_bound}.

\subsection{A suitable family of subgraphs\label{subsec:The-family-F}}
Let $\mathcal{H}=\mathcal{H}(v,e)$ be the family of graphs from Theorem
\ref{thm:family_of_graphs} with
$v=\left\lfloor \omega\frac{\ln n}{\ln\ln n}\right\rfloor$ , $e=\left\lceil \frac{1}{\alpha}v\right\rceil +1.$ Without loss of generality we may assume that $v$ is even. We define $\mathcal{F}=\mathcal{F}(v,e)$ as the family of graphs $H\in\mathcal{H}$
which additionally satisfy the enhanced balancedness property from
Proposition \ref{prop:enhanced_balanceness}.
Like $\mathcal{H}$, the family $\mathcal{F}$ is closed under isomorphism.
Moreover, from Theorem \ref{thm:family_of_graphs}, Proposition \ref{prop:enhanced_balanceness}, and Claim \ref{lem:B(n)_bound},
we have the following bound on $\left|\mathcal{F}\right|$. 
\begin{claim}
$
\left|\mathcal{F}\right|=\exp\left(-O(v)\right)\left|\mathcal{H}\right|=\exp\left(-O(v)\right)v^{e}.
$
\label{cl:cardinality_F_and_H}
\end{claim}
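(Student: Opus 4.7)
\medskip

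\noindent\textbf{Proof proposal.} The upper bound $|\mathcal{F}|\leq|\mathcal{H}|=\exp(-O(v))v^{e}$ is immediate from $\mathcal{F}\subseteq\mathcal{H}$ and Theorem~\ref{thm:family_of_graphs} applied with parameters $(v,e)$; one only has to check the hypotheses of that theorem, namely that $v$ is even (by choice) and $v+2\leq e\leq c'v$ for some constant $c'=c'(\alpha)$, both of which hold since $e=\lceil v/\alpha\rceil+1$. Thus the content of the claim is the matching lower bound $|\mathcal{F}|\geq\exp(-O(v))v^{e}$.

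My plan is to replay the counting argument from Subsection~\ref{subsec:The-family}, but restricted to the enhancedly balanced outcomes of ${\bf H}(v,e)$. Observe that $\mathcal{F}$ is exactly the set of possible values of ${\bf H}(v,e)$ that are both asymmetric and enhancedly balanced. By Theorem~\ref{thm:asymmetry} together with Proposition~\ref{prop:enhanced_balanceness},
\[
\mathbb{P}\bigl({\bf H}(v,e)\in\mathcal{F}\bigr)\geq 1-o(1).
\]
Since, conditional on simplicity, the triple $({\bf H}_r,{\bf H}_h,{\bf H}_b)$ is uniform over its $N=N(v,e)$ admissible values, the above probability equals (number of triples whose sum lies in $\mathcal{F}$)$/N$. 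Hence the number of such triples is at least $(1-o(1))N$.

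Finally, Claim~\ref{lem:B(n)_bound} tells us that every simple graph $H\in\mathcal{F}$ is represented by at most $B(v)=\exp(O(v))$ triples, so
\[
|\mathcal{F}|\;\geq\;\frac{(1-o(1))N}{B(v)}.
\]
Plugging in $N=\Theta(N^{*})$ from Proposition~\ref{prop:simplicity_probability} and the product formula $N^{*}=\exp(-O(v))v^{e}$ from the proof of Theorem~\ref{thm:family_of_graphs} yields $|\mathcal{F}|\geq\exp(-O(v))v^{e}$, as required. I do not expect a genuine obstacle here: the argument is almost verbatim the one used to establish $|\mathcal{H}|=\exp(-O(v))v^{e}$, with Proposition~\ref{prop:enhanced_balanceness} supplying the extra w.h.p.\ restriction. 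The only thing worth checking carefully is that the $o(1)$ error in $\mathbb{P}({\bf H}\in\mathcal{F})$ is absorbed into the $\exp(-O(v))$ factor — which it trivially is, since $1-o(1)=\exp(-o(1))=\exp(-O(v))$.
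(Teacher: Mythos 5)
Your proof is correct and follows essentially the same route as the paper: both arguments express the lower bound via the number $N_{\mathcal F}$ of triples $(H_r,H_h,H_b)$ whose sum lies in $\mathcal F$, note that $N_{\mathcal F}=(1-o(1))N$ by Theorem~\ref{thm:asymmetry} and Proposition~\ref{prop:enhanced_balanceness}, and then divide by the overcounting factor $B(v)=\exp(O(v))$ from Claim~\ref{lem:B(n)_bound} and invoke $N=\Theta(N^*)=\exp(-O(v))v^e$. If anything, you are slightly more careful than the paper in explicitly citing Theorem~\ref{thm:asymmetry} alongside Proposition~\ref{prop:enhanced_balanceness} for the $1-o(1)$ fraction.
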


\begin{proof}
The second equality follows directly from the lower bound on $\left|{\cal H}\right|$
from Theorem \ref{thm:family_of_graphs}, so it remains to prove the
first equality. Let $N$ be the number of possible values $(H_{r},H_{h},H_{b})$
such that $H_{r}+H_{h}+H_{b}$ is simple, and let $N_{{\cal F}}$
be the number of possible values $(H_{r},H_{h},H_{b})$ such that
$H_{r}+H_{h}+H_{b}\in{\cal F}$. Proposition \ref{prop:enhanced_balanceness}
implies $\frac{N_{{\cal F}}}{N}=1-o(1)$. Claim \ref{lem:B(n)_bound}
implies $N_{{\cal F}}=\exp\left(O(v)\right)\left|{\cal F}\right|$.
Therefore $\left|{\cal F}\right|=\exp\left(-O(v)\right)N_{{\cal F}}=\exp\left(-O(v)\right)N=\exp\left(-O(v)\right)\left|{\cal H}\right|.$
\end{proof}

\begin{lemma}
$\mathcal{F}$ satisfies Property (B). 
\end{lemma}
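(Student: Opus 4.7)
The plan is a straightforward first moment / Markov argument exploiting the density surplus built into the definition of $e$. Fix any $H\in\mathcal{F}$ (note $H$ may depend on $n$, since $v=v(n)$ and $e=e(n)$), and let $\mathbf{X}_{H}$ denote the number of induced copies of $H$ in $\mathbf{G}_{n}^{2}\sim G(n,n^{-\alpha})$. I would first express this as
\[
\mathbb{E}\mathbf{X}_{H}=\frac{(n)_{v}}{|\mathrm{Aut}(H)|}\,p^{e}(1-p)^{\binom{v}{2}-e},\qquad p=n^{-\alpha},
\]
and then drop the last factor and the $|\mathrm{Aut}(H)|\ge 1$ to obtain the crude bound $\mathbb{E}\mathbf{X}_{H}\le n^{v}\,n^{-\alpha e}=n^{v-\alpha e}$.

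The second step is to use the key numerical fact that the $+1$ in $e=\lceil v/\alpha\rceil+1$ forces a definite surplus in the exponent: since $e\ge v/\alpha+1$, we get $\alpha e\ge v+\alpha$, hence
\[
\mathbb{E}\mathbf{X}_{H}\le n^{v-\alpha e}\le n^{-\alpha}\xrightarrow[n\to\infty]{}0.
\]
By Markov's inequality, $\mathbb{P}(\mathbf{X}_{H}\ge 1)\le\mathbb{E}\mathbf{X}_{H}\to 0$, which is exactly Property (B).

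I expect no serious obstacle here. Strict balancedness and asymmetry of $H$ play no role in this direction; they are only needed for Property (A), where one must control the \emph{variance} of the count of induced copies of some $H\in\mathcal{F}$ in $\mathbf{G}_{n}^{1}$. In that companion argument, asymmetry eliminates the $1/|\mathrm{Aut}(H)|$ factor in the expectation so that $\mathbb{E}\mathbf{X}_{H}$ is as large as possible, and strict (or enhanced) balancedness controls the contribution to $\mathbb{E}\mathbf{X}_{H}^{2}$ coming from pairs of copies sharing a small subgraph. But for the present lemma, the only thing really being used is the explicit $+1$ in the definition of $e$, which translates into the factor $n^{-\alpha}$ and kills the expectation regardless of any finer structure of $H$.
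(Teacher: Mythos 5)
Your proposal is correct and is essentially the paper's own argument: the paper likewise computes $\mathbb{E}\mathbf{X}_{H}=(n)_{v}p^{e}(1-p)^{\binom{v}{2}-e}\sim n^{v-\alpha e}$, uses $v-\alpha e\leq-\alpha$ coming from the $+1$ in the definition of $e$, and concludes via Markov's inequality. Your side remarks about the roles of asymmetry and balancedness (relevant only for Property (A)) also match the paper's treatment.
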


\begin{proof}
Fix $H\in\mathcal{F}$ and let $\mathbf{X}_{H}$
be the number of induced copies of $H$ in ${\bf G}_{2}$. Then
\[
\mathbb{E}\left(\mathbf{X}_{H}\right)=(n)_{v} p^{e}\left(1-p\right)^{\binom{v}{2}-e}\sim n^{v}p^{e}=n^{v-\alpha e}.
\]
since
$v,e=O(\ln n)$ and $p=n^{-\alpha}$. 
From the definition of $e$ we have $v-\alpha e\leq-\alpha$, so overall
$\mathbb{E}\left(\mathbf{X}_{H}\right)=O\left(n^{-\alpha}\right)$. From Markov's
inequality, $\mathbb{P}\left(X_{H}\geq1\right)=o(1)$.
\end{proof}
\begin{lemma}
\label{lem:property_A}$\mathcal{F}$ satisfies property (A).
\end{lemma}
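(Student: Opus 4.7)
My plan is to apply Chebyshev's inequality to the count $\mathbf{X} := |\{S \subseteq [n] : |S| = v,\ \mathbf{G}_{n}^{1}[S] \in \mathcal{F}\}|$. Because $\mathcal{F}$ is closed under isomorphism, $\mathbf{X} \geq 1$ is equivalent to Property (A), so it suffices to show $\mathbb{E}[\mathbf{X}] \to \infty$ together with $\mathrm{Var}(\mathbf{X}) = o(\mathbb{E}[\mathbf{X}]^{2})$. All graphs in $\mathcal{F}$ have exactly $e$ edges, so $\mathbb{E}[\mathbf{X}] = \binom{n}{v}|\mathcal{F}|\,p^{e}(1-p)^{\binom{v}{2}-e}$. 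Plugging in $|\mathcal{F}| = \exp(-O(v))\,v^{e}$ from Claim~\ref{cl:cardinality_F_and_H}, Stirling's approximation for $v!$, and $(1-p)^{\binom{v}{2}-e} = 1 - o(1)$ (since $pv^{2} = o(1)$), one obtains $\ln\mathbb{E}[\mathbf{X}] = (1/\alpha - 1)v\ln v + O(v) + O(\ln n)$, which tends to $+\infty$ because $v\ln v \sim \omega\ln n$ and $1/\alpha > 1$.

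For the second moment I expand $\mathbb{E}[\mathbf{X}^{2}] = \sum_{S_{1},S_{2}}\mathbb{P}(\mathbf{G}_{n}^{1}[S_{1}],\mathbf{G}_{n}^{1}[S_{2}]\in\mathcal{F})$ and group by $k = |S_{1}\cap S_{2}|$. Fixing $(S_{1},S_{2})$ and conditioning on the induced graph $F_{T} := \mathbf{G}_{n}^{1}[T]$ on the intersection $T$ (with $e_{T}$ edges) factorises the joint probability as
\[
\sum_{F_{T}}\mathbb{P}(\mathbf{G}_{n}^{1}[T]=F_{T})\cdot\bigl(|\mathcal{F}_{F_{T}}|\,p^{e-e_{T}}(1-p)^{\binom{v}{2}-\binom{k}{2}-(e-e_{T})}\bigr)^{2},
\]
where $\mathcal{F}_{F_{T}}$ denotes the set of $H \in \mathcal{F}$ whose restriction to the positions of $T$ equals $F_{T}$. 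The disjoint case $k = 0$ contributes $(1-o(1))\mathbb{E}[\mathbf{X}]^{2}$, and the diagonal $k = v$ contributes $\mathbb{E}[\mathbf{X}]/\mathbb{E}[\mathbf{X}]^{2} = o(1)$.

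It remains to bound the contribution from $1 \leq k \leq v-1$. Use the crude bound $|\mathcal{F}_{F_{T}}| \leq \binom{\binom{v}{2}-\binom{k}{2}}{e-e_{T}}$ (the $e-e_{T}$ edges outside $T$ are unrestricted). For small overlaps $k \leq \delta_{0}v$, enhanced balancedness (Proposition~\ref{prop:enhanced_balanceness}) yields the uniform gap $e_{T} \leq k/\alpha - \beta_{0}$ over every $H \in \mathcal{F}$; substituting into the factorised expression bounds the $k$th contribution by $\exp(O(v))\,n^{-\alpha\beta_{0}}\,v^{k(2-1/\alpha)}/k!$, and summation over $1 \leq k \leq \delta_{0}v$ is $o(1)$ thanks to the $n^{-\alpha\beta_{0}}$ factor. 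For large overlaps $\delta_{0}v < k < v$, strict balancedness only supplies $e - e_{T} \geq (v-k)/\alpha - O(1)$, but conditional on $\mathbf{G}_{n}^{1}[S_{1}] = H_{1}$ the probability that $\mathbf{G}_{n}^{1}[S_{2}] \in \mathcal{F}$ decays geometrically in $e - e_{T}$; combined with the small count of pairs with $|S_{1} \cup S_{2}| = 2v - k \leq (2-\delta_{0})v$, this regime also contributes $o(\mathbb{E}[\mathbf{X}]^{2})$.

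The main obstacle is the intermediate-$k$ range: strict balancedness alone gives $e_{T} < ke/v$, which exactly matches the normalisation and provides no decay. The quantitative slack $\beta_{0}$ supplied by enhanced balancedness produces the decisive $n^{-\alpha\beta_{0}}$ factor that overwhelms the $\exp(O(v))$ combinatorial errors and forces the sum over intersecting pairs to collapse.
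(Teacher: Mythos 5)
Your proposal follows the same skeleton as the paper's proof: Chebyshev's inequality for the count of $v$-sets inducing a member of $\mathcal{F}$, a decomposition of the second moment by the overlap parameters $(k,e_T)$, enhanced balancedness to extract the $n^{-\alpha\beta_0}$ gain for small overlaps, and density bookkeeping for large overlaps. The one genuinely different ingredient is your bound $|\mathcal{F}_{F_T}|\leq\binom{\binom{v}{2}-\binom{k}{2}}{e-e_T}$. The paper instead invokes Proposition~\ref{prop:E_0_existence_bound} (the spread property of the random balanced graph) to show that at most an $\exp(O(v))(c_0/v)^{e_T}$-fraction of $\mathcal{F}$ contains a prescribed set of $e_T$ edges. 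Your elementary bound delivers an estimate of the same quality, $|\mathcal{F}_{F_T}|/|\mathcal{F}|\leq\exp(O(v))v^{-e_T}$, because $\binom{\binom{v}{2}}{m}\leq(\mathrm{e}v^{2}/2m)^{m}\leq\exp(O(v))v^{m}$ for all $m=O(v)$ and because Claim~\ref{cl:cardinality_F_and_H} says $\mathcal{F}$ occupies an $\exp(-O(v))$-fraction of all $e$-edge graphs on $[v]$. So your route bypasses Proposition~\ref{prop:E_0_existence_bound} entirely for this lemma, a genuine simplification; the price is another $\exp(O(v))$ factor, affordable only after the reduction (which the paper makes explicitly and you should too) to $\omega=o(\ln\ln n)$, so that $\exp(O(v))=n^{o(1)}$ does not swamp $n^{-\alpha\beta_0}$.

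Two cautions on the regime $\delta_0 v<k<v$. First, attributing the saving to "the small count of pairs with $|S_1\cup S_2|=2v-k$" is misleading: that count is smaller only by $n^{-k}$ (times polynomial factors), and this is essentially cancelled by the gain $p^{-e_T}\approx n^{\alpha(e/v)k}\approx n^{k}$ from the shared edges, since $e_T$ can be as large as $(e/v)k$ and $\alpha e/v=1+O(1/v)$; the powers of $n$ net out to $n^{O(k/v)}=v^{o(k)}$. The decisive decay lives in the powers of $v$: the combined factor is $\exp(O(v))\,v^{2k}v^{-e_T}/k!\leq\exp(O(k))\,v^{k(2-1/\alpha)}/k!\leq v^{-(1/\alpha-1)k(1-o(1))}$ for $k=\Theta(v)$, which decays precisely because $1/\alpha>1$ --- the same mechanism as the term $(*)$ in the paper's large-subgraph case. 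Second, this only works if you retain the factorial in the binomial coefficient, i.e.\ use $\binom{\binom{v}{2}}{e-e_T}\leq\exp(O(v))v^{e-e_T}$; the cruder geometric bound $(v^{2})^{e-e_T}$ that your phrase "decays geometrically in $e-e_T$" suggests loses a factor $v^{e-e_T}$ and fails for $\delta_0 v<k<v/(2-\alpha)$. With these points made explicit, your argument closes.
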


\begin{proof}
Let $\mathbf{X}_{\mathcal{F}}$ count the number
of induced subgraphs $H\subseteq{\bf G}_{n}^{1}$ with $H\in\mathcal{F}$.
We need to prove $\mathbb{P}(\mathbf{X}_{\mathcal{F}}=0)=o(1)$. From Chebyshev's
inequality, it suffices to prove $\frac{{\rm Var}(\mathbf{X}_{\mathcal{F}})}{\left[\mathbb{E}\left(\mathbf{X}_{\mathcal{F}}\right)\right]^{2}}=o(1).$ Write $\mathbf{X}_{\mathcal{F}}=\sum_{H}\mathds{1}_{H\subseteq\mathbf{G}_1}$, where the sum is over
all subgraphs $H$ of the complete graph $K_{n}$ with $H\in F$.
The expected value is
\begin{align*}
\mathbb{E}\left(\mathbf{X}_{\mathcal{F}}\right)  =\sum_{H}\mathbb{P}\left(H\subseteq{\bf G}_{n}^{1}\right)=\binom{n}{v}\left|\mathcal{F}\right| p^{e}\left(1-p\right)^{\binom{v}{2}-e}\sim n^{v-\alpha e}\frac{\left|\mathcal{F}\right|}{v!}.
\end{align*}
The variance is
\begin{equation}
{\rm Var}(\mathbf{X}_{\mathcal{F}})=\sum_{H,H'}\left[\mathbb{E}\left(\mathds{1}_{H\subseteq\mathbf{G}_1,H'\subseteq\mathbf{G}_1}\right)-\mathbb{E}\left(\mathds{1}_{H\subseteq\mathbf{G}_1}\right)\mathbb{E}\left(\mathds{1}_{H'\subseteq\mathbf{G}_1}\right)\right].\label{eq:var(X_F)}
\end{equation}
We decompose the sum in (\ref{eq:var(X_F)}) by considering different
possible intersection patterns of $H,H'$. For every non-negative
integers $v_{0},e_{0}$ let $S_{v_{0},e_{0}}$ denote the sum in (\ref{eq:var(X_F)}),
but only over the pairs $H,H'$ which share exactly $v_{0}$ common
vertices and $e_{0}$ common edges. Then ${\rm Var}(\mathbf{X}_{\mathcal{F}})=\sum_{v_{0},e_{0}}S_{v_{0},e_{0}}$.
When $v_{0}\leq1$, the indicators $\mathds{1}_{H},\mathds{1}_{H'}$
are independent and the corresponding summand is $0$. Taking enhanced
balancedness into account, we may therefore sum over the set $\mathcal{I}$ of pairs $(v_{0},e_{0})$ such that $v_0\geq 2$, $e_0\leq\frac{e}{v}v_0$ and, moreover, $e_0\leq\frac{1}{\alpha}v_0-\beta_0$ whenever $v_{0}\leq\delta_{0}v$. Each $S_{v_{0},e_{0}}$ can be written as follows:
\begin{align*}
S_{v_{0},e_{0}}  = & \binom{n}{v}\binom{v}{v_{0}}\binom{n-v}{v-v_{0}}\cdot T_{v_{0},e_{0}}\\
  & \times\left[p^{2e-e_{0}}(1-p)^{2\left(\binom{v}{2}-e\right)-\left(\binom{v_{0}}{2}-e_{0}\right)}-p^{2e}(1-p)^{2\left(\binom{v}{2}-e\right)}\right].
\end{align*}
Here $T_{v_{0},e_{0}}$ denotes the number of possible choices of
a pair of induced subgraphs $H,H'$ on two given sets of $v$ vertices
with $v_{0}$ common vertices, such that $H,H'\in\mathcal{F}$ and they
share exactly $e_{0}$ edges. Simple estimations yield
\begin{align}
\frac{S_{v_{0},e_{0}}}{[\mathbb{E}\left(\mathbf{X}_{\mathcal{F}}\right)]^{2}}\lesssim\frac{(v!)^{2}}{v_{0}!\left((v-v_{0})!\right)^{2}}\cdot\frac{T_{v_{0},e_{0}}}{\left|\mathcal{F}\right|^{2}}\cdot\frac{1}{n^{v_{0}-\alpha e_{0}}}.
\label{eq:second_moment_X_F}
\end{align}
The following claim bounds the term $T_{v_{0},e_{0}}/\left|\mathcal{F}\right|^{2}$. 
\begin{claim}
\label{lem:T_estimation}For every $(v_{0},e_{0})\in\mathcal{I}$,
$T_{v_{0},e_{0}}/\left|\mathcal{F}\right|^{2}=\exp\left(O(v)\right)\left(\frac{c_{0}}{v}\right)^{e_{0}}$,
where $c_{0}$ is the constant from Proposition \ref{prop:E_0_existence_bound},
and the term $O(v)$ does not depend on $(v_{0},e_{0})$. 
\end{claim}

\begin{proof}
Let us fix two sets of vertices $V_{1},V_{2}$ with $\left|V_{1}\right|=\left|V_{2}\right|=v$
and $V_{0}=V_{1}\cap V_{2}$ having $\left|V_{0}\right|=v_{0}$. We
need to bound $T_{v_{0},e_{0}}$, the number of pairs $H_{1},H_{2}$
such that each $H_{i}$ is a graph on $V_{i}$ with $H_{i}\in\mathcal{F}$,
$H_{1}[V_{0}]=H_{2}[V_{0}]$, and the common induced subgraph contains
exactly $e_{0}$ edges. 

First choose $H_{1}\in\mathcal{F}$ on $V_{1}$ such that $H_{1}\left[V_{0}\right]$
contains exactly $e_{0}$ edges. The number of choices is trivially
bounded by $\left|\mathcal{F}\right|$. Now, given $H_{1}$, we must
choose $H_{2}\in\mathcal{F}$ on $V_{2}$ which contains a given set
of $e_{0}$ edges. The number of choices is bounded by the number of graphs from $\mathcal{H}$ that contain the same set of edges which is, due to Proposition \ref{prop:E_0_existence_bound}, at most $\left(\frac{c_{0}}{v}\right)^{e_{0}}N$,
where $N=N(v,e)$ is again the number of possible values $(H_{r},H_{h},H_{b})$
such that $H_{r}+H_{h}+H_{b}$ is simple. By Claim~\ref{cl:cardinality_F_and_H}, $\left|\mathcal{F}\right|=\exp\left(-O(v)\right)N$.
Therefore
\[
T_{v_{0},e_{0}}\leq\left|\mathcal{F}\right|\cdot\left(\frac{c_{0}}{v}\right)^{e_{0}}\cdot\exp\left(O(v)\right)\left|\mathcal{F}\right|=\exp\left(O(v)\right)\left(\frac{c_{0}}{v}\right)^{e_{0}}\left|\mathcal{F}\right|^{2}.
\]
\end{proof}
We now return to the proof of Lemma \ref{lem:property_A}. It suffices
to prove that the right-hand side in (\ref{eq:second_moment_X_F}) is $o\left(v^{-2}\right)$
(uniformly), since the number of summands is $\Theta\left(v^{2}\right)$.  We have
\begin{eqnarray*}
\frac{S_{v_{0},e_{0}}}{[\mathbb{E}\left(\mathbf{X}_{\mathcal{F}}\right)]^{2}} & = & \frac{(v!)^{2}}{v_{0}!\left((v-v_{0})!\right)^{2}}\cdot\exp\left(O(v)\right)\left(\frac{c_{0}}{v}\right)^{e_{0}}\cdot\frac{1}{n^{v_{0}-\alpha e_{0}}}\\
 & \leq & \frac{\left(\left(v\right)_{v_{0}}\right)^{2}}{\left(\frac{v_{0}}{{\rm e}}\right)^{v_{0}}}\cdot\exp\left(O(v)\right)\left(\frac{c_{0}}{v}\right)^{e_{0}}\cdot\frac{1}{n^{v_{0}-\alpha e_{0}}}\\
 & \leq & \frac{v^{2v_{0}}}{v_{0}^{v_{0}}}\exp\left(O(v)\right)\cdot\left(\frac{c_{0}}{v}\right)^{e_{0}}\cdot\frac{1}{n^{v_{0}-\alpha e_{0}}}.
\end{eqnarray*}
Notice that the last expression is monotonically increasing with respect
to $e_{0}$. Therefore we may bound it for every $2\leq v_{0}\leq v$
only with $e_{0}^{*}$, defined as the maximal $e_{0}$ such that
$(v_{0},e_{0})\in\mathcal{I}$. Note that $e_{0}^{*}=\frac{1}{\alpha}v_{0}+O(1)$,
so
\begin{equation}
\frac{S_{v_{0},e_{0}}}{[\mathbb{E}\left(\mathbf{X}_{\mathcal{F}}\right)]^{2}}=\underset{(*)}{\underbrace{\left(\frac{v^{2-\frac{1}{\alpha}}}{v_{0}}\right)^{v_{0}}\exp\left(O(v)\right)}}\cdot\underset{(**)}{\underbrace{\frac{1}{n^{v_{0}-\alpha e_{0}^{*}}}}}.\label{eq:S_v0e0 bound}
\end{equation}
We bound (\ref{eq:S_v0e0 bound}) separately for small $v_{0}$ and
for large $v_{0}$. Intuitively, for small subgraphs the enhanced
balancedness property promises that $(**)$ is sufficiently small,
while for large subgraphs it is $(*)$ which becomes small.

\textbf{Small Subgraphs. }In this case we assume $v_{0}\leq\frac{c}{\omega}v$
where $c=\frac{\alpha\beta_{0}}{2}$. By definition of $v$,
\[
v_{0}\leq\frac{c}{\omega}\left\lfloor \omega\frac{\ln n}{\ln\ln n}\right\rfloor \leq c\frac{\ln n}{\ln\ln n}.
\]
Of course, this implies $v_{0}\leq\delta_{0}v$, so enhanced balancedness
applies: $e_{0}^{*}\leq\frac{1}{\alpha}v_{0}-\beta_{0}$ and therefore
$\frac{1}{n^{v_{0}-\alpha e_{0}^{*}}}\leq n^{-\alpha\beta_{0}}$. 

As for $(*)$, we use the simple bound
\[
(*)\leq v^{v_{0}}\left[\left(\frac{v}{v_{0}}\right)^{\frac{v_{0}}{v}}\right]^{v}\exp\left(O(v)\right)\leq \exp\left(v_{0}\ln v+O(v)\right).
\]
Without loss of generality, we may assume $\omega=o(\ln\ln n)$, so
$v=o(\ln n)$ and
\begin{eqnarray*}
\exp\left(v_{0}\ln v+O(v)\right)  \leq  \exp\left(c\frac{\ln n}{\ln\ln n}\ln\ln n+o(\ln n)\right)=n^{\frac{\alpha\beta_{0}}{2}+o(1)}.
\end{eqnarray*}
Overall, $S_{v_{0},e_{0}}/\mathbb{E}[\left(\mathbf{X}_{\mathcal{F}}\right)]^{2}\leq n^{-\frac{\alpha\beta_{0}}{2}+o(1)}$
uniformly. In particular it is $o\left(v^{-2}\right)$. 

\textbf{Large Subgraphs. }Now assume $v_{0}\geq\frac{c}{\omega}v$.
In this case we only know $e_{0}^{*}\leq\frac{e}{v}v_{0}\leq\left(\frac{1}{\alpha}+\frac{2}{v}\right)v_{0}$,
so 
\[
(**)=\frac{1}{n^{v_{0}-\alpha e_{0}^{*}}}\leq n^{2\alpha\frac{v_{0}}{v}}.
\]
As for $(*)$, we have
\[
\left(\frac{v^{2-\frac{1}{\alpha}}}{v_{0}}\right)^{v_{0}}\leq\left(\frac{\omega}{c}v^{1-\frac{1}{\alpha}}\right)^{v_{0}}=\left(\frac{\omega}{c}v^{-\xi}\right)^{v_{0}}
\]
where $\xi=\frac{1}{\alpha}-1$ is a positive constant. Combining
both bounds,
\[
\frac{S_{v_{0},e_{0}}}{\mathbb{E}[\left(\mathbf{X}_{\mathcal{F}}\right)]^{2}}\leq\left(\omega v^{-\xi}\cdot C^{\frac{v}{v_{0}}}n^{\frac{2\alpha}{v}}\right)^{v_{0}}
\]
for some constant $C>1$. Note that 
$$
\omega C^{\frac{v}{v_{0}}}n^{\frac{2\alpha}{v}}  \leq\exp\left(O(\omega)+O\left(\frac{\ln n}{v}\right)\right)\leq  \exp\left(O(\omega)+O\left(\frac{\ln\ln n}{\omega}\right)\right).
$$
Again, without loss of generality we may assume $\omega=o(\ln\ln n)$,
so
\[
\omega C^{\frac{v}{v_{0}}}n^{\frac{2\alpha}{v}}=\exp\left(o(\ln\ln n)\right)=v^{o(1)}.
\]
Overall, $S_{v_{0},e_{0}}/\mathbb{E}[\left(\mathbf{X}_{\mathcal{F}}\right)]^{2}\leq\left(v^{-\xi+o(1)}\right)^{v_{0}}$
which is again $o(v^{-2})$.  We have therefore completed the proof of Lemma \ref{lem:property_A}.
\end{proof}

\subsection{Proof of Proposition \ref{prop:enhanced_balanceness}\label{subsec:enhanced_balanceness_proof}}

It remains to prove Proposition \ref{prop:enhanced_balanceness} about
enhanced balancedness. We have ${\bf H}={\bf H}(v,e)$ with $v\rightarrow\infty$
and $e=\left\lceil \frac{1}{\alpha}v\right\rceil +1$. 
Write $\frac{1}{\alpha}=\frac{q}{2}+\xi$ where $q\geq2$ is an integer
and $0<\xi<\frac{1}{2}$. Then $e=q\cdot\frac{v}{2}+r$ where $r=\left\lceil \xi v\right\rceil +1<\frac{v}{2}$. 

\begin{definition}
Let ${\bf H}={\bf H}_{r}+{\bf H}_{h}+{\bf H}_{b}$ be a random balanced
(simple) graph. Let ${\bf H}'={\bf H}_{h}+{\bf H}_{b}$ be
obtained by deleting the regular edges from ${\bf H}$. Let ${\bf H}''$ be 
obtained from ${\bf H}'$ by replacing every balancing edge $\{x,y[x]\}$
with a loop at $x$. That is, ${\bf H}''$ is simply the Hamilton
cycle ${\bf H}_{h}$ with a loop at every vertex of~${\bf R}$. 
\end{definition}

\begin{definition}
For a given set of vertices $V_{0}\subseteq[v]$, its \emph{segment
decomposition }is $V_{0}=\tau_{1}\sqcup\dots\sqcup\tau_{t}$, where
$\tau_{1},\dots,\tau_{t}$ are mutually disjoint segments of the Hamiltonian
component ${\bf H}_{h}$ with no Hamiltonian edge between them. 
\end{definition}

We shall use the following technical claim. 

\begin{claim}
\label{claim:enhanced_balanceness_key}For every positive integer
$M$ there exists a positive constant $\beta$ such that the following
holds. Let $V_{0}\subseteq[v]$ be a set of $v_{0}$ vertices and
let $H_{0}''={\bf H}''[V_{0}]$. If the segment decomposition of $V_{0}$
contains a segment of length at most $M$, then
\[
e(H_{0}'')\leq\left(1+\xi\right)v_{0}+\frac{2v_{0}}{v}-\beta.
\]
\end{claim}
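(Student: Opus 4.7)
The plan is to rewrite $e(H_0'')$ via the segment decomposition $V_0 = \tau_1 \sqcup \dots \sqcup \tau_t$ and then bound, segment by segment, how much it undershoots the target $(1+\xi) v_0 + 2v_0/v$. Since $\mathbf{H}''$ is $\mathbf{H}_h$ equipped with a loop at every vertex of $\mathbf{R}$, each $H_0''[\tau_i]$ consists of the $k_i - 1$ Hamiltonian edges internal to the segment together with the $\ell_i := |\tau_i \cap \mathbf{R}|$ loops at its $\mathbf{R}$-vertices, so that $e(H_0'') = v_0 - t + \sum_i \ell_i$ with $k_i := |\tau_i|$.

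Next I would feed in the almost-equidistribution bound underlying \eqref{eq:R}: since $r = \lceil \xi v \rceil + 1$ and hence $r/v \leq \xi + 2/v$, we have $\ell_i \leq \lfloor k_i r/v \rfloor + 1 \leq k_i \xi + 2 k_i/v + 1$. Define the per-segment deficit $d_i := k_i \xi + 2 k_i/v + 1 - \ell_i \geq 0$; a direct computation then gives
\begin{equation*}
(1+\xi) v_0 + \tfrac{2 v_0}{v} - e(H_0'') = \sum_{i=1}^{t} d_i.
\end{equation*}
Since each $d_i \geq 0$, the claim reduces to exhibiting a constant $\beta = \beta(M, \xi) > 0$ such that $d_{i_0} \geq \beta$ for the short segment $\tau_{i_0}$ with $k_{i_0} \leq M$.

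The decisive point is that $\xi = 1/\alpha - q/2$ is irrational, inherited from the irrationality of $\alpha$, so for every $k \in \{1, \dots, M\}$ the fractional part $\{k\xi\}$ lies strictly inside $(0, 1)$. Setting $\mu_M := \min_{k \leq M} \{k\xi\} > 0$ and $\nu_M := \min_{k \leq M} (1 - \{k\xi\}) > 0$, once $v$ is large enough that $2M/v < \nu_M$, the correction $k_{i_0}(r/v - \xi) \leq 2M/v$ is too small to push $k_{i_0} r/v$ past the next integer, so $\lfloor k_{i_0} r/v \rfloor = \lfloor k_{i_0} \xi \rfloor$ and consequently $\ell_{i_0} \leq \lfloor k_{i_0} \xi \rfloor + 1$. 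Substituting this into the definition of the deficit yields $d_{i_0} \geq \{k_{i_0} \xi\} \geq \mu_M$, so $\beta := \mu_M$ will do the job. The main obstacle is precisely this integer-crossing step: a priori the small shift $k_{i_0}(r/v - \xi)$ could carry $k_{i_0} r/v$ across an integer boundary and inflate $\lfloor k_{i_0} r/v \rfloor$ by one, wiping out the gain entirely; the strict positivity of $\nu_M$, guaranteed by the irrationality of $\xi$ on the finite set $\{1, \dots, M\}$, is exactly what rules this out for all sufficiently large $v$.
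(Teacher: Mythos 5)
Your proof is correct and follows essentially the same route as the paper's: decompose $V_0$ into Hamiltonian segments, use the almost-equidistribution of $\mathbf{R}$ to control the loop count per segment, and extract a uniform constant gap from the one short segment by minimizing over the finitely many lengths $k\le M$. Indeed the paper's constant $\beta=\min\left((1+\xi)v_0-e_0\right)$ over integer pairs with $e_0<(1+\xi)v_0$ coincides with your $\mu_M=\min_{k\le M}\{k\xi\}$, and your explicit handling of the integer-crossing issue via $\nu_M$ just makes precise the paper's remark that for sufficiently large $v$ the inequality $e_0<(1+\frac{r}{v})v_0$ upgrades to $e_0<(1+\xi)v_0$.
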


\begin{proof}

Fix a positive integer $M$. Define 
\[
\beta=\min_{v_{0},e_{0}}\left((1+\xi)v_{0}-e_{0}\right)
\]
where the minimum is over all integers $v_{0},e_{0}$ with $1\leq v_{0}\leq M$
and $1\leq e_{0}<\left(1+\xi\right)v_{0}$. This is a minimum over
a finite, fixed set and thus $\beta$ is a positive constant. Note
that $\beta<1$ by definition.

First let $V_{0}=\tau\subseteq[v]$ be a segment with $v_{0}\leq M$
vertices. Let $e_{0}$ be the number of edges in the induced subgraph
${\bf H}''[\tau]$. Recall that ${\bf H}''$ is strictly balanced
and its density is $1+\frac{r}{v}$. Also recall that $r=\left\lceil \xi v\right\rceil +1$,
therefore
\[
1+\frac{r}{v}\in\left[(1+\xi)+\frac{1}{v},(1+\xi)+\frac{2}{v}\right].
\]
Since $M$ is a constant, for a sufficiently large $v$, the inequality
$e_{0}<\left(1+\frac{r}{v}\right)v_{0}$ implies $e_{0}<(1+\xi)v_{0}$
(whenever $1\leq v_{0}\leq M$). In turn, $e_{0}<(1+\xi)v_{0}$ implies
$e_{0}\leq(1+\xi)v_{0}-\beta$ by the definition of $\beta$. 

Now let $\tau$ be a segment of any length. Again, let $v_{0}$ be its
number of vertices and let $e_{0}$ be the number of edges in ${\bf H}''[\tau]$.
In this case we can only use strict balancedness to claim that 
\[
e_{0}<\left(1+\frac{r}{v}\right)v_{0}<\left(1+\xi\right)v_{0}+\frac{2v_{0}}{v}.
\]
Finally, let $V_{0}\subseteq[v]$ be an arbitrary set of vertices.
Let $V_{0}=\tau_{1}\cup\dots\cup\tau_{t}$ be the segment decomposition.
By definition, ${\bf H}''$ contains no edges between any pair $\tau_{i},\tau_{j}$.
Let $v_{0},e_{0}$ denote the number of vertices and edges in ${\bf H}''[V_{0}]$
and let $v_{i},e_{i}$ denote the number of vertices and edges in
${\bf H}''[\tau_{i}]$. If at least one of the segments is of length
$\leq M$, 
\[
e_{0}=\sum_{i=1}^{t}e_{i}<\sum_{i=1}^{t}\left[(1+\xi)v_{i}+\frac{2v_{i}}{v}\right]-\beta=\left(1+\xi\right)v_{0}+\frac{2v_{0}}{v}-\beta.
\]
That finishes the proof of the claim.
\end{proof}

Let us now finish the proof of Proposition \ref{prop:enhanced_balanceness}. Set $M=\left\lceil \frac{4}{\xi}\right\rceil $ and let $\beta$
be the corresponding constant from Claim \ref{claim:enhanced_balanceness_key}.
We prove Proposition \ref{prop:enhanced_balanceness} for $\beta_{0}=\frac{1}{2}\beta$
and $\delta_{0}=\frac{1}{4}\min\left\{ \beta,{\rm e}^{-\frac{2}{\xi}}\right\} $.
These choices will be justified soon; for now, notice that $\beta_{0}+2\delta_{0}\leq\beta<1$. 

Let $V_{0}\subseteq[v]$ be a subset with $\left|V_{0}\right|=v_{0}\leq\delta_{0}v$
and let $H_{0}={\bf H}[V_{0}]$. We begin by identifying several cases
in which the desired inequality $e(H_{0})<\frac{1}{\alpha}v_{0}-\beta_{0}$
holds. The \emph{lost edges} of $V_0$
are the balancing edges $\{x,{\bf y}[x]\}$ with $x\in V_{0}$ and
${\bf y}[x]\not\in V_{0}$. 

\textbf{Fact 1.} Suppose that $V_{0}$ has a lost edge.\textbf{ }Then,
from Claim \ref{claim:enhanced_balanceness_key},
\[ 
e(H_{0})\leq\left(\frac{q}{2}-1\right)v_{0}+e\left(H_{0}''\right)-1\leq\frac{1}{\alpha}v_{0}+\frac{2v_{0}}{v}-1<\frac{1}{\alpha}v_{0}-\beta_{0}.
\]
Here, the first inequality follows from that fact that $H_{0}$ is obtained from ${\bf H}'[V_0]$ be adding some edges of the $(q-2)$-regular
graph ${\bf H}_{r}$, and therefore $e(H_{0})  \leq  \left(\frac{q}{2}-1\right)v_{0}+e({\bf H}'[V_0])$, while $H_{0}''$ is obtained from ${\bf H}'[V_0]$ by replacing its balancing edges with loops, and adding a loop for every lost edge of $V_{0}$. 

\textbf{Fact 2. }Suppose that the segment decomposition of $V_{0}$
contains a segment of length at most $M$. Then, from Claim \ref{claim:enhanced_balanceness_key},
\[
e(H_{0})\leq\frac{1}{\alpha}v_{0}+\frac{2v_{0}}{v}-\beta\leq\frac{1}{\alpha}v_{0}+2\delta_{0}-\beta<\frac{1}{\alpha}v_{0}-\beta_{0}.
\]

\textbf{Fact 3.} Denote $r_{0}=\left|V_{0}\cap R\right|$. Then $e(H_{0})\leq\frac{q}{2}v_{0}+r_{0}$.
In particular, if $r_{0}<\xi v_{0}-\beta_{0}$, we again have
\[
e(H_{0})<\frac{1}{\alpha}v_{0}-\beta_{0}.
\]

These three facts show that in order to finish the proof, it is sufficient
to prove the following statement. W.h.p., for every subset $V_{0}\subseteq[v]$
with $\left|V_{0}\right|\leq\delta_{0}v$, if it is composed only
of segments of length at least $M$ and satisfies $\left|V_{0}\cap R\right|\geq\xi v_{0}-\beta_{0}$,
then it has a lost edge.

For $V_{0}\subseteq[v]$ with $v_{0}=\left|V_{0}\right|$ and $r_{0}=\left|V_{0}\cap R\right|$,
the probability that it has no lost edges is precisely $\frac{(v_{0})_{r_{0}}}{(v)_{r_{0}}}$
in ${\bf H}^{*}$, and $\leq\frac{1}{p_{0}}\frac{(v_{0})_{r_{0}}}{(v)_{r_{0}}}$
in ${\bf H}$ (see Proposition \ref{prop:simplicity_probability}).
Assuming $r_{0}\geq\xi v_{0}-\beta_{0}$, 
\[
\frac{(v_{0})_{r_{0}}}{(v)_{r_{0}}}\leq\left(\frac{v_{0}}{v}\right)^{r_{0}}\leq\left(\frac{v_{0}}{v}\right)^{\xi v_{0}-\beta_{0}}.
\]
Now consider the number of subsets $V_{0}\subseteq[v]$ with $v_{0}$
vertices which are composed only of segments of length at least $M$.
It is bounded by the number of subsets which are composed of at most
$\frac{v_{0}}{M}$ segments, which is $O(1)\cdot\binom{v}{2v_{0}\slash M}$.
By the union bound, the probability that no relevant $V_{0}$ has
a lost edge is
\begin{equation}
\mathbb{P}\left({\rm no}\;V_{0}\;{\rm loses}\;{\rm edges}\right)=O(1)\cdot\sum_{v_{0}=M}^{\delta_{0}v}\binom{v}{2v_{0}\slash M}\left(\frac{v_{0}}{v}\right)^{\xi v_{0}-\beta_{0}}.\label{eq:balanceness_sum}
\end{equation}
It remains to show that the last sum is $o(1)$. The standard bound
$\binom{n}{k}\leq\left(\frac{{\rm e}n}{k}\right)^{k}$ yields
\begin{align*}
(\ref{eq:balanceness_sum}) & \leq\sum_{v_{0}=M}^{\delta_{0}v}\left(\frac{{\rm e}v}{2v_{0}\slash M}\right)^{2v_{0}\slash M}\left(\frac{v_{0}}{v}\right)^{\xi v_{0}-\beta_{0}}\\
 & =\sum_{v_{0}=M}^{\delta_{0}v}\left(\frac{{\rm e}M}{2}\right)^{\frac{2v_{0}}{M}}\left(\frac{v_{0}}{v}\right)^{\xi v_{0}-\frac{2}{M}v_{0}-\beta_{0}}
  \overset{(*)}{\leq}\sum_{v_{0}=M}^{\delta_{0}v}{\rm e}^{v_{0}}\left(\frac{v_{0}}{v}\right)^{\frac{\xi}{2}v_{0}-\beta_{0}}.
\end{align*}
The last inequality $(*)$ follows from the inequality $\left({\rm e}x\right)^{\frac{1}{x}}\leq{\rm e}$
(applied for $x=\frac{M}{2}$) and the fact that $\frac{2}{M}<\frac{\xi}{2}$
(which follows from the definition of $M$). 

We divide the above sum into two: the sum over $M\leq v_{0}\leq\sqrt{v}$
and the sum over $\sqrt{v}\leq v_{0}\leq\delta_{0}v$. First,
\begin{multline*}
\sum_{v_{0}=M}^{\sqrt{v}}{\rm e}^{v_{0}}\left(\frac{v_{0}}{v}\right)^{\frac{\xi}{2}v_{0}-\beta_{0}} \leq \sum_{v_{0}=M}^{\sqrt{v}}{\rm e}^{v_{0}}\left(\frac{1}{\sqrt{v}}\right)^{\frac{\xi}{2}v_{0}-\beta_{0}}\\
  =  {\rm e}^{M}\left(\frac{1}{\sqrt{v}}\right)^{\frac{\xi}{2}M-\beta_{0}}\sum_{i=0}^{\sqrt{v}-M}\left({\rm e}v^{-\frac{\xi}{4}}\right)^{i}
  \lesssim   {\rm e}^{M}\left(\frac{1}{\sqrt{v}}\right)^{\frac{\xi}{2}\cdot\frac{4}{\xi}-1}=o(1).
\end{multline*}

Finally,
\begin{multline*}
\sum_{v_{0}=\sqrt{v}}^{\delta_{0}v}{\rm e}^{v_{0}}\left(\frac{v_{0}}{v}\right)^{\frac{\xi}{2}v_{0}-\beta_{0}}  \leq  \sum_{v_{0}=\sqrt{v}}^{\delta_{0}v}{\rm e}^{v_{0}}\delta_{0}^{\frac{\xi}{2}v_{0}}\cdot\left(\frac{v}{v_{0}}\right)^{\beta_{0}}\\
  \leq  v^{\beta_{0}\slash2}\sum_{v_{0}=\sqrt{v}}^{\delta_{0}M}\left[{\rm e}\delta_{0}^{\xi\slash2}\right]^{v_{0}}
  =  v^{\beta_{0}\slash2}\exp\left(-\Theta(\sqrt{v})\right)=o(1)
\end{multline*}
since ${\rm e}\delta_{0}^{\xi\slash2}<1$ by the definition of $\delta_{0}$.
That finishes the proof.

\section{Low upper bounds for well-approximable $\alpha$}
\label{sec:low_upper_bound_well_approximable}

In this section we prove Theorem \ref{thm:arbitrary_low_upper_bounds}.
Recall that for a given function $f(n)\underset{n\rightarrow\infty}{\rightarrow}\infty$
(which can grow arbitrarily slowly), our goal is to find an irrational
$\alpha\in (0,1)$ and an increasing sequence of positive integers $\{n_{t}\}_{t=1}^{\infty}$
such that w.h.p. two independent copies ${\bf G}_{n_t}^{1},{\bf G}_{n_t}^{2}\sim G(n_{t},n_{t}^{-\alpha})$
can be distinguished by a FO sentence of quantifier depth at most
$f(n_{t})$.  For the rest of this section, let us denote these copies by $\mathbf{G}_{1},\mathbf{G}_{2}$ for notational convenience.

The construction of a suitable irrational $\alpha$ is explicit. The
idea is to take $\alpha$ which is very well-approximable by rational numbers;
the slower $f(n)$ grows, the better the approximations must be. Then,
along a subsequence $\left\{ n_{t}\right\} _{t=1}^{\infty}$ (which
is determined by the sequence of approximations of $\alpha$), distinguishing
between ${\bf G}_{1},{\bf G}_{2}$ is essentially the same as in the
rational case. This idea naturally leads to the theory of Diophantine
approximations, and specifically to the concept of \emph{Liouville
numbers.}

\subsection{Diophantine approximations}

The theory of Diophantine approximations studies approximations of
real numbers by rational numbers. One of its well-known applications
is the celebrated Liouville's theorem, which was used to establish the existence
of transcendental numbers for the first time.
A \emph{Liouville number} is an irrational number $x$ such that for every $d\in\mathbb{N}$,
there exists a rational $\frac{p}{q}$ with $q>1$ such that $\left|x-\frac{p}{q}\right|<\frac{1}{q^{d}}$. Liouville's theorem implies that Liouville numbers are transcendental. For a comprehesive survey on this subject, see \cite{Transcendental_number_theory}.
 Liouville provided $x=\sum_{n=1}^{\infty}\frac{1}{2^{n!}}$ as an
example of a Liouville number. For our purposes, however, we will
need Liouville numbers which are much better approximated.
From now on, when we write a rational number as $\frac{p}{q}$, we
always assume that $q>0$ and that ${\rm gcd}(p,q)=1$. 
\begin{definition}
\label{def:phi-approximable}Let $\varphi:\mathbb{N}\rightarrow(0,\infty)$
be a decreasing function. An irrational number $x$ is
called \emph{$\varphi$-approximable} if, for infinitely many
rational numbers $\frac{p}{q}$, $\left|x-\frac{p}{q}\right|<\varphi(q)$. 
\end{definition}

\begin{lemma}
\label{lem:well-approximable-irrationals}For every descreasing function
$\varphi:\mathbb{N}\rightarrow(0,\infty)$ there exists an irrational
$x\in(0,1)$ which is $\varphi$-approximable.
\end{lemma}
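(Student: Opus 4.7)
The plan is to construct $x$ as the unique point in the intersection of a nested sequence of closed intervals $J_1 \supset J_2 \supset \cdots$, each centered at a rational $p_k/q_k$ with radius $\varepsilon_k < \varphi(q_k)$, while simultaneously diagonalizing against an enumeration of $\mathbb{Q}\cap(0,1)$ to force irrationality. This is a Cantor-style construction adapted to our setting; the decreasing function $\varphi$ will only be used to ensure the radii can be chosen small enough.

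Enumerate $\mathbb{Q} \cap (0,1) = \{r_1, r_2, \ldots\}$ and inductively build rationals $p_k/q_k \in (0,1)$ (with $\gcd(p_k,q_k)=1$) and radii $\varepsilon_k > 0$ so that the intervals $J_k := [p_k/q_k - \varepsilon_k,\, p_k/q_k + \varepsilon_k]$ satisfy: \emph{(i)} $J_{k+1} \subset \operatorname{int}(J_k)$; \emph{(ii)} $r_k \notin J_k$; \emph{(iii)} $\varepsilon_k < \varphi(q_k)$; and \emph{(iv)} $|J_k| \le 1/k$. For the inductive step, given $J_k$, choose $q_{k+1}$ so large that $1/q_{k+1}$ is much smaller than $\varepsilon_k$; since the rationals $p/q_{k+1}$ are spaced $1/q_{k+1}$ apart, at least two of them lie in $\operatorname{int}(J_k)$, and we can pick $p_{k+1}$ so that $p_{k+1}/q_{k+1} \in \operatorname{int}(J_k) \setminus \{r_{k+1}\}$. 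Then set $\varepsilon_{k+1}$ to be any positive real smaller than each of $\varphi(q_{k+1})$, $\operatorname{dist}(p_{k+1}/q_{k+1}, \partial J_k)$, $|p_{k+1}/q_{k+1} - r_{k+1}|$, and $1/(k+1)$; each of these is strictly positive.

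By \emph{(i)} and \emph{(iv)} the nested intersection $x := \bigcap_k J_k$ is a single point of $(0,1)$. For each $k$, \emph{(i)} yields $x \in J_m \subset J_k$ for all $m \ge k$, while \emph{(ii)} gives $r_k \notin J_k$; hence $x \ne r_k$ for every $k$, so $x$ is irrational. By \emph{(iii)}, $|x - p_k/q_k| \le \varepsilon_k < \varphi(q_k)$ for every $k$, and since the denominators $q_k$ strictly increase (our choice of $q_{k+1}$ forces this), the rationals $p_k/q_k$ are pairwise distinct, so we have infinitely many approximants of the required form.

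There is no real obstacle here — the construction is routine. The only point requiring care is the inductive choice of $p_{k+1}/q_{k+1}$: we need a rational whose denominator is simultaneously large enough to make $\varphi(q_{k+1})$ a useful upper bound on $\varepsilon_{k+1}$, while lying inside the narrow interval $J_k$ and differing from the prescribed rational $r_{k+1}$. The spacing argument for multiples of $1/q_{k+1}$ above resolves this, and since $\varphi$ takes values in $(0,\infty)$ all the minimum constraints on $\varepsilon_{k+1}$ remain strictly positive.
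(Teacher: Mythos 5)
Your proof is correct, but it takes a genuinely different route from the paper's. The paper constructs $x$ explicitly as a lacunary binary series $x=\sum_{i\geq 1}2^{-U_i}$, with exponents defined recursively by $U_{t+1}=\left\lceil -\log_{2}\varphi\left(2^{U_{t}}\right)\right\rceil +1$; the truncations $p_t/q_t$ with $q_t=2^{U_t}$ are then the good approximants, and irrationality follows from aperiodicity of the binary expansion (after assuming, without loss of generality, that the gaps $U_{t+1}-U_t$ strictly increase). You instead run a nested-intervals construction and diagonalize against an enumeration of $\mathbb{Q}\cap(0,1)$ to force irrationality --- essentially the Baire-category argument showing that the $\varphi$-approximable numbers form a dense $G_\delta$. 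Your version is more robust (monotonicity of $\varphi$ plays almost no role; only positivity of $\varphi(q_{k+1})$ is needed to pick $\varepsilon_{k+1}$), while the paper's is shorter and produces a completely explicit number whose approximating sequences $\{p_t\},\{q_t\}$ are reused verbatim in Section 5; your construction yields such sequences just as well, so nothing downstream breaks. Two details you should make explicit: choose $J_1\subset(0,1)$ so that the limit point actually lies in $(0,1)$ (your diagonalization excludes only the rationals of $(0,1)$, not the endpoints $0$ and $1$), and address coprimality of $p_{k+1}$ and $q_{k+1}$ --- e.g.\ take $q_{k+1}$ prime, or note that after reducing to lowest terms $p'/q'$ with $q'\leq q_{k+1}$ the bound $\varepsilon_{k+1}<\varphi(q_{k+1})\leq\varphi(q')$ survives precisely because $\varphi$ is decreasing (and at most finitely many of the approximants can coincide with any fixed rational, since $\varepsilon_k\to 0$ and $x$ is irrational, so infinitely many distinct approximants remain).
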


\begin{proof}
First notice that if $\varphi,\psi:\mathbb{N}\rightarrow(0,\infty)$
satisfy $\varphi\leq\psi$ and $x$ is $\varphi$-approximable,
then $x$ is also $\psi$-approximable. Therefore, without loss of
generality, we may assume $\varphi<1$. Recursively define a sequence
of natural numbers as follows: $U_{1} = 1$, $U_{t+1} = \left\lceil -\log_{2}\varphi\left(2^{U_{t}}\right)\right\rceil +1$. Now define $x=\sum_{i=1}^{\infty}2^{-U_{i}}$. Without loss
of generality, we may assume that $\varphi$ is decreasing sufficiently
fast such that $U_{t+1}-U_{t}$ is a strictly increasing sequence.
In that case, $x$ is an irrational number because its binary expansion
is aperiodic. For every $t\in\mathbb{N}$, consider the rational approximation
$\sum_{i=1}^{t}2^{-U_{i}}$ of $x$, which can be written
as $\frac{p_{t}}{q_{t}}$ for $q_{t}=2^{U_{t}}$ and $p_{t}=\sum_{i=1}^{t}2^{U_{t}-U_{i}}$. Then
\[
0<x-\frac{p_{t}}{q_{t}}=\sum_{i=t+1}^{\infty}\frac{1}{2^{U_{i}}}\leq\sum_{i=U_{t+1}}^{\infty}\frac{1}{2^{i}}=\frac{1}{2^{U_{t+1}-1}}\leq\varphi\left(2^{U_{t}}\right)=\varphi(q_{t}).
\]
\end{proof}
As in Section \ref{sec:general_upper_bound}, we distinguish between
two independent ${\bf G}_{1},{\bf G}_{2}\sim G(n_{t},n_{t}^{-\alpha})$
through their subgraphs; however, some details are different.
First, here we count (all) subgraphs instead
of induced subgraphs. We consider subgraphs with densities that approximate $\frac{1}{\alpha}$ well so that the
variables counting copies of these subgraphs asymptotically behave
like independent Poisson variables (as in the case of a rational $\alpha$).
In particular, a single subgraph suffices to distinguish
${\bf G}_{1}$ from ${\bf G}_{2}$ with positive probability. To distinguish
w.h.p., we shall consider $\ell\rightarrow\infty$
different subgraphs and approximate their joint distribution. 

\subsection{Asymptotic Poisson behavior}

We now return to the proof of Theorem \ref{thm:arbitrary_low_upper_bounds}. 
 Let $g:\mathbb{N}\rightarrow\mathbb{N}$ be the inverse of $f$, that is, $g(k)=\min\left\{ n\in\mathbb{N}:f(n)\geq k\right\}$. Without loss of generality, (1) $f:\mathbb{N}\to\mathbb{N}$ is non-decreasing and surjective, (2) $f(n)=o\left(\sqrt{\ln n}/\ln\ln n\right)$, (3) $n/g(n)$ strictly decreases with $n$. Let $\alpha\in(0,1)$ be an irrational number which is $\varphi$-approximable
for $\varphi(q)=1/g(q)$; its existence is guaranteed by Lemma
\ref{lem:well-approximable-irrationals}. Let $\left\{ p_{t}\right\} _{t=1}^{\infty},\left\{ q_{t}\right\} _{t=1}^{\infty}$
be the suitable sequences from the proof of the lemma: increasing
sequences of natural numbers such that, for every $t\in\mathbb{N}$, $p_t,q_t$ are coprime and
\[
0<\alpha-\frac{p_{t}}{q_{t}}\leq\frac{1}{g(q_{t})}.
\]
Write $v_{t}=p_{t}$ and $e_{t}=q_{t}$ and also define $n_{t}=g(v_{t})$.
Our first assumption on $f$ implies that $v_{t}=f(n_{t})$. From
now on we focus on the subsequence $\{n_{t}\}_{t=1}^{\infty}$, so
the underlying parameter is now $t$. For convenience, we often omit
the dependency on $t$ from the notation; all quantities implicitly
depend on $t$ unless we explicitly state that they are fixed.

Let $\mathcal{H}=\mathcal{H}(v,e)$ be the set of graphs from Theorem \ref{thm:family_of_graphs}.
Recall that $\mathcal{H}$ is a family of asymmetric graphs which is
closed under isomorphism. Therefore it contains $\ell=\frac{1}{v!}\left|\mathcal{H}\right|$
isomorphism classes. From the asymptotic estimation on $\left|\mathcal{H}\right|$
given in Theorem \ref{thm:family_of_graphs} it follows that $\ell\underset{t\rightarrow\infty}{\rightarrow}\infty$.
Let $H^{(1)},H^{(2)},\dots,H^{(\ell)}$ be representatives of the
$\ell$ isomorphism classes. For every $1\leq i\leq\ell$ let $\mathbf{X}^{(i)}$
count copies of $H^{(i)}$ in ${\bf G}_{1}$ and let $\mathbf{Y}^{(i)}$ count
copies of $H^{(i)}$ in ${\bf G}_{2}$. In order to complete the proof,
it is sufficient to prove the following.
\begin{prop}
\label{prop:distnguish_H^(i)}W.h.p.\ as $t\rightarrow\infty$, there
exists $1\leq i\leq\ell$ such that either $\mathbf{X}^{(i)}\geq1$ and $\mathbf{Y}^{(i)}=0$,
or $\mathbf{X}^{(i)}=0$ and $\mathbf{Y}^{(i)}\geq1$.
\end{prop}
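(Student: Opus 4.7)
The plan is to verify the Poisson heuristic: each count $\mathbf{X}^{(i)}$ (and each $\mathbf{Y}^{(i)}$) is asymptotically $\mathrm{Poisson}(1)$, so the event $A_i:=(\{\mathbf{X}^{(i)}\geq 1\}\cap\{\mathbf{Y}^{(i)}=0\})\cup(\{\mathbf{X}^{(i)}=0\}\cap\{\mathbf{Y}^{(i)}\geq 1\})$ has $\mathbb{P}(A_i)\to\delta_0:=2\mathrm{e}^{-1}(1-\mathrm{e}^{-1})>0$; since $\ell=\ell(t)\to\infty$, a second-moment argument on $W:=\sum_{i=1}^\ell\mathds{1}_{A_i}$ forces $W>0$ w.h.p.

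First I compute $\lambda_i:=\mathbb{E}\mathbf{X}^{(i)}=(n)_v\,p^e/|\mathrm{Aut}(H^{(i)})|$. Asymmetry from Theorem~\ref{thm:family_of_graphs} eliminates the automorphism factor, $(n)_v\sim n^v$ since $v=o(\sqrt{\ln n})$, and the Diophantine bound $0<\alpha-v/e\leq 1/g(e)\leq 1/n$ combined with $e=O(v)$ yields $(\alpha e-v)\ln n\leq e\ln n/n\to 0$, so $\lambda_i\to 1$ uniformly in $i$ (similarly for $\mathbb{E}\mathbf{Y}^{(i)}$). Poisson convergence of $\mathbf{X}^{(i)}$ then follows from Janson's inequality once the correction term $\Delta_i=\sum_{H_1\neq H_2\,:\,E(H_1)\cap E(H_2)\neq\emptyset}\mathbb{P}(H_1,H_2\subseteq\mathbf{G}_1)$ tends to $0$. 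The key ingredient is a quantitative balancedness gap: for every proper subgraph $H_0\subsetneq H^{(i)}$ with $v_0$ vertices and $e_0$ edges, strict balancedness gives $e_0/v_0<e/v$; since these are distinct rationals with denominators at most $v$, their gap is $\geq 1/(vv_0)$, while $|e/v-1/\alpha|=O(1/n)\ll 1/v^2$, so $v_0-\alpha e_0\geq \alpha/(2v)$. Combining crude bounds (at most $2^{e}=2^{O(v)}$ iso types of $H_0$ and at most $n^{2v-v_0}v^{2v_0}$ pairs of labelled copies meeting in $H_0$) gives $\Delta_i\leq\exp\bigl(O(v\ln v)-(\alpha/(2v))\ln n\bigr)$, which tends to $0$ by the growth condition $v=f(n)=o(\sqrt{\ln n}/\ln\ln n)$. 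Janson then yields $\mathbb{P}(\mathbf{X}^{(i)}=0)\to\mathrm{e}^{-1}$, and independence of $\mathbf{G}_1,\mathbf{G}_2$ gives $\mathbb{P}(A_i)\to\delta_0$.

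For the variance, write $A_i=(U_i\cap V_i')\cup(V_i\cap U_i')$ with $U_i=\{\mathbf{X}^{(i)}\geq 1\}$, $V_i=U_i^c$, and primes denoting the analogous events for $\mathbf{Y}$. Independence of $\mathbf{G}_1,\mathbf{G}_2$ expresses $\mathrm{Cov}(\mathds{1}_{A_i},\mathds{1}_{A_j})$ as a polynomial in covariances such as $\mathbb{P}(U_i\cap U_j)-\mathbb{P}(U_i)\mathbb{P}(U_j)$. These vanish asymptotically via a bivariate Janson/joint Poisson argument: any shared subgraph $H_0$ of a copy of $H^{(i)}$ and a copy of $H^{(j)}$ must, by non-isomorphism of $H^{(i)}$ and $H^{(j)}$ (which have identical parameters $v,e$), be a proper subgraph of both, so the same quantitative gap argument applies uniformly in $i\neq j$. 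Hence $\mathrm{Var}(W)=o([\mathbb{E}W]^2)$ and Chebyshev yields $\mathbb{P}(W=0)\to 0$. The main obstacle is precisely this uniform-in-$(i,j)$ bivariate step: one must ensure that the gap $1/(vv_0)-O(1/n)$ survives the sum over $\Theta(\ell^2)$ pairs of non-isomorphic iso classes, which is exactly the tension behind the assumed growth bound on $f$.
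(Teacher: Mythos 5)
Your proof is correct, but it concludes by a genuinely different route from the paper's. The shared core is identical: $\mathbb{E}\mathbf{X}^{(i)}\to 1$ via the Diophantine bound $0<\alpha e-v\leq e/g(e)$, and the quantitative strict-balancedness gap $v_0-\alpha e_0=\Omega(1/v)$ for proper common subgraphs via the rational-denominator argument — this is exactly the paper's Claim~\ref{claim:union_density} (Observation 3), and your bound $\exp(O(v\ln v)-\Omega(\ln n/v))=o(1)$ matches the paper's estimate of $S_2$. Where you diverge is in combining the $\ell\to\infty$ classes. The paper proves convergence of the joint factorial moments of any \emph{fixed} number $k$ of the counts (Lemma~\ref{lem:joint_moments}), deduces via the method of moments that $(\mathbf{X}^{(1)},\dots,\mathbf{X}^{(k)})$ converges to independent $\mathrm{Pois}(1)$ (Corollary~\ref{cor:poisson_convergence}), and then notes that the failure event is contained in $\{\vec{\mathbf{I}}_k=\vec{\mathbf{J}}_k\}$, whose probability tends to $\lambda^k$ with $\lambda<1$; taking $k$ arbitrary finishes. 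You instead run Chebyshev on $W=\sum_i\mathds{1}_{A_i}$, which obliges you to control all $\Theta(\ell^2)$ pairwise covariances uniformly. That uniformity does hold: applying Janson to the union of the families of copies of $H^{(i)}$ and $H^{(j)}$, the cross term involves only proper common subgraphs (two non-isomorphic graphs with the same $(v,e)$ cannot contain one another), so the same gap bound gives a cross-$\Delta$ of $o(1)$ with constants depending only on $(v,e)$, not on the pair $(i,j)$; hence $\mathbb{P}(V_i\cap V_j)\to e^{-2}$ and all four covariances you need vanish uniformly. The trade-off: the paper's fixed-$k$ truncation avoids any uniformity-in-$\ell$ concern at the cost of computing higher joint moments, while your route stays at second moments but must make the bivariate estimates explicitly uniform — the point you correctly identify as the crux, and which is resolvable as above. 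One cosmetic remark: Janson with $\Delta_i\to0$ gives $\mathbb{P}(\mathbf{X}^{(i)}=0)\to e^{-1}$ rather than full Poisson convergence, but that one-point limit is all your argument actually uses.
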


We postpone the proof of Proposition~\ref{prop:distnguish_H^(i)} to the very end of this section since first we have to prove several auxiliary assertions.

First of all, let us show that $\mathbf{X}^{(1)},\mathbf{X}^{(2)},\dots,\mathbf{X}^{(\ell)}$
asymptotically behave like independent Poisson random variables. We
do that using the method of moments (see \cite{rucinski_luczak_janson},
Subsection 6.1). Note that the method of moments applies only when
the number of variables is fixed, so we start by fixing some $k\in\mathbb{N}$
and considering only the first $k$ variables $\mathbf{X}^{(1)},\mathbf{X}^{(2)},\dots,\mathbf{X}^{(k)}$
(assuming that the underlying $t$ is sufficiently large such that
$k\leq\ell$).

Recall that for summations of Bernoulli random variables, it is easier
to deal with factorial moments rather than usual moments. For a random
variable $\mathbf{Z}$, its $m$-th factorial moment is $\mathbb{E}\left((\mathbf{Z})_{m}\right)$,
where $(\mathbf{Z})_{m}=\mathbf{Z}(\mathbf{Z}-1)\dots(\mathbf{Z}-m+1)$. For a $k$-tuple of variables $\mathbf{Z}_{1},\dots,\mathbf{Z}_{k}$, its $(m_{1},\dots,m_{k})$-th
joint factorial moment equals to $\mathbb{E}\left(\left(\mathbf{Z}_{1}\right){}_{m_{1}}\cdots\left(\mathbf{Z}_{k}\right)_{m_{k}}\right).$
\begin{lemma}
\label{lem:joint_moments}Fix $k\in\mathbb{N}$ and non-negative integers
$m_{1},m_{2},\dots,m_{k}$. Then, with the above definitions, we have
\[
\lim_{t\rightarrow\infty}\mathbb{E}\left(\left(\mathbf{X}^{(1)}\right){}_{m_{1}}\cdots\left(\mathbf{X}^{(k)}\right)_{m_{k}}\right)=1.
\]
\end{lemma}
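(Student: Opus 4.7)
The plan is to use the method of (joint factorial) moments. Write $M = m_1 + \cdots + m_k$ and let $a_j \in \{1,\dots,k\}$ denote the type of the $j$-th factor. Expand
\[
\mathbb{E}\left(\prod_{i=1}^k (\mathbf{X}^{(i)})_{m_i}\right) = \sum_{\mathbf{C}} p^{|E(C_1 \cup \cdots \cup C_M)|},
\]
where $\mathbf{C}=(C_1,\dots,C_M)$ runs over ordered tuples of pairwise distinct copies of $H^{(a_1)},\dots,H^{(a_M)}$ in $K_n$. I split this sum into a vertex-disjoint contribution $S_{\mathrm{d}}$ and an overlapping contribution $S_{\mathrm{o}}$, and aim for $S_{\mathrm{d}} \to 1$ and $S_{\mathrm{o}} \to 0$.

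For $S_{\mathrm{d}}$: since each $H^{(i)}$ is asymmetric, any $v$-vertex subset of $[n]$ hosts exactly $v!$ copies of $H^{(i)}$, so the number of disjoint tuples equals $(n)_{vM}$. Each such tuple has $eM$ edges in its union, yielding
\[
S_{\mathrm{d}} = (n)_{vM}\, p^{eM} \sim (n^{v - \alpha e})^M.
\]
The Diophantine bound $0 < \alpha - v/e \leq 1/g(e) \leq 1/n$ gives $0 \leq \alpha e - v \leq e/n$; since $e = O(v) = O(f(n)) = o(\ln n)$, we have $(\alpha e - v)\ln n \to 0$, hence $n^{v-\alpha e} \to 1$ and $S_{\mathrm{d}} \to 1$.

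For $S_{\mathrm{o}}$: given an overlapping tuple, let $j^*$ be the smallest index with $V(C_{j^*}) \cap V(C_1 \cup \cdots \cup C_{j^*-1}) \neq \emptyset$, and let $v_0 \geq 1$, $e_0 \geq 0$ record the shared vertex and edge counts. Since $C_1,\dots,C_{j^*-1}$ are disjoint and $H^{(a_{j^*})}$ is connected and asymmetric, the intersection is a proper subgraph of $H^{(a_{j^*})}$ (otherwise $C_{j^*}$ would be forced to coincide with one of the earlier copies). Strict balancedness (Proposition~\ref{thm:balanceness}) together with integrality then gives $e_0 v \leq v_0 e - 1$, and using $\alpha e/v - 1 \leq e/(nv)$ one obtains
\[
\alpha e_0 - v_0 \leq \frac{v_0 e}{n v} - \frac{\alpha}{v} \leq \frac{e}{n} - \frac{\alpha}{v} \leq -\frac{\alpha}{2v}
\]
for $t$ sufficiently large. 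Thus introducing $C_{j^*}$ costs an extra factor of $n^{\alpha e_0 - v_0} \leq n^{-\alpha/(2v)}$ relative to the disjoint extension.

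To finish I enumerate overlap patterns, bounding the combinatorial cost of choosing the intersection and of selecting the copy of $H^{(a_{j^*})}$ through a prescribed shared edge set by Proposition~\ref{prop:E_0_existence_bound} together with $|\mathcal{H}| = \exp(-O(v))v^e$ from Theorem~\ref{thm:family_of_graphs}, exactly as in the proof of Claim~\ref{lem:T_estimation}. This gives a combinatorial factor of $\exp(O(v))$, while each of the subsequent copies $C_{j^*+1},\dots,C_M$ contributes only $O(1)$ by the same bound used for $S_{\mathrm{d}}$. Hence
\[
S_{\mathrm{o}} = \exp\!\bigl(O(v) - \Theta(\ln n / v)\bigr) = o(1),
\]
where the last step uses $v^2 = o(\ln n)$, which follows from the assumption $f(n) = o(\sqrt{\ln n}/\ln \ln n)$. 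The main obstacle is that the density gap $\alpha e/v - 1$ is only of order $1/(nv)$, not a positive constant; the integrality step $e_0 v \leq v_0 e - 1$ is therefore essential in upgrading raw strict balancedness to the quantitative saving $n^{-\alpha/(2v)}$, and the growth restriction on $f$ is precisely what ensures that this saving dominates the combinatorial overcount of overlap patterns.
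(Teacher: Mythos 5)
Your proof is correct and takes essentially the same route as the paper's: the same split into a vertex-disjoint sum (evaluated via asymmetry and the Diophantine bound $|v-\alpha e|\le e/n$) and an overlapping sum killed by the quantitative strict-balancedness gain $n^{-\Theta(1/v)}$ obtained from the integrality estimate $ev_0-e_0v\ge 1$; the paper packages this gain as Claim~\ref{claim:union_density} (an induction over the whole union $F$) where you isolate the first overlapping copy, but the two bookkeepings are equivalent. Two minor imprecisions, neither fatal: the overlap-pattern count is $\exp(O(v\ln v))$ rather than $\exp(O(v))$ (matching the paper's $(mv)^{mv}$, and still dominated since $f(n)=o(\sqrt{\ln n}/\ln\ln n)$ gives $v^{2}\ln v=o(\ln n)$), and Proposition~\ref{prop:E_0_existence_bound} and Claim~\ref{lem:T_estimation} are not the right tools here --- those count members of the family $\mathcal{H}$ through a prescribed edge set, whereas in this lemma the $H^{(i)}$ are fixed graphs and the needed count of copies with a prescribed intersection is an elementary injection count.
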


\begin{proof}
Let us denote $(\vec{\mathbf{X}})_{\vec{m}}=\left(\mathbf{X}^{(1)}\right){}_{m_{1}}\cdots\left(\mathbf{X}^{(k)}\right)_{m_{k}}$ and $m=m_{1}+\dots+m_{k}$.
Then, by decomposing $(\vec{\mathbf{X}})_{\vec{m}}$ into a sum of indicator
random variables, we can write
\begin{equation}
\mathbb{E}(\vec{\mathbf{X}})_{\vec{m}}=\sum_{\vec{H}}\mathbb{P}\left(H_{1}^{(1)},\dots,H_{m_{1}}^{(1)},\dots,H_{1}^{(k)},\dots,H_{m_{k}}^{(k)}\subseteq{\bf G}_{1}\right)\label{eq:factorial_moment_sum}
\end{equation}
where the sum is over all $m$-tuples
\begin{equation}
\vec{H}=\left(H_{1}^{(1)},\dots,H_{m_{1}}^{(1)},\dots,H_{1}^{(k)},\dots,H_{m_{k}}^{(k)}\right)\label{eq:H_tuple}
\end{equation}
such that $H_{1}^{(i)},\dots,H_{m_{i}}^{(i)}$ are distinct copies
of $H^{(i)}$ in the complete graph $K_{n}$ for every $1\leq i\leq k$.
Let us divide the sum in the right hand side of (\ref{eq:factorial_moment_sum})
into two parts: $\mathbb{E}(\vec{\mathbf{X}})_{\vec{m}}=S_{1}+S_{2}$ where
$S_{1}$ is the sum over the $m$-tuples $\vec{H}$ which do not share
any vertices with each other, and $S_{2}$ is the remaining part of
the sum.

We first estimate $S_{1}$. Since $H^{(1)},\dots,H^{(k)}$ are asymmetric,
it is easy to see that
\[
S_{1}=(n)_{mv}p^{me}\sim n^{mv}p^{me}=(n^{v-\alpha e})^{m}.
\]
By the definition of $v,e$, we have $\left|\alpha-v/e\right|\leq 1/g(e)$,
which can also be written as $\left|v-\alpha e\right|\leq e/g(e)$.
Since $n/g(n)$ decreases and $v<e$, we get
$\left|v-\alpha e\right|\leq v/g(v)=f(n)/n$.
Therefore,
\[
S_{1}\sim\exp\left(\ln n\cdot O\left(\frac{f(n)}{n}\right)\right)=\exp(o(1))=1+o(1).
\]

It remains to prove $S_{2}=o(1)$. Let $\mathcal{F}^*$ be the family of
all graphs (up to isomorphism) which are the result of a union of
$m_{i}$ distinct copies of $H^{(i)}$ for every $1\leq i\leq m$,
with at least one shared vertex. For every $F\in\mathcal{F}^*$, let $S_{F}$
be the sum from (\ref{eq:factorial_moment_sum}) but only over the
$m$-tuples $\vec{H}$ whose union is isomorphic to $F$. Then $S_{2}=\sum_{F\in\mathcal{F}^*}S_{F}$. 

Fix $F\in\mathcal{F}^*$. Let $C_{F}$ denote the number of $m$-tuples
$\vec{H}$ as in (\ref{eq:H_tuple}) such that $H_{1}^{(i)},\dots,H_{m_{i}}^{(i)}$ are distinct copies of $H^{(i)}$ and $H_{1}^{(1)}\cup\dots\cup H_{m_{k}}^{(k)}=F$. Then 
\[
S_{F}=\binom{n}{v(F)}C_{F}p^{e(F)}\leq C_{F}n^{v(F)}p^{e(F)}=C_{F}n^{v(F)-\alpha e(F)}.
\]
Since $v(H^{(i)})-\alpha e(H^{(i)})<0$ for all $i\in[k]$, then following the usual argument that is used to prove that the union of intersecting strictly balanced graphs with the same density has higher density, we derive the following claim.
\begin{claim}
\label{claim:union_density}For every $F\in\mathcal{F}^*$, the inequality $v(F)-\alpha e(F)\leq-\frac{c}{v}$ holds for any constant $c<\alpha$.
\end{claim}

\begin{proof}

For a graph $F$ we define $\xi(F)=v(F)-\alpha e(F)$. We begin with
a few simple observations.

\emph{Observation 1.}\textbf{ }If $H$ is isomorphic to one of $H^{(1)},\dots,H^{(k)}$,
we have $\xi\left(H\right)=v-\alpha e<0$. This is because $\frac{v}{e}<\alpha$
by definition. 

\emph{Observation 2. }For every two graphs $F_{1},F_{2}$,
\[
\xi(F_{1}\cup F_{2})=\xi(F_{1})+\xi(F_{2})-\xi(F_{1}\cap F_{2}).
\]

\emph{Observation 3.}\textbf{ }Suppose $H$ is isomorphic to one of
$H^{(1)},\dots,H^{(k)}$ and let $H_{0}\subset H$ be a proper subgraph
with $v_{0}$ vertices and $e_{0}$ edges. Since $H$ is strictly
balanced, we have $\frac{e_{0}}{v_{0}}<\frac{e}{v}.$ Furthermore,
\[
\frac{e}{v}-\frac{e_{0}}{v_{0}}=\frac{ev_{0}-e_{0}v}{v_{0}v}\geq\frac{1}{v_{0}v}.
\]
Denote $\varepsilon=\frac{e}{v}-\frac{1}{\alpha}$ (which is positive).
Then 
\begin{eqnarray*}
\frac{1}{\alpha}+\varepsilon-\frac{e_{0}}{v_{0}} & \geq & \frac{1}{v_{0}v},\\
v_{0}+\alpha\varepsilon v_{0}-\alpha e_{0} & \geq & \frac{\alpha}{v},\\
\xi(H_{0})=v_{0}-\alpha e_{0} & \geq & \frac{\alpha}{v}-\alpha\varepsilon v_{0}.
\end{eqnarray*}
Since $\varepsilon=O\left(\frac{1}{g(v)}\right)$, we get that $\xi(H_{0})\geq\frac{c}{v}$
for any constant $c<\alpha$ (of course, this becomes true when the
underlying $t$ is sufficiently large). 

With these observations, we now prove Claim \ref{claim:union_density}
by induction on $m\geq2$.

We start with the induction base $m=2$. Suppose $F=H_{1}\cup H_{2}$
where $H_{1},H_{2}$ are both from $\mathcal{H}$ and share a vertex.
Then
\[
\xi(F)=\xi(H_{1})+\xi(H_{2})-\xi(H_{1}\cap H_{2})<-\xi(H_{1}\cap H_{2}).
\]
$H_{1}\cap H_{2}$ is isomorphic to some proper subgraph $H_{0}$
of $H_{1}$, therefore
\[
\xi(F)<-\xi(H_{0})\leq-\frac{c}{v}.
\]
Now, suppose the statement is true for $m\geq2$ and prove it for
$m+1$. Let $F=\bigcup_{i=1}^{m+1}H_{i}$ be the union of certain
copies of graphs from $\mathcal{H}$ such that, without loss of generality,
$H_{1},H_{2}$ share a vertex. Let $F'=\bigcup_{i=1}^{m}H_{i}$. Then
\begin{eqnarray*}
\xi(F) & = & \xi(F')+\xi(H_{m+1})-\xi(F'\cap H_{m+1}).
\end{eqnarray*}
$F'\cap H_{m+1}$ is isomorphic to a subgraph $H_{0}$ of $H_{m+1}$
(not necessarily proper this time). If $H_{0}=H_{m+1}$ then $\xi(F)=\xi(F')$.
If $H_{0}\subset H_{m+1}$ then the third observation shows that $\xi(H_{0})>0$,
but $\xi(H_{m+1})<0$ and therefore $\xi(F)<\xi(F')$. From the inductive
assumption, in both cases $\xi(F)\leq\xi(F')<-\frac{c}{v}$ and that
finishes the proof.

\end{proof}

From Claim \ref{claim:union_density}, we can write $S_{F}\leq C_{F}n^{-\frac{c}{v}}$
for any constant $c<\alpha$. Finally, $\sum_{F\in\mathcal{F}^*}C_{F}$
is trivially bounded by $\left((mv)_{v}\right)^{m}$. Overall
\[
S_{2}\leq(mv)^{mv}n^{-\frac{c}{v}}=\exp\left(\Theta\left(v\ln v\right)-\Theta\left(\frac{\ln n}{v}\right)\right)=o(1)
\]
where the last estimation follows from the assumption $f(n)=o\left(\frac{\sqrt{\ln n}}{\ln\ln n}\right)$.
That finishes the proof.
\end{proof}

From Lemma \ref{lem:joint_moments} and the method of moments~\cite[Theorem 6.2]{rucinski_luczak_janson} we get
the following corollary.
\begin{corollary}
\label{cor:poisson_convergence}Fix $k\in\mathbb{N}$; then $\vec{\mathbf{X}}=(\mathbf{X}^{(1)},\dots,\mathbf{X}^{(k)})$
converges in distribution to $\vec{\mathbf{P}}=(\mathbf{P}^{(1)},\dots,\mathbf{P}^{(k)})$ where
$\mathbf{P}^{(1)},\dots,\mathbf{P}^{(k)}\sim{\rm Pois}(1)$ are independent. 
\end{corollary}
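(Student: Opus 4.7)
The plan is to invoke the standard Poisson convergence criterion via the method of moments, which reduces the corollary to a direct translation of Lemma~\ref{lem:joint_moments}. The first step is to compute the target joint factorial moments: for a single Poisson random variable $\mathbf{P}\sim\mathrm{Pois}(1)$, a classical computation gives $\mathbb{E}[(\mathbf{P})_m]=1$ for every non-negative integer $m$. Since $\mathbf{P}^{(1)},\dots,\mathbf{P}^{(k)}$ are independent, joint factorial moments factorize, so
\[
\mathbb{E}\left(\left(\mathbf{P}^{(1)}\right)_{m_1}\cdots\left(\mathbf{P}^{(k)}\right)_{m_k}\right)=\prod_{i=1}^k \mathbb{E}\left((\mathbf{P}^{(i)})_{m_i}\right)=1
\]
for every tuple $(m_1,\dots,m_k)$ of non-negative integers.

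The second step is the comparison: Lemma~\ref{lem:joint_moments} establishes that for each fixed $(m_1,\dots,m_k)$ the corresponding joint factorial moment of $(\mathbf{X}^{(1)},\dots,\mathbf{X}^{(k)})$ converges to $1$ as $t\to\infty$, which by the previous paragraph is precisely the joint factorial moment of $(\mathbf{P}^{(1)},\dots,\mathbf{P}^{(k)})$. Thus we are in the exact hypothesis of the multivariate method-of-moments for integer-valued random variables; concretely, Theorem~6.2 of~\cite{rucinski_luczak_janson} asserts that if $\vec{\mathbf{Z}}_t$ is a sequence of non-negative integer-valued random vectors whose joint factorial moments converge (along $t$) to those of a fixed vector $\vec{\mathbf{Z}}$ of independent Poisson variables, then $\vec{\mathbf{Z}}_t\xrightarrow{d}\vec{\mathbf{Z}}$. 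Applying this theorem with $\vec{\mathbf{Z}}_t=\vec{\mathbf{X}}$ and $\vec{\mathbf{Z}}=\vec{\mathbf{P}}$ yields the desired convergence in distribution.

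There is essentially no obstacle: the whole technical content sits in Lemma~\ref{lem:joint_moments}, which handled both the dominant diagonal term (tuples with disjoint copies, contributing the product of means $\sim 1$) and the intersection terms (subdominant due to strict balancedness combined with Claim~\ref{claim:union_density}). The one minor point to flag is that Theorem~6.2 of~\cite{rucinski_luczak_janson} requires the limiting Poisson parameters to be finite and the joint factorial moments to converge for \emph{every} fixed tuple $(m_1,\dots,m_k)$ — both conditions are satisfied here, the latter precisely because Lemma~\ref{lem:joint_moments} fixes $k$ and the tuple before taking the limit in $t$. Hence the corollary follows in one line from the lemma and the cited theorem.
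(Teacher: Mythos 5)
Your proof is correct and follows the same route as the paper: the paper likewise deduces the corollary directly from Lemma~\ref{lem:joint_moments} together with the method of moments as stated in Theorem~6.2 of~\cite{rucinski_luczak_janson}. Your additional remark that the joint factorial moments of independent ${\rm Pois}(1)$ variables all equal $1$ is the (implicit) matching step the paper leaves to the reader, so nothing is missing.
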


We are now ready to prove Proposition \ref{prop:distnguish_H^(i)}.
\begin{proof}[Proof of Proposition \ref{prop:distnguish_H^(i)}]
Let $\vec{\mathbf{I}}=\left(\mathds{1}_{\mathbf{X}^{(1)}\geq 1},\dots,\mathds{1}_{\mathbf{X}^{(\ell)}\geq 1}\right)$ and $\vec{\mathbf{J}}=\left(\mathds{1}_{\mathbf{Y}^{(1)}\geq 1},\dots,\mathds{1}_{\mathbf{Y}^{(\ell)}\geq 1}\right)$.
We need to prove $\mathbb{P}\left(\vec{\mathbf{I}}=\vec{\mathbf{J}}\right)=o(1)$.  For every fixed $k\in\mathbb{N}$, let $\vec{\mathbf{I}}_{k}$  and $\vec{\mathbf{J}}_{k}$ consist of the first $k$ coordinates of $\vec{\mathbf{I}}$ and $\vec{\mathbf{J}}$ respectively. Then
$\mathbb{P}\left(\vec{\mathbf{I}}=\vec{\mathbf{J}}\right)\leq\mathbb{P}\left(\vec{\mathbf{I}}_{k}=\vec{\mathbf{J}}_{k}\right)$. From Corollary \ref{cor:poisson_convergence}, $\mathbb{P}\left(\vec{\mathbf{I}}_{k}=\vec{\mathbf{J}}_{k}\right)\rightarrow\lambda^{k}$
where 
\[
\lambda=\mathbb{P}\left({\rm Pois}(1)=0\right)^{2}+\mathbb{P}\left({\rm Pois}(1)\geq1\right)^{2}={\rm e}^{-2}+\left(1-{\rm e}^{-1}\right)^{2}.
\]
Since this is true for every $k\in\mathbb{N}$, we deduce $\mathbb{P}\left(\vec{\mathbf{I}}=\vec{\mathbf{J}}\right)=o(1)$
and that finishes the proof.
\end{proof}

\section{A lower bound for almost every $\alpha$}
\label{sec:Lower-bound}

In this section we prove Theorem \ref{thm:almost_uniform_lower_bound}.
That is, we fix $d_0>2$ and an irrational $\alpha\in(0,1)$ with Liouville-Roth irrationality measure strictly smaller than $d_0$ and show that w.h.p.\ there is no FO-sentence of quantifier depth less than $\frac{1}{\ln d_0}\ln\ln\ln n$ distinguishing between two independent copies ${\bf G}_{n}^{1},{\bf G}_{n}^{2}\sim G(n,n^{-\alpha})$. To prove this, we present a generalization of  Theorem~\ref{thm:SS_0-1}. The proof of the zero-one law for irrational $\alpha$ relies on the existence of a winning strategy of the second
player in the Ehrenfeucht-Fra\"{i}ss\'{e} game with a bounded number
of rounds. We show that a similar winning strategy can be used by the second player when the number of rounds grows with $n$ sufficiently slowly, provided that $\alpha$ is not too well-approximable by rational numbers. 

From now on we fix $\alpha\in(0,1)$ with Liouville-Roth irrationality measure strictly smaller than $d_0$. Let $d\in(2,d_0)$ be such that $\alpha$ is not $q^{-d}$-approximable, that is,   $\left| x-\frac{p}{q} \right|\geq \frac{1}{q^d}$ for all but finitely many~$\frac{p}{q}$. 

\subsection{The Ehrenfeucht-Fra\"{i}ss\'{e} game}

We begin with a brief overview of the Ehrenfeucht-Fra\"{i}ss\'{e}
game (see Chapter 2 of \cite{spencer} for a more detailed exposition). Given two graphs $G_{1},G_{2}$ (with disjoint vertex sets) and $k\in\mathbb{N}$,
the Ehrenfeucht-Fra\"{i}ss\'{e} game ${\rm EF}(G_{1},G_{2};k)$
is described as follows. The game has two players, called Spoiler
and Duplicator, and consists of $k$ rounds. The graphs $G_{1},G_{2}$
are the ``board'' on which the players make their moves. In the
$i$-th round, Spoiler selects a vertex in either graph (to his choice)
and marks it $i$. Duplicator responds by selecting a vertex in the
other graph, and also marks it $i$. At the end of the game, let $x_{1},\dots,x_{k}$
be the vertices of $G_{1}$ marked $1,\dots,k$ (regardless of who
marked them) and let $y_{1},\dots,y_{k}$ be the vertices of $G_{2}$
marked $1,\dots,k$. Duplicator wins if there exists a partial isomorphism
from $G_{1}$ to $G_{2}$ which maps $x_{i}$ to $y_{i}$ for every
$1\leq i\leq k$. That is, Duplicator wins if $x_{i}=x_{j}\iff y_{i}=y_{j}$
and $x_{i}\sim x_{j}\iff y_{i}\sim y_{j}$ for every $1\leq i,j\leq k$.
The importance of the Ehrenfeucht-Fra\"{i}ss\'{e} game comes from
the following key result, relating it to the FO distinguishability.
\begin{theorem}[Ehrenfeucht~\cite{Ehrenfeucht}]
Let $G_{1},G_{2}$ be two graphs and let $k\in\mathbb{N}$. Then
Duplicator has a winning strategy in ${\rm EF}(G_{1},G_{2};k)$ $\iff$
$k(G_{1},G_{2})>k$.
\end{theorem}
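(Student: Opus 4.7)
The plan is to prove the theorem by induction on $k$, via the standard machinery of $k$-types and Hintikka (characteristic) formulas. For a graph $G$ with a tuple of distinguished vertices $\vec{a}=(a_1,\dots,a_i)$, let $\mathrm{tp}^k(G,\vec{a})$ denote the set of all FO formulas $\varphi(x_1,\dots,x_i)$ of quantifier depth at most $k$ satisfied by $(G,\vec{a})$. I will establish the slightly stronger statement: Duplicator wins ${\rm EF}(G_1,G_2;k)$ starting from an arbitrary position $(\vec{a},\vec{b})$ with $|\vec{a}|=|\vec{b}|=i$ if and only if $\mathrm{tp}^k(G_1,\vec{a})=\mathrm{tp}^k(G_2,\vec{b})$. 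The theorem then follows from the $i=0$ case, combined with the reformulation that $k(G_1,G_2)>k$ is equivalent to $\mathrm{tp}^k(G_1)=\mathrm{tp}^k(G_2)$.

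The first step is to prove that, for each $i,k$, there are only finitely many $k$-types over $i$ free variables up to logical equivalence, and each such type $T$ is captured by a single Hintikka formula $\chi_T(x_1,\dots,x_i)$ of quantifier depth at most $k$, satisfying $(G,\vec{a})\models\chi_T$ iff $\mathrm{tp}^k(G,\vec{a})=T$. This is itself an induction on $k$: the base case uses the atomic diagram of $\vec{a}$, and the $(k+1)$-level Hintikka formula of $(G,\vec{a})$ is formed as
\begin{equation*}
\bigwedge_{T\in S}\exists x_{i+1}\,\chi_T(\vec{x},x_{i+1})\;\wedge\;\forall x_{i+1}\bigvee_{T\in S}\chi_T(\vec{x},x_{i+1}),
\end{equation*}
where $S$ is the finite set of $k$-types of the form $\mathrm{tp}^k(G,\vec{a},v)$ as $v$ ranges over $V(G)$. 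Since at each level only finitely many types exist, the conjunction/disjunction is itself a formula of quantifier depth exactly $k+1$.

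The main induction on $k$ then runs smoothly. The base $k=0$ is immediate: $\mathrm{tp}^0(G,\vec{a})$ records precisely the atomic diagram of $\vec{a}$, and Duplicator wins the zero-round game iff $a_j\mapsto b_j$ is a partial isomorphism. For the inductive step, assume the equivalence at level $k$ and consider level $k+1$. For the forward direction, suppose Duplicator has a winning strategy from $(\vec{a},\vec{b})$; then for every $v\in V(G_1)$ there is a response $w\in V(G_2)$ for which Duplicator wins the remaining $k$-round game from $(\vec{a}v,\vec{b}w)$, and symmetrically with the roles reversed. By the inductive hypothesis the corresponding $k$-types agree, and the Hintikka construction immediately yields $\mathrm{tp}^{k+1}(G_1,\vec{a})=\mathrm{tp}^{k+1}(G_2,\vec{b})$. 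Conversely, if the $(k+1)$-types agree, then the Hintikka description says exactly that the set of $k$-types realized by one-vertex extensions coincides in the two graphs, so whatever vertex Spoiler picks Duplicator can find a response producing the same $k$-type, and then wins the remaining $k$ rounds by the inductive hypothesis.

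The main technical obstacle is the correct recursive setup of the Hintikka formulas: one has to verify that the recursion yields formulas of the precise quantifier depth claimed and that the semantic equivalence $(G,\vec{a})\models\chi_T\Leftrightarrow\mathrm{tp}^k(G,\vec{a})=T$ is preserved through the induction. Once this is in place, the equivalence between Duplicator's winning strategies and agreement of $k$-types is essentially a direct translation of quantifier alternation into the back-and-forth structure of the game, and the theorem follows.
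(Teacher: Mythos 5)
The paper does not prove this statement: it is Ehrenfeucht's classical theorem, quoted with a citation to \cite{Ehrenfeucht} and used as a black box. Your argument is the standard textbook proof via $k$-types and Hintikka formulas (the induction on $k$ showing that Duplicator wins the $k$-round game from a position iff the corresponding $k$-types agree), and it is correct; the only points that need the care you already flag are the finiteness of the set of $k$-types at each level (which makes the conjunctions and disjunctions in $\chi_T$ finite) and the verification that satisfying $\chi_T$ really pins down the full $k$-type, both of which are routine.
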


From this result we see that, in order to prove Theorem \ref{thm:almost_uniform_lower_bound},
it suffices to prove that
w.h.p.\ Duplicator has a winning strategy in ${\rm EF}\left({\bf G}_{n}^{1},{\bf G}_{n}^{2};k=\frac{1}{\ln d_0}\ln\ln\ln n\right)$.
Below, we show that the winning strategy of Duplicator
introduced by Shelah and Spencer ({\it look-ahead strategy}) can be used for $k$ rounds as well. We present the strategy and explain the main properties of the random graph that allow Duplicator to use it. Proofs of these properties are postponed to Appendix~\ref{AppendixG}
 since they resemble the proof in the case of constant number of rounds. 
For the original argument, we refer to \cite[Chapters 4--6]{spencer}.

\subsection{Extensions and the closure}

The look-ahead strategy is based on the validity of certain $\forall\exists$-sentences known as extension statements. The original
argument considers statements with a constant number of variables,
but now we allow it to grow with $n$. We begin by generalizing the
concept of safe extensions and their main feature: they w.h.p.\ exist over any tuple of root vertices. 

A \emph{rooted graph} is a graph $H$ with a designated subset of
root vertices $R$. It is denoted by the pair $(R,H)$. We allow $R=\emptyset$
but not $R=V(H)$. Rooted graphs can be used to define graph extensions as follows. Let
$(R,H)$ be a rooted graph and label its vertices $a_{1},\dots,a_{r},b_{1},\dots,b_{v}$
where $a_{i}$ are the roots and $b_{j}$ are the non-roots. Let $G$
be any graph and let $\vec{x}=(x_{1},\dots,x_{r})$ be an $r$-tuple
of distinct vertices of $G$. An $(R,H)$\emph{-extension} of $\vec{x}$
is defined as a $v$-tuple $\vec{y}=(y_{1},\dots,y_{v})$ of distinct
vertices of $G$ such that: (1) $x_{i}\sim y_{j}$ in $G$ whenever $a_{i}\sim b_{j}$ in $H$; (2) $y_{i}\sim y_{j}$ in $G$ whenever $b_{i}\sim b_{j}$ in $H$.  Note that this definition does not consider edges between root vertices,
and allows for additional edges except those specified by $H$. Also
note that the definition implicitly assumes an underlying labeling
of the vertices of $H$. From now on, whenever a rooted graph $(R,H)$
is introduced, we always implicitly assume that it is equipped with
a predetermined labeling.

A rooted graph has three parameters: the number of root vertices $r$,
the number of non-root vertices $v$, and the number of edges (excluding
edges between root vertices) $e$. We call $(v,e)$ the \emph{type}
of the rooted graph. The following definitions depend on the fixed
$\alpha$, and are designed to be used in studying extensions in ${\bf G}\sim G(n,n^{-\alpha})$. 
\begin{definition}
Let $(R,H)$ be a rooted graph of type $(v,e)$. If $v-\alpha e>0$
we say that $(R,H)$ is \emph{sparse}. If $v-\alpha e<0$ we say that
$(R,H)$ is \emph{dense}.
\end{definition}

Since $\alpha$ is irrational, every rooted graph is either sparse
or dense.
\begin{definition}
Let $(R,H)$ be a rooted graph and let $R\subsetneq S\subseteq V(H)$.
We call $\left(R,H[S]\right)$ a \emph{subextension} of $(R,H)$.
We also call $(S,H)$ a \emph{nailextension }of $(R,H)$. 
\end{definition}

Note that $(R,H)$ is always a subextension and a nailextension of
itself.

\begin{definition}
A rooted graph $(R,H)$ is called \emph{safe} if all its subextensions
are sparse, and \emph{rigid} if all its nailextensions are dense.
\end{definition}

The next step is to generalize the Finite Closure Theorem~\cite[Theorem 4.3.2]{spencer} (we rename it as Finite Closure Lemma). We first recall the definition of the {\it $t$-closure}
of a set of vertices.
\begin{definition}
\label{def:t-closure}For a graph $G$, a subset $U\subseteq V(G)$, and $t\in\mathbb{Z}_{\geq 0}$, we define \emph{the $t$-closure
of $U$}, denoted ${\rm cl}_{t}(U)$, as follows: ${\rm cl}_{t}(U)$
is the minimum set of vertices which contains $U$ and is closed with
respect to taking rigid extensions with at most $t$ non-roots. 
\end{definition}

Now consider ${\bf G}\sim G(n,n^{-\alpha}).$ In what follows, we deal with rooted graphs $(R,H)$ of type $(v,e)$ with $r$ roots. Importantly, we now allow
$(R,H)$ to depend on $n$, so $r=r(n)$, $v=v(n)$
and $e=e(n)$. As we shall see, the original argument can be stretched
up to $r,v,e$ which grow as $(\ln n)^{1/(10d)}$. Thus, we denote
$M=M(n)=(\ln n)^{1/(10d)}$ and always assume that $r,v,e=O(M)$.

\begin{lemma}[Bounded Closure Lemma]
There exists a constant $C$
such that w.h.p.\ in ${\bf G}$, for every $r,t\leq M$,
the $t$-closure of every $r$-tuple $\vec{x}$ has size at most $r+Crt^{d}$.
\end{lemma}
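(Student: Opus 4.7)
The proof adapts the template of the Finite Closure Theorem (as in~\cite[Chapter~4]{spencer}), making every probabilistic estimate quantitative via the Diophantine hypothesis on $\alpha$. The core quantitative input will be a uniform lower bound on the density deficit: for any dense rooted graph of type $(v,e)$ with $v \leq M$, there is a constant $c_1 = c_1(\alpha,d) > 0$ such that $\alpha e - v \geq c_1/v^{d-1}$. Since $v/e < \alpha$, writing $v/e = p/q$ in lowest terms gives $|\alpha - v/e| \geq q^{-d} \geq e^{-d}$, hence $\alpha e - v \geq e^{1-d}$, once $e$ exceeds the finite list of exceptions to the hypothesis that $\alpha$ is not $q^{-d}$-approximable; the exceptions contribute a uniform positive lower bound. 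When $e$ is close to $v/\alpha$ (the only regime where the deficit can be small), this rearranges to $c_1/v^{d-1}$; otherwise $\alpha e - v = \Omega(1)$, which also dominates $c_1/v^{d-1}$ for $v \geq 1$.

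Next I would accumulate deficits along the closure construction. Unfolding $\mathrm{cl}_t(\vec{x})$ iteratively, every added vertex comes with a \emph{witness}: a rigid extension of size $\leq t$ with roots lying in the previously added vertices. Picking witnesses greedily so that their non-root blocks $B_1,\dots,B_k$ are pairwise disjoint and partition $\mathrm{cl}_t(\vec{x}) \setminus \vec{x}$, the combined rooted structure $(\vec{x},\hat H)$ has $V = \sum_i v_i$ non-roots and contains at least $E = \sum_i e_i$ of the edges prescribed by the witnesses, where $(v_i,e_i)$ is the type of the $i$-th rigid step. Since each rigid step is dense, the combined deficit satisfies
\[
\alpha E - V \;=\; \sum_i(\alpha e_i - v_i) \;\geq\; \sum_i \frac{c_1}{v_i^{d-1}} \;\geq\; \frac{c_1 V}{t^d},
\]
where the last bound uses $v_i \leq t$ and convexity of $x \mapsto x^{-(d-1)}$.

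The final step is a union bound. If $|\mathrm{cl}_t(\vec{x})| > r + Crt^d$ for some $\vec{x}$, then stopping the construction the first time the bound is crossed produces a witness structure with $V \in (Crt^d, Crt^d + t]$. For a fixed labelled witness structure of total type $(V,E)$, the probability that $\mathbf{G}$ realises the required $E$ edges over $\vec{x}$ is at most $n^V p^E = n^{-(\alpha E - V)} \leq n^{-c_1 V/t^d} \leq n^{-c_1 C r}$. The number of such labelled structures (an edge pattern on $r+V$ vertices together with an ordered partition of the $V$ non-roots into blocks of size at most $t$) is bounded by $\exp(O((r+V)^2)) = \exp(O(M^{2(d+1)}))$, which is $n^{o(1)}$ since $d > 2$ implies $(d+1)/(5d) < 1$. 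Union-bounding over the $n^r$ choices of $\vec{x}$, the $O(M)$ values of $V$, and the $M^2$ choices of $(r,t)$ gives a failure probability of at most $M^3 \cdot n^{o(1)} \cdot n^{r(1 - c_1 C)} = o(1)$ once $C$ is chosen with $c_1 C \geq 2$. The principal obstacle is isolating the uniform polynomial deficit bound $c_1/v^{d-1}$ (in particular, ensuring its constant does not depend on $e$ or on $n$); the choice $M = (\ln n)^{1/(10d)}$ is then calibrated so that the combinatorial multiplicity of witness structures is exactly absorbed by the probabilistic gain, and the condition $d > 2$ surfaces in the inequality $(d+1)/(5d) < 1$ needed to close this trade-off.
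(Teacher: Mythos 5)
Your argument is correct and follows essentially the same route as the paper's proof: the same Diophantine lower bound on the density deficit of a dense rooted graph (giving a per-non-root gain of order $t^{-d}$), the same unfolding of the closure into a chain of rigid extensions with disjoint non-root blocks stopped just after the size threshold is crossed, and the same union bound over witness structures and root tuples. The only difference is bookkeeping — you keep $\vec{x}$ fixed and defeat the $n^{r}$ factor by forcing the accumulated deficit to exceed $c_1 C r$, whereas the paper folds $r$ into the deficit by choosing $K\geq r/\beta$ — and your aside that $d>2$ is what makes $(d+1)/(5d)<1$ is a harmless overstatement (any $d>1/4$ suffices there).
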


Finally, we generalize \cite[Theorem 5.3.1]{spencer} about the
existence of {\it generic extensions}. 
\begin{definition}
\label{def:generic}Let $(R,H)$ be a rooted graph of type $(v,e)$
with $r$ roots. Fix a non-negative integer $t$. For tuples $\vec{x}=(x_{1},\dots,x_{r})$
and $\vec{y}=(y_{1},\dots,y_{v})$, we say that an $(R,H)$-extension
$\vec{y}$ of $\vec{x}$ is \emph{$t$-generic} if: (1) There are no additional edges between the vertices of $\vec{y}$ or
from $\vec{y}$ to $\vec{x}$ other than those specified by $H$. (2) If any $\vec{z}=(z_{1},\dots,z_{s})$ with $s\leq t$ forms a rigid
extension over $\vec{x}\cup\vec{y}$, then there are no edges between
$\vec{z}$ and $\vec{y}$.
\end{definition}

\begin{lemma}
\label{lem:generic_extensions}
W.h.p.\ in ${\bf G}$, for every safe rooted graph $(R,H)$
with $r\leq M$ roots and $v\leq M$ non-roots and for every $t\leq M$,
we have that every $r$-tuple of vertices $\vec{x}$ has a $t$-generic
$(R,H)$-extension $\vec{y}$. 
\end{lemma}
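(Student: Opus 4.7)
The plan is to adapt the classical generic-extensions argument (Spencer, \cite[Chapter 5]{spencer}) to the setting where $r,v,e,t=O(M)$ with $M=(\ln n)^{1/(10d)}$, making quantitative use of the hypothesis that $\alpha$ has irrationality measure strictly less than $d$. For fixed safe $(R,H)$ and an $r$-tuple $\vec{x}$, the aim is to count extensions $\vec{y}$ satisfying only condition~(1) of Definition~\ref{def:generic} via a Janson-type argument, and then to show that only a negligible fraction of these also fail condition~(2); a union bound over $(R,H)$ and $\vec{x}$ then finishes the proof.

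First I would establish a \emph{quantitative sparsity} estimate: for every safe $(R,H)$ of type $(v,e)$ with $v\leq M$ and every subextension of type $(v',e')$, writing $v'/e'=p/q$ in lowest terms gives $|\alpha-p/q|\geq q^{-d}\geq e'^{-d}$ by the assumption on $\alpha$, hence $v'-\alpha e'\geq e'^{1-d}\geq\mu:=c_\alpha M^{1-d}$ for some $c_\alpha>0$ (the finitely many small-$e'$ exceptions are absorbed into $c_\alpha$). Next, for fixed $(R,H)$ and $\vec{x}$, let $X$ count the $(R,H)$-extensions $\vec{y}$ satisfying condition~(1); the routine calculation $\mathbb{E}[X]=(n-r)_v\,p^e\,(1-p)^{rv+\binom{v}{2}-e}$ combined with $pM^2=n^{-\alpha}\cdot\mathrm{polylog}(n)=o(1)$ gives $\mathbb{E}[X]\sim n^{v-\alpha e}\geq n^{\mu}/2$. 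For concentration I would apply Janson's inequality to the (increasing) event that all required edges of $\vec{y}$ are present; the correlation sum $\Delta$ decomposes over intersection types, each of which is a proper subextension, so $\Delta/\mathbb{E}[X]^2\leq 2^{O(M^2)}n^{-\mu}=o(1)$ since $M^{d+1}\ll\ln n$ for the chosen $M$. Janson yields $\mathbb{P}(X\leq\tfrac12\mathbb{E}[X])\leq\exp(-c'n^\mu)$, and a short splitting argument (handle the required-edge and the no-extraneous-edge factors separately, using $pM^2=o(1)$ for the latter) upgrades this to the actual condition~(1) count.

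Finally, to handle condition~(2) I would bound the expected number of type-(1) extensions $\vec{y}$ admitting a rigid extension $\vec{z}$ with $|\vec{z}|\leq t$ and an edge to $\vec{y}$. The Bounded Closure Lemma caps the size of the $t$-closure of $\vec{x}\cup\vec{y}$ by $O(M^{d+1})$, while the density surplus of each rigid extension (again via the irrationality bound) contributes a factor $n^{-\mu}$ that absorbs the union bound over rigid-extension isomorphism types, giving $o(\mathbb{E}[X])$ in total. Combining with the concentration step, failure at a single $(R,H,\vec{x})$ has probability at most $\exp(-c'n^\mu)$, and since there are only $n^M\cdot 2^{O(M^2)}$ choices overall, the union bound succeeds because $n^\mu=\exp(c_\alpha(\ln n)^{(9d+1)/(10d)})$ dwarfs $M\ln n+M^2$. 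The main obstacle, and the technical heart of the argument, is the parameter balance: $M$ must be small enough that $pM^2=o(1)$ and that $2^{O(M^2)}n^{-\mu}$ in the Janson step vanishes, yet large enough to cover the $\Omega(\ln\ln\ln n)$ rounds of the Ehrenfeucht--Fra\"{i}ss\'{e} game; the exponent $1/(10d)$ in the definition of $M$ is chosen precisely to leave slack in each of these competing constraints.
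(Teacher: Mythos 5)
Your overall architecture (a quantitative sparsity bound $\mu=\Theta(M^{1-d})$ derived from the irrationality measure, a lower bound on the number of extensions for each root tuple, a count of the non-generic ones, and a final union bound over $(R,H)$, $t$ and $\vec{x}$) matches the paper's, and your bookkeeping of the parameter balance is correct. The genuine gap is in the upper-tail control. Janson's inequality is a lower-tail bound only: it shows that w.h.p.\ every $\vec{x}$ has at least $\tfrac12 n^{v-\alpha e}$ extensions with all required edges present, but it says nothing about how many of these carry an extraneous edge (failure of condition (1) of Definition \ref{def:generic}) or admit an attached rigid set with an edge into $\vec{y}$ (failure of condition (2)). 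For both of these you only compute expectations. A first-moment bound of $o(\mathbb{E}[X])$ on the bad count for a \emph{fixed} $\vec{x}$ converts, via Markov, into a failure probability for the event ``at least $\tfrac14 n^{v-\alpha e}$ bad extensions'' that is only polynomially small (of order $n^{-\alpha+o(1)}$), and this is annihilated by the union bound over the $n^{O(M)}$ root tuples. Your claimed per-tuple failure probability $\exp(-c'n^{\mu})$ is therefore unjustified for exactly the two steps that make the extension generic rather than merely existent.

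The paper circumvents this by proving a \emph{two-sided} concentration statement (Theorem \ref{thm:Counting_Safe_Extensions}, via the Kim--Vu polynomial concentration inequality) with failure probability $\exp(-\exp(\Omega(\ln n/M^{d})))$, which does survive the union bound, and then applying the \emph{upper} half of that concentration to the augmented rooted graphs that witness non-genericity: $(R,H^{+})$ with an extra edge, and the graphs obtained by adjoining a rigid set $\vec{z}$. The structural point that makes this work --- and which your sketch would also need --- is that these augmented extensions are again safe once $\vec{y}$ is restricted to avoid the closure $U_{\vec{x}}={\rm cl}_{2M}(\vec{x})\setminus\vec{x}$ (Proposition \ref{prop:extension_properties} together with Remark \ref{rem:counting_safe_extensions_upgraded}), while in the remaining case the rigid set $\vec{z}$ must itself lie inside $U_{\vec{x}}$, leaving only $\exp(O(M\ln M))$ choices for it by the Bounded Closure Lemma. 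To repair your argument you would have to replace Janson by a concentration tool with an upper tail of comparable strength (Kim--Vu, or a martingale argument in the style of Spencer), at which point you essentially recover the paper's proof.
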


\subsection{Look-ahead strategy}

The look-ahead strategy guarantees a win for Duplicator
in the Ehrenfeucht-Fra\"{i}ss\'{e} game. The previous results allow
us to apply the look-ahead strategy in ${\rm EF}({\bf G}_{n}^{1},{\bf G}_{n}^{2},\frac{1}{\ln d_0}\ln\ln\ln n)$
where ${\bf G}_{n}^{1},{\bf G}_{n}^{2}\sim G(n,n^{-\alpha})$ are independent. Let us recall relevant definitions.
\begin{definition}
Let $\vec{x}=(x_{1},\dots,x_{r})$ be a tuple of vertices in a graph
$G_{1}$ and let $\vec{y}=(y_{1},\dots,y_{r})$ be a tuple of vertices
in a graph $G_{2}$. Also let $t\in\mathbb{Z}_{\geq 0}$. We say
that the $t$-closures ${\rm cl}_{t}(\vec{x})$ and ${\rm cl}_{t}(\vec{y})$
are isomorphic, and write ${\rm cl}_{t}(\vec{x})\cong{\rm cl}_{t}(\vec{y})$,
if there exists a graph isomorphism from ${\rm cl}_{t}\left(\left\{ x_{1},\dots,x_{r}\right\} \right)$
to ${\rm cl}_{t}\left(\left\{ y_{1},\dots,y_{r}\right\} \right)$
that sends $x_{i}$ to $y_{i}$ for every $1\leq i\leq r$. The $t$-type
of a tuple $\vec{x}$ is the isomorphism class of ${\rm cl}_{t}(\vec{x})$ with fixed $\vec{x}$.
\end{definition}

\begin{definition}
Given a sequence of nonnegative integers $0=t_{0},t_{1},\dots,t_{k-1}$,
a \emph{look-ahead $(t_{0},t_{1},\dots,t_{k-1})$-strategy} for Duplicator
in ${\rm EF}(G_{1},G_{2},k)$ is a strategy that satisfies the following
condition. For every $0\leq i\leq k-1$, when there are $i$ rounds
remaining in the game, the $t_{i}$-types of the already marked vertices
are the same in both graphs. 
\end{definition}

The following lemma (see \cite[Chapter 6]{spencer}) summarizes the deterministic part of the original
argument about the existence of a look-ahead
strategy.
\begin{lemma}[\cite{spencer}]
\label{lem:look-ahead}Let $G_{1},G_{2}$ be two graphs and let $\ell,t$
be non-negative integers. Let $x_{1},\dots,x_{\ell}\in G_{1}$ and
$y_{1},\dots,y_{\ell}\in G_{2}$ be two tuples with the same $t$-type.
 Assume
that $u\geq t$ is an integer  such that $t$-closure of any $(\ell+1)$-tuple in $G_{1}$ has at
most $u-1$ non-roots (so at most $\ell+u$ vertices overall).  Also assume that for every rooted graph $(R,H)$ with $r\leq\ell+u$
roots and $v\leq\ell+u$ non-roots, every $r$-tuple in $G_{2}$ has
a $u$-generic $(R,H)$-extension. Then, for every $x_{\ell+1}\in G_{1}$
(representing Spoiler's move) there exists $y_{\ell+1}\in G_{2}$
such that $x_{1},\dots,x_{\ell+1}\in G_{1}$ and $y_{1},\dots,y_{\ell+1}\in G_{2}$
have the same $u$-type.
\end{lemma}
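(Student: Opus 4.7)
The proof is a standard Ehrenfeucht-Fra\"{i}ss\'{e} argument in the Shelah--Spencer style: given Spoiler's move $x_{\ell+1}\in G_1$, Duplicator identifies the local extension structure that $x_{\ell+1}$ induces over $\mathrm{cl}_t(\vec x)$ in $G_1$ and transplants it into $G_2$ using the assumed isomorphism of $t$-closures together with the generic extension hypothesis.

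The plan is to fix at the outset the isomorphism $\phi:\mathrm{cl}_t(\vec x)\to\mathrm{cl}_t(\vec y)$ sending $x_i\mapsto y_i$ supplied by the hypothesis that $\vec x$ and $\vec y$ have the same $t$-type. Write $D=\mathrm{cl}_t(\vec x)$, $D'=\mathrm{cl}_t(\vec y)$, and set $C=\mathrm{cl}_t(x_1,\ldots,x_{\ell+1})\subseteq V(G_1)$. Monotonicity of the closure operator yields $D\subseteq C$, and the assumed size bound on the $t$-closure of an $(\ell+1)$-tuple gives $|C|\le\ell+u$, so that the extension of $D$ by $C\setminus D$ has at most $u$ non-root vertices.

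I would then split on whether $x_{\ell+1}\in D$. If it is, simply set $y_{\ell+1}=\phi(x_{\ell+1})$; in this case adding $x_{\ell+1}$ as a new root does not enlarge the closure as a set of vertices, and the same rigid-extension triggers fire on the $G_2$ side by transport of structure along $\phi$, so the desired $u$-type agreement is obtained for free. The substantive case is $x_{\ell+1}\notin D$. Here, view $(D,G_1[C])$ as a rooted graph with root set $D$ and non-root set $C\setminus D$. The standard dichotomy lets one decompose this extension as a safe sub-extension carrying a (possibly empty) tower of rigid extensions on top: peel off maximal rigid sub-extensions one by one until the remainder is safe. The safe piece has at most $\ell+u$ roots and at most $u\le\ell+u$ non-roots, so Lemma~\ref{lem:generic_extensions} furnishes a $u$-generic copy of it over $D'$ in $G_2$. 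The rigid pieces sitting above it are forced by closure and can be copied one layer at a time, producing an isomorphism $\psi:C\to\psi(C)\subseteq V(G_2)$ that extends $\phi$. Take $y_{\ell+1}=\psi(x_{\ell+1})$.

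The main obstacle is verifying that the $(\ell+1)$-tuples so produced have the same $u$-type, not merely the same $t$-type. This uses crucially the $u$-genericity (rather than mere $t$-genericity) of the extension built in $G_2$: genericity rules out unintended edges among or into $\psi(C)$, and thereby rules out any rigid extensions of size $\le u$ emerging over $\psi(C)$ in $G_2$ that are not already present over $C$ in $G_1$. Combined with the size bound $|C|\le\ell+u$ and the hypothesis $u\ge t$, this forces $\mathrm{cl}_u(\vec y,y_{\ell+1})=\psi(\mathrm{cl}_u(\vec x,x_{\ell+1}))$, so that $\psi$ witnesses the required equality of $u$-types.
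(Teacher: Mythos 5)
The paper itself gives no proof of this lemma --- it is imported verbatim from Spencer's book --- so I am measuring your sketch against the standard Shelah--Spencer argument that the citation points to. Before the main issue, note that the statement as printed appears to have the two depths interchanged: the induction in the proof of Theorem~\ref{thm:almost_uniform_lower_bound} needs the $\ell$-tuples to agree at depth $u=t_{i+1}$ and produces $(\ell+1)$-tuples agreeing at the \emph{smaller} depth $t=t_i$, and the literal version (upgrading a $t$-type agreement to a $u$-type agreement with $u\ge t$) is false, since the hypotheses give no control over $u$-closures of $(\ell+1)$-tuples. Your write-up inherits this confusion at both ends: in the case $x_{\ell+1}\in D$ you only get $t$-type agreement ``for free'', not $u$-type agreement, and in the final step $\psi$ is defined only on $C=\mathrm{cl}_t(\vec x,x_{\ell+1})$, so the expression $\psi(\mathrm{cl}_u(\vec x,x_{\ell+1}))$ does not typecheck when $u>t$.

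The substantive gap is your handling of the rigid pieces. You realize a safe sub-extension generically over $D'=\mathrm{cl}_t(\vec y)$ and then assert that ``the rigid pieces sitting above it are forced by closure and can be copied one layer at a time.'' Nothing forces a rigid extension to exist in $G_2$: rigid extensions are dense configurations, and a $u$-generic extension \emph{by definition} admits no rigid extension (with at most $u$ non-roots) having an edge into its new vertices, so genericity actively excludes the very configurations you propose to copy on top of the safe piece. The standard argument sidesteps this entirely by choosing the root set correctly: one takes the whole of $C$ rooted at $C\cap\mathrm{cl}_u(\vec x)$ and checks that this \emph{single} extension is safe, because $|C\setminus\mathrm{cl}_u(\vec x)|\le u$, so any rigid subextension over $C\cap\mathrm{cl}_u(\vec x)$ would have at most $u$ non-roots and hence be absorbed into $\mathrm{cl}_u(\vec x)$, a contradiction. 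One application of Lemma~\ref{lem:generic_extensions} then produces all of $\psi(C)$ at once; genericity yields $\mathrm{cl}_t(\vec y,y_{\ell+1})\subseteq\psi(C)$, and the fact that rigidity is a property of the induced rooted graph (so $\psi$ carries the generating rigid extensions of $C$ over to $G_2$) yields the reverse inclusion. This is also precisely why the hypothesis must supply an isomorphism of the $u$-closures of the old tuples rather than of their $t$-closures, as in your setup: the root set $C\cap\mathrm{cl}_u(\vec x)$ of the safe extension generally leaves $\mathrm{cl}_t(\vec x)$.
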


We can now prove the existence of a look-ahead strategy in
${\rm EF}({\bf G}_{n}^{1},{\bf G}_{n}^{2},k)$, and thus complete the proof
of Theorem \ref{thm:almost_uniform_lower_bound}.
\begin{proof}[Proof of Theorem \ref{thm:almost_uniform_lower_bound}]
Let ${\bf G}_{n}^{1},{\bf G}_{n}^{2}\sim G(n,n^{-\alpha})$
be independent and set $k=\frac{1}{\ln d_0}\ln\ln\ln n$. Let $\mathcal{A}$ be the event
that the conclusions of 
Bounded Closure Lemma and Theorem \ref{lem:generic_extensions}
hold in both ${\bf G}_{n}^{1}$ and ${\bf G}_{n}^{2}$. Then $\mathbb{P}\left(\mathcal{A}\right)=1-o(1)$.
We show that, given the event $\mathcal{A}$, Duplicator has a winning
strategy in ${\rm EF}({\bf G}_{n}^{1},{\bf G}_{n}^{2},k)$. 

Construct a sequence $t_{0}\leq t_{1}\leq\dots\leq t_{k-1}$ inductively
as follows: $t_{0}=0$, and given $t_{i}$, we take $t_{i+1}\geq t_i$ such that the $t_{i}$-closure of any $(k-i)$-tuple in both ${\bf G}_{n}^{1}$
and ${\bf G}_{n}^{2}$ has at most $t_{i+1}-1$ non-roots. Note that these are the conditions from Lemma \ref{lem:look-ahead}
with $t=t_{i}$, $\ell=k-i-1$ and $u=t_{i+1}$. From the Bounded
Closure Lemma, a suitable $t_{i+1}$ exists, and can be taken to be
$t_{i+1}=C(k-i)t_{i}^{d}+1$ where $C=C_{\alpha}$ is a constant
(note that it also promises $t_{i+1}\geq t_{i}$ as long as we take
$C\geq1$). From Theorem \ref{lem:generic_extensions} (with $M$
replaced by $2M$), for every rooted graph $(R,H)$ with $r\leq2M$
roots and $v\leq2M$ non-roots and for every $t\leq2M$, we have that
in both ${\bf G}_{n}^{1}$ and ${\bf G}_{n}^{2}$, every $r$-tuple has a
$t$-generic $(R,H)$-extension.

Note that if $t_{i+1}\leq M$ then the assumptions of Lemma \ref{lem:look-ahead}
apply with $t,\ell,u$ as above. Indeed, we have $\ell\leq k$ and
$k=o(M)$, so $\ell+u\leq2M$ and $u\leq M$, and so for every rooted
graph $(R,H)$ with $r\leq\ell+u$ roots and $v\leq\ell+u$ non-roots,
every $r$-tuple has a $u$-generic $(R,H)$-extension. We deduce
that Duplicator can follow a look-ahead
$(t_{0},t_{1},\dots,t_{k-1})$-strategy, provided that $t_{k-1}\leq M$. To show that $t_{k-1}\leq M$ we consider the recurrence relation
defining $t_{i}$. We have $t_{0}=0$, $t_{1}=1$ and for every $1\leq i\leq k-1$,
\[
t_{i+1}=C(k-i)t_{i}^{d}+1\leq Ckt_{i}^{d}+1\leq\tilde{C}kt_{i}^{d}
\]
where $\tilde{C}=2C$. We deduce $t_{i}\leq(\tilde{C}k)^{n_{i}}$
where $n_{1}=0$ and $n_{i+1}=1+dn_{i}$, implying $n_{i}\leq d^{i}$.
Overall we have 
\begin{eqnarray*}
t_{k-1} & \leq & \left(\tilde{C}k\right)^{d^{k}}=\exp\left(d^{k}\ln\left(\tilde{C}k\right)\right)\\
 & = & \exp\left(\exp\left(\frac{\ln d}{\ln d_{0}}\ln\ln\ln n\right)\cdot(1+o(1))\ln\ln\ln\ln n\right)\\
 & = & \exp\left(o(\ln\ln n)\right)=o(M)
\end{eqnarray*}

where we used the assumption $d<d_0$. That finishes the proof.
\end{proof}

\section{Asymmetry: sparse case\label{sec:Asymmetry_sparse}}

In this section we prove the asymmetry of the random balanced graph for the sparse case $m<\frac{3}{2}n$, thus completing the proof of Theorem~\ref{thm:asymmetry}.


Fix $m=m(n)$ with $n+2\leq m<\frac{3}{2}n$ and let ${\bf H}={\bf H}(n,m)$.
Write $m=2\cdot\frac{n}{2}+r$ where $2\leq r<\frac{n}{2}$. The regular
component of ${\bf H}$ is therefore empty and ${\bf H}={\bf H}_{h}+{\bf H}_{b}$.
That is, ${\bf H}$ is a random $n$-cycle with additional
$r$ random balancing edges. Note that the lower bound $m\geq n+2$
cannot be improved: a cycle with one additional edge has
a single non-trivial isomorphism.

We first expose the edges of $\mathbf{H}_{h}$. Due to symmetry, it
is sufficient to prove that w.h.p.\ $\mathbf{H}$ is asymmetric subject
to $\mathbf{H}_{h}=H_{h}:=(1,2,\ldots,n,1)$. Let $R=R(H_{h},r)$
be the set of almost equidistributed vertices. The only random component
is therefore the endpoints $\left\{ {\bf y}[x]:x\in R\right\} $,
which determine the balancing edges $\{x,{\bf y}[x]\}$. 

For a more convenient description of the proof, let us color the Hamiltonian
edges in red and the balancing edges in blue. Since we are working
with the simple graph ${\bf H}$, every edge is assigned with exactly
one color. The red edges are deterministic while the blue edges are
random; see Figure \ref{fig:sparse_random_graph}.


\begin{figure}[h]
\begin{centering}
\includegraphics{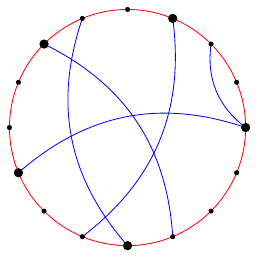}
\par\end{centering}
\caption{\label{fig:sparse_random_graph}A sample of ${\bf H}$ with $n=16$ and $r=5$. Vertices of $R$ are marked by big black circles.}
\end{figure}

The rest of the section is organized as follows.  In Section~\ref{subsec:Rare-configurations} we list several configurations which occur in ${\bf H}$ with probability $o(1)$. A key concept in the proof is that of an \emph{alternating cycle}, which is a cycle with edges of alternating colors. We show that w.h.p.\ ${\bf H}$ contains no two large alternating cycles with equal degree sequences, no two small alternating cycles which are connected by an alternating path, no small alternating cycles with additional internal edges, and does not contain some other rare configurations. In Section~\ref{subsec:Proving-asymmetry} we show that these configurations naturally arise from symmetries of ${\bf H}$, and thus prove that ${\bf H}$ is asymmetric w.h.p.

\subsection{Rare configurations}
\label{subsec:Rare-configurations}

\begin{definition}
An \emph{alternating walk} in ${\bf H}$ is a walk $v_{1},v_{2},\dots,v_{k}$
such that colors of the edges $\{v_{1},v_{2}\},\{v_{2},v_{3}\},\dots,\{v_{k-1},v_{k}\}$
alternate between red and blue. An \emph{alternating path} is an alternating
walk without repeated vertices. An \emph{alternating cycle} is an
alternating walk with $v_{1}=v_{k}$ and no other repetitions. Note that an odd alternating cycle contains two adjacent edges of
the same color.
\end{definition}

Let us call labeled cycles $C=(x_1,\ldots,x_{\ell}),\,C'=(x'_1,\ldots,x'_{\ell})$ {\it equipotent} in $\mathbf{H}$, if, for every $i$, vertices $x_i$ and $x'_i$ have equal degrees.

\begin{prop}
W.h.p. $\mathbf{H}$ does not contain two different (though their sets of vertices may coincide) equipotent alternating cycles of length at least $\ln^2n$. In particular, if $r=o(n)$, then w.h.p. there are no alternating cycles.
\label{prop:alt-cycles-are-small}
\end{prop}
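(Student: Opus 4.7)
The plan is a first-moment argument. For each even $\ell=2s$ with $\ell\geq\ln^2 n$, let $P_\ell$ count ordered pairs of distinct labeled equipotent alternating cycles of length $\ell$ in $\mathbf{H}$; by Markov's inequality it suffices to show $\sum_\ell \mathbb{E}[P_\ell]=o(1)$.

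As a first step, I would bound the expected number of labeled alternating cycles of length $2s$. Such a cycle uses exactly $s$ balancing edges, and by simplicity each has a unique source in $R$. Parameterize a labeled cycle by the cyclic ordered tuple $(s_1,\dots,s_s)\in R^s$ of distinct sources together with, for each $j$, one of two orientation choices specifying which of the two Hamilton-neighbors of $s_{j+1}$ equals the target $y[s_j]$. Proposition~\ref{prop:E_0_existence_bound} then yields
\[
\mathbb{E}\bigl[\#\text{ labeled alt.\ cycles of length }2s\bigr]\;\lesssim\;s\cdot\frac{(r)_s\,2^s}{(n)_s}\;\leq\;s\cdot\Bigl(\tfrac{4r}{n}\Bigr)^{\!s}.
\]
The \emph{in particular} statement follows immediately: if $r=o(n)$, then $\sum_{s\geq 2}s\,(4r/n)^s=O((r/n)^2)=o(1)$, so by Markov w.h.p.\ $\mathbf{H}$ contains no alternating cycle at all.

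For the main claim, I would consider an ordered pair $(C,C')$ of distinct labeled alternating cycles of length $\ell=2s$ and partition by the pattern of overlap (shared sources, shared blue edges, shared vertices). Proposition~\ref{prop:E_0_existence_bound} bounds the probability that all $|E_{\mathrm{blue}}(C\cup C')|$ required balancing edges appear. After conditioning on the balancing edges incident to $V(C)\cup V(C')$, the ``extra'' degree contributions from the remaining balancing edges at each vertex are near-independent Poisson-like counts of mean $\Theta(r/n)$, and the equipotency condition $\deg_{\mathbf{H}}(x_i)=\deg_{\mathbf{H}}(x'_i)$ at each of the $\ell$ matched positions becomes a coincidence of two such counts. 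The joint probability of all $\ell$ coincidences is at most $\rho^s$ for some $\rho<1$, controlled by $\sum_k \mathbb{P}[\mathrm{Pois}(\lambda)=k]^2$ with $\lambda=\Theta(r/n)$. Combined with the enumeration bound $O\bigl(s^2(4r/n)^{2s}\bigr)$ for ordered pairs, this gives $\mathbb{E}[P_{2s}]\lesssim s^2\bigl((4r/n)^2\rho\bigr)^s$. One checks that for all admissible $r<n/2$ the base $(4r/n)^2\rho$ is strictly less than $1$; summing over $s\geq\ln^2 n/2$ yields $\exp(-\Omega(\ln^2 n))=o(1)$.

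The main obstacle is controlling pairs with heavy overlap, especially those on the same vertex set with a nontrivial relabeling (a cyclic rotation or reflection of $C$). Here the Poisson-independence used above breaks down and equipotency becomes a periodicity constraint on the degree sequence of $C$. This case would be handled by an ancillary combinatorial argument showing that for typical $\mathbf{H}$, the degree sequence of any alternating cycle of length $\geq\ln^2 n$ admits no nontrivial cyclic symmetry, so that distinct labelings of the same unlabeled cycle are never equipotent in the relevant length regime.
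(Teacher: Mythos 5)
Your overall strategy --- a first-moment count of pairs of alternating cycles, multiplied by the conditional probability that the residual balancing randomness produces $\Omega(\ell)$ degree coincidences --- is the same as the paper's. But there are three concrete gaps. First, you only count \emph{even} alternating cycles. By the paper's definition an alternating cycle may be odd (it then has two adjacent edges of the same colour), and the odd case is not a formality: the enumeration splits into bluish cycles and three types of reddish cycles, and for reddish cycles containing a red edge with no $R$-endpoint (or a blue edge with two $R$-endpoints) the exponent degrades from $\ell/2$ to $(\ell-3)/2$, which must be tracked. Your ``in particular'' claim for $r=o(n)$ is therefore also incomplete as written, though it is easily repaired there since types 2 and 3 of reddish cycles cannot occur when $r=o(n)$.

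Second, and more seriously, you defer exactly the hardest case --- two labelings of the same (or heavily overlapping) cycle related by a rotation or reflection --- to an unspecified ``ancillary combinatorial argument'' that the degree sequence of a long alternating cycle has no nontrivial cyclic symmetry. That statement \emph{is} the proposition in its hardest instance, not a lemma one can wave at: the paper's proof spends most of its effort here, using the structural classification of alternating cycles to extract $(1-o(1))\ell/8$ positions $i$ with $x_i\in R$, $x_i\neq x_i'$, and $\{x_i\}$, $\{x_i'\}$ disjoint, before the coincidence probability can be applied. Third, your quantitative claim is wrong in the regime $r=(\tfrac12-o(1))n$ and $\ell=\Theta(n)$: the correct single-cycle base is $2r/n$, not $4r/n$ (with $4r/n$ up to $2$, the product $(4r/n)^2\rho$ is not below $1$), and for cycles consuming a constant fraction of $R$ the residual hitting probability degenerates, so the per-position coincidence probability $\rho$ is \emph{not} uniformly bounded away from $1$. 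The paper must first rule out alternating cycles of length $\geq 2n/7$ via a sharper bound on $(r)_{\lfloor\ell/2\rfloor}/(n)_{\lfloor\ell/2\rfloor}$ before running the equipotency argument; your proposal needs an analogous preliminary step.
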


\begin{remark}
\label{rem:R-vertices-and-edges}We make two simple observations:
\begin{itemize}
\item Since $r<\frac{n}{2}$, every red edge has at most one endpoint from
$R$.
\item By definition, every blue edge has at least one endpoint from $R$.
\end{itemize}
\end{remark}

\begin{proof}[Proof of Proposition~\ref{prop:alt-cycles-are-small}]

Let us first assume that $0\leq r\leq\xi n$ for a positive constant $\xi<\frac{1}{2}$ and prove an even stronger statement: either $r=o(n)$, and then w.h.p.\ ${\bf H}$
contains no alternating cycles, or $r=\Theta(n)$, and then w.h.p.\ ${\bf H}$
contains no alternating cycles of length greater than $\ln n$.

We apply Markov's inequality. Let ${\bf X}_{\ell}$ be the number
of alternating cycles of length $\ell$ in ${\bf H}$. Write it as
the sum of indicators: ${\bf X}_{\ell}=\sum_{C}\mathds{1}_{\left\{ C\subseteq{\bf H}\right\} }$
where the sum is over all possible alternating cycles of length $\ell$. 

First, assume $\ell\geq4$ is even. An alternating cycle $C$ of length
$\ell$ must consist of $\frac{\ell}{2}$ red edges and $\frac{\ell}{2}$
blue edges. From Remark \ref{rem:R-vertices-and-edges} it follows
that $C$ contains exactly $\frac{\ell}{2}$ vertices from $R$. Every
edge in $C$ has exactly one endpoint from $R$. It can be therefore
written as 
\begin{equation}
u_{1}\textcolor{red}{\sim}w_{1}\textcolor{blue}{\sim}u_{2}\textcolor{red}{\sim}w_{2}\textcolor{blue}{\sim}\dots\textcolor{blue}{\sim}u_{\frac{\ell}{2}}\textcolor{red}{\sim}w_{\frac{\ell}{2}}\textcolor{blue}{\sim}u_{1}\label{eq:even-alt-cycle}
\end{equation}
where $u_{i}\in R$ and $w_{i}\in R^{c}$.
\begin{claim}
Given $R$, the number of possible alternating cycles of length $\ell$
is at most $2^{\frac{\ell}{2}}(r)_{\frac{\ell}{2}}$.
\end{claim}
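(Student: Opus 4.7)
The plan is a direct encoding argument based on the normal form (\ref{eq:even-alt-cycle}). First I would justify that normal form: by Remark~\ref{rem:R-vertices-and-edges}, every red edge has at most one endpoint in $R$ and every blue edge has at least one. In an alternating cycle of even length $\ell$, each vertex is incident to exactly one red edge and one blue edge of the cycle, so counting red endpoints in $R$ in two ways (once by vertices, once by edges) gives $|R\cap V(C)|\leq \ell/2$, while the analogous count for blue edges gives $|R\cap V(C)|\geq \ell/2$. Hence $|R\cap V(C)|=\ell/2$, every red edge and every blue edge of $C$ has exactly one $R$-endpoint, and $R$-vertices alternate with non-$R$-vertices around the cycle, yielding (\ref{eq:even-alt-cycle}).

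Next I would fix an encoding scheme: an alternating cycle of length $\ell$ is encoded by the tuple
\[
\bigl((u_1,\dots,u_{\ell/2}),(w_1,\dots,w_{\ell/2})\bigr)
\]
obtained by choosing one of the $\ell/2$ vertices of $R\cap V(C)$ as the basepoint $u_1$, choosing one of the two directions of traversal, and reading off the remaining vertices in order. Every alternating cycle of length $\ell$ admits at least one (in fact $\ell$) such encoding, so it suffices to bound the number of encodings.

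Finally I would count the encodings. The tuple $(u_1,\dots,u_{\ell/2})$ is an ordered selection of $\ell/2$ distinct elements of $R$, contributing at most $(r)_{\ell/2}$ possibilities. Once the $u_i$ are fixed, the constraint that $\{u_i,w_i\}$ be a red edge, i.e.\ an edge of the fixed Hamilton cycle $H_h$, forces each $w_i$ to be one of the two neighbors of $u_i$ on $H_h$, contributing at most $2^{\ell/2}$ possibilities. Multiplying gives the bound $2^{\ell/2}(r)_{\ell/2}$. There is no real obstacle here: since we only need an upper bound, it does not matter that the encoding over-counts each cycle (by a factor of $\ell$ coming from basepoint and direction), nor that some ostensible encodings fail to give a genuine simple cycle (because some $w_i$ collides with another vertex or because the required blue edges are missing).
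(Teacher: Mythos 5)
Your proposal is correct and is essentially the paper's own argument: the paper counts the same encoding (an ordered choice of $\ell/2$ distinct vertices of $R$, then one of the two incident Hamiltonian edges for each), noting that every alternating cycle arises from this procedure and that over-counting is harmless for an upper bound. Your preliminary double-counting justification of the normal form (\ref{eq:even-alt-cycle}) is a slightly more explicit version of the paper's appeal to Remark~\ref{rem:R-vertices-and-edges}, but the substance is the same.
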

\begin{proof}
Consider the following procedure:
\begin{enumerate}
\item Choose a sequence $u_{1},u_{2},\dots,u_{\frac{\ell}{2}}$ of distinct
vertices from $R$.
\item For every $1\leq i\leq\frac{\ell}{2}$, choose one of the two red
edges incident to $u_{i}$, denote it $\{u_{i},w_{i}\}$. 
\end{enumerate}
This procedure defines a single possible alternating cycle (\ref{eq:even-alt-cycle})
of length $\ell$. Every possible alternating cycle can be obtained
from the procedure (perhaps in more than one way). The number of possible
choices is $2^{\frac{\ell}{2}}(r)_{\frac{\ell}{2}}$. This proves
the claim. 
\end{proof}
Given a possible alternating cycle $C$, the probability of $C\subseteq{\bf H}$
is the probability that given $\frac{\ell}{2}$ edges, each
with exactly one endpoint from $R$, appear as blue edges in ${\bf H}$.
This event can be interpreted as determining the values of $\frac{\ell}{2}$
out of the $r$ random vertices $\left\{ {\bf y}[x]:x\in R\right\} $.
In ${\bf H}^{*}$, the probability of this event is $\frac{1}{(n)_{\frac{\ell}{2}}}$.
Therefore, by Proposition $\ref{prop:simplicity_probability}$, in
${\bf H}$ the probability of this event is at most $\frac{p_{0}}{(n)_{\frac{\ell}{2}}}$
where $p_{0}$ is a constant. Overall 
\begin{equation}
\mathbb{E}\left({\bf X}_{\ell}\right)\leq p_{0}2^{\frac{\ell}{2}}\frac{(r)_{\frac{\ell}{2}}}{(n)_{\frac{\ell}{2}}}\leq p_{0}\left(\frac{2r}{n}\right)^{\frac{\ell}{2}}.\label{eq:E(X_l) even bound}
\end{equation}

Now assume $\ell\geq3$ is odd. Write ${\bf X}_{\ell}={\bf X}_{\ell}^{[b]}+{\bf X}_{\ell}^{[r]}$
where ${\bf X}_{\ell}^{[b]}$ is the number of alternating cycles
with $\frac{\ell+1}{2}$ blues edges (\emph{bluish cycles}) and ${\bf X}_{\ell}^{[r]}$
is the number of alternating cycles with $\frac{\ell+1}{2}$ red edges
(\emph{reddish cycles}). 

We start with ${\bf X}_{\ell}^{[b]}$. Let $C$ be a possible bluish
cycle of length $\ell$. It has a single vertex $u_{0}$ incident
to two (potentially) blue edges. From Remark \ref{rem:R-vertices-and-edges}
it follows that $C$ contains exactly $\frac{\ell+1}{2}$ vertices
from $R$. Without loss of generality, we can express any bluish $\ell$-cycle
as
\[
u_{0}\textcolor{blue}{\sim}u_{1}\textcolor{red}{\sim}w_{1}\textcolor{blue}{\sim}u_{2}\textcolor{red}{\sim}w_{2}\textcolor{blue}{\sim}\dots\textcolor{blue}{\sim}u_{\frac{\ell-1}{2}}\textcolor{red}{\sim}w_{\frac{\ell-1}{2}}\textcolor{blue}{\sim}u_{0}
\]
where $u_{i}\in R$, $w_{i}\in R^{c}$. Indeed, since all $y[x]$
have to be distinct, the only common vertex $u_{0}$ of two blue edges
have to be in $R$. Then, exactly one of the two blue edges adjacent
to $u_{0}$ must have both endpoints from $R$. Now we can similarly
claim that the number of possible bluish cycles of length $\ell$
is at most $2^{\frac{\ell-1}{2}}(r)_{\frac{\ell+1}{2}}$. Moreover,
the event $C\subseteq{\bf H}$ now determines the values of $\frac{\ell+1}{2}$
random vertices ${\bf y}[x]$, therefore its probability in ${\bf H}$
is $\leq\frac{p_{0}}{(n)_{\frac{\ell+1}{2}}}$. Overall 
\[
\mathbb{E}\left({\bf X}_{\ell}^{[b]}\right)\leq p_{0}2^{\frac{\ell-1}{2}}\frac{(r)_{\frac{\ell+1}{2}}}{(n)_{\frac{\ell+1}{2}}}\leq\frac{p_{0}}{2}\left(\frac{2r}{n}\right)^{\frac{\ell+1}{2}}.
\]
It remains to estimate the expectation of ${\bf X}_{\ell}^{[r]}$.
Let $C$ be a possible reddish cycle of length $\ell$. From Remark
\ref{rem:R-vertices-and-edges} it follows that the number of $R$-vertices
in $C$ is either $\frac{\ell-1}{2}$ or $\frac{\ell+1}{2}$. Moreover,
$C$ contains a red $2$-path, and at least one of its endpoints must
be from $R$ (otherwise there cannot be at least $\frac{\ell-1}{2}$
$R$-vertices in $C$). Let $xyz$ be the red $2$-path, where $x\in R$.
It is possible that $z\in R$ as well. If $z\not\in R$ then necessarily
$C$ contains exactly $\frac{\ell-1}{2}$ $R$-vertices. Overall we
have three different types of possible reddish cycles; we bound the
expected number of cycles of each type separately. The three types
are demonstrated in Figure \ref{fig:reddish-cycles-types}.

\begin{figure}[h]
\begin{center}

\textcolor{red}{}%
\begin{minipage}[t]{0.3\columnwidth}%
\textcolor{red}{}\subfloat[Type 1.]{\includegraphics[scale=0.8]{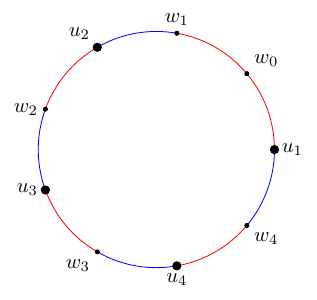}
\textcolor{red}{}}%
\end{minipage}\textcolor{red}{}%
\begin{minipage}[t]{0.3\columnwidth}%
\textcolor{red}{}\subfloat[Type 2.]{\includegraphics[scale=0.8]{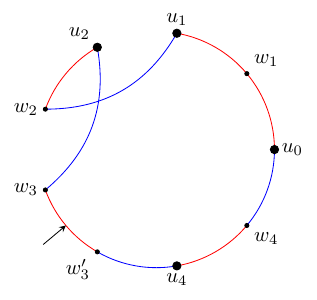}
\textcolor{red}{}}%
\end{minipage}\textcolor{red}{}%
\begin{minipage}[t]{0.3\columnwidth}%
\textcolor{red}{}\subfloat[Type 3.]{\includegraphics[scale=0.8]{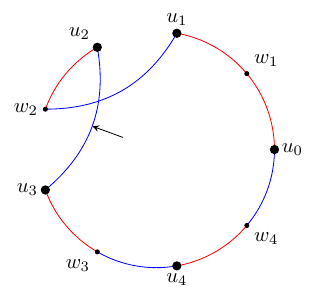}
\textcolor{red}{}}%
\end{minipage}



\end{center}

\caption{Three types of reddish cycles, $\ell=9$. In types 2 and 3, an arrow points at the \textquotedblleft special edge\textquotedblright{} whose choice adds a factor of $\Theta(\ell)$ to the bound. The cycles are drawn such the red edges are fixed, thus emphasizing the process of choosing the blue edges.\label{fig:reddish-cycles-types}}
\end{figure}

\textbf{Type 1.} There are $\frac{\ell-1}{2}$ $R$-vertices and $z\not\in R$.
Cycles of these type behave exactly like even alternating
cycles, with one red edge is replaced by a red $2$-path. In this
case we relabel $x=u_{1},y=w_{0},z=w_{1}$ and then $C$ can be written
as
\[
u_{1}\textcolor{red}{\sim}w_{0}\textcolor{red}{\sim}w_{1}\textcolor{blue}{\sim}u_{2}\textcolor{red}{\sim}w_{2}\textcolor{blue}{\sim}\dots\textcolor{blue}{\sim}u_{\frac{\ell-1}{2}}\textcolor{red}{\sim}w_{\frac{\ell-1}{2}}\textcolor{blue}{\sim}u_{1}
\]
where $u_{i}\in R$ and $w_{i}\in R^{c}$. The number of possible
cycles of this type is bounded by $2^{\frac{\ell-1}{2}}(r)_{\frac{\ell-1}{2}}$. Since
$\mathbb{P}\left(C\subseteq{\bf H}\right)\leq\frac{p_{0}}{(n)_{\frac{\ell-1}{2}}}$, the expected number of reddish cycles of type 1 is at
most $p_{0}\left(\frac{2r}{n}\right)^{\frac{\ell-1}{2}}$.

\textbf{Type 2.} There are $\frac{\ell-1}{2}$ $R$-vertices and $z\in R$.
Then there exists a single red edge which has \emph{no} endpoints
in $R$. Let $e_{0},e_{1},e_{2},\dots,e_{\frac{\ell-1}{2}}$ be
the sequence of red edges in $C$, starting from $e_{0}=\{x,y\},e_{1}=\{y,z\}$
and going in the cyclic order. Let $2\leq j\leq\frac{\ell-1}{2}$ be
such that $e_{j}$ is the red edge with no endpoint from $R$. Write
$e_{j}=\{w_{j},w_{j}'\}$. For $1\leq i\leq\frac{\ell-1}{2}$, $i\neq j$
write $e_{i}=\{u_{i},w_{i}\}$ where $u_{i}\in R,w_{i}\in R^{c}$
(so in particular $y=w_{1}$ and $z=u_{1}$) and also let $x=u_{0}$.
The blue neighbors of $w_{j},w_{j}'$ in $C$ must be $u_{j-1},u_{j+1}$;
without loss of generality, assume the blue edges are $\{u_{j-1},w_{j}\}$
and $\{w_{j}',u_{j+1}\}$. We get the following cycle:
\begin{equation}
\begin{array}{c}
u_{0}\textcolor{red}{\sim}w_{1}\textcolor{red}{\sim}u_{1}\textcolor{blue}{\sim}w_{2}\textcolor{red}{\sim}u_{2}\textcolor{blue}{\sim}\dots\textcolor{blue}{\sim}w_{j-1}\textcolor{red}{\sim}u_{j-1}\\
\textcolor{blue}{\sim}w_{j}\textcolor{red}{\sim}w_{j}'\textcolor{blue}{\sim}u_{j+1}\textcolor{red}{\sim}w_{j+1}\textcolor{blue}{\sim}\dots\textcolor{blue}{\sim}u_{\frac{\ell-1}{2}}\textcolor{red}{\sim}w_{\frac{\ell-1}{2}}\textcolor{blue}{\sim}u_{0}.
\end{array}\label{eq:type-2-cycle}
\end{equation}

The conclusion of this analysis is that the number of possible cycles
of type 2 is at most the number of choices in the following procedure:
\begin{enumerate}
\item Choose an index $2\leq j\leq\frac{\ell-1}{2}$. 
\item Choose a sequence of $\frac{\ell-3}{2}$ distinct vertices from $R$
and denote them $u_{i}$ for $i\in\{0\}\cup\left\{ 2,3,\dots\frac{\ell-1}{2}\right\} \setminus\{j\}.$
\item For $u_{0}$ choose one of the two red 2-paths starting from it, denote
it $\{u_{0},w_{1},u_{1}\}$. If $u_{1}\not\in R$, halt.
\item For every other $u_{i}$ choose one of the two red edges incident
to it, denote it $\{u_{i},w_{i}\}$. 
\item Choose an additional red edge with no endpoints in $R$. Also choose
one of its vertices to be denoted $w_{j}$, and denote the second
one $w_{j}'$. 
\end{enumerate}
The procedure indeed uniquely defines a possible cycle of type 2:
the one described in (\ref{eq:type-2-cycle}). As explained, every
possible cycle of type 2 can be obtained from it. The number of choices
in the procedure, which bounds the number of possible cycles of type
2, is at most $\ell\cdot(r)_{\frac{\ell-3}{2}}\cdot2^{\frac{\ell-3}{2}}\cdot2n$.
Like before, we still have $\mathbb{P}\left(C\subseteq{\bf H}\right)\leq\frac{p_{0}}{(n)_{\frac{\ell-1}{2}}}$.
In conclusion, the expected number of reddish cycles of type 2 is
at most
\[
2p_{0}\ell\cdot2^{\frac{\ell-3}{2}}\frac{n(r)_{\frac{\ell-3}{2}}}{(n)_{\frac{\ell-1}{2}}}=2p_{0}\ell\cdot2^{\frac{\ell-3}{2}}\frac{(r)_{\frac{\ell-3}{2}}}{(n-1)_{\frac{\ell-3}{2}}}\leq2p_{0}\ell\cdot\left(\frac{2r}{n-1}\right)^{\frac{\ell-3}{2}}.
\]
\textbf{Type 3.} There are $\frac{\ell+1}{2}$ $R$-vertices and $z\in R$.
In this case all red edges have exactly one endpoint from $R$. Moreover,
there is a single blue edge which has both endpoints from $R$. The
analysis is now similar to the previous type. Again, let $e_{0},e_{1},e_{2},\dots,e_{\frac{\ell-1}{2}}$
be the sequence of red edges in $C$, starting from $e_{0}=\{x,y\},e_{1}=\{y,z\}$
and going in cyclic order. For $1\leq i\leq\frac{\ell-1}{2}$ write
$e_{i}=\{u_{i},w_{i}\}$ where $u_{i}\in R,w_{i}\in R^{c}$ and also
let $x=u_{0}$. There exists $1\leq j\leq\frac{\ell-1}{2}$ such that
the blue edge connecting $e_{j}$ with $e_{j+1}$ has both endpoints
from $R$ (here we let $e_{\frac{\ell+1}{2}}=e_{0}$). We get the
following cycle:
\begin{equation}
\begin{array}{c}
u_{0}\textcolor{red}{\sim}w_{1}\textcolor{red}{\sim}u_{1}\textcolor{blue}{\sim}w_{2}\textcolor{red}{\sim}u_{2}\textcolor{blue}{\sim}\dots\textcolor{blue}{\sim}w_{j}\textcolor{red}{\sim}u_{j}\\
\textcolor{blue}{\sim}u_{j+1}\textcolor{red}{\sim}w_{j+1}\textcolor{blue}{\sim}\dots\textcolor{blue}{\sim}u_{\frac{\ell-1}{2}}\textcolor{red}{\sim}w_{\frac{\ell-1}{2}}\textcolor{blue}{\sim}u_{0}.
\end{array}\label{eq:type-3-cycle}
\end{equation}
So the number of possible cycles of type 3 is at most $\ell\cdot(r)_{\frac{\ell-1}{2}}\cdot2^{\frac{\ell-1}{2}}$.
As for $\mathbb{P}\left(C\subseteq{\bf H}\right)$, it is now bounded
by $\frac{2p_{0}}{(n)_{\frac{\ell-1}{2}}}$; the extra factor of $2$
comes from the blue edge $\{u_{j},u_{j+1}\}$, which could come from
$u_{j+1}=y[u_{j}]$ or from $u_{j}=y[u_{j+1}]$. In conclusion, the
expected number of reddish cycles of type 3 is at most
\begin{eqnarray*}
2p_{0}\ell\cdot2^{\frac{\ell-1}{2}}\frac{(r)_{\frac{\ell-1}{2}}}{(n)_{\frac{\ell-1}{2}}} & \leq & 2p_{0}\ell\cdot\left(\frac{2r}{n}\right)^{\frac{\ell-1}{2}}.
\end{eqnarray*}

In summary, we have proved the following bound for every $\ell\geq3$:
\begin{equation}
\mathbb{E}\left({\bf X}_{\ell}\right)=O\left(\ell\left(\frac{2r}{n}\right)^{\frac{\ell-3}{2}}\right).\label{eq:E(X_l) bound}
\end{equation}
Assume $r=\Theta(n)$. Then the expected number of alternating
cycles of length $\geq\ln n$ is
\[
\sum_{\ell\geq\ln n}\mathbb{E}\left({\bf X}_{\ell}\right)=O\left(\sum_{\ell\geq\ln n}\ell\left(\frac{2r}{n}\right)^{\frac{\ell-3}{2}}\right)=O\left(\sum_{\ell\geq\ln n}\ell\left(2\xi\right)^{\frac{\ell-3}{2}}\right)=o(1)
\]
since $2\xi<1$. Then we are done by Markov's inequality. 

Now assume $r=o(n)$. Then reddish cycles of type 2 or 3 are
impossible, because there is no red path of length $2$ with both
endpoints in $R$. In that case we can improve (\ref{eq:E(X_l) bound})
and write $\mathbb{E}\left({\bf X}_{\ell}\right)=O\left(\ell\left(\frac{2r}{n}\right)^{\frac{\ell-1}{2}}\right)$.
Then the expected number of alternating cycles is
\[
O\left(\sum_{\ell\geq3}\ell\left(\frac{2r}{n}\right)^{\frac{\ell-1}{2}}\right)=o(1).
\]
Again, we apply Markov's inequality and complete the proof in the case when $r$ is bounded away from $n/2$.

Finally, assume that $r<n/2$ and $r=(\frac{1}{2}-o(1))n$. We apply the bound (\ref{eq:E(X_l) bound}), noting that its proof does not use the fact that $r$ is bounded away from $\frac{n}{2}$. The bound can now be written more neatly as
\begin{equation}
\mathbb{E}\left({\bf X}_{\ell}\right)=O\left(\ell\left(\frac{2r}{n}\right)^{\frac{\ell}{2}}\right).\label{eq:E(X_l) neat}
\end{equation}
Let us first prove that w.h.p.\ there are no alternating cycles of size at least $2n/7$. It immediately follows from the union bound and ~\eqref{eq:E(X_l) neat}. Note that the power of $(2r/n)$ in this bound is, in particular, due to the inequality $(r)_{\lfloor\ell/2\rfloor}/(n)_{\lfloor\ell/2\rfloor}\leq (r/n)^{\lfloor\ell/2\rfloor}$. When $\ell$ is linear $n$ a much stronger inequality holds true:
\begin{align*}
\frac{(r)_{\lfloor\ell/2\rfloor}}{(n)_{\lfloor\ell/2\rfloor}} & =
\frac{r!(n-\lfloor\ell/2\rfloor)!}{n!(r-\lfloor\ell/2\rfloor)!}\lesssim\frac{r^r(n-\lfloor\ell/2\rfloor)^{n-\lfloor\ell/2\rfloor}}{n^n(r-\lfloor\ell/2\rfloor)^{r-\lfloor\ell/2\rfloor}}\\
&=\left(\frac{r}{n}\right)^{\lfloor\ell/2\rfloor}\left(1+\frac{\lfloor\ell/2\rfloor}{r-\lfloor\ell/2\rfloor}\right)^{r-\lfloor\ell/2\rfloor}\left(1-\frac{s}{n-s}\right)^{n-\lfloor\ell/2\rfloor}\\
&=\left(\frac{r}{n}
\left[\left(\frac{1}{1-2x}\right)^{(1-2x)/(2x)}(1-x)^{(1-x)/x}\right]
\right)^{\lfloor\ell/2\rfloor}
,
\end{align*}
where $x=\lfloor \ell/2\rfloor/n\in[1/7,1/2)$. Note that the function of $x$ in the above bound decreases with $x$, and so it is at most $(7/5)^{5/2}(6/7)^6:=\alpha<1$. Eventually, we get the following refined bound:
$$
\mathbb{E}\left({\bf X}_{\ell}\right)=O\left(\ell\left(\alpha\frac{2r}{n}\right)^{\frac{\ell}{2}}\right),
$$
and then Markov's inequality and the union bound over $\ell\geq\frac{2}{7}n$ implies that indeed w.h.p. there are no alternating cycles of length at least $2n/7$. This implies that w.h.p. any union of two alternating cycles has at most $\frac{3}{7}n+O(1)$ vertices from $R$.

Assume that different $C=(x_1,\ldots,x_{\ell})$ and $C'=(x'_1,\ldots,x'_{\ell})$ are equipotent in $\mathbf{H}$. Let as assume that, for a certain $i\in[\ell]$, $x_i=x'_{i}\in R$ and, at the same time, $x_{i+2}=x'_{i+2}\in R$ as well. Due to the description of types of alternating cycles, it may only happen when $x_i$ and $x_{i+2}$ are joined in $C$ by the path $x_ix_{i+1}x_{i+2}$  with edges of different color, and the same is true in $C'$ (an entirely blue $x_ix_{i+1}x_{i+2}$ might have two consecutive vertices from $R$, and the third vertex should not belong to $R$). Moreover, both $x_i$ and $x_{i+2}$ should be adjacent to at least one blue vertex both in $C$ and $C'$. Since colours of $\{x_i,x_{i+1}\}$ and $\{x'_i,x'_{i+1}\}$ are different, $x_{i+1}\neq x'_{i+1}$. This may only happen when either $x_i$ or $x_{i+2}$ belongs to two blue edges in $C\cup C'$. But then one of these two blue edges joins two vertices from $R$. There can be only constantly many such $i$ due to the description of types of alternating cycles.
 
Then there exists a set $\mathcal{J}\subset[\ell]$ of size at least $\frac{1-o(1)}{4}\ell$ such that, for every $i\in\mathcal{J}$, $x_i\in R$, and $x_i\neq x'_i$. From this it immediately follows that there exists $\mathcal{\tilde J}\subset\mathcal{J}$ of size at least $\frac{1-o(1)}{8}\ell$ such that sets $\mathcal{R}:=\{x_i,i\in\mathcal{\tilde J}\}$ and $\mathcal{R}':=\{x'_i,i\in\mathcal{\tilde J}\}$ are disjoint and $\mathcal{R}\subset R$. 



Let us now finish the proof. Consider two (not necessarily disjoint) sets $\{x_1,\ldots,x_{\ell}\}$ and $\{x'_1,\ldots,x'_{\ell}\}$. Assume that the event saying that $C=(x_1,\ldots,x_{\ell})$ and $C'=(x'_1,\ldots,x'_{\ell})$ are alternating cycles in $\mathbf{H}$ holds. We may assume that at least $(1-o(1))\frac{n}{14}$ vertices of $R$ do not belong to $C\cup C'$. Denote the set of these vertices of $R$ that do not belong to any of the cycles by $R'$. Almost all (but constantly many) vertices of $R$ that belong to the union of the cycles still can play the role of $y[x]$ for $x\in R'$ (unless they do not belong to blue edges with both endpoints from $R$). We let $\tilde R$ be the set of such vertices from $R\cap (V(C)\cup V(C'))$. Moreover, let $\mathcal{\tilde R}=\tilde R\cap\mathcal{R}$, $\mathcal{\tilde R}'=\tilde R\cap\mathcal{R}'$. Recall that $|\mathcal{\tilde R}|=\frac{1-o(1)}{8}\ell$.

Note that the set $\mathcal{Y}$ of $y[x]$, $x\in R'$, is a uniformly random subset of $[n]\setminus(V(C)\cup V(C'))\cup\tilde R$ of size $|R'|$.
Let $X$ be the number of vertices in $\mathcal{\tilde R}\cap\mathcal{Y}$. Due to the Hoeffding tail bounds for the hypergeometric distribution (see, e.g.~\cite[Theorem 2.10]{rucinski_luczak_janson}), $X$ is at least $\frac{1-o(1)}{112}\ell$ with probability $1-\exp(-\Omega(\ell))$. Let $\varphi$ be the bijection from $\mathcal{R}$ to $\mathcal{R}'$ that sends $x_i$ to $x'_i$. If, for a certain $x\in\mathcal{Y}\cap\mathcal{\tilde R}$, $\varphi(x)\notin \mathcal{\tilde R}'$, then cycles $C$ and $C'$ cannot be equipotent. Thus, we get that the event that $C$ and $C'$ are equipotent implies that $\varphi|_{\mathcal{Y}\cap\mathcal{\tilde R}}$ is a bijection between  $\mathcal{Y}\cap\mathcal{\tilde R}$ and $\mathcal{Y}\cap\mathcal{\tilde R}'$. Since the probability that $\mathcal{Y}\cap\mathcal{\tilde R}'$ coincides with a fixed subset of $\mathcal{\tilde R}'$ is $\exp(-\Omega(\ell))$, we get the statement of Proposition~\ref{prop:alt-cycles-are-small} due to the bound~\eqref{eq:E(X_l) neat} and the union bound over the choices of $\ell$, $x_1$, $x_1'$, and the directions in both cycles $C,C'$.

\end{proof}

In the case $r=o(n)$, Proposition \ref{prop:alt-cycles-are-small}
rules out the existence of alternating cycles, which is already sufficient for a proof that ${\bf H}$ is asymmetric w.h.p. (as would be clear from the rest of the proof). In the case $r=\Theta(n)$,
however, we must consider
more specific configurations. Thus, for the rest of this subsection,
let us assume that $r\geq cn$ where $c$ is a positive constant.

\begin{definition}
For two non-empty sets $U_{1},U_{2}\subseteq[n]$, the \emph{red
distance} between them is the minimal length of a red path connecting a vertex
from $U_{1}$ to a vertex from $U_{2}$. 
\end{definition}

\begin{definition}
For a set $U\subseteq[n]$, $\left|U\right|\geq2$, a \emph{red segment}
of $U$ of length $t$ is a red path with endpoints $x_{0},x_{t}\in U$
and internal vertices $x_{1},\dots,x_{t-1}\not\in U$.
\end{definition}

\begin{prop}
\label{prop:alt-cycles_all-in-1}
W.h.p.\ in ${\bf H}$, there
are no alternating cycles of length at most $\ln^2n$ 
\begin{enumerate}
\item with an additional alternating path of length between $2$ and $\ln^2 n$ connecting two (not necessarily distinct) vertices of the cycle;
\item with additional internal edges;
\item with another alternating cycle of length at most $\ln^2n$ at red distance less than $\sqrt{n}$;
\item with a red segment of length between $2$ and $\sqrt{n}$.
\end{enumerate}
\end{prop}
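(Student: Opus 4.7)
The plan is to apply Markov's inequality to each of the four configurations, following the same first-moment template as in the proof of Proposition \ref{prop:alt-cycles-are-small}. By Proposition \ref{prop:simplicity_probability}, it suffices to work in $\mathbf{H}^*$, where any prescribed set of $s$ blue edges appears with probability at most $O((2/n)^s)$ (the factor $2$ per blue edge accounting for the choice of which endpoint $x \in R$ is the ``random'' one in $\{x,\mathbf{y}[x]\}$). For each configuration I will enumerate labeled realizations of the form (alternating cycle) $+$ (extra structure), multiply by the probability that the required blue edges are present, and sum over the admissible length parameters.

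The key insight is that each of the four ``extra structures'' imposes an additional constraint whose expected contribution shrinks with $n$, even though the expected count of short alternating cycles alone is only $O(1)$. For configurations (1) and (2), the saving comes from extra blue edges that must hit a specific target determined by the underlying cycle, giving a factor of $O(1/n)$ per configuration that is not absorbed by extra vertex freedom (because red edges are deterministic Hamilton edges). For configurations (3) and (4), the saving is geometric: the extra red structure forces some vertex into a Hamilton-arc of length $O(\sqrt{n})$, giving a factor of $O(1/\sqrt{n})$.

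Concretely, for (1) an alternating path of length $\ell_2 \in [2, \ln^2 n]$ connecting two vertices of a cycle of length $\ell_1 \leq \ln^2 n$ has expected count at most $O(\ell_1^2 \beta^{\ell_1/2}/n)$, where $\beta = 2r/n < 1$: each extra blue edge of the path introduces an $O(1/n)$ probability factor that balances the $O(n)$ choice of its free endpoint, while the final constraint that the path must reach its second cycle endpoint costs an extra unabsorbed factor $n^{-1}$. Summing over $\ell_1, \ell_2 \leq \ln^2 n$ yields $O(\ln^2 n / n) = o(1)$. For (2) a blue internal chord produces the same $O(1/n)$ saving, while a red internal chord forces two cycle vertices to be Hamilton-adjacent and is absorbed into a reddish-cycle analysis already controlled by Proposition \ref{prop:alt-cycles-are-small}. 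For (3), given the first cycle, the second cycle must have a vertex inside a Hamilton-arc of length $O(\sqrt{n})$ around some vertex of the first cycle, reducing its expected count by $O(\sqrt{n}/n)$. For (4), a red segment of length $t \in [2, \sqrt{n}]$ forces two cycle vertices to sit at a common Hamilton distance at most $\sqrt{n}$, which again produces an $O(\sqrt{n}/n)$ factor in the enumeration of the cycle.

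The main obstacle I anticipate is the parity and sub-type casework in (1) and (2), mirroring the three reddish sub-cases in Proposition \ref{prop:alt-cycles-are-small} together with analogous sub-cases for the color pattern at the attachment points of the extra structure. The combinatorial bookkeeping is tedious but routine, since the decisive saving of $O(n^{-1})$ or $O(n^{-1/2})$ is robust across sub-cases and arises from the same simple mechanism --- a terminal constraint on a random blue jump, or confinement to a deterministic Hamilton arc. Summing each sub-case contribution gives $o(1)$ and the proposition then follows from the union bound.
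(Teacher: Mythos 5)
Your overall strategy coincides with the paper's: a first-moment/union-bound computation for each of the four configurations, with the decisive savings coming from exactly the two mechanisms you identify --- an extra blue edge whose endpoint is pinned down (an unabsorbed factor $O(1/n)$, which the paper packages as Claim~\ref{claim:alternating_paths}, bounding alternating paths between two \emph{fixed} endpoints by $O(\tfrac{1}{n})\,t(2r/n)^{t/2}$), and confinement of red structure to a short Hamilton arc (a factor $O(1/r)=O(1/n)$ per configuration, hence $O(\sqrt n/n)$ after summing over segment lengths $t\le\sqrt n$ in parts (3) and (4)). Parts (1), (3) and (4) of your plan, and the blue-chord half of part (2), match the paper's argument essentially line for line.

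The one place where your plan, as written, would fail is the red-chord case of part (2). You propose that a red internal chord is ``absorbed into a reddish-cycle analysis already controlled by Proposition~\ref{prop:alt-cycles-are-small}.'' This cannot work: for $r=\Theta(n)$ that proposition does \emph{not} exclude short alternating cycles --- their expected number is $\Theta(1)$, and the proposition only rules out pairs of long equipotent cycles --- so reducing the configuration to ``a reddish cycle'' yields an $O(1)$ bound, not $o(1)$. (Nor does a red chord automatically create a shorter \emph{alternating} cycle, since the colors at the chord's endpoints need not cooperate.) The correct saving is your own ``Hamilton-arc confinement'' mechanism, which you invoke for (3) and (4) but not here: the chord $e=\{u_1,w\}$ together with the two red edges of $C$ it meets forms a red path of length $3$, so one of the $R$-vertices of the cycle is determined up to $O(\ell)$ choices instead of $\Theta(r)$ choices. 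This multiplies the cycle count of Proposition~\ref{prop:alt-cycles-are-small} by $O(\ell/r)=O(\ell/n)$ and gives $\mathbb{E}\bigl(\mathbf{X}^{[r]}_{\ell}\bigr)=O\bigl(\tfrac{\ell}{n}(2r/n)^{\ell/2}\bigr)$, which sums to $o(1)$. With that substitution the rest of your bookkeeping goes through.
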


We will need the following claim.

\begin{claim}
\label{claim:alternating_paths}Fix two distinct vertices $x,y$ and
let $t\geq2$. Let ${\bf X}_{t}[x,y]$ be the number of alternating
paths of length $t$ which have $x,y$ as the two endpoints. Then
\[
\mathbb{E}({\bf X}_{t}[x,y])=O\left(\frac{1}{n}\right)\cdot t\left(\frac{2r}{n}\right)^{\frac{t}{2}}.
\]
\end{claim}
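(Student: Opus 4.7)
The plan is to follow the same enumerate-and-bound scheme used for $\mathbb{E}(\mathbf{X}_\ell)$ in the proof of Proposition~\ref{prop:alt-cycles-are-small}, adapted from closed alternating cycles to open alternating $x$--$y$ paths of length $t$. Write the path as $x = v_0, v_1, \dots, v_t = y$. I would split into a constant number of cases according to the parity of $t$, the dominant color when $t$ is odd, and whether each of $x,y$ lies in $R$. By Remark~\ref{rem:R-vertices-and-edges}, every blue edge forces at least one of its endpoints into $R$, which forces the number of internal $R$-vertices on the path to be roughly $\lceil t/2\rceil$.

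In each case I would enumerate the possible paths by first selecting the internal $R$-vertices (at most $2^{O(t)}(r)_s$ choices, for some $s\le\lceil t/2\rceil$) and then choosing which of the two red edges at each such vertex lies on the path. The probability that a specified path with $b$ blue edges appears in $\mathbf{H}$ is at most $p_0/(n)_b$, by exactly the argument used in Proposition~\ref{prop:alt-cycles-are-small} (Proposition~\ref{prop:simplicity_probability} allows us to pass from $\mathbf{H}^*$ to $\mathbf{H}$, and an extra factor of $2$ is needed whenever a blue edge has both endpoints in $R$, as in the reddish type-3 configuration). Multiplying the count by the probability in each case yields a bound of the shape
\[
\mathbb{E}(\mathbf{X}_t[x,y])\ =\ O(t)\cdot\Bigl(\frac{2r}{n}\Bigr)^{t/2}\cdot\frac{1}{n},
\]
and summing the $O(1)$-many cases yields the claim.

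The mechanism producing the extra factor $1/n$, relative to the cycle bound $\mathbb{E}(\mathbf{X}_\ell)=O(\ell(2r/n)^{\ell/2})$, is that fixing a second endpoint $y$ kills one free choice of $R$-vertex (or, equivalently, replaces one $R$-vertex factor in the numerator with a single-vertex factor in the denominator $(n)_b$), effectively pulling out an additional $O(1/n)$ factor in each case. The main obstacle is to carry out the endpoint case analysis carefully: whether $x$ or $y$ lies in $R$ changes which $v_i$ must lie in $R$ and what color the boundary edges have, and when $t$ is odd one must also separate red-majority sub-cases by whether or not the red $2$-path at the ``odd'' end has both endpoints in $R$. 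These sub-cases are direct analogues of types 1--3 of the reddish cycles in Proposition~\ref{prop:alt-cycles-are-small}; each is routine and none alters the leading order of the bound.
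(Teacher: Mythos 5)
Your proposal is correct and follows essentially the same route as the paper: the paper also reduces to a constant number of endpoint/parity/color cases (types E1--E3, B1--B3, R2--R3), bounds each by combining a factor of $2r$ per internal red edge, $1/n$ per blue edge, and an extra factor of $t$ for the ``special edge'' in types 2 and 3, with the additional $1/n$ arising exactly as you describe from the second anchored endpoint removing one free choice of $R$-vertex. No gaps.
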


\begin{proof}
The proof involves case analysis very similar to that of the previous
proof. We avoid a repetition of the same details, and instead only briefly
describe the different possible types of alternating paths (see also
Figure \ref{fig:paths-types}).

Again, we distinguish between three cases: even paths, bluish odd
paths, and reddish odd paths. Each case can be subdivided into types
analogous to the types of reddish cycles from the previous proof.
\begin{description}
\item [{Even~path}] $P$ consists of $\frac{t}{2}$ red edges
and $\frac{t}{2}$ blue edges. Without loss of generality $x$ is
incident to a blue edge. 
\begin{description}
\item [{Type~E1.}] $x\not\in R$ and $\left|V(P)\cap R\right|=\frac{t}{2}$.
\item [{Type~E2.}] $x\in R$ and $\left|V(P)\cap R\right|=\frac{t}{2}$.
\item [{Type~E3.}] $x\in R$ and $\left|V(P)\cap R\right|=\frac{t}{2}+1$.
\end{description}
\item [{Bluish~path}] $P$ consists of $\frac{t-1}{2}$ red
edges and $\frac{t+1}{2}$ blue edges. At least one endpoint of $P$
must be in $R$, due to Remark \ref{rem:R-vertices-and-edges}. Without
loss of generality assume $x\in R$.
\begin{description}
\item [{Type~B1.}] $y\not\in R$ and $\left|V(P)\cap R\right|=\frac{t+1}{2}$.
\item [{Type~B2.}] $y\in R$ and $\left|V(P)\cap R\right|=\frac{t+1}{2}$.
\item [{Type~B3.}] $y\in R$ and $\left|V(P)\cap R\right|=\frac{t+3}{2}$.
\end{description}
\item [{Reddish~path}] $P$ consists of $\frac{t+1}{2}$ red
edges and $\frac{t-1}{2}$ blue edges. In this case we exclude the
endpoints of $P$ from the count of $R$-vertices, since they are
not incident to any blue edge in the path. 
\begin{description}
\item [{Type~R2.}] $\left|\left(V(P)\cap R\right)\setminus\left\{ x,y\right\} \right|=\frac{t-1}{2}$.
\item [{Type~R3.}] $\left|\left(V(P)\cap R\right)\setminus\left\{ x,y\right\} \right|=\frac{t+1}{2}$.
\end{description}
\end{description}
For all types, the expected number of paths is determined (up to a
$\Theta(1)$-factor) by combining a factor of $2r$ for every \emph{internal}
red edge, a factor of $\frac{1}{n}$ for every blue edge, and an additional
factor of $t$ in types numbered $2$ and $3$ (coming from the choice
of one ``special edge'', indicated by an arrow in Figure \ref{fig:paths-types}). 
\end{proof}

\begin{figure}[!h]
\begin{center}

\textcolor{red}{}\subfloat[Type E1.]{\includegraphics{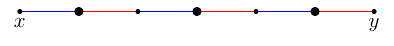}
\textcolor{red}{}}

\textcolor{red}{}\subfloat[Type E2.]{\includegraphics{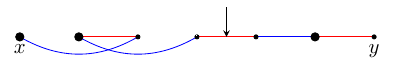}
\textcolor{red}{}}

\textcolor{red}{}\subfloat[Type E3.]{\includegraphics{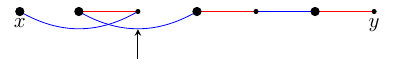}
\textcolor{red}{}}

\textcolor{red}{}\subfloat[Type B1.]{\includegraphics{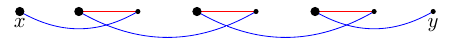}
\textcolor{red}{}}

\textcolor{red}{}\subfloat[Type B2.]{\includegraphics{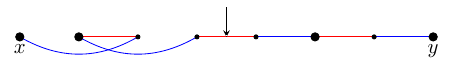}
\textcolor{red}{}}

\textcolor{red}{}\subfloat[Type B3.]{\includegraphics{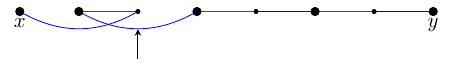}
\textcolor{red}{}}

\textcolor{red}{}\subfloat[Type R2.]{\includegraphics{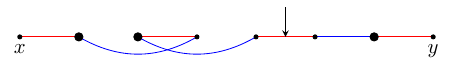}
\textcolor{red}{}}

\textcolor{red}{}\subfloat[Type R3.]{\includegraphics{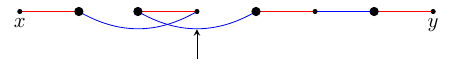}
\textcolor{red}{}}

\end{center}

\caption{\label{fig:paths-types}All possible types of alternating paths connecting a given pair of vertices $x,y$. Vertices of $R$ are marked by big black circles (excluding the endpoints in the red cases, as described in the proof). An arrow marks the \textquotedblleft special edge\textquotedblright , which is again either a red edge with no $R$-endpoints or a blue edge with two $R$-endpoints.}
\end{figure}

\begin{proof}[Proof of Proposition~\ref{prop:alt-cycles_all-in-1}.1.]

Let ${\bf X}_{\ell,t}$ be the number of pairs $(C,P)$,
where $C$ is an alternating cycle of length $\ell\leq\ln^2 n$ and $P$ is an alternating path of length $t\leq\ln^2 n$ which shares its two endpoints with $C$ (internally
disjoint from $C$). A pair $(C,P)$ can be chosen by first choosing an alternating cycle
$C$, then two vertices $x,y$ from $C$, and then an alternating
path connecting $x,y$. From (\ref{eq:E(X_l) neat}) and Claim \ref{claim:alternating_paths},
we get
$$
\mathbb{E}\left({\bf X}_{\ell,t}\right)  =  O\left(\ell\left(\frac{2r}{n}\right)^{\frac{\ell}{2}}\ell^{2}\frac{1}{n}t\left(\frac{2r}{n}\right)^{\frac{t}{2}}\right)
 =  \frac{1}{n}O\left(\ell^{3}\left(\frac{2r}{n}\right)^{\frac{\ell}{2}} t\left(\frac{2r}{n}\right)^{\frac{t}{2}}\right).
$$
Summing over $3\leq\ell\leq\ln^2 n$ and $t\leq\ln^2 n$, and using the inequality
$\frac{2r}{n}<1$, we deduce $\sum_{\ell,t}\mathbb{E}\left({\bf X}_{\ell,t}\right)=o(1)$. 
We then apply Markov's inequality and finish the proof. 
\end{proof}

\begin{proof}[Proof of Proposition~\ref{prop:alt-cycles_all-in-1}.2.]
For every $4\leq\ell\leq\ln^2 n$, let ${\bf X}_{\ell}^{[r]}$ be the
number of pairs $(C,e)$ where $C$ is an alternating cycle of length
$\ell$ and $e$ is an additional internal red edge. Similarly let
${\bf X}_{\ell}^{[b]}$ be the number of $(C,e)$ where now $e$ is
an additional internal blue edge. The idea is that an additional red
edge reduces the number of possible choices by a factor of $\ell/r$, and
an additional blue edge reduces the probability of the existence in
${\bf H}$ by a factor of $\ell/n$.

We begin with ${\bf X}_{\ell}^{[b]}$. First notice that an additional
blue edge in $C$ is possible only when $\ell$ is odd and $C$ is
a reddish cycle of type 3 (as defined in the proof of Proposition
\ref{prop:alt-cycles-are-small}). Indeed, in all other cases, the
number of blue edges in $C$ equals the number of $R$-vertices in
$C$, and therefore no additional blue edges may exist. The general
form of a reddish cycle of type 3 is described in (\ref{eq:type-3-cycle}).
Using that notation, the additional blue edge $e$ must have either
$u_{j}$ or $u_{j+1}$ as one of its endpoints. The number of possible
choices for $(C,e)$ is therefore at most $2\ell\cdot\ell\cdot(r)_{\frac{\ell-1}{2}}\cdot2^{\frac{\ell-1}{2}}$
(an extra factor of $2\ell$ is added due to $e$). However, the probability
that $(C,e)$ exists in ${\bf H}$ is now $\frac{1}{(n)_{\frac{\ell+1}{2}}}$.
Overall
\[
\mathbb{E}\left({\bf X}_{\ell}^{[b]}\right)=O\left(\ell^{2}\cdot2^{\frac{\ell-1}{2}}\frac{(r)_{\frac{\ell-1}{2}}}{(n)_{\frac{\ell+1}{2}}}\right)=O\left(\frac{\ell^{2}}{n}\cdot\left(\frac{2r}{n}\right)^{\frac{\ell}{2}}\right).
\]
Now consider ${\bf X}_{\ell}^{[r]}$. We focus on the case where $\ell$
is even; the odd case is handled similarly and gives an additional
factor of $\ell$.

In any pair $(C,e)$ the cycle can be labelled as in $C$
as in (\ref{eq:even-alt-cycle}) in a way such that $e$ has $u_{1}$
as an endpoint. There are at most $\ell$ ways to choose the other
endpoint of $e$. Moreover, $e$ determines a red path of length 3
which contains two red edges of $C$, and reduces the
number of choices by a factor of $r$. Therefore, a bound on $\mathbb{E}\left({\bf X}_{\ell}^{[r]}\right)$
can be obtained by multiplying (\ref{eq:E(X_l) even bound}) by $\frac{\ell}{r}$,
which yields
\[
\mathbb{E}\left({\bf X}_{\ell}^{[r]}\right)=O\left(\frac{\ell}{r}\cdot\left(\frac{2r}{n}\right)^{\frac{\ell}{2}}\right)=O\left(\frac{\ell}{n}\cdot\left(\frac{2r}{n}\right)^{\frac{\ell}{2}}\right).
\]
Recalling that $\frac{2r}{n}<1$ and combining the above,
\[
\sum_{4\leq\ell\leq\ln^2 n}\mathbb{E}\left({\bf X}_{\ell}^{[b]}+{\bf X}_{\ell}^{[r]}\right)=O\left(\frac{1}{n}\right)\sum\ell^{2}\left(\frac{2r}{n}\right)^{\frac{\ell}{2}}=O\left(\frac{\ln^5 n}{n}\right).
\]
We apply Markov's inequality and finish the proof. 
\end{proof}

\begin{proof}[Proof of Proposition~\ref{prop:alt-cycles_all-in-1}.3.]
We have already proved that there are no two overlapping
alternating cycles w.h.p. Fix $3\leq\ell,\ell'\leq\ln^2 n$ and $t\leq\sqrt{n}$.
Let ${\bf X}_{\ell,\ell',t}$ be the number of triplets $(C,C',P)$
where $C,C'$ are disjoint alternating cycles of lengths $\ell,\ell'$
respectively, and $P$ is a red path connecting them. The idea is
that the red path $P$ eliminates one degree of freedom in the choice
of cycles, therefore reduces the number of choices by a factor of
$r$. Let us handle the case when $\ell,\ell'$ are both even; the
odd case is handled similarly, and gives an additional factor of $\ell\ell'$.

Note that $P$ together with two red edges from $C$ and $C'$ produce
a red path $P'$ of length $t+2$, with two vertices from $R\cap(V(C)\cup V(C'))$
in it. There are $O(r)$ ways to choose the path $P'$, and this choice
determines a red edge of $C$ and a red edge of $C'$. The number
of choices of $C$ and of $C'$ are then reduced by a factor of $r$
each. Combining these factors with (\ref{eq:E(X_l) even bound}),
we obtain the bound
\[
\mathbb{E}\left({\bf X}_{\ell,\ell',t}\right)=O\left(\frac{1}{n}\cdot\left(\frac{2r}{n}\right)^{\frac{\ell}{2}+\frac{\ell'}{2}}\right).
\]
Summing over $\ell,\ell',t$, and adding a factor of $\ell\ell'$
for the odd case, we have
\begin{align*}
\sum_{\ell,\ell'\leq\ln^2 n,\;t\leq\sqrt{n}}\mathbb{E}\left({\bf X}_{\ell,\ell',t}\right) & =\sqrt{n}\cdot O\left(\frac{1}{n}\right)\cdot\sum_{\ell,\ell'\leq\ln^2 n}\ell\ell'(2\xi)^{\frac{\ell}{2}+\frac{\ell'}{2}}\\
&=O\left(\frac{\ln^6n}{\sqrt{n}}\right)=o(1).
\end{align*}
\end{proof}

\begin{proof}[Proof of Proposition~\ref{prop:alt-cycles_all-in-1}.4.]
Fix $\ell\leq\ln^2 n$ and $2\leq t\leq\sqrt{n}$. Let ${\bf X}_{\ell,t}$
count pairs $(C,P)$ where $C$ is an alternating cycle of length
$\ell$ and $P$ is a red segment of $C$ of length $t$. As above,
we only consider the case when $\ell$ is even; the odd case is handled
similarly, and gves an additional factor of $\ell$.

Note that the endpoints of $P$ are not adjacent by a red edge ---
otherwise this edge would form with $P$ the entire Hamilton cycle.
We apply the usual argument: $P$ together with the two red edges
from $C$ produce a red path $P'$ of length $t+2$ with two vertices
from $R\cap V(C)$ in it. There are $O(r)$ ways to choose the path
$P'$, and this choice determines two red edges of $C$. There are
$O(\ell)$ ways to choose their relative position in $C$. The number
of choices of $C$ is therefore multiplied by a factor of $\frac{\ell}{r^{2}}$.
Combining with (\ref{eq:E(X_l) even bound}), we obtain the bound
\[
\mathbb{E}\left({\bf X}_{\ell,t}\right)=O\left(r\cdot\frac{\ell}{r^{2}}\left(\frac{2r}{n}\right)^{\frac{\ell}{2}}\right)=O\left(\frac{\ell}{n}\cdot\left(\frac{2r}{n}\right)^{\frac{\ell}{2}}\right).
\]
Summing over $\ell,t$, and adding a factor of $\ell\ell'$ for the
odd case, we have
\begin{align*}
\sum_{3\leq\ell\leq\ln^2 n,\;2\leq t\leq\sqrt{n}}\mathbb{E}\left({\bf X}_{\ell,t}\right) &=\sqrt{n}\cdot O\left(\frac{1}{n}\right)\cdot\sum_{3\leq\ell\leq\ln^2 n}\ell^{2}(2\xi)^{\frac{\ell}{2}}\\
&=O\left(\frac{\ln^5 n}{\sqrt{n}}\right)=o(1).
\end{align*}
\end{proof}

Finally, we will need the following property.

\begin{prop}
\label{prop:paths-same-degree}W.h.p.\ in ${\bf H}$, there exist
no two disjoint red paths $P=(x_{1},x_{2},\dots,x_{t})$ and $P'=(x_{1}',x_{2}',\dots,x_{t}')$
of length $t\geq\sqrt{n}$ such that ${\rm deg}_{{\bf H}}(x_{i})={\rm deg}_{{\bf H}}(x_{i}')$
for every $1\leq i\leq t$. 
\end{prop}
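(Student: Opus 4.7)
The plan is to estimate, for each $t\geq\sqrt{n}$, the expected number of ordered pairs $(P,P')$ of vertex-disjoint red paths of length $t$ with matching degree sequences, and then conclude by Markov's inequality together with a union bound over $t$.

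Since every vertex has red degree exactly $2$ and $\mathcal{S}:=\mathbf{y}(R)$ determines all blue edges, one has $\deg_{\mathbf{H}}(x)=2+\mathds{1}[x\in R]+\mathds{1}[x\in\mathcal{S}]$. Setting $a_{i}:=\mathds{1}[x_{i}\in R]$ and $a'_{i}:=\mathds{1}[x'_{i}\in R]$ (both deterministic once $R,P,P'$ are fixed), the degree-matching condition for $(P,P')$ becomes
\[
\mathds{1}[x_{i}\in\mathcal{S}]-\mathds{1}[x'_{i}\in\mathcal{S}]=a'_{i}-a_{i}\qquad(1\leq i\leq t).
\]
Depending on the ``type'' $(a_{i},a'_{i})\in\{0,1\}^{2}$, this either uniquely fixes the pair $\bigl(\mathds{1}[x_{i}\in\mathcal{S}],\mathds{1}[x'_{i}\in\mathcal{S}]\bigr)$ (for types $(0,1)$ and $(1,0)$) or allows it to be either $(0,0)$ or $(1,1)$ (for types $(0,0)$ and $(1,1)$). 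By Proposition~\ref{prop:simplicity_probability} it suffices to bound this probability in $\mathbf{H}^{*}$, where $\mathcal{S}$ is uniformly distributed over the subsets of $[n]$ of size $r$; the passage back to $\mathbf{H}$ costs only a factor $1/p_{0}$.

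The hard part will be to decouple the matching events across the $t$ positions, since under the uniform law on $\mathcal{S}$ the constraint $|\mathcal{S}|=r$ is fixed. I will handle this via Poissonization: let $\widetilde{\mathcal{S}}$ include each vertex of $[n]$ independently with probability $p:=r/n\in[c,1/2)$. Since $\mathcal{S}$ has the same distribution as $\widetilde{\mathcal{S}}$ conditioned on $|\widetilde{\mathcal{S}}|=r$, and $\Pr[|\widetilde{\mathcal{S}}|=r]=\Theta(1/\sqrt{n})$ by Stirling's formula (using $p$ bounded away from $0$ and $1$), one has $\Pr[\mathcal{S}\in A]\leq O(\sqrt{n})\cdot\Pr[\widetilde{\mathcal{S}}\in A]$ for every event $A$. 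Under $\widetilde{\mathcal{S}}$ the indicators $\mathds{1}[v\in\widetilde{\mathcal{S}}]$ are independent across the $2t$ pairwise distinct vertices of $\{x_{i}\}\cup\{x'_{i}\}$, so the matching probability factors into a product over $i\in[t]$. A direct case analysis yields per-position match probabilities $p(1-p)$ for types $(0,1),(1,0)$ and $p^{2}+(1-p)^{2}=1-2p(1-p)$ for types $(0,0),(1,1)$; using the elementary inequality $3p^{2}-3p+1>0$ and that $p(1-p)\geq c(1-c)$ on $[c,1/2]$, both are bounded by $1-2c(1-c)=:1-\eta<1$ uniformly. Consequently, for every fixed disjoint pair $(P,P')$,
\[
\Pr\bigl[\text{degrees of }P\text{ and }P'\text{ match}\bigr]\;\leq\;\frac{1}{p_{0}}\cdot O(\sqrt{n})\cdot(1-\eta)^{t}.
\]

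Finally, each red path of length $t$ is a subpath of the Hamilton cycle and is determined by a starting vertex ($n$ choices) and a direction ($2$ choices), so there are at most $(2n)^{2}=4n^{2}$ ordered pairs of vertex-disjoint red paths of length $t$, for every relevant $t$. Summing over $\sqrt{n}\leq t\leq n/2$,
\[
\sum_{t\geq\sqrt{n}}4n^{2}\cdot O(\sqrt{n})\cdot(1-\eta)^{t}\;\leq\;O\!\bigl(n^{5/2}/\eta\bigr)\,e^{-\eta\sqrt{n}}\;=\;o(1),
\]
and Markov's inequality will complete the proof.
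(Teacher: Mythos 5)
Your proof is correct, and while it shares the paper's overall skeleton (pass to $\mathbf{H}^{*}$ via Proposition~\ref{prop:simplicity_probability}, express degrees as $2+\mathds{1}[v\in R]+\mathds{1}[v\in\mathbf{Y}]$, bound the matching probability for a fixed pair $(P,P')$, then union-bound over the $\leq 4n^{2}$ pairs of arcs of the Hamilton cycle), the central probability estimate is obtained by a genuinely different device. The paper conditions on the indicator vector $\mathbf{z}$ recording which vertices of $P$ lie in $\mathbf{Y}$; given $\mathbf{z}$, the matching event forces $\tilde{\mathbf{Y}}=\mathbf{Y}\setminus P$ to contain a prescribed set $B\subseteq V(P')$ and avoid its complement $C$, and the resulting hypergeometric ratio is shown to be $\exp(-\Theta(t))$ using $r\geq cn$. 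You instead Poissonize the uniform $r$-subset into an i.i.d.\ Bernoulli$(r/n)$ set at the cost of an $O(\sqrt{n})$ factor, after which the $2t$ relevant indicators are independent and the matching probability factors into per-position terms, each at most $1-2c(1-c)<1$. Your route avoids any binomial-coefficient manipulation (and in particular sidesteps the slightly delicate monotonicity of $\binom{n-2t}{\cdot}$ that the paper's chain of inequalities leans on), gives a clean uniform per-position bound that covers both the ``forced'' and ``free'' positions, and handles all $t\geq\sqrt{n}$ directly rather than reducing to $t=\sqrt{n}$ by monotonicity; the price is the extra $\sqrt{n}$ factor from de-Poissonization, which is harmless against the $e^{-\eta\sqrt{n}}$ decay. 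Both arguments use the standing assumption $r\geq cn$ of this subsection in an essential way.
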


\begin{proof}
We prove the proposition in the random multigraph ${\bf H}^{*}$.
Then, from Proposition \ref{prop:simplicity_probability} it is also
true in ${\bf H}$. Moreover, it is sufficient to prove the proposition
only for $t=\sqrt{n}$ due to monotonicity. 

The number of disjoint labelled red paths $P=(x_{1},x_{2},\dots,x_{t})$
and $P'=(x_{1}',x_{2}',\dots,x_{t}')$ is at most $(2n)^{2}=4n^{2}$.
Let us fix such a pair $(P,P')$ and bound $\mathbb{P}(\mathcal{D})$,
where 
\[
\mathcal{D}=\mathcal{D}_{P,P'}=\left\{ {\rm deg}(x_{i})={\rm deg}(x_{i}')\quad\forall1\leq i\leq t\right\} .
\]
Note that ${\rm deg}(x)$ refers to the degree in ${\bf H}^{*}$. 

We begin by defining some notation. Let ${\bf Y}=\left\{ y[x]:x\in R\right\} $.
In ${\bf H}^{*}$, by definition, ${\bf Y}$ is distributed uniformly
over all subsets of $[n]$ of size $r$. For every $v\in[n]$, 
${\rm deg}(v)=2+\mathds{1}_{v\in{\bf Y}}+\mathds{1}_{v\in R}$. Let ${\bf z}=\left(\mathds{1}_{x_1\in{\bf Y}},\dots,\mathds{1}_{x_t\in{\bf Y}}\right)$ be a random binary vector, whose value precisely determines the
intersection of $P$ with ${\bf Y}$. We bound $\mathbb{P}\left(\mathcal{D}\right)$
by conditioning by the value of ${\bf z}$. 

So, let $z=\left(I_{x_{1}},I_{x_{2}},\dots,I_{x_{t}}\right)$ be a
binary vector and consider $\mathbb{P}\left(\mathcal{D}\bigm|{\bf z}=z\right)$.
Let $j=\sum_{i=1}^{t}I_{x_{i}}$ (the number of $1$-entries in $z$).
Then, given ${\bf z}=z$, the subset $\tilde{{\bf Y}}={\bf Y}\setminus P$
is distributed uniformly over all subsets of $[n]\setminus P$ of
size $r-j$. The event $\mathcal{D}$ is saying that $\mathds{1}_{x'_i\in{\bf Y}}=I_{x_{i}}+\mathds{1}_{x_i\in R}-\mathds{1}_{x'_i\in R}$
for every $1\leq i\leq t$. If there exists $i$ such that $\mathds{1}_{x'_i\in{\bf Y}}=I_{x_{i}}+\mathds{1}_{x_i\in R}-\mathds{1}_{x'_i\in R}\not\in\{0,1\}$,
then of course $\mathbb{P}\left(\mathcal{D}\bigm|{\bf z}=z\right)=0$.
Otherwise, we can partition the vertices of $P'$ into 
\begin{eqnarray*}
B & = & \left\{ x_{i}':\,I_{x_{i}}+\mathds{1}_{x_i\in R}-\mathds{1}_{x'_i\in R}=1\right\} ,\\
C & = & \left\{ x_{i}':\,I_{x_{i}}+\mathds{1}_{x_i\in R}-\mathds{1}_{x'_i\in R}=0\right\} .
\end{eqnarray*}
Then $\mathcal{D}$ is the event that $B\subseteq\tilde{{\bf Y}}$ and
$\tilde{{\bf Y}}\cap C=\emptyset$. 
\begin{eqnarray*}
\mathbb{P}(\mathcal{D}\mid{\bf z}=z) & = & \frac{{n-2t \choose r-j-t}}{{n-t \choose r-j}}\leq\frac{{n-2t \choose r-j}}{{n-t \choose r-j}}=\frac{(n-2t)_{r-j}}{(n-t)_{r-j}}\leq\frac{(n-t)_{r-j}}{(n)_{r-j}}\\
 & = & O\left(\prod_{i=0}^{r-1}\left(1-\frac{t}{n-i}\right)\right)=O\left(\exp\left(-\sum_{i=0}^{r-1}\frac{t}{n-i}\right)\right).
\end{eqnarray*}
Now, since $r\geq cn$ for a positive $c$,
\begin{align*}
\sum_{i=0}^{r-1}\frac{1}{n-i}=\frac{1}{n}\sum_{i=0}^{r-1}\frac{1}{1-\frac{i}{n}}
&\geq(1+o(1))\int_{0}^{c}\frac{{\rm d}x}{1-x}\\
&=(1+o(1))\ln\left(\frac{1}{1-c}\right).
\end{align*}
We deduce
\[
\mathbb{P}(\mathcal{D}\mid{\bf z}=z)\leq\exp\left(-\Theta(t)\right)=\exp\left(-\Theta\left(\sqrt{n}\right)\right).
\]
This bound is true for every value $z$ (uniformly), therefore it
also holds for $\mathbb{P}\left(\mathcal{D}\right)$. Now, taking a union
bound over the pairs $(P,P')$, we have
\[
\mathbb{P}\left(\bigcup_{(P,P')}\mathcal{D}_{P,P'}\right)\leq4n^{2}\cdot\exp\left(-\Theta\left(\sqrt{n}\right)\right)=o(1)
\]
and that finishes the proof.
\end{proof}


\subsection{Proof of asymmetry}
\label{subsec:Proving-asymmetry}

Let $\Sigma_{n}$ be the group of permutations of $[n]$. We need
to prove that w.h.p., for every $\sigma\in\Sigma_{n}\setminus\{{\rm id}\}$,
$\sigma$ is not an automorphism of ${\bf H}$. Let $\Sigma_{n}^{[h]}\subset\Sigma_{n}$
be the subgroup of $2n$ automorphisms of $H_{h}=(1,2,\dots,n,1).$
Note that these are exactly the permutations which preserve edge colors. 

We begin by proving that w.h.p., no permutation $\sigma\in\Sigma_{n}\setminus\Sigma_{n}^{[h]}$
is an automorphism of ${\bf H}$. Suppose there exists $\sigma\in\Sigma_{n}\setminus\Sigma_{n}^{[h]}$
which is an automorphism of ${\bf H}$. Then there exists a red edge
$\{x_1,x_2\}$ such that $\sigma(\{x_1,x_2\})$ is blue. Consider the two red edges incident to $\sigma(x_{2})$; write them
as $\{\sigma(x_{2}),\sigma(x_3)\}$ and $\{\sigma(x_{2}),\sigma(x'_3)\}$.
Since $x_{2}$ is also incident to exactly two red edges, and $\{x_{1},x_{2}\}$
is red, one of $\{x_{2},x_{3}\},\{x_{2},x_{3}'\}$ must be blue. Assuming that $\{x_2,x_3\}$ is blue, and applying the same argument but for $\sigma^{-1}$ applied to $\{\sigma(x_2),\sigma(x_3)\}$, we would get that there exists a red edge $\{x_3,x_4\}$ such that $\{\sigma(x_3),\sigma(x_4)\}$ is blue. Proceeding by induction,
we obtain an alternating walk such that its image under $\sigma$
is also an alternating walk, with colors switched. Since the graph
is finite, at some point the walk meets itself, and we would get an alternating cycle $C$ such that $\sigma(C)$ is also an alternating
cycle, with edge colors switched by $\sigma$.
If $r=o(n)$, we are done: from Proposition \ref{prop:alt-cycles-are-small}
we know that w.h.p.\ ${\bf H}$ contains no alternating cycles. So
assume $r\geq cn$ for a positive constant $c$. Since labeled $C$ and $\sigma(C)$ are different, due to Proposition~\ref{prop:alt-cycles-are-small}, we may assume that $C$ has length at most $\ln^2n$.

Let $u_{1}$ be a vertex in $C$, incident to a red edge and a blue
edge in $C$, such that $\sigma(u_{1})\neq u_{1}$. It is obvious that such a vertex exists when $C$ is even. If $C$ is odd, then it has a unique vertex adjacent to two edges of the same color. If all other vertices are fixed, then there exists an edge in $C$ that maps to itself, and so its image has the same color --- a contradiction.

Let $\{u_{1},v\}$ be the red edge incident to $u_{1}$ in $C$. Let
$\{u_{1},u_{2}\}$ be the second red edge incident to $u_{1}$. From Proposition \ref{prop:alt-cycles_all-in-1}, we may assume that $u_{2}\not\in C$, $\{\sigma(u_{1}),\sigma(u_{2})\}$ is red, and
all four vertices $u_{1},u_{2},\sigma(u_{1}),\sigma(u_{2})$ must
be distinct. We can now continue inductively, constructing two disjoint red paths
$(u_{1},u_{2},u_{3},\dots)$ and $(\sigma(u_{1}),\sigma(u_{2}),\sigma(u_{3}),\dots)$.
Since Proposition~\ref{prop:alt-cycles_all-in-1}
forbids any red distances or red segments of length at most $\sqrt{n}$,
in particular we can proceed until the red paths are of length $t\geq\sqrt{n}$.
We get two directed disjoint red paths of length at least $\sqrt{n}$
with the same sequence of degrees, so Proposition \ref{prop:paths-same-degree}
applies. We see that the assumption that ${\bf H}$ is $\sigma$-symmetric
leads to the existence of one of the rare configurations from the
previous subsection in ${\bf H}$, which happens with probability
$o(1)$.

Finally, it remains to show that w.h.p., no permutation $\sigma\in\Sigma_{n}^{[h]}\setminus\left\{ {\rm id}\right\} $
is an automorphism of ${\bf H}$. Recall that these are the permutations
which preserve edge colors. This time we separately consider the cases
$r\geq30\ln n$ and $r\leq30\ln n$.

Start with $r\geq30\ln n$. We show that for a given $\sigma\in\Sigma_{n}^{[h]}\setminus\left\{ {\rm id}\right\} $,
the probability that it is an automorphism of ${\bf H}$
is $o\left(1/n\right)$. Then, it would only remain to take
a union bound over $2n-1$ possible $\sigma$. 

We construct a set of vertices $S$ (depending only on $\sigma$)
with $\left|S\right|=\left\lfloor \frac{n}{3}\right\rfloor $ and
$\left|\sigma(S)\cap R\right|\geq\left\lfloor \frac{r}{3}\right\rfloor $
and $S\cap\sigma(S)=\emptyset$. The construction is done with the
following algorithm.
\begin{enumerate}
\item Start with $S=\emptyset$. 
\item Choose a vertex $x\in\sigma^{-1}(R)$ such that $\sigma(x)\neq x$
and $x\not\in S\cup\sigma(S)\cup\sigma^{-1}(S)$. Update $S\leftarrow S\cup\{x\}$.
Repeat this step $\left\lfloor \frac{r}{3}\right\rfloor $ times.
\item Choose a vertex $x\in[n]$ such that $\sigma(x)\neq x$ and $x\not\in S\cup\sigma(S)\cup\sigma^{-1}(S)$.
Update $S\leftarrow S\cup\{x\}$. Repeat this step $\left\lfloor \frac{n}{3}\right\rfloor -\left\lfloor \frac{r}{3}\right\rfloor $
times.
\end{enumerate}
Since $\sigma$ is either a rotation or a reflection of the Hamiltonian
cycle $H_{h}$, it has at most $2$ fixed points. Therefore, at the
$i$-th step there are at most $2+3(i-1)=3i-1$ forbidden vertices.
This shows that indeed we can repeat the second step $\left\lfloor \frac{r}{3}\right\rfloor $
times, and then repeat the third step $\left\lfloor \frac{n}{3}\right\rfloor -\left\lfloor \frac{r}{3}\right\rfloor $
times. By construction $S\cap\sigma(S)=\emptyset$, $\left|\sigma(S)\cap R\right|\geq\left\lfloor \frac{r}{3}\right\rfloor \geq\left\lfloor 10\ln n\right\rfloor $
and $\left|S\right|=\left\lfloor \frac{n}{3}\right\rfloor $. Also let $T\subseteq S$ such that $\sigma(T)\subseteq\sigma(S)\cap R$
and $\left|T\right|=\left\lfloor 10\ln n\right\rfloor $.

Let ${\bf B}\left[T,S\right]$
denote the set of blue edges with an endpoint in $T$ and the other
endpoint in $S$. If $\sigma$ is an automorphism
of ${\bf H}$ then the event $\sigma\left({\bf B}[T,S]\right)={\bf B}[\sigma(T),\sigma(S)]$ holds. 
So it remains to show that, for every possible value $B$ of ${\bf B}[T,S]$,
\[
\mathbb{P}\left({\bf B}[\sigma(T),\sigma(S)]=\sigma(B)\bigm|{\bf B}[T,S]=B\right)=o\left(1/n\right)
\]
uniformly over $B$. 

Consider the vertices $\left\{ {\bf y}[x]:x\in\sigma(T)\right\} $.
Given ${\bf B}[T,S]=B$, each ${\bf y}[x]$ can still be any vertex
from $\sigma(S)$ (since $S\cap\sigma(S)=\emptyset$). However, the
event ${\bf B}[\sigma(T),\sigma(S)]=\sigma(B)$ restricts ${\bf y}[x]$
to at most $2$ possible values from $\sigma(S)$ (since there are at most two blue edges between $x$ and $S$). Recall that $\left|S\right|=\left\lfloor \frac{n}{3}\right\rfloor $ and that $\left|T\right|=\lfloor 10\ln n\rfloor$,
so given ${\bf B}[T,S]=B$, the event ${\bf B}[\sigma(T),\sigma(S)]=\sigma(B)$
restricts the number of values of each ${\bf y}[x]$ by a factor of
$\frac{2}{3}+o(1)$. We therefore get the following coarse bound:
\[
\mathbb{P}\left({\bf B}[\sigma(S)]=\sigma(B)\bigm|{\bf B}[S]=B\right)\leq\left(2/3+o(1)\right)^{\left|T\right|}=o\left(1/n\right).
\]

In the case $r\leq30\ln n$ we use a different argument. Fix $\sigma\in\Sigma_{n}^{[h]}\setminus\left\{ {\rm id}\right\} $.
If $\sigma$ is an automorphism of ${\bf H}$, one of the following
events must hold:
\begin{enumerate}
\item $E_{\sigma}^{(1)}$, the event that $\sigma(e)=e$ for every blue
edge $e$.
\item $E_{\sigma}^{(2)}$, the event that there exist two blue edges $e_{1}\neq e_{2}$
such that $\sigma(e_{1})=e_{2}$. 
\end{enumerate}
The first event fully determines the values ${\bf y}[x]$ for every
$x\in R$ (necessarily ${\bf y}[x]=\sigma(x)$, except when $\sigma$
is a reflection and there are two fixed points $x_{1},x_{2}$, in
which case ${\bf y}[x_{1}]=x_{2}$ and ${\bf y}[x_{2}]=x_{1}$). Therefore
$\mathbb{P}\left(E_{\sigma}^{(1)}\right)\leq p_{0}/(n)_{r}$. 

The second event implies the following event $E$ (which is not dependent
on $\sigma$). For every $x\in R$ let ${\bf d}(x)$ measure the red
distance between $x$ and ${\bf y}[x]$. Let $E$ be the event that
there exist two distinct vertices $x_{1},x_{2}\in R$ such that ${\bf d}(x_{1})={\bf d}(x_{2})$.
It is easy to see that $\mathbb{P}\left(E\right)=\binom{r}{2}\cdot O\left(1/n\right).$
Overall, by the union bound, the probability that ${\bf H}$ is $\sigma$-symmetric
in this case is $o(1)$ since $r\geq2$ and $r=O\left(\ln n\right)$.
That finishes the proof of Theorem \ref{thm:asymmetry} for the case $m<\frac{3}{2}n$. 

\section{Final remarks}
\label{sec:conclusions}

In this paper, we study distinguishability of sparse random graphs by FO sentences. We proved a general upper bound on the minimum quantifier depth of a sentence that distinguishes between independent samples of $G(n,n^{-\alpha})$ that answers the question from~\cite{BZ}, developed a new tool --- random balanced graph --- and studied its properties. We observed a new phenomenon: the FO distinguishability between independent samples of $G(n,n^{-\alpha})$ depends on how well $\alpha$ can be approximated by rational numbers. In particular, from Theorem~\ref{thm:almost_uniform_lower_bound} it follows that the lower bound $\mathbf{k}_{\alpha}\geq\frac{1-o(1)}{\ln 2}\ln\ln\ln n$ w.h.p. holds true for almost all $\alpha\in(0,1)$ 
  --- in particular, for all algebraic irrational $\alpha\in(0,1)$. 

We believe that the general upper bound on $\mathbf{k}_{\alpha}$ in Theorem~\ref{thm:general_upper_bound} can be improved. A possible way to do it is to consider existential sentences that describe existence of a tuple of graphs with a common set of roots defined in such a way that, as soon as, intersections between these graphs become large, the density of their union becomes non-typical. We suspect that 
 a further increase of the width of syntax trees of existential sentences by branching would improve the bound up to $O(\ln\ln n)$, but we do not see a promising way to reach the lower bound in Theorem~\ref{thm:almost_uniform_lower_bound}. 

A more general and involved problem that is very interesting to address is to get a tight dependency between typical values of $\mathbf{k}_{\alpha}$ and an accuracy of approximations of $\alpha$ by rational numbers.

\bibliography{bibliography} 
\bibliographystyle{plain}

\appendix

\section{Asymmetry: dense case\label{sec:Asymmetry_dense}}
\label{appendix:dense_asym}

In this section we prove Theorem~\ref{thm:asymmetry_bollobas}. 
From now on, suppose that $\{{\bf G}_{n}\}_{n=1}^{\infty}$ is a sequence
of random graphs with the following three properties:
\begin{description}
\item [{(P1)}] $\delta({\bf G}_{n})\geq3$.
\item [{(P2)}] There exists a constant $\Delta\geq3$ such that $\Delta({\bf G}_{n})\leq\Delta$ for every $n$.
\item [{(P3)}] There exists a constant $c_{0}>0$ such that $\mathbb{P}\left(E_{0}\subseteq E\left({\bf G}_{n}\right)\right)\leq\left(\frac{c_{0}}{n}\right)^{\left|E_{0}\right|}$
for every $E_{0}\subseteq\binom{[n]}{2}$ and every $n$. 
\end{description}
We need to show that ${\bf G}_{n}$ is asymmetric w.h.p.

\subsection{Orbits Analysis}

Let $\Sigma_{n}$ denote the group of permutations on $[n]$. For
every $\sigma\in\Sigma_{n}$ let $P_{\sigma}$ denote the probability
that $\sigma$ is an automorphism of ${\bf G}_{n}$. Then it suffices
to show that
\begin{equation}
\sum_{\sigma\in\Sigma_{n}\setminus\{{\rm id}\}}P_{\sigma}=o(1).\label{eq:sum of P_sigma}
\end{equation}

To bound $P_{\sigma}$ for a given permutation $\sigma\in\Sigma_{n}$,
we shall analyize the orbits of edges under the action of $\sigma$.
\begin{definition}
Let $E_{n}=\binom{[n]}{2}$. We define a group action of $\Sigma_{n}$
on $E_{n}$ as follows: for every $\sigma\in\Sigma_{n}$ and $\{x,y\}\in E_{n}$,
\[
\sigma\left(\{x,y\}\right)=\{\sigma(x),\sigma(y)\}.
\]
Given a permutation $\sigma\in\Sigma_{n}$ and a pair $e\in E_{n}$,
we define ${\rm Orb}_{\sigma}(e)$ as the orbit of $e$ under the
action of the subgroup $\left\langle \sigma\right\rangle $ generated
by $\sigma$. That is, the $\sigma$-orbit ${\rm Orb}_{\sigma}(e)=\left\{ \sigma^{k}(e):k\in\mathbb{Z}\right\} \subseteq E_{n}$. 
\end{definition}

Note that for every $\sigma$, the quotient set 
is
\[
E_{n}/\left\langle \sigma\right\rangle =\left\{ {\rm Orb}_{\sigma}(e):e\in E_{n}\right\}
\]
defines a partition of $E_{n}$. 

The relation between asymmetry of a graph and orbits is given in the following
simple claim.
\begin{claim}
$\sigma$ as an automorphism of a graph $G=([n],E)$  $\iff$ $E$
does not split $\sigma$-orbits (that is, for every $\sigma$-orbit
$O$, either $O\subseteq E$ or $O\cap E=\emptyset$). 
\label{cl:appA_asym}
\end{claim}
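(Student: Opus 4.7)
The plan is to prove both implications by tracking a single edge under iteration of $\sigma$ and then using that $\sigma$-orbits partition $E_n$.

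For the forward direction, suppose $\sigma$ is an automorphism of $G=([n],E)$. Fix any $\sigma$-orbit $O$ and pick an arbitrary $e\in O$. Since $\sigma$ preserves the edge relation, $e\in E$ implies $\sigma^k(e)\in E$ for every integer $k$ (by induction on $|k|$ using $\sigma$ and $\sigma^{-1}$), so $O\subseteq E$. Similarly, $e\notin E$ implies $\sigma^k(e)\notin E$ for every $k$, so $O\cap E=\emptyset$. In either case, $E$ does not split $O$.

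For the reverse direction, suppose $E$ does not split any $\sigma$-orbit. To show $\sigma$ is an automorphism it is enough (since $\sigma$ is already a bijection on the vertex set) to check that $\sigma(e)\in E \Leftrightarrow e\in E$ for every $e\in E_n$. Fix $e\in E_n$ and consider $O={\rm Orb}_\sigma(e)$, which contains both $e$ and $\sigma(e)$. By the hypothesis, either $O\subseteq E$, in which case $e,\sigma(e)\in E$, or $O\cap E=\emptyset$, in which case $e,\sigma(e)\notin E$. Thus $e\in E\Leftrightarrow \sigma(e)\in E$, and $\sigma$ is an automorphism.

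There is no real obstacle here; the statement is essentially a restatement of the definition of automorphism in terms of the orbit partition $E_n/\langle\sigma\rangle$, and the proof is a few lines. The only point worth being explicit about is that the group acting is $\langle\sigma\rangle$ (so that iterating $\sigma$ on a single edge sweeps out the whole orbit), which is already built into the definition of ${\rm Orb}_\sigma(e)$ given just above the claim.
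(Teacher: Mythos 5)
Your proof is correct and is exactly the routine orbit argument the paper has in mind; the paper in fact states this claim without proof, treating it as immediate from the definitions. Both directions are handled properly (iterating $\sigma^{\pm 1}$ for the forward direction, and noting $e$ and $\sigma(e)$ lie in the same orbit for the converse), so there is nothing to add.
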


The $\sigma$-orbits in the set of edges $E_{n}$ are closely related
to the $\sigma$-orbits in the vertex set $[n]$, which are simply sets of vertices of cycles in the cycle decomposition of $\sigma$. The following proposition is straightforward.
\begin{prop}
\label{prop:orbits_and_cycles}Let $e=\{x_{1},x_{2}\}\in E_{n}$, $x_{1}\in C_{1}$ and $x_{2}\in C_{2}$ where $C_{1},C_{2}$
are cycles from the cycle decomposition of $\sigma$. Denote $\ell_{1}=\left|C_{1}\right|$
and $\ell_{2}=\left|C_{2}\right|$. 
\begin{itemize}
\item If $C_{1}\neq C_{2}$ then $\left|{\rm Orb}_{\sigma}(e)\right|={\rm lcm}(\ell_{1},\ell_{2})$. 
\item Now suppose $C_{1}=C_{2}$ and let $\ell=\ell_{1}=\ell_{2}$. Then
$\left|{\rm Orb}_{\sigma}(e)\right|=\ell$, unless $\ell$ is even
and $x,y$ are opposite in the cycle; in that case $\left|{\rm Orb}_{\sigma}(e)\right|=\frac{\ell}{2}$.
\end{itemize}
\end{prop}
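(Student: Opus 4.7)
The plan is to compute $|\mathrm{Orb}_\sigma(e)|$ as the smallest positive integer $k$ with $\sigma^k(e) = e$, via the orbit--stabilizer relation inside the cyclic group $\langle \sigma\rangle$. For $e = \{x_1,x_2\}$, the equation $\sigma^k(\{x_1,x_2\}) = \{x_1,x_2\}$ decomposes into two mutually exclusive scenarios: either (a) $\sigma^k$ fixes $x_1$ and $x_2$ individually, or (b) $\sigma^k$ swaps them. The whole proof will be a short case analysis driven by this dichotomy.

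First I would handle the case $C_1 \neq C_2$. Since every power of $\sigma$ preserves each cycle of the cycle decomposition setwise, $\sigma^k(x_1)$ always lies in $C_1$ and thus cannot equal $x_2 \in C_2$; scenario (b) is impossible. Scenario (a) reduces to $\ell_1 \mid k$ and $\ell_2 \mid k$, so the smallest valid positive $k$ is $\mathrm{lcm}(\ell_1,\ell_2)$, as claimed.

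Next I would address the case $C_1 = C_2 =: C$ with $\ell := \ell_1 = \ell_2$. Let $j \in \{1,\ldots,\ell-1\}$ be the unique exponent with $\sigma^j(x_1) = x_2$. Scenario (a) is equivalent to $\ell \mid k$, so on its own it contributes the value $\ell$. Scenario (b) requires simultaneously $k \equiv j \pmod{\ell}$ and $k \equiv -j \pmod{\ell}$, which forces $\ell \mid 2j$; since $0 < j < \ell$, this is possible only when $\ell$ is even and $j = \ell/2$, i.e.\ when $x_1$ and $x_2$ are opposite vertices of $C$. In that exceptional subcase the smallest $k$ satisfying (b) is $\ell/2 < \ell$, so $|\mathrm{Orb}_\sigma(e)| = \ell/2$; in every other subcase only scenario (a) contributes and $|\mathrm{Orb}_\sigma(e)| = \ell$.

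There is no genuine obstacle here — the statement is a direct bookkeeping exercise for the action of a cyclic group on unordered pairs. The only mild subtlety is remembering to test the ``swap'' scenario in the single-cycle case, which is precisely what produces the well-known halving phenomenon for antipodal pairs when $\ell$ is even.
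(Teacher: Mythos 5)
Your proof is correct and is exactly the routine orbit--stabilizer computation the paper has in mind (the paper omits the proof, calling the proposition straightforward). The case analysis, including the antipodal subcase forcing $\ell$ even and $j=\ell/2$, is complete and matches the values tabulated for $h_{ij}$ later in the appendix.
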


Proposition \ref{prop:orbits_and_cycles} motivates us to classify
vertices according to their cycle length in $\sigma$. As it turns
out, to properly bound $P_{\sigma}$ for $\sigma\in\Sigma_{n}\setminus\{{\rm id}\}$
we must consider $2$-cycles and $3$-cycles separately from the longer
cycles.

From now on let us fix a permutation $\sigma\in\Sigma_{n}$; all the
following definitions are with respect to the action of $\left\langle \sigma\right\rangle $.
\begin{definition}
We define a partition $[n]=S_{1}\cup S_{2}\cup S_{3}\cup S_{4}$ as
follows. $S_{1},S_{2},S_{3}$ are the sets of vertices in $1$-cycles
(fixed points), $2$-cycles and $3$-cycles of $\sigma$, respectively.
$S_{4}$ is the set of remaining vertices: vertices in $\ell$-cycles
for $\ell\geq4$. We also write $S=S_{2}\cup S_{3}\cup S_{4}$, $s_{i}=\left|S_{i}\right|$
for $i\in\{1,2,3,4\}$ and $s=\left|S\right|$.
\end{definition}

Note that $\sigma\neq{\rm id}$ is equivalent to $S\neq\emptyset$.
\begin{definition}
We classify pairs $e\in E_{n}$ as follows. Suppose $e=\left\{ x,y\right\} $
where $x\in S_{i}$ and $y\in S_{j}$ for some $i,j\in\{1,2,3,4\}$. 
\begin{itemize}
\item If $x,y$ are not opposite vertices in an even cycle,
we say that $e$ is a\emph{ pair of type $(i,j)$} (and also of type
$(j,i)$). 
\item Now suppose that $x,y$ are opposite vertices in an
even cycle of length $\ell$. Note that then $i=j={\scriptscriptstyle \begin{cases}
2 & \ell=2\\
4 & \ell\geq4
\end{cases}}$. In this case we say that $e$ is a \emph{pair of type $(i,0)$ }(and
also of type $(0,i)$).
\end{itemize}
A pair of type $(i,j)$ is also called an \emph{$(i,j)$-pair}.
\end{definition}

\begin{remark}
We make several remarks about the last definition. 
\begin{itemize}
\item The types $(i,j)$ and $(j,i)$ are identical.
\item Two pairs in $E_{n}$ from the same $\sigma$-orbit have the same
type. An orbit of an $(i,j)$-pair is called an \emph{$(i,j)$-orbit. }
\item There are 12 distinct possible types, arranged in the table below.
\[
\begin{array}{cccc}
 & (0,2), &  & (0,4),\\
(1,1), & (1,2), & (1,3), & (1,4),\\
 & (2,2), & (2,3), & (2,4),\\
 &  & (3,3), & (3,4),\\
 &  &  & (4,4).
\end{array}
\]
\end{itemize}
\end{remark}

\begin{definition}
Following the last definitions and remarks, we introduce some additional notations.
\begin{itemize}
\item For every type $(i,j)$ let $T_{ij}$ be the set of $(i,j)$-pairs.
Also denote $t_{ij}=\left|T_{ij}\right|$.
\item For every type $(i,j)$, let $h_{ij}$ denote the minimum possible
size of an $(i,j)$-orbit. The values of $h_{ij}$ for all distinct
possible types are arranged in the table below.
\[
\begin{array}{cccc}
 & h_{02}=1, &  & h_{04}=2,\\
h_{11}=1, & h_{12}=2, & h_{13}=3, & h_{14}=4,\\
 & h_{22}=2, & h_{23}=6, & h_{24}=4,\\
 &  & h_{33}=3, & h_{34}=6,\\
 &  &  & h_{44}=4.
\end{array}
\]
\end{itemize}
\end{definition}

\subsection{Lists of parameters}

We shall bound $P_{\sigma}$ by taking into account the number of
edges of each type in ${\bf G}_{n}$; we call these numbers the \emph{parameters}
of the graph. Actually, our definition excludes the trivial type $(1,1)$
from the list of parameters, since it will not contribute anything
to the bound. The following definitions are still with respect to
a given $\sigma\in\Sigma_{n}$.
\begin{definition}
Let ${\cal I}$ be the set of all $11$ non-trivial distinct types:
\[
{\cal I}=\left\{ \begin{array}{ccc}
(0,2), &  & (0,4),\\
(1,2), & (1,3), & (1,4),\\
(2,2), & (2,3), & (2,4),\\
 & (3,3), & (3,4),\\
 &  & (4,4)
\end{array}\right\} .
\]
\end{definition}

\begin{definition}
Let $G=(V,E)$ be a graph. Its \emph{list of parameters} is the $11$-tuple
$(k_{ij})_{(i,j)\in{\cal I}}$ where $k_{ij}=\left|E\cap T_{ij}\right|$
for every $(i,j)\in{\cal I}$. That is, $k_{ij}$ is the number of
edges of type $(i,j)$ in $G$.
\end{definition}

For convenience, 
 lists of parameters are denoted $(k_{ij})$. We also let $k_{ji}=k_{ij}$
for every $(i,j)$. At this point we slightly deviate from Bollob\'{a}s's
original definitions, in order to adjust to the non-regular case.
\begin{prop}
\label{prop:permissible}Let $G=(V,E)$ be a graph with the list of parameters
$(k_{ij})$. For every $i\in\{2,3,4\}$ let $r_{i}$ be the average
degree of vertices from $S_{i}$ in $G$. Then the following equations
hold:
\begin{equation}
\begin{cases}
r_{2}s_{2} & =2k_{20}+k_{21}+2k_{22}+k_{23}+k_{24},\\
r_{3}s_{3} & =k_{31}+k_{32}+2k_{33}+k_{34},\\
r_{4}s_{4} & =2k_{40}+k_{41}+k_{42}+k_{43}+2k_{44}.
\end{cases}\label{eq:permissible}
\end{equation}
\end{prop}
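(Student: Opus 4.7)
The plan is a routine double-counting argument applied to each $i \in \{2,3,4\}$ separately. By definition, $r_i$ is the average degree in $G$ of the $s_i$ vertices of $S_i$, so the sum of degrees of vertices in $S_i$ equals $r_i s_i$. On the other hand, this sum of degrees counts every edge of $G$ having at least one endpoint in $S_i$, with multiplicity equal to the number of its endpoints lying in $S_i$: an edge with both endpoints in $S_i$ contributes $2$, and an edge with exactly one endpoint in $S_i$ contributes $1$.

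The next step is to identify which of the $11$ types in $\mathcal{I}$ contribute to each of the three equations, and with which coefficient. For $i = 2$: the types whose pair of indices includes a $2$ are $(0,2), (1,2), (2,2), (2,3), (2,4)$; the types $(0,2)$ and $(2,2)$ have both endpoints in $S_2$ (note that $(0,2)$ corresponds to pairs of opposite vertices in $2$-cycles of $\sigma$, so these are also in $S_2$), yielding coefficient $2$, while the remaining three contribute coefficient $1$. Similarly, for $i = 3$: since $3$-cycles are odd, there is no $(0,3)$ type, so the relevant types are $(1,3), (2,3), (3,3), (3,4)$, with $(3,3)$ getting coefficient $2$ and the others coefficient $1$. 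For $i = 4$: the relevant types are $(0,4), (1,4), (2,4), (3,4), (4,4)$, with $(0,4)$ and $(4,4)$ getting coefficient $2$ and the others coefficient $1$. Summing the contributions yields exactly the three equations in \eqref{eq:permissible}.

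There is no real obstacle here beyond carefully verifying the classification of each type of pair and reading off the correct coefficient from whether the pair has one or two endpoints in $S_i$; in particular one must remember that the type $(0,i)$ (for $i \in \{2,4\}$) refers to pairs of vertices that are opposite in an even cycle of length $i$, so both endpoints still lie in $S_i$ and the corresponding contribution is $2k_{0i}$, not $k_{0i}$.
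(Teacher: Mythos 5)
Your proof is correct and uses exactly the same double-counting argument as the paper: sum the degrees over $S_i$, then classify the incident edges by type, with edges having both endpoints in $S_i$ counted twice. The only (harmless) imprecision is in your closing remark: for $i=4$ the type $(0,4)$ refers to opposite pairs in an even cycle of length $\ell\geq 4$, not of length exactly $4$, but both endpoints still lie in $S_4$, so the coefficient $2$ is unaffected.
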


\begin{proof}
The three equations come from counting the sum of degrees of $S_{i}$
for $i=2,3,4$. From the definition of $r_{i}$ this sum is $r_{i}s_{i}$.
For the right hand side, note that it indeed counts all the edges
incident to $S_{i}$, with edges with both endpoints in $S_{i}$ counted
twice. The counting is done by considering the different possible
types of the edges. 
\end{proof}

\subsection{Bounding $P_{\sigma}$}

We are now ready for the main part of the proof: bounding $P_{\sigma}$,
the probability that $\sigma$ is an automorphism of ${\bf G}_{n}$.
We shall prove a bound which implies Equation (\ref{eq:sum of P_sigma}),
and that will finish the proof.

Let $({\bf k}_{ij})$ be the list of parameters of the random graph
${\bf G}_{n}$.
\begin{definition}
A list $(k_{ij})$ is called \emph{$\sigma$-valid} if 
\[
P_{\sigma,(k_{ij})} := \mathbb{P}\left(\sigma \text{ is an automorphism of } {\bf G}_{n}\;{\rm and}\;({\bf k}_{ij})=(k_{ij})\right)>0.
\]
\end{definition}

By definition, $P_{\sigma}=\sum_{(k_{ij})}P_{\sigma,(k_{ij})}$ where
the sum is over all $\sigma$-valid lists. To bound $P_{\sigma}$,
we first bound $P_{\sigma,(k_{ij})}$ for a given $\sigma$-valid
list $(k_{ij})$. 

We fix a $\sigma$-valid list $(k_{ij})$. Then $P_{\sigma,(k_{ij})}$ is the probability that, for every $(i,j)\in{\cal I}$,
${\bf G}_{n}$ contains exactly $k_{ij}$ edges of type $(i,j)$,
and also does not split $(i,j)$-orbits by Claim~\ref{cl:appA_asym}.
\begin{itemize}
\item Let $f_{ij}$ denote the number of sets $E_{ij}$ of size $k_{ij}$
which can be written as unions of $(i,j)$-orbits.
\item Let $k=\sum_{(i,j)\in{\cal I}}k_{ij}$ be the number of edges of non-trivial
types. Equivalently, $k$ is the number of edges incident to $S$. 
\end{itemize}
\begin{prop}
Following the above definitions, we have
\[
P_{\sigma,(k_{ij})}\leq\left(\frac{c_{0}}{n}\right)^{k}\prod_{(i,j)\in{\cal I}}f_{ij}
\]
where $c_{0}$ is the constant from property \emph{(P3)}.
\end{prop}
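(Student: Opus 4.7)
The approach is a direct union bound, organized by the possible edge sets of each non-trivial type. The first observation is that on the event in question, $\sigma$ is an automorphism, so by the claim relating automorphisms to orbits, for every $(i,j)\in\mathcal{I}$ the set $E({\bf G}_n)\cap T_{ij}$ must be a union of $(i,j)$-orbits. Combined with $({\bf k}_{ij})=(k_{ij})$, this forces $E({\bf G}_n)\cap T_{ij}$ to be one of the $f_{ij}$ sets of size $k_{ij}$ that are unions of $(i,j)$-orbits.

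Therefore I would enumerate tuples $(E_{ij})_{(i,j)\in\mathcal{I}}$, where each $E_{ij}$ is a union of $(i,j)$-orbits with $|E_{ij}|=k_{ij}$, and write
\[
P_{\sigma,(k_{ij})}\;\leq\;\sum_{(E_{ij})}\mathbb{P}\Bigl(E_{ij}\subseteq E({\bf G}_n)\ \text{for all }(i,j)\in\mathcal{I}\Bigr).
\]
Since the classes $T_{ij}$ are pairwise disjoint, the sets $E_{ij}$ in any such tuple are also disjoint, and their union $E_0:=\bigcup_{(i,j)\in\mathcal{I}}E_{ij}$ has exactly $|E_0|=\sum_{(i,j)\in\mathcal{I}}k_{ij}=k$ edges. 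Applying property (P3) to this $E_0$ gives
\[
\mathbb{P}\bigl(E_0\subseteq E({\bf G}_n)\bigr)\;\leq\;\left(\frac{c_0}{n}\right)^{k}.
\]

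Finally, the number of admissible tuples $(E_{ij})$ factorizes as $\prod_{(i,j)\in\mathcal{I}}f_{ij}$, because each coordinate is chosen independently among the $f_{ij}$ available sets. Substituting into the displayed inequality yields
\[
P_{\sigma,(k_{ij})}\;\leq\;\left(\frac{c_0}{n}\right)^{k}\prod_{(i,j)\in\mathcal{I}}f_{ij},
\]
which is the stated bound. There is no real obstacle here: the only nontrivial ingredients are the orbit-splitting characterization of symmetry (already recorded as a claim) and the hypothesis (P3), and everything else is bookkeeping on disjoint types. The subsequent work in the section will be to estimate each $f_{ij}$ and then sum the resulting bounds over all $\sigma$-valid lists and all $\sigma\neq\mathrm{id}$ to establish~\eqref{eq:sum of P_sigma}.
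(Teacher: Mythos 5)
Your proposal is correct and follows essentially the same route as the paper: a union bound over the $\prod_{(i,j)\in\mathcal{I}} f_{ij}$ admissible tuples of orbit-union sets $(E_{ij})$, followed by an application of (P3) to $E_0=\bigcup E_{ij}$ with $|E_0|=k$. The only difference is that you spell out explicitly why the automorphism event forces $E({\bf G}_n)\cap T_{ij}$ to be one of the $f_{ij}$ candidate sets, which the paper leaves implicit.
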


\begin{proof}
There are $\prod_{(i,j)\in{\cal I}}f_{ij}$ ways to choose, for every
$(i,j)\in{\cal I}$, a set $E_{ij}\subseteq T_{ij}$ of size $k_{ij}$
which is a union of $(i,j)$-orbits. Given such a choice of sets $E_{ij}$,
let $E_{0}=\bigcup_{(i,j)\in{\cal I}}E_{ij}$. From (P3) we have 
\[
\mathbb{P}\left(E_{0}\subseteq E({\bf G}_{n})\right)\leq\left(\frac{c_{0}}{n}\right)^{\left|E_{0}\right|}=\left(\frac{c_{0}}{n}\right)^{k}.
\]
The union bound completes the proof.
\end{proof}
The following simple combinatorial lemma allows us to bound the terms $f_{ij}$.
\begin{lemma}
\label{lem:combinatorial}Let ${\cal A}$ be a finite collection of
mutually disjoint finite sets, which contains exactly $m_{i}$ sets
of size $i$ for every $i\geq1$. Suppose there exists a positive
integer $h$ such that $m_{i}=0$ for every $i<h$. Let $f(k;{\cal A})$
denote the number of sets of size $k$ which are unions of some of
the sets from ${\cal A}$. Then $f(k;{\cal A})\leq\binom{\left\lfloor \nicefrac{t}{h}\right\rfloor }{\left\lfloor \nicefrac{k}{h}\right\rfloor }$
where $t=\sum_{i}im_{i}$ is the total size of all sets from ${\cal A}$. 
\end{lemma}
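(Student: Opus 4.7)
The plan is to construct an injection from the valid subcollections (i.e.\ those $\mathcal{B}\subseteq\mathcal{A}$ whose union has size exactly $k$) into the family of $\lfloor k/h\rfloor$-subsets of a $\lfloor t/h\rfloor$-set. First I would record two elementary inequalities. Writing $N:=|\mathcal{A}|=\sum_i m_i$, the hypothesis $|A|\geq h$ for every $A\in\mathcal{A}$ gives $Nh\leq t$, and hence $N\leq\lfloor t/h\rfloor$. Similarly, any valid $\mathcal{B}$ satisfies $|\mathcal{B}|\,h\leq k$, so $|\mathcal{B}|\leq\lfloor k/h\rfloor$.

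Next, I would fix an arbitrary enumeration $\mathcal{A}=\{A_1,\dots,A_N\}$ and set $N':=\lfloor t/h\rfloor$, so that $N\leq N'$. For a valid $\mathcal{B}$ let $I(\mathcal{B})\subseteq[N]$ be its index set, and define
\[
\phi(\mathcal{B}) \;:=\; I(\mathcal{B}) \;\cup\; \bigl\{N+1,\,N+2,\,\dots,\,N+\lfloor k/h\rfloor-|\mathcal{B}|\bigr\} \;\subseteq\;[N'].
\]
The padding block brings $|\phi(\mathcal{B})|$ up to exactly $\lfloor k/h\rfloor$, and the map is trivially injective since $\mathcal{B}$ can be recovered as $\{A_j:j\in\phi(\mathcal{B})\cap[N]\}$.

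The only thing to verify is that the padding actually fits inside $[N']$, that is, $N+\lfloor k/h\rfloor-|\mathcal{B}|\leq N'$. For this I would apply the same size argument to the complement: $\mathcal{A}\setminus\mathcal{B}$ consists of $N-|\mathcal{B}|$ sets of total size $t-k$, each of size at least $h$, so $N-|\mathcal{B}|\leq\lfloor (t-k)/h\rfloor$. Combining this with the standard inequality $\lfloor (t-k)/h\rfloor+\lfloor k/h\rfloor\leq\lfloor t/h\rfloor$ (a special case of $\lfloor x\rfloor+\lfloor y\rfloor\leq\lfloor x+y\rfloor$) gives the required bound. Hence $\phi$ embeds the valid subcollections into the $\lfloor k/h\rfloor$-subsets of $[N']$, and
\[
f(k;\mathcal{A})\;\leq\;\binom{N'}{\lfloor k/h\rfloor}\;=\;\binom{\lfloor t/h\rfloor}{\lfloor k/h\rfloor}.
\]
The argument is short and I do not anticipate any real obstacle; the only mildly delicate step is the floor arithmetic that licenses the padding.
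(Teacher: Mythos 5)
Your proof is correct. The paper itself does not prove this lemma; it simply defers to Bollob\'{a}s's original argument (Equation (5) of the cited reference), so there is nothing in the text to compare against. Your injection-with-padding argument is a clean, self-contained verification: the two counting bounds $|\mathcal{B}|\le\lfloor k/h\rfloor$ and $N-|\mathcal{B}|\le\lfloor (t-k)/h\rfloor$ (the latter applied to the complement) together with $\lfloor (t-k)/h\rfloor+\lfloor k/h\rfloor\le\lfloor t/h\rfloor$ are exactly what is needed to make the padding fit, and recovering $\mathcal{B}$ as $\{A_j: j\in\phi(\mathcal{B})\cap[N]\}$ establishes injectivity; note also that since the sets of $\mathcal{A}$ are disjoint and nonempty, bounding the number of valid subcollections indeed bounds the number of distinct unions.
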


The proof is fairly straightforward and is given in \cite[Equation (5)]{Bollobas-asymmetry}.

To effectively bound the terms $f_{ij}$ with Lemma \ref{lem:combinatorial},
in a way which applies generally for all lists $(k_{ij})$, we must
separate ``large'' $k_{ij}$ from ``small'' $k_{ij}$, which motivates
the use of $\varepsilon_{ij}$ below. 
\begin{prop}
For every $(i,j)\in{\cal I}$ let $\ell_{ij}=\frac{k_{ij}}{h_{ij}}$
and 
\[
\varepsilon_{ij}=\begin{cases}
1, & \left\lfloor \ell_{ij}\right\rfloor >\frac{s}{22000};\\
0, & \left\lfloor \ell_{ij}\right\rfloor \leq\frac{s}{22000}.
\end{cases}
\]
Then $\prod_{(i,j)\in{\cal I}}f_{ij}\leq n^{\sum_{j=2}^{4}\varepsilon_{1j}\ell_{1j}}\cdot s^{\sum_{2\leq i\leq j\leq4}\varepsilon_{ij}\ell_{ij}}\cdot c_{1}^{k}\cdot n^{\frac{s}{1000}}$
for some positive constant $c_{1}$. 
\label{prop:app_A_prop}
\end{prop}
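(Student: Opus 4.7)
The strategy is to apply Lemma~\ref{lem:combinatorial} to each $f_{ij}$ and bound the resulting binomial coefficients by splitting into two regimes. Since every $(i,j)$-orbit contains at least $h_{ij}$ pairs, the lemma yields $f_{ij}\le\binom{N_{ij}}{K_{ij}}$ with $N_{ij}:=\lfloor t_{ij}/h_{ij}\rfloor$ and $K_{ij}:=\lfloor\ell_{ij}\rfloor$. Straightforward counting bounds the type-class sizes: $t_{1j}\le s_1 s_j\le ns$ for $j\in\{2,3,4\}$; $t_{ij}\le s^2$ for $2\le i\le j\le 4$; and $t_{0i}\le s/2$ for $i\in\{2,4\}$. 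I then split on the value of $\varepsilon_{ij}$.

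In the ``large'' regime $\varepsilon_{ij}=1$, so $K_{ij}>s/22000$, I apply $\binom{N}{K}\le(eN/K)^K$. The crucial point is that $s/K_{ij}<22000$, so $eN_{ij}/K_{ij}$ is at most a constant times $n$ (for $(1,j)$-types), a constant times $s$ (for $(i,j)$-types with $2\le i\le j\le 4$), or a constant (for $(0,i)$-types). Pulling out $K_{ij}\le\ell_{ij}$ copies of $n$ or $s$ and using $h_{ij}K_{ij}\le k_{ij}$ to rewrite the remaining bounded factor as $c_1^{k_{ij}}$ (for a sufficiently large constant $c_1$), this yields
\[
f_{1j}\le n^{\ell_{1j}}c_1^{k_{1j}},\qquad f_{ij}\le s^{\ell_{ij}}c_1^{k_{ij}},\qquad f_{0i}\le c_1^{k_{0i}}
\]
in the three groups of types, respectively. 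Note that no $n$ or $s$ factor is needed for the $(0,i)$-types, matching the absence of such a term in the statement.

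In the ``small'' regime $\varepsilon_{ij}=0$, so $K_{ij}\le s/22000$, the cruder bound $\binom{N}{K}\le N^K$ is enough: $f_{1j}\le(ns)^{K_{1j}}\le n^{2K_{1j}}\le n^{s/11000}$, $f_{ij}\le s^{2K_{ij}}\le n^{s/11000}$, and $f_{0i}\le s^{K_{0i}}\le n^{s/22000}$. Since $|\mathcal{I}|=11$, the combined contribution of all $\varepsilon_{ij}=0$ factors is at most $n^{11\cdot s/11000}=n^{s/1000}$; indeed, the threshold $s/22000$ in the definition of $\varepsilon_{ij}$ is calibrated exactly so that eleven per-type contributions fit inside the single tail factor $n^{s/1000}$. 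Multiplying the two regimes and, for the $\varepsilon_{ij}=0$ types, artificially inserting $c_1^{k_{ij}}\ge 1$ to consolidate all constant-base contributions into a uniform factor $c_1^k$, I arrive at the claimed inequality. The main obstacle is not any individual estimate but the careful arithmetic calibration of the constants $h_{ij}$, $22000$, and $1000$, together with the type-by-type bookkeeping needed to place every piece in the right slot of the final product.
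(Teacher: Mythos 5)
Your proposal is correct and follows essentially the same route as the paper: apply Lemma~\ref{lem:combinatorial} to each $f_{ij}$, use $\binom{N}{K}\le (eN/K)^K$ with the threshold $\lfloor\ell_{ij}\rfloor > s/22000$ in the large regime (splitting into the three groups of types via $t_{1j}\le ns$, $t_{ij}\le s^2$, $t_{0i}\le s/2$), and absorb each small-regime factor into $n^{s/11000}$ so that all eleven fit inside $n^{s/1000}$. The only cosmetic difference is that you retain $\lfloor t_{ij}/h_{ij}\rfloor$ where the paper immediately relaxes to $t_{ij}$, which changes nothing.
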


\begin{proof}
For every $(i,j)\in{\cal I}$, Lemma \ref{lem:combinatorial} applies
where ${\cal A}$ is the set of $(i,j)$-orbits, $k=k_{ij}$, $h=h_{ij}$
and $t=t_{ij}$. Therefore
\[
f_{ij}\leq\binom{\left\lfloor \nicefrac{t_{ij}}{h_{ij}}\right\rfloor }{\left\lfloor \nicefrac{k_{ij}}{h_{ij}}\right\rfloor }=\binom{\left\lfloor \nicefrac{t_{ij}}{h_{ij}}\right\rfloor }{\left\lfloor \ell_{ij}\right\rfloor }\leq\binom{t_{ij}}{\left\lfloor \ell_{ij}\right\rfloor }.
\]
When $\varepsilon_{ij}=0$, we are satisfied with the very coarse
bound
\[
\binom{t_{ij}}{\left\lfloor \ell_{ij}\right\rfloor }\leq t_{ij}^{\left\lfloor \ell_{ij}\right\rfloor }\leq(n^{2})^{\nicefrac{s}{22000}}=n^{\nicefrac{s}{11000}}.
\]
When $\varepsilon_{ij}=1$, we apply the bound
\[
\binom{t_{ij}}{\left\lfloor \ell_{ij}\right\rfloor }\leq\left(\frac{{\rm e}t_{ij}}{\left\lfloor \ell_{ij}\right\rfloor }\right)^{\left\lfloor \ell_{ij}\right\rfloor }\leq\left(\frac{{\rm e}t_{ij}}{\nicefrac{s}{22000}}\right)^{\ell_{ij}}=\left(c_{1}\frac{t_{ij}}{s}\right)^{\ell_{ij}}
\]
where $c_{1}=22000{\rm e}$. The bound on $t_{ij}$ itself depends on the type $(i,j)$. We have
three cases:
\begin{itemize}
\item $2\leq i\leq j\leq4$. In this case $t_{ij}\leq s_{i}s_{j}\leq s^{2}$
and $f_{ij}\leq\left(c_{1}s\right)^{\ell_{ij}}$. 
\item $i=1$ and $2\leq j\leq4$. In this case $t_{1j}\leq s_{1}s_{j}\leq ns$
and $f_{ij}\leq\left(c_{1}n\right)^{\ell_{ij}}$. 
\item $i=0$ and $j\in\{2,4\}$. In this case $t_{0j}=\frac{s_{j}}{2}\leq\frac{s}{2}$
and $f_{ij}\leq\left(\frac{c_{1}}{2}\right)^{\ell_{ij}}$. 
\end{itemize}
Overall,
\begin{eqnarray*}
\prod_{(i,j)\in{\cal I}}f_{ij} & \leq & n^{\sum_{j=2}^{4}\varepsilon_{1j}\ell_{1j}}\cdot s^{\sum_{2\leq i\leq j\leq4}\varepsilon_{ij}\ell_{ij}}\cdot c_{1}^{\sum_{(i,j)\in{\cal I}}\ell_{ij}}\cdot n^{\sum_{(i,j)\in{\cal I}}\frac{s}{11000}}\\
 & \leq & n^{\sum_{j=2}^{4}\varepsilon_{1j}\ell_{1j}}\cdot s^{\sum_{2\leq i\leq j\leq4}\varepsilon_{ij}\ell_{ij}}\cdot c_{1}^{k}\cdot n^{\frac{s}{1000}}.
\end{eqnarray*}
\end{proof}
\begin{corollary}
\label{cor:P_sigma,k bound}For large enough $n$.
\[
P_{\sigma,(k_{ij})}\leq n^{\frac{s}{100}}n^{-k}\cdot n^{\sum_{j=2}^{4}\varepsilon_{1j}\ell_{1j}}s^{\sum_{2\leq i\leq j\leq4}\varepsilon_{ij}\ell_{ij}}.
\]
\end{corollary}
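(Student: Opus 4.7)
The plan is to simply combine the two preceding bounds and then absorb the extra constant factor $c_0^k c_1^k$ into the polynomial slack $n^{s/1000}$ by using that $k$ is linearly controlled by $s$. Concretely, the proposition just before Proposition~\ref{prop:app_A_prop} gives
\[
P_{\sigma,(k_{ij})} \leq \left(\frac{c_0}{n}\right)^{k} \prod_{(i,j)\in\mathcal{I}} f_{ij},
\]
and Proposition~\ref{prop:app_A_prop} gives
\[
\prod_{(i,j)\in\mathcal{I}} f_{ij} \leq n^{\sum_{j=2}^{4}\varepsilon_{1j}\ell_{1j}} \cdot s^{\sum_{2\leq i\leq j\leq 4}\varepsilon_{ij}\ell_{ij}} \cdot c_1^{k} \cdot n^{s/1000}.
\]
Multiplying these yields
\[
P_{\sigma,(k_{ij})} \leq (c_0 c_1)^{k}\, n^{-k} \cdot n^{\sum_{j=2}^{4}\varepsilon_{1j}\ell_{1j}}\, s^{\sum_{2\leq i\leq j\leq 4}\varepsilon_{ij}\ell_{ij}} \cdot n^{s/1000}.
\]

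Thus it suffices to show that, for all sufficiently large $n$,
\[
(c_0 c_1)^{k} \cdot n^{s/1000} \leq n^{s/100}.
\]
The first step is to bound $k$ in terms of $s$ using property (P2). By definition $k$ counts edges of non-trivial type, namely edges with at least one endpoint in $S$. The sum of degrees of vertices in $S$ is at most $\Delta s$, and every such edge contributes at least $1$ to this sum, so $k \leq \Delta s$. Therefore
\[
(c_0 c_1)^{k} \, n^{s/1000} \leq \exp\!\left( s \bigl[\Delta \ln(c_0 c_1) + \tfrac{\ln n}{1000} \bigr] \right).
\]

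To obtain the desired bound it is then enough to verify
\[
\Delta \ln(c_0 c_1) \leq \frac{\ln n}{100} - \frac{\ln n}{1000} = \frac{9\ln n}{1000},
\]
which holds whenever $n$ is large enough (depending only on the constants $c_0, c_1, \Delta$). Substituting into the displayed inequality above concludes the proof.

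There is essentially no hard step here: the only observation beyond straightforward arithmetic is the uniform bound $k \leq \Delta s$, which is immediate from the maximum-degree assumption. The purpose of the slack factor $n^{s/1000}$ engineered into Proposition~\ref{prop:app_A_prop} is precisely to absorb the $(c_0 c_1)^k$ factor coming from multiplying the two prior bounds, and it does so uniformly in the list $(k_{ij})$.
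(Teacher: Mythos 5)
Your proof is correct and follows essentially the same route as the paper: combine the bound $P_{\sigma,(k_{ij})}\leq(c_0/n)^k\prod f_{ij}$ with Proposition~\ref{prop:app_A_prop}, use Property (P2) to get $k\leq\Delta s$, and absorb $(c_0c_1)^{\Delta s}\cdot n^{s/1000}$ into $n^{s/100}$ for large $n$. No meaningful differences.
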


\begin{proof}
Due to Proposition~\ref{prop:app_A_prop}, we get
\[
P_{\sigma,(k_{ij})}\leq\left(\frac{c_{0}}{n}\right)^{k}\prod_{(i,j)\in{\cal I}}f_{ij}\leq\left(\frac{c_{0}}{n}\right)^{k}\cdot c_{1}^{k}\cdot n^{\frac{s}{1000}}\cdot n^{\sum_{j=2}^{4}\varepsilon_{1j}\ell_{1j}}\cdot s^{\sum_{2\leq i\leq j\leq4}\varepsilon_{ij}\ell_{ij}}.
\]
In addition,
$$
\left(\frac{c_{0}}{n}\right)^{k}\cdot c_{1}^{k}\cdot n^{\frac{s}{1000}}  =  (c_{0}c_{1})^{k}\cdot n^{\frac{s}{1000}}n^{-k} \overset{(*)}{\leq}  (c_{0}c_{1})^{\Delta s}\cdot n^{\frac{s}{1000}}n^{-k}
  \overset{(**)}{=}  c_{2}^{s}\cdot n^{\frac{s}{1000}}n^{-k} \leq  n^{\frac{s}{100}}n^{-k}
 $$
for large enough $n$. Inequality $(*)$ follows from Property (P2) of the model (and our
assumption that $(k_{ij})$ is $\sigma$-valid). Indeed, recall that
$k$ is the number of edges incident to $S$; in a graph with maximum
degree at most $\Delta$, this number is at most $\Delta s$. In Equality
$(**)$ we simply define $c_{2}=(c_{0}c_{1})^{\Delta}$. 
\end{proof}
The next assertion is the main technical part of the proof.
\begin{lemma}
\label{thm:P_sigma,k bound}
For every $\sigma$-valid list $(k_{ij})$,
\[
P_{\sigma,(k_{ij})}\leq\frac{1}{n^{(1+\frac{1}{100})s}}s_{2}^{\nicefrac{s_{2}}{2}}s_{3}^{\nicefrac{s_{3}}{3}}.
\]
\end{lemma}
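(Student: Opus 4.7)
The plan is to deduce Lemma~\ref{thm:P_sigma,k bound} from Corollary~\ref{cor:P_sigma,k bound} by exploiting the linear identities of Proposition~\ref{prop:permissible} together with the uniform degree bounds $3\le r_i\le\Delta$ coming from properties (P1) and (P2). Writing the target as
\[
n^{L_1}\,s^{L_2}\;\leq\;n^{\,k-(1+2/100)s}\,s_2^{s_2/2}\,s_3^{s_3/3},\qquad L_1:=\sum_{j=2}^{4}\varepsilon_{1j}\ell_{1j},\ \ L_2:=\sum_{2\le i\le j\le 4}\varepsilon_{ij}\ell_{ij},
\]
I would split $k=k_S+k_{12}+k_{13}+k_{14}$, where $k_S$ counts edges with both endpoints in $S$, and handle the $n$-exponent and the $s$-exponent separately.

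First, for the $n$-exponent I would show that $k-L_1$ already beats the $(1+2/100)s$ term with margin. Since $\ell_{1j}=k_{1j}/j$ for $j\in\{2,3,4\}$,
\[
k-L_1\;\ge\;k_S+\tfrac{1}{2}k_{12}+\tfrac{2}{3}k_{13}+\tfrac{3}{4}k_{14}.
\]
Summing the three identities of Proposition~\ref{prop:permissible} yields $r_2 s_2+r_3 s_3+r_4 s_4=2k_S+k_{12}+k_{13}+k_{14}$, so using $r_i\ge 3$ one gets $2k_S+k_{12}+k_{13}+k_{14}\ge 3s$, hence $k-L_1\ge \tfrac{3}{2}s$. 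Consequently the $n$-side carries a surplus of at least $n^{s/2}$ after subtracting the $(1+2/100)s$ term — more than enough to absorb any contribution of the form $n^{O(s_4)}$.

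Second, for the $s$-exponent I would re-derive the bounds on $f_{ij}$ in Proposition~\ref{prop:app_A_prop} using the sharper estimates $t_{22}\le s_2^2/2$, $t_{33}\le s_3^2/2$, $t_{44}\le s_4^2/2$, $t_{ij}\le s_i s_j$ for $i\ne j$, in place of the crude $t_{ij}\le s^2$. Lemma~\ref{lem:combinatorial} combined with $\binom{a}{b}\le(ea/b)^b$ then gives, in the case $\varepsilon_{ij}=1$,
\[
f_{22}\le(c_1 s_2^2/s)^{\ell_{22}},\quad f_{33}\le(c_1 s_3^2/s)^{\ell_{33}},\quad f_{44}\le(c_1 s_4^2/s)^{\ell_{44}},
\]
and analogously $f_{ij}\le(c_1 s_i s_j/s)^{\ell_{ij}}$ for the mixed types. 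These replace the factor $s^{L_2}$ by a product of the form $\prod s_i^{2\ell_{ii}}\cdot\prod(s_is_j/s)^{\ell_{ij}}$. The constraints $2\ell_{ii}=k_{ii}\le r_i s_i/2\le\Delta s_i/2$ from Proposition~\ref{prop:permissible} then yield $s_2^{2\ell_{22}}\le s_2^{\Delta s_2/2}$, with the analogous bounds for $s_3$ and $s_4$. For $s_4$ we use $s_4^{c s_4}\le n^{c s_4}$, which is absorbed by the $n^{s/2}$ surplus obtained in the first step; for $s_2$ and $s_3$ we collect the remaining contributions into the desired $s_2^{s_2/2}s_3^{s_3/3}$ envelope by optimizing the exponent (using $\ell_{2j}\le \Delta s_2$, $\ell_{3j}\le\Delta s_3$, and the freedom afforded by the leftover $n^{s/2-O(s_4)}$).

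The main obstacle will be the last step: getting the exponents of $s_2$ and $s_3$ to come out at exactly $s_2/2$ and $s_3/3$, rather than, say, $\Delta s_2/2$ or $\Delta s_3/3$. This is delicate because $\Delta$ can exceed $2$. The resolution is to trade the ``extra'' $\Delta$-factors in the bounds $k_{2\cdot}\le \Delta s_2/2$ and $k_{3\cdot}\le \Delta s_3$ against the $n$-slack $n^{k-L_1-(1+2/100)s}\ge n^{s/2}$: whenever $s_i^{\alpha s_i}$ threatens to exceed the desired exponent, one writes $s_i\le n$ and charges the excess $n^{(\alpha-\beta)s_i}$ to the surplus. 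Making this bookkeeping tight, case by case on which $\varepsilon_{ij}$ vanish, will be the technical heart of the argument.
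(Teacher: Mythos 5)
Your first step is sound: summing the identities of Proposition~\ref{prop:permissible} does give $2k_S+k_{12}+k_{13}+k_{14}=\sum_i r_is_i\geq 3s$, hence $k-L_1\geq\frac{3}{2}s$ and a surplus of $0.48s$ in the $n$-exponent. The gap is in how you spend that surplus. By collapsing $k-L_1-(1+\frac{2}{100})s$ to a lump sum of order $s$, you discard the fact that the true surplus grows linearly in the number of edges $k$ (it is essentially $\sum_{ij}\bigl(1-O(1/r_i)\bigr)k_{ij}$), whereas the quantities you must later absorb also grow linearly in $k$, with a constant proportional to $\Delta$. Concretely, take $\sigma$ a fixed-point-free involution, so $s=s_2=n$, $s_3=s_4=0$, with all $k$ edges of type $(2,2)$ and $k_{22}=r_2s_2/2$ for some $4\leq r_2\leq\Delta$. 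Your reduction leaves the task $s^{L_2}\leq n^{0.48s}s_2^{s_2/2}$, but here $s^{L_2}=s_2^{\ell_{22}}=n^{r_2n/4}$ while the right side is $n^{0.98n}$, which fails whenever $r_2\geq 4$. (The target inequality itself is true in this example — one needs only $r_2\geq 2.08$ — but only because the discarded part of the surplus, $n^{k-\frac{3}{2}s}=n^{(r_2/2-3/2)n}$, covers the difference.) The same failure afflicts your claim that the $s_4$-contribution $s_4^{cs_4}\leq n^{cs_4}$ is ``absorbed by the $n^{s/2}$ surplus'': there $c=\Theta(\Delta)$ and $s_4$ can equal $s$. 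So the proposed resolution of what you correctly flag as the main obstacle does not close, and no amount of case analysis on the $\varepsilon_{ij}$ will fix it while the surplus is kept as $n^{\Theta(s)}$.

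The paper's proof avoids this by never aggregating: it expresses $k-s$, $\sum_j\varepsilon_{1j}\ell_{1j}$, $\alpha\sum_{i\leq j}\varepsilon_{ij}\ell_{ij}$, $\frac{\alpha s_2}{2}$ and $\frac{\alpha s_3}{3}$ (with $\alpha=\ln s/\ln n$) all as linear combinations of the $k_{ij}$ via \eqref{eq:permissible}, and verifies that the resulting coefficient of each $k_{ij}$ is at least a positive multiple of $1/r_i$; resumming via \eqref{eq:permissible} then gives $-\frac{\ln A}{\ln n}\geq-\frac{s}{\ln n}+\frac{s_2}{6}+\frac{s_3}{6}+\frac{s_4}{8}\geq\frac{2}{100}s$. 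The point is that the per-edge surplus $1-\frac{2}{r_i}$ coming from the $n$-exponent increases with $r_i$ exactly fast enough to dominate the per-edge cost $\frac{\alpha}{h_{ij}}$ of the orbit-counting factor, with the $s_2^{s_2/2}s_3^{s_3/3}$ envelope supplying the extra $\frac{\alpha}{r_i}$ per edge needed when $r_i$ is small. If you keep your surplus in the unaggregated form $k_S+\frac{1}{2}k_{12}+\frac{2}{3}k_{13}+\frac{3}{4}k_{14}-(1+\frac{2}{100})s$ and match it against the deficits edge type by edge type, your argument becomes the paper's; your sharper bounds $t_{ii}\leq s_i^2/2$, $t_{ij}\leq s_is_j$ are a mild improvement but do not change this.
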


Note that the bound depends only on $\sigma$, and not on $(k_{ij})$.
\begin{proof}
From Corollary \ref{cor:P_sigma,k bound}, it suffices to prove
\[
\left[n^{\frac{s}{100}}n^{-k}\cdot n^{\sum_{j=2}^{4}\varepsilon_{1j}\ell_{1j}}s^{\sum_{2\leq i\leq j\leq4}\varepsilon_{ij}\ell_{ij}}\right]\cdot n^{s}s_{2}^{\nicefrac{-s_{2}}{2}}s_{3}^{\nicefrac{-s_{3}}{3}}\leq n^{-\frac{s}{100}}.
\]
Let us denote
\[
A=n^{s-k}\cdot n^{\sum_{j=2}^{4}\varepsilon_{1j}\ell_{1j}}s^{\sum_{2\leq i\leq j\leq4}\varepsilon_{ij}\ell_{ij}}\cdot s_{2}^{\nicefrac{-s_{2}}{2}}s_{3}^{\nicefrac{-s_{3}}{3}}.
\]
Then we need to prove $n^{\frac{s}{100}}A\leq n^{-\frac{s}{100}}$.
Equivalently, we need to prove
\begin{equation}
-\frac{\ln A}{\ln n}\geq\frac{2}{100}s.\label{eq:-ln(A)/ln(n)_required_bound}
\end{equation}
Let us bound $-\frac{\ln A}{\ln n}$ from below. Denote $\alpha=\frac{\ln s}{\ln n}$;
then
\begin{equation}
-\frac{\ln A}{\ln n}=(k-s)-\sum_{j=2}^{4}\varepsilon_{1j}\ell_{1j}-\alpha\sum_{2\leq i\leq j\leq4}\varepsilon_{ij}\ell_{ij}+\frac{1}{2}s_{2}\frac{\ln s_{2}}{\ln n}+\frac{1}{3}s_{3}\frac{\ln s_{3}}{\ln n}.\label{eq:ln(A)/ln(n)}
\end{equation}
At this point we invoke Equations (\ref{eq:permissible}). 
 Then
\begin{eqnarray*}
k-s & = & \sum_{(i,j)\in{\cal I}}k_{ij}-(s_{2}+s_{3}+s_{4})\\
 & = & \left(1-\frac{2}{r_{2}}\right)k_{20}+\left(1-\frac{1}{r_{2}}\right)k_{21}+\left(1-\frac{2}{r_{2}}\right)k_{22}\\
 & + & \left(1-\frac{1}{r_{2}}-\frac{1}{r_{3}}\right)k_{23}+\left(1-\frac{1}{r_{2}}-\frac{1}{r_{4}}\right)k_{24}+\left(1-\frac{1}{r_{3}}\right)k_{31}+\left(1-\frac{2}{r_{3}}\right)k_{33}\\
 & + & \left(1-\frac{1}{r_{3}}-\frac{1}{r_{4}}\right)k_{34}+\left(1-\frac{2}{r_{4}}\right)k_{40}+\left(1-\frac{1}{r_{4}}\right)k_{41}+\left(1-\frac{2}{r_{4}}\right)k_{44}.
\end{eqnarray*}
In addition,
\begin{eqnarray*}
\sum_{j=2}^{4}\varepsilon_{1j}\ell_{1j} & = & \varepsilon_{12}\frac{k_{12}}{2}+\varepsilon_{13}\frac{k_{13}}{3}+\varepsilon_{14}\frac{k_{14}}{4},\\
\alpha\sum_{2\leq i\leq j\leq4}\varepsilon_{ij}\ell_{ij} & = & \alpha\varepsilon_{22}\frac{k_{22}}{2}+\alpha\varepsilon_{23}\frac{k_{23}}{6}+\alpha\varepsilon_{24}\frac{k_{24}}{4}+\alpha\varepsilon_{33}\frac{k_{33}}{3}+\alpha\varepsilon_{34}\frac{k_{34}}{6}+\alpha\varepsilon_{44}\frac{k_{44}}{4}.
\end{eqnarray*}
As for $\frac{1}{2}s_{2}\frac{\ln s_{2}}{\ln n}$, we have
\begin{eqnarray*}
\frac{1}{2}s_{2}\frac{\ln s_{2}}{\ln n} & = & \frac{1}{2\ln n}\left[s_{2}\ln s-s_{2}\ln\left(\frac{s}{s_{2}}\right)\right]\\
 & \geq & \frac{1}{2\ln n}\left[s_{2}\ln s-s_{2}\cdot\frac{s}{s_{2}}\right]\\
 & = & \frac{\alpha s_{2}}{2}-\frac{s}{2\ln n}
\end{eqnarray*}
and similarly 
\begin{eqnarray*}
\frac{1}{3}s_{3}\frac{\ln s_{3}}{\ln n} & \geq & \frac{\alpha s_{3}}{3}-\frac{s}{3\ln n}.
\end{eqnarray*}
Again, we apply Equations (\ref{eq:permissible}) to write
\begin{eqnarray*}
\frac{\alpha s_{2}}{2} & = & \frac{\alpha}{2r_{2}}\left(2k_{20}+k_{21}+2k_{22}+k_{23}+k_{24}\right),\\
\frac{\alpha s_{3}}{3} & = & \frac{\alpha}{3r_{3}}\left(k_{31}+k_{32}+2k_{33}+k_{34}\right).
\end{eqnarray*}
Overall, putting everything back into (\ref{eq:ln(A)/ln(n)}), we
have the following lower bound on $-\frac{\ln A}{\ln n}$. We use
colors to help keeping track of the terms.
\begin{eqnarray*}
-\frac{\ln A}{\ln n} & \geq & -\frac{s}{\ln n}\textcolor{red}{+(k-s)}\textcolor{blue}{-\sum_{j=2}^{4}\varepsilon_{1j}\ell_{1j}}\textcolor{green}{-\alpha\sum_{2\leq i\leq j\leq4}\varepsilon_{ij}\ell_{ij}}\textcolor{orange}{+\frac{\alpha s_{2}}{2}}\textcolor{purple}{+\frac{\alpha s_{3}}{3}}\\
 & = & -\frac{s}{\ln n}+\left(\textcolor{red}{1-\frac{2}{r_{2}}}\textcolor{orange}{+\frac{\alpha}{r_{2}}}\right)k_{20}+\left(\textcolor{red}{1-\frac{1}{r_{2}}}\textcolor{blue}{-\frac{\varepsilon_{21}}{2}}\textcolor{orange}{+\frac{\alpha}{2r_{2}}}\right)k_{21}\\
 & + & \left(\textcolor{red}{1-\frac{2}{r_{2}}}\textcolor{green}{-\frac{\alpha\varepsilon_{22}}{2}}\textcolor{orange}{+\frac{\alpha}{r_{2}}}\right)k_{22}+\left(\textcolor{red}{1-\frac{1}{r_{2}}-\frac{1}{r_{3}}}\textcolor{green}{-\frac{\alpha\varepsilon_{23}}{6}}\textcolor{orange}{+\frac{\alpha}{2r_{2}}}\textcolor{purple}{+\frac{\alpha}{3r_{3}}}\right)k_{23}\\
 & + & \left(\textcolor{red}{1-\frac{1}{r_{2}}-\frac{1}{r_{4}}}\textcolor{green}{-\frac{\alpha\varepsilon_{24}}{4}}\textcolor{orange}{+\frac{\alpha}{2r_{2}}}\right)k_{24}+\left(\textcolor{red}{1-\frac{1}{r_{3}}}\textcolor{blue}{-\frac{\varepsilon_{31}}{3}}\textcolor{purple}{+\frac{\alpha}{3r_{3}}}\right)k_{31}\\
 & + & \left(\textcolor{red}{1-\frac{2}{r_{3}}}\textcolor{green}{-\frac{\alpha\varepsilon_{33}}{3}}\textcolor{purple}{+\frac{2\alpha}{3r_{3}}}\right)k_{33}+\left(\textcolor{red}{1-\frac{1}{r_{3}}-\frac{1}{r_{4}}}\textcolor{green}{-\frac{\alpha\varepsilon_{34}}{6}}\textcolor{purple}{+\frac{\alpha}{3r_{3}}}\right)k_{34}\\
 & + & \left(\textcolor{red}{1-\frac{2}{r_{4}}}\right)k_{40}+\left(\textcolor{red}{1-\frac{1}{r_{4}}}\textcolor{blue}{-\frac{\varepsilon_{41}}{4}}\right)k_{41}+\left(\textcolor{red}{1-\frac{2}{r_{4}}}\textcolor{green}{-\frac{\alpha\varepsilon_{44}}{4}}\right)k_{44}.
\end{eqnarray*}
We write simpler lower bounds on the terms in the brackets. In negative
summands with $\varepsilon_{ij}$ we replace it with $1$. In
addition, we omit some positive orange and purple summands.
\begin{eqnarray*}
-\frac{\ln A}{\ln n} & \geq & -\frac{s}{\ln n}+\left(\textcolor{red}{1-\frac{2}{r_{2}}}\right)k_{20}+\left(\textcolor{red}{1-\frac{1}{r_{2}}}\textcolor{blue}{-\frac{1}{2}}\right)k_{21}\\
 & + & \left(\textcolor{red}{1-\frac{2}{r_{2}}}\textcolor{green}{-\frac{\alpha}{2}}\textcolor{orange}{+\frac{\alpha}{r_{2}}}\right)k_{22}+\left(\textcolor{red}{1-\frac{1}{r_{2}}-\frac{1}{r_{3}}}\textcolor{green}{-\frac{\alpha}{6}}\textcolor{orange}{+\frac{\alpha}{2r_{2}}}\right)k_{23}\\
 & + & \left(\textcolor{red}{1-\frac{1}{r_{2}}-\frac{1}{r_{4}}}\textcolor{green}{-\frac{\alpha}{4}}\textcolor{orange}{+\frac{\alpha}{2r_{2}}}\right)k_{24}+\left(\textcolor{red}{1-\frac{1}{r_{3}}}\textcolor{blue}{-\frac{1}{3}}\right)k_{31}\\
 & + & \left(\textcolor{red}{1-\frac{2}{r_{3}}}\textcolor{green}{-\frac{\alpha}{3}}\textcolor{purple}{+\frac{2\alpha}{3r_{3}}}\right)k_{33}+\left(\textcolor{red}{1-\frac{1}{r_{3}}-\frac{1}{r_{4}}}\textcolor{green}{-\frac{\alpha}{6}}\textcolor{purple}{+\frac{\alpha}{3r_{3}}}\right)k_{34}\\
 & + & \left(\textcolor{red}{1-\frac{2}{r_{4}}}\right)k_{40}+\left(\textcolor{red}{1-\frac{1}{r_{4}}}\textcolor{blue}{-\frac{1}{4}}\right)k_{41}+\left(\textcolor{red}{1-\frac{2}{r_{4}}}\textcolor{green}{-\frac{\alpha}{4}}\right)k_{44}.
\end{eqnarray*}
In $k_{22},k_{23},k_{24},k_{33},k_{34},k_{44}$ there are summands
involving $\alpha$. In all six cases, the sum of these summands is
of the form $\alpha x$ where $x\leq0$. This is because $r_{2},r_{3},r_{4}\geq 3$ (unless the respective set $S_i$ is empty, but in that case the corresponding $k$-factor equals 0),
which follows from Property (P1) of the model (and our assumption
that $(k_{ij})$ is $\sigma$-valid). Also note that $\alpha\leq1$
by definition. Therefore, replacing every appearance of $\alpha$
with $1$ can only decrease the sum. By doing so (and simplifying)
we obtain
\begin{eqnarray*}
-\frac{\ln A}{\ln n} & \geq & -\frac{s}{\ln n}+\left(1-\frac{2}{r_{2}}\right)k_{20}+\left(\frac{1}{2}-\frac{1}{r_{2}}\right)k_{21}\\
 & + & \left(\frac{1}{2}-\frac{1}{r_{2}}\right)k_{22}+\left(\frac{5}{6}-\frac{1}{2r_{2}}-\frac{1}{r_{3}}\right)k_{23}\\
 & + & \left(\frac{3}{4}-\frac{1}{2r_{2}}-\frac{1}{r_{4}}\right)k_{24}+\left(\frac{2}{3}-\frac{1}{r_{3}}\right)k_{31}\\
 & + & \left(\frac{2}{3}-\frac{4}{3r_{3}}\right)k_{33}+\left(\frac{5}{6}-\frac{2}{3r_{3}}-\frac{1}{r_{4}}\right)k_{34}\\
 & + & \left(1-\frac{2}{r_{4}}\right)k_{40}+\left(\frac{3}{4}-\frac{1}{r_{4}}\right)k_{41}+\left(\frac{3}{4}-\frac{2}{r_{4}}\right)k_{44}.
\end{eqnarray*}
The right-hand side can be equivalently rewritten as
\begin{eqnarray*}
-\frac{\ln A}{\ln n} & \geq & -\frac{s}{\ln n}+\left(\frac{r_{2}-2}{r_{2}}\right)k_{20}+\left(\frac{r_{2}-2}{2r_{2}}\right)k_{21}\\
 & + & \left(\frac{r_{2}-2}{2r_{2}}\right)k_{22}+\left(\frac{2r_{2}-3}{6r_{2}}+\frac{r_{3}-2}{2r_{3}}\right)k_{23}\\
 & + & \left(\frac{r_{2}-2}{4r_{2}}+\frac{r_{4}-2}{2r_{4}}\right)k_{24}+\left(\frac{2r_{3}-3}{3r_{3}}\right)k_{31}\\
 & + & \left(\frac{2r_{3}-4}{3r_{3}}\right)k_{33}+\left(\frac{r_{3}-2}{3r_{3}}+\frac{r_{4}-2}{2r_{4}}\right)k_{34}\\
 & + & \left(\frac{r_{4}-2}{r_{4}}\right)k_{40}+\left(\frac{3r_{4}-4}{4r_{4}}\right)k_{41}+\left(\frac{3r_{4}-8}{4r_{4}}\right)k_{44}.
\end{eqnarray*}
Since $r_{2},r_{3},r_{4}\geq 3$ (unless the respective $k_i$ equals 0), all the numerators in all the fractions
above are $\geq1$, so replacing them with $1$ only decreases the
sum. By doing so, and rearranging summands, we obtain
\begin{eqnarray*}
-\frac{\ln A}{\ln n} & \geq & -\frac{s}{\ln n}+\frac{1}{r_{2}}\left[k_{20}+\frac{k_{21}}{2}+\frac{k_{22}}{2}+\frac{k_{23}}{6}+\frac{k_{24}}{4}\right]\\
 & + & \frac{1}{r_{3}}\left[\frac{k_{31}}{3}+\frac{k_{32}}{2}+\frac{k_{33}}{3}+\frac{k_{34}}{3}\right]\\
 & + & \frac{1}{r_{4}}\left[k_{40}+\frac{k_{41}}{4}+\frac{k_{42}}{2}+\frac{k_{43}}{2}+\frac{k_{44}}{4}\right]\\
 & \overset{\eqref{eq:permissible}}{\geq} & -\frac{s}{\ln n}+\frac{s_{2}}{6}+\frac{s_{3}}{6}+\frac{s_{4}}{8}\\
 & \geq & -\frac{s}{\ln n}+\frac{s}{8}=\left(\frac{1}{8}-o(1)\right)s\\
 & \geq & \frac{2}{100}s.
\end{eqnarray*}
  In conclusion, we have proved Inequality (\ref{eq:-ln(A)/ln(n)_required_bound}),
and as explained this completes the proof. 
\end{proof}
\begin{corollary}
\[
P_{\sigma}\leq(\Delta s+1)^{11}\frac{1}{n^{(1+\frac{1}{100})s}}s_{2}^{\nicefrac{s_{2}}{2}}s_{3}^{\nicefrac{s_{3}}{3}}.
\]
\label{cor:app_A_cor}
\end{corollary}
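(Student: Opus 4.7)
The plan is to combine the union bound with the uniform per-list estimate from Lemma \ref{thm:P_sigma,k bound}. Recall that by definition
\[
P_{\sigma}=\sum_{(k_{ij})}P_{\sigma,(k_{ij})},
\]
where the sum is taken over all $\sigma$-valid lists $(k_{ij})_{(i,j)\in\mathcal{I}}$. Lemma \ref{thm:P_sigma,k bound} gives the \emph{same} bound on each summand, namely
\[
P_{\sigma,(k_{ij})}\leq\frac{1}{n^{(1+\frac{1}{100})s}}s_{2}^{\nicefrac{s_{2}}{2}}s_{3}^{\nicefrac{s_{3}}{3}},
\]
regardless of which particular list $(k_{ij})$ is considered. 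So it suffices to count how many $\sigma$-valid lists there can possibly be.

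The key point is that each coordinate $k_{ij}$ is forced to be quite small. Indeed, for any graph $G$ that witnesses $P_{\sigma,(k_{ij})}>0$, every edge counted by some $k_{ij}$ with $(i,j)\in\mathcal{I}$ is incident to at least one vertex of $S=S_2\cup S_3\cup S_4$ (this is the very reason why the trivial type $(1,1)$ was excluded from $\mathcal{I}$). Since Property (P2) of the model gives $\Delta(G)\leq\Delta$, the total number of such edges is at most $\Delta s$, and in particular
\[
k_{ij}\in\{0,1,\dots,\Delta s\}\qquad\text{for every }(i,j)\in\mathcal{I}.
\]
Because $|\mathcal{I}|=11$, the number of $\sigma$-valid lists is at most $(\Delta s+1)^{11}$.

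Combining these two observations yields
\[
P_{\sigma}\leq(\Delta s+1)^{11}\cdot\frac{1}{n^{(1+\frac{1}{100})s}}s_{2}^{\nicefrac{s_{2}}{2}}s_{3}^{\nicefrac{s_{3}}{3}},
\]
which is precisely the statement of Corollary \ref{cor:app_A_cor}. There is really no obstacle in this step: the heavy lifting has already been done in Lemma \ref{thm:P_sigma,k bound}; here we only pay the polynomial price $(\Delta s+1)^{11}$ for a trivial union bound, which will later be absorbed by the factor $n^{-s/100}$ when we sum $P_\sigma$ over $\sigma\in\Sigma_n\setminus\{\mathrm{id}\}$ to establish \eqref{eq:sum of P_sigma}.
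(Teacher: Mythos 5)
Your proposal is correct and follows essentially the same route as the paper: decompose $P_{\sigma}=\sum_{(k_{ij})}P_{\sigma,(k_{ij})}$, apply the uniform bound from Lemma \ref{thm:P_sigma,k bound} to each summand, and observe that Property (P2) forces every $k_{ij}$ (indeed their sum $k$) to be at most $\Delta s$, so there are at most $(\Delta s+1)^{11}$ $\sigma$-valid lists. No gaps.
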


\begin{proof}
We have $P_{\sigma}=\sum_{(k_{ij})}P_{\sigma,(k_{ij})}$ where the
sum is over all $\sigma$-valid lists $(k_{ij})$. Due to Lemma \ref{thm:P_sigma,k bound},
 it only remains
to explain why there at most $(\Delta s+1)^{11}$ $\sigma$-valid
lists $(k_{ij})$. 

Every $\sigma$-valid list is a list of $11$ non-negative integers,
and their sum $k$ must satisfy $k\leq\Delta s$ (due to Property
(P2) of the model). Therefore, the number of $\sigma$-valid lists
is bounded by the number of $11$-tuples of integers, each between $0$
and $\Delta s$, which is $(\Delta s+1)^{11}$.
\end{proof}
We are now ready to prove the asymmetry of ${\bf G}_{n}$.
\begin{proof}[Proof of Theorem \ref{thm:asymmetry_bollobas}]
As previously explained, it suffices to prove Equation (\ref{eq:sum of P_sigma}). Let $M(s,s_{2},s_{3})$ be the set of permutations $\sigma$ with
$s$ non-fixed vertices, $s_{2}$ vertices in $2$-cycles and $s_{3}$
vertices in $3$-cycles. Then we shall use the following fairly coarse bound
\[
\left|M(s,s_{2},s_{3})\right|\leq(n)_{s}\cdot\frac{1}{\left(\frac{s_{2}}{2}\right)!}\cdot\frac{1}{\left(\frac{s_{3}}{3}\right)!}\leq\frac{n^{s}}{\left(\frac{s_{2}}{2}\right)!\left(\frac{s_{3}}{3}\right)!}.
\]
From Corollary~\ref{cor:app_A_cor} we deduce
\begin{eqnarray*}
\sum_{\sigma\in M(s,s_{2},s_{3})}P_{\sigma} & \leq & \frac{n^{s}}{\left(\frac{s_{2}}{2}\right)!\left(\frac{s_{3}}{3}\right)!}\cdot(\Delta s+1)^{11}\frac{1}{n^{(1+\frac{1}{100})s}}s_{2}^{\nicefrac{s_{2}}{2}}s_{3}^{\nicefrac{s_{3}}{3}}\\
 & \leq & c_{3}^{s}n^{-\frac{s}{100}}
\end{eqnarray*}
where $c_{3}$ is some positive constant, since $s>1$ provided by $\sigma\neq\mathrm{id}$. Every $\sigma\neq{\rm id}$
belongs to some set $M(s,s_{2},s_{3})$ with $2\leq s\leq n$, and
for every $2\leq s\leq n$ there at most $s^{2}$ choices for $s_{2},s_{3}$
such that $M(s,s_{2},s_{3})$ is non-empty. Overall
\[
\sum_{\sigma\in\Sigma_{n}\setminus\{{\rm id}\}}P_{\sigma}\leq\sum_{s=2}^{n}s^{2}c_{3}^{s}n^{-\frac{s}{100}}=O\left(\sum_{s=2}^{n}n^{-\frac{s}{200}}\right)=o(1).
\]
Thus the proof is completed. 
\end{proof}

\section{Properties of extensions in random graphs}
\label{AppendixG}

The following properties of safe and rigid extensions will be useful.
For their proofs, see  \cite[Chapter 4]{spencer}.
\begin{prop}
Let $(R,H)$ be a rooted graph.\label{prop:extension_properties}
\begin{enumerate}
\item If $(R,H)$ is not safe then it has a rigid subextension.
\item Assume that $(R,H)$ is rigid, $H$ is a subgraph of some larger graph $G$, and 
$X\subseteq V(G)$. If $R\cup X\neq V(H)\cup X$ then $(R\cup X,G[V(H)\cup X])$
is rigid.
\end{enumerate}
\end{prop}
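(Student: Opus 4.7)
The plan is to work throughout with the \emph{excess function}
\[
\xi_R(S)\;:=\;|S\setminus R|-\alpha\cdot e_R(S),
\]
where $e_R(S)$ denotes the number of edges of $H[S]$ having at least one endpoint outside $R$. With this notation, $(R,H[S])$ is sparse iff $\xi_R(S)>0$ and dense iff $\xi_R(S)<0$, and irrationality of $\alpha$ forces $\xi_R(S)\neq 0$ whenever $R\subsetneq S$. The key algebraic identity that underlies both parts is the additivity
\[
\xi_R(S)\;=\;\xi_R(T)+\xi_T(S)\qquad(R\subseteq T\subseteq S),
\]
which translates set inclusions into arithmetic relations among excesses.

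For part (1), I would pick $T$ of minimum cardinality among all sets with $R\subsetneq T\subseteq V(H)$ and $\xi_R(T)<0$; such a $T$ exists precisely because $(R,H)$ is not safe. The claim is that $(R,H[T])$ is rigid. Any nailextension of it has the form $(T'',H[T])$ with $R\subsetneq T''\subsetneq T$, and its type $(v,e)$ obeys $v-\alpha e=\xi_{T''}(T)$. Since $|T''|<|T|$ and $R\subsetneq T''$, the minimality of $T$ forces $\xi_R(T'')>0$, and then additivity gives $\xi_{T''}(T)=\xi_R(T)-\xi_R(T'')<0$, making the nailextension dense. The conceptual content is that size-minimality of $T$ is exactly what upgrades a single dense subextension into a fully rigid one.

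For part (2), set $R':=R\cup X$ and $H':=G[V(H)\cup X]$, and fix any nailextension $(S,H')$ of $(R',H')$ with $R'\subsetneq S\subsetneq V(H')$. The plan is to reduce denseness of $(S,H')$ to denseness of the nailextension $(T,H)$ of $(R,H)$ obtained by setting $T:=S\cap V(H)$. Using $X\subseteq R'\subseteq S$, any witness vertex $v\in S\setminus R'$ lies in $V(H')\setminus X=V(H)\setminus X$ and satisfies $v\notin R$, so $v\in T\setminus R$ and hence $R\subsetneq T$. The more delicate step is $T\subsetneq V(H)$: otherwise $V(H)\cup X\subseteq S$, forcing $S=V(H')$ and contradicting $S\subsetneq V(H')$; this is precisely where the hypothesis $V(H)\not\subseteq R\cup X$ enters. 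Thus $(T,H)$ is a genuine nailextension of the rigid $(R,H)$, hence dense. Finally, $X\subseteq S$ gives $V(H')\setminus S=V(H)\setminus T$, while $E(H)\subseteq E(H')$ ensures that every edge of $H$ with an endpoint outside $T$ is also an edge of $H'$ with an endpoint outside $S$; hence the type $(v',e')$ of $(S,H')$ satisfies $v'-\alpha e'\leq\xi_T(V(H))<0$.

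The main obstacle I anticipate is the bookkeeping in part (2): simultaneously tracking three manipulations (enlarging the roots from $R$ to $R\cup X$, enlarging the ambient graph from $H$ to $G[V(H)\cup X]$, and restricting a prospective nailextension $S$ back down to $V(H)$) without accidentally losing the strict inclusions that distinguish genuine rooted graphs from degenerate ones. Once the two inclusions $R\subsetneq T\subsetneq V(H)$ are verified, the remainder of both parts is a routine application of the additivity identity for $\xi$.
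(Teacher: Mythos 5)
Your proof is correct and follows the standard argument (the paper itself gives no proof of this proposition, deferring to Spencer's book, where part (1) is established by exactly your minimality-plus-additivity device and part (2) by the same restriction $T=S\cap V(H)$). The only point to tidy is the boundary case permitted by the paper's definition of nailextension, namely $T''=R$ in part (1) and $S=R\cup X$ in part (2): in the first, $(R,H[T])$ is dense by the very choice of $T$, and in the second your reduction still goes through with $T=R\cup X\cap V(H)$ possibly equal to $R$, since $(R,H)$ is a nailextension of itself and hence dense by rigidity, so the strict inclusion $R\subsetneq T$ you establish via a witness vertex is only needed when $R\cup X\subsetneq S$.
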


Recall that ${\bf G}\sim G(n,n^{-\alpha})$, $M=M(n)=(\ln n)^{1/(10d)}$, and $r,v,e=O(M)$.

Given an $r$-tuple of vertices $\vec{x}$, we let ${\bf N}_{\vec{x}}^{(R,H)}$
denote the random variable which counts $(R,H)$-extensions of $\vec{x}$
in ${\bf G}$. By definition,
\[
\mathbb{E}\left({\bf N}_{\vec{x}}^{(R,H)}\right)=\frac{1}{a}(n-r)_{v}p^{e}\sim\frac{1}{a}n^{v}p^{e}=\frac{1}{a}n^{v-\alpha e}
\]
where $a$ is the number of automorphisms of the rooted graph $(R,H)$
which preserve the root vertices.
\begin{theorem}
\label{thm:Counting_Safe_Extensions}W.h.p.\ in ${\bf G}$, for every
safe $(R,H)$ with $r\leq M$ roots and $v\leq M$ non-roots, and
for every $r$-tuple of vertices $\vec{x}$, we have ${\bf N}_{\vec{x}}^{(R,H)}\sim\mathbb{E}{\bf N}_{\vec{x}}^{(R,H)}$
uniformly. More explicitly, there exists $\delta=o(1)$ (which is
uniform w.r.t.\ $(R,H)$ and $\vec{x}$) such that w.h.p.
\begin{equation}
\frac{{\bf N}_{\vec{x}}^{(R,H)}}{\mathbb{E}{\bf N}_{\vec{x}}^{(R,H)}}\in[1-\delta,1+\delta]\label{eq:concentration_N}
\end{equation}
\end{theorem}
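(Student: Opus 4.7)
My plan is to apply the second moment method to $\mathbf{N}_{\vec x}^{(R,H)}$ for each fixed safe $(R,H)$ and $r$-tuple $\vec x$, and then union bound over all such pairs. The non-Liouville hypothesis on $\alpha$—specifically that it is not $q^{-d}$-approximable—will play the crucial role of upgrading the qualitative ``safe $\Rightarrow$ all subextensions sparse'' into a quantitative lower bound on $v_0 - \alpha e_0$ that is valid uniformly for $r, v, e \leq M = (\ln n)^{1/(10d)}$.

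Writing $\mathbf{N}_{\vec x}^{(R,H)} = \sum_{\vec y} \mathds{1}_{\vec y \text{ is an extension}}$ and decomposing $\mathrm{Var}(\mathbf{N})$ by the overlap pattern of pairs $(\vec y, \vec y')$, each subextension $(R, H_0)$ of $(R,H)$ with $v_0$ non-root vertices and $e_0$ edges (not between roots) contributes $O(1) \cdot n^{-(v_0 - \alpha e_0)}$ to the relative variance $\mathrm{Var}(\mathbf{N})/[\mathbb{E}(\mathbf{N})]^2$, up to a combinatorial factor $v^{O(v_0)} = \exp(O(M \ln M))$ accounting for the choice of overlap. Safeness gives $v_0 - \alpha e_0 > 0$; to make this quantitative, I use that $\alpha$ is not $q^{-d}$-approximable to conclude $|e_0 \alpha - v_0| \geq e_0^{-(d-1)}$ for all but finitely many integer pairs. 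For $e_0 \leq O(M)$ this yields
\[
v_0 - \alpha e_0 \geq M^{-(d-1)} = (\ln n)^{-(d-1)/(10d)}, \qquad n^{v_0 - \alpha e_0} \geq \exp\!\bigl((\ln n)^{(9d+1)/(10d)}\bigr).
\]
Summing over the at most $2^{O(M)}$ subextension patterns and absorbing the combinatorial prefactor, I obtain
\[
\mathrm{Var}(\mathbf{N})/[\mathbb{E}(\mathbf{N})]^2 \leq \exp\!\bigl(-(1-o(1))(\ln n)^{(9d+1)/(10d)}\bigr),
\]
and Chebyshev produces concentration with error $\delta_n = o(1)$, failing with probability of the same order.

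The final step is a union bound over $\exp(O(M^2))$ choices of $(R,H)$ and $n^r \leq n^M = \exp((\ln n)^{1 + 1/(10d)})$ choices of $\vec x$. The main obstacle is that the Chebyshev-based failure probability $\exp(-(\ln n)^{(9d+1)/(10d)})$ is not quite enough to absorb the $\exp((\ln n)^{1 + 1/(10d)})$ union bound, since $(9d+1)/(10d) < 1 + 1/(10d)$. I would overcome this either by invoking a Kim--Vu-type polynomial concentration inequality (treating $\mathbf{N}_{\vec x}^{(R,H)}$ as a polynomial of bounded degree in the edge indicators of $\mathbf{G}$, which provides stretched-exponential rather than polynomial tails) or, more in line with Spencer's approach, by induction on $v$: decompose a safe $(R,H)$ as a chain of safe one-vertex subextensions, and prove concentration stepwise so that the union bound at each step is over the previously stabilized root set of size $\leq M$ rather than over fresh $r$-tuples. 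In either case the quantitative lower bound on $v_0 - \alpha e_0$ from the non-Liouville hypothesis is the decisive ingredient, and the remaining combinatorial bookkeeping follows the template already established in Section~\ref{sec:general_upper_bound}.
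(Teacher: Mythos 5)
Your proposal is correct and its decisive step coincides with the paper's: the paper applies the Kim--Vu polynomial concentration inequality, using the non-$q^{-d}$-approximability of $\alpha$ to obtain $E'/\mathbb{E}\mathbf{N}\leq (v!)^2 n^{-\beta}$ with $\beta\geq c/v^{d-1}$, which yields a per-pair failure probability $\exp(-\lambda)$ with $\lambda=\exp\left(\Omega\left(\ln n/M^{d}\right)\right)$ — a stretched-exponential tail that absorbs the $\exp\left(O(M^{2})+O(M)\ln n\right)$ union bound exactly as you anticipated. Your diagnosis that plain Chebyshev only gives failure probability $\exp\left(-(\ln n)^{(9d+1)/(10d)}\right)$, which falls short of the $\exp\left((\ln n)^{1+1/(10d)}\right)$ cost of the union bound over $r$-tuples, is also accurate and is precisely why the second moment method alone does not suffice here.
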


\begin{proof}
To prove the theorem, we shall prove that there exists $\delta=o(1)$
such that for any $(R,H)$ with $r\leq M$ roots and $v\leq M$ non-roots
and any $r$-tuple $\vec{x}$, (\ref{eq:concentration_N}) holds with
probability $1-\exp\left(-\lambda\right)$, where $\lambda=\exp\left(\Omega\left(\frac{\ln n}{M^{d}}\right)\right)$.
Then it would only remain to apply the union bound over $(R,H)$ and
$\vec{x}$. Indeed, the number of possible safe rooted graphs with
$r\leq M$ roots and $v\leq M$ non-roots is trivially bounded by
$\exp\left(O(M^{2})\right)$. The number of $r$-tuples of vertices
$\vec{x}$ is $O(n^{r})=\exp\left(O(M)\ln n\right)$. So the probability
of having $(R,H)$ and $\vec{x}$ such that $N_{\vec{x}}^{(R,H)}$
does not satisfy (\ref{eq:concentration_N}) is
\[
\exp\left(O(M^{2})+O(M)\ln n-\lambda\right)=o(1).
\]
So, from now on let us fix a safe rooted graph $(R,H)$ of type $(v,e)$
with $r\leq M$ roots and $v\leq M$, and also fix an $r$-tuple of
vertices $\vec{x}$. Denote ${\bf N}={\bf N}_{\vec{x}}^{(R,H)}$.
We prove concentration of ${\bf N}$ via Kim and Vu's powerful concentration
result (see \cite{Kim-Vu}). Note that for the rest of the proof,
$e$ denotes the number of edges in the rooted graph (not the Napier's
constant). 

Let $\mathcal{Y}$ be the set of all $\frac{1}{a}(n-r)_{v}$ $v$-tuples
$\vec{y}$ which are potential $(R,H)$-extensions of $\vec{x}$.
Let $\mathcal{E}$ be the set of all $\binom{n}{2}-\binom{r}{2}$ pairs
of vertices which do not connect two vertices of $\vec{x}$. For every
$\vec{y}\in\mathcal{Y}$, let $P_{\vec{y}}$ be the set of all edges
required by the $(R,H)$-extension $(\vec{x},\vec{y})$. This is a
subset of $\mathcal{E}$ of size $e$. Finally, for every $i\in\mathcal{E}$
let ${\bf I}_{i}$ be the indicator of the event $i\in E({\bf G})$.
Then we can write ${\bf N}$ as the value of a multivariate polynomial
on $E({\bf G})$ as follows: 
\[
{\bf N}=\sum_{\vec{y}\in\mathcal{Y}}\left[\prod_{i\in P_{\vec{y}}}{\bf I}_{i}\right].
\]
For a set of edges $A\subseteq\mathcal{E}$ with $\left|A\right|=e_{0}\leq e$,
we define
\begin{eqnarray*}
\mathcal{Y}_{A}  =  \left\{ \vec{y}\in\mathcal{Y}:A\subseteq P_{\vec{y}}\right\} ,\quad
{\bf N}_{A}  =  \sum_{\vec{y}\in\mathcal{Y}_{A}}\left[\prod_{i\in P_{\vec{y}}\setminus A}{\bf I}_{i}\right].
\end{eqnarray*}
For every $0\leq e_{0}\leq e$ let 
\[
E_{e_{0}}=\max_{A\subseteq\mathcal{E},\left|A\right|=e_{0}}\mathbb{E}\left({\bf N}_{A}\right).
\]
Also define 
\[
E=\max_{e_{0}\geq0}E_{e_{0}},\quad E'=\max_{e_{0}\geq1}E_{e_{0}}.
\]
Notice that $E_{0}=\mathbb{E}{\bf N}$, thus $E=\max\left\{ E',\mathbb{E}{\bf N}\right\} $.
Kim-Vu's concentration result asserts that for every $\lambda>1$,
\begin{align}
\mathbb{P}\left(\left|{\bf N}-\mathbb{E}{\bf N}\right|>\frac{8^{e}}{\sqrt{e!}}\sqrt{E\cdot E'}\cdot\lambda^{e}\right) & =O\left(\exp\left(-\lambda+(e-1)\ln\left|\mathcal{E}\right|\right)\right).\label{eq:Kim-Vu}
\end{align}
To make use of this result we first bound $\mathbb{E}({\bf N}_{A})$
for every set $A$ of $e_{0}\leq e$ edges. It is easy to see that
$\mathbb{E}({\bf N}_{A})=\left|\mathcal{Y}_{A}\right|p^{e-e_{0}}$; let
us bound $\left|\mathcal{Y}_{A}\right|$. Consider the set of vertices
incident to the edges of $A$, excluding the vertices of $\vec{x}$;
denote it $V_{0}$ and its size $v_{0}$. We can bound $\left|\mathcal{Y}_{A}\right|$
by the number of $v$-tuples $\vec{y}$ which contain all the vertices
of $V_{0}$ (in any positions), which is
\[
(v)_{v_{0}}\cdot(n-r-v_{0})_{v-v_{0}}\leq v!\cdot n^{v-v_{0}}.
\]
If the vertices of $V_{0}$ and the edges of $A$ cannot be completed
into an $(R,H)$-extension of $\vec{x}$, then actually $\mathcal{Y}_{A}$
is empty and $\mathbb{E}({\bf N}_{A})=0$ trivially. Otherwise, from
safeness of $(R,H)$ we deduce $v_{0}-\alpha e_{0}>0$. Overall $\mathbb{E}({\bf N}_{A})\leq v!\cdot\frac{n^{v-\alpha e}}{n^{v_{0}-\alpha e_{0}}}$.

Comparing $\mathbb{E}({\bf N}_{A})$ to $\mathbb{E}({\bf N})$ yields
\[
\frac{\mathbb{E}({\bf N}_{A})}{\mathbb{E}({\bf N})}\leq\frac{av!}{n^{v_{0}-\alpha e_{0}}}\leq\frac{(v!)^{2}}{n^{v_{0}-\alpha e_{0}}}.
\]

Maximizing over all non-empty sets $A$ of at most $e$ edges, we
obtain the bound
\[
E'\leq\frac{(v!)^{2}}{n^{\beta}}\cdot\mathbb{E}({\bf N})
\]
where $\beta=\min_{v_{0},e_{0}}(v_{0}-\alpha e_{0})$, with the minimum
taken over $1\leq v_{0}\leq v$ and $e_{0}<\frac{v_{0}}{\alpha}$. 

It is at this point that we encounter the main difference from the
original argument: since $v$ may now grow with $n$, we get that
$\beta$ is no longer constant, but approaches $0$ in a way that
reflects the behavior of rational approximations of $\alpha$. To
provide a lower bound on $\beta$ we invoke the assumption that $\left|\alpha-\frac{p}{q}\right|\geq\frac{1}{q^{d}}$ for all but finitely many rationals $\frac{p}{q}$.
Taking any $1\leq v_{0}\leq v$ and $e_{0}<\frac{v_{0}}{\alpha}$
(with a finite number of possible exceptions) we have
\[
v_{0}-\alpha e_{0}=e_{0}\left(\frac{v_{0}}{e_{0}}-\alpha\right)=e_{0}\left|\alpha-\frac{v_{0}}{e_{0}}\right|\geq\frac{1}{e_{0}^{d-1}}.
\]
Overall, this proves the lower bound $\beta\geq\frac{c}{e^{d-1}}$
where $c$ is a constant (which comes from the finite number of exceptions).
For convenience, we rewrite this lower bound in terms of $v$: since
$e<\frac{v}{\alpha}$, we have $\beta\geq\frac{c}{v^{d-1}}$ (where
now $c$ might be a different constant).

We now return to Equation (\ref{eq:Kim-Vu}). Let us first take care
of the left hand side. From what we have seen so far,
\begin{align*}
\frac{E'}{\mathbb{E}{\bf N}}\leq\frac{(v!)^{2}}{n^{\beta}}\leq\frac{v^{2v}}{n^{\beta}}&=\exp\left(2v\ln v-\beta\ln n\right)\\
&\leq\exp\left(2v\ln v-\frac{c}{v^{d-1}}\ln n\right).
\end{align*}
Since $v=O(M)=O\left((\ln n)^{\frac{1}{10d}}\right)$, we have
\[
E'\leq\exp\left(-(c-o(1))\frac{\ln n}{v^{d-1}}\right)\cdot\mathbb{E}{\bf N}.
\]
We can also trivially bound $\frac{8^{e}}{\sqrt{e!}}$ from above
by a constant, and for every $\lambda>1$, we have $\lambda^{e}\leq\lambda^{\frac{1}{\alpha}v}$.
Let us take $\lambda=\exp\left(\frac{\alpha c}{4}\cdot\frac{\ln n}{v^{d}}\right).$
This choice guarantees 
\[
\frac{8^{e}}{\sqrt{e!}}\sqrt{E\cdot E'}\cdot\lambda^{e}\leq\exp\left(-\left(\frac{c}{4}-o(1)\right)\frac{\ln n}{v^{d-1}}\right)\cdot\mathbb{E}{\bf N}.
\]
In particular $\frac{8^{e}}{\sqrt{e!}}\sqrt{E\cdot E'}\cdot\lambda^{e}\leq\delta\mathbb{E}{\bf N}$
for $\delta=o(1)$ which does not depend on the specific $(R,H)$
or $\vec{x}$.

Finally, from Equation (\ref{eq:Kim-Vu}) with $\lambda,\delta$ as
above, we have
\begin{eqnarray*}
\mathbb{P}\left(\left|N-\mathbb{E}N\right|>\delta\mathbb{E}N\right) & = & O\left(\exp\left(-\lambda+(e-1)\ln\left|\mathcal{E}\right|\right)\right)\\
 & = & \exp\left[-\lambda+O(v\ln n)\right]\\
 & = & \exp\left(-(1+o(1))\lambda\right)
\end{eqnarray*}
with $\lambda=\exp\left(\Omega\left(\frac{\ln n}{M^{d}}\right)\right).$
That finishes the proof. 
\end{proof}

\begin{remark}
\label{rem:counting_safe_extensions_upgraded}For technical reasons, we also need
the following straightforward generalization of Theorem \ref{thm:Counting_Safe_Extensions}.
{\it Following the same notation, w.h.p. for every tuple $\vec{x}$ and every set $U_{\vec{x}}$ of ``forbidden'' vertices of size
$O(M^{d+1})$ the conclusion of Theorem \ref{thm:Counting_Safe_Extensions}
holds for the random variable ${\bf P}_{\vec{x}}^{(R,H)}$ counting
only $(R,H)$-extensions $\vec{y}$ of $\vec{x}$ which do not use
any vertices from $U_{\vec{x}}$.} 
This generalization can be proved as a direct corollary of Theorem
\ref{thm:Counting_Safe_Extensions}, by adding the forbidden vertices
$U_{\vec{x}}$ to the rooted graph as roots, without any edges. Technically,
this generalization requires the replacement of $r\leq M$ with $r=O(M^{d})$,
but it has negligible effect on computations since $d$ is a constant.
\end{remark}

\subsection{Proof of Bounded Closure Lemma}

The proof is similar to the proof of the Finite Closure Theorem from
\cite{spencer}. For convenience, we present the full argument. 

We start by fixing $0\leq r,t\leq M$. As in the original argument,
set
\[
\beta=\min_{v_{0},e_{0}}\left(\frac{\alpha e_{0}-v_{0}}{v_{0}}\right)
\]
where the minimum is over all $1\leq v_{0}\leq t$ and $e_{0}>\frac{1}{\alpha}v_{0}$.
Again, this is no longer a positive constant; we now have $\beta\geq\frac{c}{t^{d}}$ where $c=c_{\alpha}$
is a positive constant.

Define $K=\left\lceil \frac{r}{\beta}\right\rceil +1$ and let us
show that with probability $1-o(M^{-2})$, the $t$-closure of any
$r$-tuple contains at most $r+K$ vertices. Then, taking a union
bound over all possible $0\leq r,t\leq M$ will finish the proof.

Assume that a set $R$ of $r$ vertices has $\left|{\rm cl}_{t}(R)\right|>r+K$.
This means there exists a sequence $R=S_{0}\subsetneq S_{1}\subsetneq S_{2}\subsetneq\dots\subsetneq S_{u}\subseteq{\rm cl}_{t}(R)$
where each $(S_{i-1},S_{i})$ is a dense rooted graph of type $(v_{i},e_{i})$
with $v_{i}\leq t$ and also the number of non-roots of $(R,S_{u})$
satisfies $\sum_{i=1}^{u}v_{i}\in[K,K+t]$.

The induced subgraph on $S_{u}$ has $v_{*}=r+\sum_{i=1}^{u}v_{i}$
vertices and at least $e_{*}=\sum_{i=1}^{u}e_{i}$ edges. Notice that
\[
v_{*}-\alpha e_{*}=r+\sum_{i=1}^{u}(v_{i}-\alpha e_{i})\leq r-\sum_{i=1}^{u}\beta v_{i}\leq r-\beta K\leq-\beta.
\]
It is therefore sufficient to prove the following statement. W.h.p.,
there exists no subgraph $H$ of ${\bf G}$ such that $v(H)\in[r+K,r+K+t]$ and $v(H)-\alpha e(H)\leq-\beta$. Fix a graph $H$ which satisfies the above conditions. Its
expected number of copies is 
\[
\mathbb{E}\left({\bf X}_{H}\right)\leq n^{v(H)-\alpha e(H)}\leq n^{-\beta}=\exp\left(-\beta\ln n\right)=\exp\left(-\Omega\left(\frac{\ln n}{M^{d}}\right)\right)
\]
since $\beta\geq\frac{c}{t^{d}}$ and $t=O(M)$. Let us recall that $r=O(M)$ and $K=O\left(rt^{d}\right)=O\left(M^{d+1}\right)$. It is sufficient to take the union bound only over those $H$ with
minimal number of edges; that is, we may assume $e(H)=\frac{1}{\alpha}v(H)+O(1)$.
Therefore, the probability of existence of a subgraph $H$ of ${\bf G}$
as above is at most
\begin{gather*}
\exp\left[O\left((r+K+t)\ln(r+K+t)\right)-\Omega\left(\frac{\ln n}{M^{d}}\right)\right]\\
=\exp\left(O\left(M^{d+1}\ln M\right)-\Omega\left(\frac{\ln n}{M^{d}}\right)\right).
\end{gather*}
This is $o(M^{-2})$ since $M=(\ln n)^{\frac{1}{10d}}$.

\subsection{Proof of Lemma~\ref{lem:generic_extensions}}

The idea is to use
Theorem \ref{thm:Counting_Safe_Extensions} to claim that there are
many $(R,H)$-extensions, and also that the majority of them must
be generic. 

From Theorem \ref{thm:Counting_Safe_Extensions}, Remark \ref{rem:counting_safe_extensions_upgraded},
and the Bounded Closure Lemma (with $M$ replaced by $2M$), w.h.p.\ the
following properties hold true in ${\bf G}$:
\begin{enumerate}
\item For every safe rooted graph $(R',H')$ with $r'\leq2M$ roots and
$v'\leq2M$ non-roots, and for every $r'$-tuple $\vec{x}'$,
\begin{equation}
{\bf N}_{\vec{x}'}^{(R',H')}\sim\mathbb{E}\left({\bf N}_{\vec{x}'}^{(R',H')}\right).\label{eq:concentration_N'}
\end{equation}
\item For every $r$-tuple $\vec{x}$, $r\leq M$, letting $U_{\vec{x}}={\rm cl}_{2M}(\vec{x})\setminus\vec{x}$,
we have $\left|U_{\vec{x}}\right|=O\left(M^{d+1}\right)$.
\item For every safe rooted graph $(R,H)$ with $r\leq M$ roots and $v\leq M$
non-roots, and for every $r$-tuple $\vec{x}$ of vertices, 
\[
{\bf P}_{\vec{x}}^{(R,H)}\sim\mathbb{E}\left({\bf P}_{\vec{x}}^{(R,H)}\right)
\]
where ${\bf P}_{\vec{x}}^{(R,H)}$ counts $(R,H)$-extensions $\vec{y}$
which do not use any vertex from $U_{\vec{x}}$ (we call such $(R,H)$-extensions
\emph{good}). 
\end{enumerate}
Let $\mathcal{A}$ be the event that the three conditions above hold;
then $\mathbb{P}\left(\mathcal{A}\right)=1-o(1)$. We now fix a safe
rooted graph $(R,H)$ with $r\leq M$ roots and $v\leq M$ non-roots,
a non-negative integer $t\leq M$ and an $r$-tuple of vertices $\vec{x}$.
To complete the proof, it suffices to show that given the event $\mathcal{A}$,
$\vec{x}$ has a $t$-generic $(R,H)$-extension $\vec{y}$.

Let $(v,e)$ denote the type of $(R,H)$. Since $\mathcal{A}$ holds,
we know that 
\[
{\bf P}_{\vec{x}}^{(R,H)}\sim\mathbb{E}\left({\bf P}_{\vec{x}}^{(R,H)}\right)\sim\frac{n^{v-\alpha e}}{a}
\]
where $a$ is the number of automorphisms of $(R,H)$ which preserve
the roots. We prove that, given that $\mathcal{A}$ holds, the number
of good $(R,H)$-extensions $\vec{y}$ of $\vec{x}$ which fail to be $t$-generic
is $o\left(\frac{n^{v-\alpha e}}{a}\right)$. 
First, let us bound the number of good $(R,H)$-extensions $\vec{y}$
which violate the first part of Definition \ref{def:generic}, meaning
that they contain additional edges except those specified by $H$.
Assume that $\vec{y}$ is such an extension; then it forms an $(R,H^{+})$-extension
of $\vec{x}$ of type $(v,e^{+})$ with $e^{+}>e$. This extension
cannot have any rigid subextensions, because $\vec{y}$ shares no
vertices with $U_{\vec{x}}$. From part 1 of Proposition \ref{prop:extension_properties}
it is safe. The number of $(R,H^{+})$-extensions of $\vec{x}$ is
therefore
\[
O\left(n^{v}p^{e^{+}}\right)=O\left(an^{-\alpha}\cdot\frac{n^{v-\alpha e}}{a}\right)=n^{-\alpha+o(1)}\cdot\frac{n^{v-\alpha e}}{a}.
\]
The last equality follows from the bound $a\leq v!=n^{o(1)}$. Summing
over $\exp\left(O(M^{2})\right)$ possibilities for the rooted graph
$(R,H^{+})$, we obtain a bound of $o\left(\frac{n^{v-\alpha e}}{a}\right)$
on the number of $(R,H)$-extensions of $\vec{x}$ which violate
the first part of Definition \ref{def:generic}.

Now we bound the number of good $(R,H)$-extensions $\vec{y}$ which
violate the second part of Definition \ref{def:generic}. Assume that
$\vec{y}$ is such an extension, and let $\vec{z}$ be a rigid extension
of $\vec{x}\cup\vec{y}$ with at most $t$ nonroots and at least one
edge between $\vec{z}$ and $\vec{y}$. Let $(V(H),H_{1})$ denote the
rooted graph corresponding to this extension, and let $(v_{1},e_{1})$
denote its type. Without loss of generality, we may assume that $\vec{z}$
is minimal rigid extension of $\vec{x}\cup\vec{y}$, in the sense
that $(V(H),H_{1})$ has no non-trivial rigid subextensions. Note that
$\vec{z}\cup\vec{y}$ is an $(R,H_{1})$-extension of $\vec{x}$, which is of
type $(v+v_{1},e+e_{1})$, with $v+v_{1}\leq v+t\leq2M$. We then consider cases.

\emph{Case 1.} $(R,H_{1})$ is safe. The number of $(R,H_{1})$-extensions
of $\vec{x}$ is 
\[
O\left(n^{v+v_{1}}p^{e+e_{1}}\right)=O(n^{v_{1}-\alpha e_{1}}n^{v-\alpha e}).
\]
$(V(H),H_{1})$ is rigid, so $v_{1}-\alpha e_{1}<0$. Actually, we have $v_{1}-\alpha e_{1}=-\Omega\left(\frac{1}{M^{d-1}}\right)$.
Therefore
\[
O(n^{v_{1}-\alpha e_{1}}n^{v-\alpha e})=\exp\left(-\Omega\left(\frac{\ln n}{M^{d-1}}\right)\right)n^{v-\alpha e}.
\]
Summing over $\exp\left(O(M^{2})\right)$ possibilities for $(V(H),H_{1})$, we obtain a bound of $o\left(\frac{n^{v-\alpha e}}{a}\right)$
on the number of $\vec{y}$ which violate the second part of Definition
\ref{def:generic}.

\emph{Case 2. }$(R,H_{1})$ is not safe. We first show that in this
case, $\vec{z}$ is a rigid extension of $\vec{x}$. Since $(R,H_{1})$
is not safe, from part 1 of Proposition \ref{prop:extension_properties}
it has a rigid subextension $(R,H_{2})$. The vertices of $\vec{y}\cup\vec{z}$
which form this $(R,H_{2})$-extension of $\vec{x}$ are actually
all in $\vec{z}$; we denote them by $\vec{z}'$. Indeed, by definition
these vertices are contained in $U_{\vec{x}}$ (as they form a rigid
extension of $\vec{x}$ with at most $v+t\leq2M$ vertices) but $\vec{y}$
contains no vertices from $U_{\vec{x}}$. From part 2 of Proposition
\ref{prop:extension_properties} we deduce that $\vec{x}\cup\vec{y}\cup\vec{z}'$
is a rigid extension of $\vec{x}\cup\vec{y}$. It corresponds to a
rigid subextension of $(V(H),H_{1})$, but from minimality it must be
$(V(H),H_{1})$ itself, so $\vec{z}'=\vec{z}$. Therefore $\vec{z}$
is a rigid extension of $\vec{x}$.

Now, since $\vec{z}$ is a rigid extension of $\vec{x}$, its vertices
are all in $U_{\vec{x}}$. Since $\vec{z}$ has at most $t$ vertices
and $\left|U_{\vec{x}}\right|=O(M^{d+1})$, there are $O\left(M^{(d+1)t}\right)=\exp\left(O(M\ln M)\right)$
possibilities for $\vec{z}$.

Fix a possible $\vec{z}$. Consider $\vec{y}$ as an extension of
$\vec{x}\cup\vec{z}$. It does not have a rigid subextension, since
$\vec{y}$ contains no vertex from $U_{\vec{x}}$, and from part 1
of Proposition \ref{prop:extension_properties} it is safe. Let $(R',H_1)$
denote the rooted graph corresponding to this extension. Its type
is $(v,e^{+})$ with $e^{+}>e$, because there is at least one edge
between $\vec{y}$ and $\vec{z}$. Its number of roots is at most
$r+t\leq2M$. Therefore the number of $(R',H')$-extensions of $\vec{x}\cup\vec{z}$
is
\[
O\left(n^{v}p^{e^{+}}\right)=O(n^{-\alpha})n^{v-\alpha e}.
\]
Summing over $\exp\left(O(M^{2})\right)$ possibilities for the rooted
graph $(R',H_1)$ and $\exp\left(O(M\ln M)\right)$ possibilities
for $\vec{z}$, we get a bound of $o\left(\frac{n^{v-\alpha e}}{a}\right)$
on the number of $\vec{y}$ which violate the second part of Definition
\ref{def:generic}. That completes the
proof.

\end{document}